\newcommand{\freeproduct} {*}
\newcommand{\N}	{\mathbb N}
\newcommand{\Z}	{\mathbb Z}
\newcommand{\R}	{\mathbb R}
\newcommand{\Cay}	{\operatorname{Cay}}
\newcommand{\diam}	{\operatorname{diam}} 
\newcommand{\Div}	{\operatorname{Div}}
\newcommand{\Area}	{\operatorname{Area}_{\mathrm{rel}}}
\newtheorem{thm}{Theorem}[section] 
\newtheorem{prop}[thm]{Proposition}
\newtheorem{lem} [thm]{Lemma} 
\newtheorem{cor} [thm]{Corollary}
\newtheorem*{thm*} {Theorem} 
\newtheorem*{prop*}{Proposition}
\newtheorem*{lem*} {Lemma} 
\newtheorem*{cor*} {Corollary}
\newenvironment{customthm}[1]
  {\innercustomthm}
  {\endinnercustomthm}
\newenvironment{customprop}[1]
  {\innercustomprop}
  {\endinnercustomprop}
\theoremstyle{definition}
\newtheorem{defi}[thm]{Definition}
\newtheorem{example}[thm]{Example}
\newtheorem{remark}[thm]{Remark}
\newtheorem*{defi*}{Definition}
\newtheorem*{example*}{Example}
\newtheorem*{remark*}{Remark}
\newtheorem*{problem*}{Problem}
\newtheorem*{convention*}{Convention}
\newtheorem*{defi1*}{Definition I of the WPD element}
\newtheorem*{defi2*}{Definition II of the WPD element}
\begin{document}
\title[Small cancellation groups are acylindrically hyperbolic]{Infinitely presented graphical small cancellation groups are acylindrically hyperbolic}
\subjclass[2010]{Primary: 20F06; Secondary: 20F65, 20F67.}
\author{Dominik Gruber}
 \thanks{The first author's work was supported by the ERC grant no.\ 259527 of Goulnara Arzhantseva, by a dissertation completion fellowship 2015 of the University of Vienna, and by the Swiss National Science Foundation Professorship FN PP00P2-144681/1 of Laura Ciobanu.}
 \address{Institut de Math\'ematiques, Universit\'e de Neuch\^atel, Rue Emile-Argand 11, 2000 Neuch\^atel, Switzerland}
 \email{dominik.gruber@unine.ch}
\author{Alessandro Sisto}
\address{Department of Mathematics, ETH Zurich, 8092 Zurich, Switzerland}
 \email{sisto@math.etzh.ch}

 \begin{abstract}
We prove that infinitely presented graphical $Gr(7)$ small cancellation groups are acylindrically hyperbolic. In particular, infinitely presented classical $C(7)$-groups and, hence, classical $C'(\frac{1}{6})$-groups are acylindrically hyperbolic. We also prove the analogous statements for the larger class of graphical small cancellation presentations over free products. We construct infinitely presented classical $C'(\frac{1}{6})$-groups that provide new examples of divergence functions of groups.
\end{abstract}

\maketitle

\section{Introduction}

Small cancellation theory is a rich source of finitely generated infinite groups with exotic or extreme properties. \emph{Graphical} small cancellation theory is a generalization of classical small cancellation theory introduced by Gromov \cite{Gro}. It is a tool for constructing groups with prescribed embedded subgraphs in their Cayley graphs. Groups containing expander graphs in a suitable way have provided the only known counterexamples to the Baum-Connes conjecture with coefficients \cite{HLS}. Gromov proved the existence of such so-called \emph{Gromov's monsters} in an intricate geometric construction \cite{Gro} which was then explained in detail by Arzhantseva-Delzant \cite{ADe}. Ollivier \cite{Oll} and, subsequently, the first author \cite{Gru} proved that combinatorial interpretations of Gromov's graphical small cancellation theory provide more elementary tools for producing new examples of Gromov's monsters. Following this, Osajda recently showed, by probabilistic arguments, the existence of suitable small cancellation labellings of certain expander graphs \cite{Osa-label}, thus completing this combinatorial construction.

The main result of this article is that infinitely presented graphical $Gr(7)$-groups are acylindrically hyperbolic. The $Gr(7)$ small cancellation condition for labelled graphs was shown to produce lacunary hyperbolic groups with coarsely embedded prescribed infinite sequences of finite graphs by the first author \cite{Gru}. Our theorem, in particular, applies to the new Gromov's monsters mentioned above. Moreover, it applies to the only known non-coarsely amenable groups with the Haagerup property of Arzhantseva-Osajda \cite{AO2,Osa-label}. Finally, our theorem also covers all examples of infinitely presented classical $C(7)$-groups as defined in \cite[Chapter V]{LS}. 

Our result, moreover, extends to groups given by infinite graphical small cancellation presentations over free products of groups. Such groups were used to prove various embedding theorems \cite[Chapter V]{LS} and, more recently, to construct examples of torsion-free hyperbolic groups and direct limits of torsion-free hyperbolic groups without the unique product property \cite{Ste,Arzhantseva-Steenbock}.

\subsection*{Acylindrical hyperbolicity} A group is \emph{acylindrically hyperbolic} if it is non-elementary and admits an acylindrical action with unbounded orbits on a Gromov-hyperbolic space, see Definition \ref{defi:acylindrical}. This definition of Osin \cite{Os-acyl} unified several equivalent far-reaching generalizations of relative hyperbolicity \cite{BeFu-wpd,Ha-isomhyp, DGO, Si-contr}. Acylindrical hyperbolicity has strong consequences: Every acylindrically hyperbolic group is SQ-universal, it contains free normal subgroups \cite{DGO}, it contains Morse elements and hence all its asymptotic cones have cut-points \cite{Si-hypembmorse}, and its bounded cohomology is infinite dimensional in degrees 2 \cite{HullOsin} and 3 \cite{FPS-H3b-acylhyp}. Moreover, if an acylindrically hyperbolic group does not contain finite normal subgroups, then its reduced $C^*$-algebra is simple \cite{DGO} and every commensurating endomorphism is an inner automorphism \cite{MOS-pwise-inner}. 

Since our theorem, in particular, applies to all infinitely presented classical $C'(\frac{1}{6})$-groups, we deduce that the infinite groups without non-trivial finite quotients constructed by Pride \cite{Pri} are not simple because they are SQ-universal and thus have uncountably many proper quotients, and that the groups with two non-homeomorphic asymptotic cones due to Thomas-Velickovic \cite{TV} have cut-points in all of their asymptotic cones. 

Beyond these straightforward corollaries, our method of proof also provides actions on very concrete Gromov hyperbolic spaces for the groups under consideration and, hence, a strong tool for studying them further.

\subsection*{Divergence} The class of acylindrically hyperbolic groups is extensive: It contains all non-elementary hyperbolic and relatively hyperbolic groups, mapping class groups, $\mathrm{Out}(F_n)$, the Cremona group in dimension 2, many $\mathrm{CAT}(0)$ groups and many groups acting on trees, see \cite{DGO,MO} and references therein. 
It is unknown whether the class of acylindrically hyperbolic groups is closed under quasi-isometries of groups \cite[Problems 9.1, 9.2]{DGO}. Analyzing how given quasi-isometry invariants behave in this class of groups is a way to shed light on this question. Therefore, in the second part of the paper, we consider a quasi-isometry invariant called \emph{divergence}. 

The divergence function of a 1-ended group measures the lengths of paths between two points that avoid given balls in the Cayley graph. It was first studied in \cite{Gr-asinv} and \cite{Ger-div}, and in recent years, for example, in \cite{Be-asgeommcg,OOS-lacun,DR-divteich,DMS-div,BC-shortconj,BC-raagcones, Si-metrrh}. 
Acylindrically hyperbolic groups have superlinear divergence \cite{Si-hypembmorse}, as they contain Morse elements. For non-elementary hyperbolic groups the divergence functions are exponential, while for mapping class groups of closed oriented surfaces of genus at least 2 they are quadratic \cite{Be-asgeommcg, DR-divteich}. For every degree integer $d\geqslant 1$, there exists a CAT(0)-group with divergence function equivalent to $x^d$ \cite{BC-shortconj,Ma}. In general, however, the problem of determining which functions can be obtained as divergence functions of groups is wide open.

We show that such tame behavior as in the mentioned examples cannot be expected in general, even in the class of acylindrically hyperbolic groups: For any given countable set of subexponential functions, we provide an explicit infinitely presented classical $C'(\frac{1}{6})$-group with a divergence function whose limit superior exceeds each of the chosen subexponential functions, while its limit inferior is bounded by a quadratic polynomial.
Previously, no example of a divergence function of a finitely generated group having a behavior of this type was known.

\par\medskip

We conclude by providing a tool that enables constructions of groups that are not non-trivially relatively hyperbolic. We use this tool and a small cancellation construction over free products to show that every finitely generated infinite group is a non-degenerate hyperbolically embedded subgroup of a finitely generated non-relatively hyperbolic group.

\subsection{Statement of results}

We state our main result on the acylindrical hyperbolicity of graphical $Gr(7)$-groups. For precise definitions of the graphical small cancellation conditions we use, see Section~\ref{section:preliminaries}. 

Given a labelled graph $\Gamma$, let $G(\Gamma)$ be the group whose generating set is the (possibly infinite) set of labels and whose relators are the words read on closed paths in $\Gamma$. A \emph{piece} is a labelled path that occurs in two distinct places in $\Gamma$, where distinct means distinct up to automorphisms of $\Gamma$. The graphical versions of the $C(7)$ small cancellation condition require that no non-trivial closed path is made up of fewer than 7 pieces. We can interpret any set of relators of a group presentation as a labelled graph that consists of disjoint cycle graphs labelled by the relators. Thus, graphical small cancellation conditions generalize classical small cancellation conditions.

We consider two variants of graphical generalizations of the classical $C(7)$-condition: 
The \emph{graphical $Gr(7)$-condition} allows non-trivial label-pre\-ser\-ving automorphisms of $\Gamma$ and fully generalizes the classical $C(7)$-condition, i.e.\ all classical $C(7)$-presentations are graphical $Gr(7)$-presentations, see \cite[Section~1.1]{Gru}. In particular, it allows constructions of groups with torsion. Since any group has a $Gr(7)$-presentation given by its labelled Cayley graph, see \cite[Example~2.2]{Gru}, we always have to make additional assumptions on the graph to obtain meaningful statements. The \emph{graphical $C(7)$-condition}, on the other hand, does not allow non-trivial label-preserving automorphisms of $\Gamma$ and always yields torsion-free (and, in fact, 2-dimensional) groups, see \cite[Theorem~2.18]{Gru}. 
The following are our main results:

\begin{thm}\label{thm:gr7} Let $\Gamma$ be a $Gr(7)$-labelled graph whose components are finite. Then $G(\Gamma)$ is either virtually cyclic or acylindrically hyperbolic. 
\end{thm}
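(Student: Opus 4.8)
The plan is to exhibit, for a $Gr(7)$-labelled graph $\Gamma$ with finite components, an acylindrical action of $G(\Gamma)$ on a Gromov-hyperbolic space with unbounded orbits whenever $G(\Gamma)$ is not virtually cyclic. The natural candidate for the space is the Cayley graph of $G(\Gamma)$ with respect to the generating set given by the labels, or rather the coned-off Cayley graph in which each embedded copy of a component of $\Gamma$ is collapsed (or has its vertex set coned off). The key geometric input, which one should extract from the graphical small cancellation machinery recalled in Section~\ref{section:preliminaries}, is that this coned-off space is hyperbolic: indeed, the $Gr(7)$-condition (the graphical analogue of $C(7)$, hence of $C'(\tfrac16)$ in spirit) gives a linear isoperimetric inequality for van Kampen diagrams over $\Gamma$ relative to the relators coming from the components, and finiteness of the components ensures the relative presentation is ``bounded'' enough for the coned-off graph to be hyperbolic. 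So the first step is to set up this relatively-hyperbolic-flavoured structure: components of $\Gamma$ play the role of (images of) peripheral subgroups, with the caveat that these images need not be proper subgroups, so one cannot invoke relative hyperbolicity as a black box but must work directly with the hyperbolicity of the coned-off graph.

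Second, I would produce a loxodromic element for this action that satisfies the WPD (weak proper discontinuity) condition — one of the two equivalent definitions of a WPD element recalled above. The candidate is a word $w$ that is ``$Gr(7)$-generic'': a reduced word that, together with its powers, reads only along paths in the Cayley graph meeting each embedded component of $\Gamma$ in uniformly bounded pieces. Such a word exists precisely when $G(\Gamma)$ is not virtually cyclic — this is where the dichotomy enters. Concretely, if no such element exists one should be able to argue that $\Gamma$ has very restricted structure (essentially one relevant component controlling everything) forcing $G(\Gamma)$ virtually cyclic; otherwise, Greendlinger-type lemmas for $Gr(7)$-diagrams show that geodesics between points on the axis of $w$ fellow-travel the axis outside a bounded neighbourhood of finitely many components, which yields both loxodromicity and the WPD property. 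One then invokes the standard fact (Osin's characterisation, part of the circle of results cited in the introduction) that a group acting on a hyperbolic space with a WPD loxodromic element is either virtually cyclic or acylindrically hyperbolic.

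Third, I must rule out the degenerate case: a priori the action just described could have all orbits bounded, or the only loxodromics could be those making $G(\Gamma)$ virtually cyclic. Here I would again use the structure theory: if $G(\Gamma)$ is infinite and not virtually cyclic, then there are ``enough'' independent generic words (two loxodromics with disjoint fixed-point sets on the Gromov boundary) to guarantee the action is non-elementary, using that a $Gr(7)$-presentation over infinitely many — or even finitely many but suitably generic — generators cannot collapse the coned-off graph. The bookkeeping distinguishing virtually cyclic $G(\Gamma)$ (small or specially-structured $\Gamma$) from the general case is the main obstacle: one needs a clean combinatorial criterion on $\Gamma$ that is equivalent to $G(\Gamma)$ being virtually cyclic, and then to show that on the non-virtually-cyclic side the generic loxodromic element genuinely exists and its translation length is bounded below, which requires the uniform control on pieces provided by the $Gr(7)$-condition together with finiteness of the components. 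I expect the hyperbolicity of the coned-off graph and the verification of WPD to be the technically heaviest parts, with the virtually-cyclic dichotomy being the conceptually delicate one.
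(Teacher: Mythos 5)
Your overall architecture --- cone off the embedded components of $\Gamma$ to obtain a hyperbolic space, find a WPD loxodromic, and invoke Osin's characterisation --- is the same as the paper's, but there are two genuine problems. First, your opening framing is wrong: you cannot hope to exhibit an \emph{acylindrical} action on the coned-off Cayley graph, since the paper shows (Example~\ref{example:not_acylindrical}) that this action fails to be acylindrical already for explicit classical $C'(\frac{1}{6})$-presentations. The correct route, to which you do eventually switch, is the weaker WPD-element characterisation of Definition~\ref{defi:wpd}; but then your third step is solving a non-problem. Once a WPD element exists for some action on a hyperbolic space, the dichotomy ``virtually cyclic or acylindrically hyperbolic'' is immediate from that definition --- you need neither a combinatorial criterion on $\Gamma$ equivalent to $G(\Gamma)$ being virtually cyclic, nor two independent loxodromics, nor any non-elementarity argument. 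Relatedly, your claim that the generic word ``exists precisely when $G(\Gamma)$ is not virtually cyclic'' is both unsubstantiated and unnecessary: the paper constructs the WPD element unconditionally after harmless normalisations of $\Gamma$ (discarding letters occurring on no or only one edge, tree components, etc.), and finiteness of components enters only there, not in the hyperbolicity of the coned-off space (Theorem~\ref{thm:hyperbolic_space} requires no finiteness).

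Second, and more seriously, the construction of the WPD element is the heart of the proof and your proposal leaves it as a black box. ``A reduced word meeting each embedded component in uniformly bounded pieces'' is not a construction, and it is not clear that such a word exists or that its powers label the right kind of paths. The paper's solution is to take two distinct finite, non-isomorphic components $\Gamma_1,\Gamma_2$, use Lemmas~\ref{lem:c7_wpd1} and~\ref{lem:gr7_wpd} to locate vertices $x_i,y_i\in\Gamma_i$ such that the relevant paths $x_i\to y_i$, even after being extended by pieces, are not concatenations of few pieces, and set $g=\ell(x_1\to y_1)\ell(x_2\to y_2)$. The alternation between two distinct components is what prevents powers of $g$ from being absorbed into a single relator, and the ``not a concatenation of at most two or three pieces'' conditions are exactly what make the $(3,7)$-bigon and $(3,7)$-quadrangle analysis via Strebel's classification go through in Lemma~\ref{lem:c7_geodesic} and Proposition~\ref{prop:wpd7}; the finiteness of the two chosen components is then what bounds the number of possible $h$ in the WPD condition. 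Without some version of this two-component alternation and the accompanying piece bookkeeping, your ``Greendlinger-type fellow-travelling'' claim has no starting point.
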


In most cases, $G(\Gamma)$ is not virtually cyclic, see Remark~\ref{remark:non-elementary}.

\begin{thm}\label{thm:c7} Let $\Gamma$ be a $C(7)$-labelled graph. Then $G(\Gamma)$ is either trivial, infinite cyclic, or acylindrically hyperbolic.
\end{thm}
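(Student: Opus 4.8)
The plan is to deduce Theorem~\ref{thm:c7} from Theorem~\ref{thm:gr7}. A $C(7)$-labelled graph is in particular a $Gr(7)$-labelled graph, and by \cite[Theorem~2.18]{Gru} the group $G(\Gamma)$ is torsion-free. Hence, if all components of $\Gamma$ are finite, then Theorem~\ref{thm:gr7} applies directly: $G(\Gamma)$ is acylindrically hyperbolic or virtually cyclic, and a torsion-free virtually cyclic group is trivial or infinite cyclic. So the entire task is to reduce the case of possibly infinite components to this one.

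For the reduction I would replace $\Gamma$ by a $Gr(7)$-labelled graph $\Delta$ with \emph{finite} components such that $G(\Gamma)\cong G(\Delta)\freeproduct F$ for some free group $F$. Since forming a free product with a free group sends a group that is trivial, infinite cyclic, or acylindrically hyperbolic to a group that is again trivial, infinite cyclic, or acylindrically hyperbolic, and since $G(\Delta)$, being a free factor of the torsion-free group $G(\Gamma)$, is itself torsion-free, Theorem~\ref{thm:gr7} applied to $\Delta$ would then finish the proof. The idea behind such a $\Delta$: the group $G(\Gamma)$ depends only on the words read along closed paths of $\Gamma$, and these are normally generated by the words read along the \emph{simple} cycles of $\Gamma$ --- indeed each free generator $c_e$ of $\pi_1(\Gamma)$ coming from an edge $e$ outside a spanning tree is conjugate to a simple cycle --- while the labels occurring only on edges contained in no cycle account for $F$. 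A first guess is to take for $\Delta$ the disjoint union, over all simple cycles of $\Gamma$, of the corresponding finite cycle graphs; then $G(\Gamma)\cong G(\Delta)\freeproduct F$ and $\Delta$ has finite components by construction.

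The main obstacle is to produce such a $\Delta$ that is still $Gr(7)$, and here the naive choice of \emph{all} simple cycles is too crude: inside an infinite component the simple cycles have unbounded length and two of them may overlap along arbitrarily long common arcs --- for instance when $\Gamma$ is assembled by gluing finite cycles along paths --- so the associated $\Delta$ badly violates the small cancellation condition. One must instead select a more economical family of closed paths that still normally generates all relators (morally, a family of shortest cycles spanning the cycle space) and then bound the pieces of the resulting $\Delta$ in terms of those of $\Gamma$. This is exactly where the hypothesis that $\Gamma$ is $C(7)$, and not merely $Gr(7)$, enters: since $\Gamma$ has no non-trivial label-preserving automorphism, an overlap between two distinct chosen cycles is a genuine piece of $\Gamma$, so the $C(7)$-condition for $\Gamma$ can be leveraged into the $Gr(7)$-condition for $\Delta$. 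Carrying out this bookkeeping --- in particular controlling self-overlaps of a single chosen cycle and the overlaps introduced by the gluing pattern --- is the technical core of the argument.
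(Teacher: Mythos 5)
Your outer logic is sound (a $C(7)$-labelled graph is $Gr(7)$-labelled; $G(\Gamma)$ is torsion-free; a torsion-free virtually cyclic group is trivial or infinite cyclic; the class of groups that are trivial, infinite cyclic or acylindrically hyperbolic is closed under free products with free groups), but the reduction on which everything rests has a genuine gap, and it sits exactly at the point you identify as the technical core. The pivotal claim --- that since $\Gamma$ has no non-trivial label-preserving automorphism, an overlap between two distinct chosen cycles is a piece of $\Gamma$, so that the $C(7)$-condition for $\Gamma$ can be leveraged into the $Gr(7)$-condition for $\Delta$ --- is false precisely in the hard case where the two cycles share edges of $\Gamma$. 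A path lying on edges common to two chosen cycles is a \emph{single} path in $\Gamma$; triviality of the automorphism group means it has only one lift to $\Gamma$ up to essential equality, so it is \emph{not} a piece of $\Gamma$, whereas its two copies in the disjoint union $\Delta$ lie in different components and hence \emph{are} essentially distinct lifts, making it a piece of $\Delta$. The $C(7)$-condition on $\Gamma$ therefore gives no control over these new pieces. Concretely, if a component of $\Gamma$ contains two vertices joined by three long arcs labelled $u,v,w$, then any family of closed paths normally generating the relators contains two cycles sharing a full arc; that arc, of length half the relator length and not a piece of $\Gamma$, becomes a piece of $\Delta$, and whether the cycle $uv^{-1}$ of $\Delta$ is a concatenation of fewer than $7$ pieces then depends on how efficiently $u$ alone can be covered by pieces --- something the $C(7)$-condition on $\Gamma$ bounds only for the whole closed path of length $2|u|$, not for the half of it left over after the shared arc is swallowed by one piece. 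One can arrange the labels so that $\Gamma$ is $C(7)$ while every such $\Delta$ contains a simple closed path that is a concatenation of five or six pieces, so no choice of generating cycles repairs this. In short, the reduction to Theorem~\ref{thm:gr7} cannot be completed along these lines.

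For comparison, the paper never replaces $\Gamma$ by a finite-component graph. It keeps $\Gamma$, fixes a single embedded cycle graph $c$ (which exists unless $\Gamma$ is a forest, in which case $G(\Gamma)$ is free), and reruns the $Gr(7)$ argument with two changes: the vertices $x_1,y_1,x_2,y_2$ defining the WPD element are taken to be two essentially distinct base points on $c$ itself (Lemma~\ref{lem:c7_wpd}), rather than on two finite components; and in the finiteness count at the end of the WPD verification (Claim~2 in the proof of Proposition~\ref{prop:wpd7}) the hypothesis that the components containing $x_1$ and $x_2$ are finite is weakened to the finiteness of their label-preserving automorphism groups, which is automatic under the $C(7)$-condition since these groups are trivial. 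That is the route to take: the finiteness of components in Theorem~\ref{thm:gr7} is only ever used through the finiteness of these automorphism groups, not through any structural replacement of $\Gamma$.
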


Since every component of $\Gamma$ embeds into the Cayley graph of $G(\Gamma)$ by \cite[Lemma~4.1]{Gru}, this group can only be trivial if every component of $\Gamma$ has at most one vertex. It can only be infinite cyclic if every simple closed path on $\Gamma$ has length at most 2 by \cite[Remark 3.3]{Gru}.

To prove acylindrical hyperbolicity, we construct a hyperbolic space $Y$ on which $G(\Gamma)$ acts. It is  obtained from the Cayley graph of $G(\Gamma)$ by coning-off the embedded copies of components of $\Gamma$. In the classical $C(7)$-case, these components are cycle graphs labelled by the relators. In fact, our construction of a hyperbolic space works in a more general setting  by coning-off relators of an infinite presentations satisfying a certain (relative) subquadratic isoperimetric inequality. 

Let $M(S)$ denote the free monoid on $S\sqcup S^{-1}$. Given $w\in M(S)$, a \emph{diagram for $w$} is a singular disk diagram with boundary word $w$, see Subsection~\ref{subsection:diagrams}.

\begin{customprop}{\ref{prop:hyperbolic_space}} Let $\langle S\mid R\rangle$ be a presentation of a group $G$,  
where $R\subseteq M(S)$ is closed under cyclic conjugation and inversion. Let $W_0$ be the set of all subwords of elements of $R$. Suppose there exists a subquadratic map $f:\N\to\N$ with the following property for every $w\in M(S)$: If $w$ is trivial in $G$ and if $w$ can be written as product of $N$ elements of $W_0$, then there exists a diagram for $w$ over $\langle S\mid R\rangle$ with at most $f(N)$ faces. Denote by $W$ the image of $W_0$ in $G$. Then $\Cay(G,S\cup W)$ is Gromov hyperbolic. 
\end{customprop}

For the action of $G(\Gamma)$ on $Y$ we construct a particular type of loxodromic element, called WPD element, see Definition~\ref{WPDdefn}. It is defined as a suitable product of labels of paths on $\Gamma$. In the classical $C(7)$-case, these labels of paths are subwords of relators. In fact, our proof of Theorem~\ref{thm:gr7} does not require all components of $\Gamma$ to be finite, only some necessary to find the WPD element. The existence of an action of $G(\Gamma)$ on a hyperbolic space with a WPD element implies that $G(\Gamma)$ is virtually cyclic or acylindrically hyperbolic. In the case that $\Gamma$ satisfies the stronger graphical $Gr'(\frac{1}{6})$-condition, Theorem~\ref{thm:gr16} relaxes the finiteness assumption of Theorem~\ref{thm:gr7}, and Proposition~\ref{prop:geodesics} provides a description of geodesics in the space $Y$ that allows us to construct an example where the action of $G(\Gamma)$ on $Y$ is not acylindrical.

Using a new viewpoint on graphical small cancellation theory over free products presented in \cite{Gru-SQ}, we show that our results immediately generalize to the larger class of groups given by graphical small cancellation presentations over free products, see Theorems~\ref{thm:gr7*},~\ref{thm:c7*}, and~\ref{thm:gr16*}.

Since every classical $C(7)$-presentation corresponds to a $Gr(7)$-labelled graph whose components are cycle graphs labelled by the relators, Theorem~\ref{thm:gr7} implies the following:

\begin{thm}\label{classical}
Let $G=\langle S\mid R\rangle$ be a classical $C(7)$-presentation. Then $G$ is either virtually cyclic or acylindrically hyperbolic.
\end{thm}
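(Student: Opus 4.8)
The plan is to derive Theorem~\ref{classical} as a direct specialization of Theorem~\ref{thm:gr7}. Given a classical $C(7)$-presentation $G=\langle S\mid R\rangle$, I would first form the labelled graph $\Gamma$ consisting of a disjoint union of cycle graphs, one for each relator $r\in R$, of length equal to $|r|$, with edges labelled (and oriented) so that reading around each cycle spells out $r$. This is the standard translation of a group presentation into graphical small cancellation data mentioned in the excerpt just before the statement of Theorem~\ref{thm:gr7}. Since each relator has finite length, every component of $\Gamma$ is a finite cycle graph, so the hypothesis of Theorem~\ref{thm:gr7} on finiteness of components is automatic.

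The one point requiring justification is that $\Gamma$ is a $Gr(7)$-labelled graph. For $\Gamma$ a disjoint union of cycle graphs labelled by relators, a piece is exactly a labelled path occurring in two distinct places in $\Gamma$ (up to label-preserving automorphism), which is precisely a common subword of two relators or of a relator with itself in an essentially different position — in other words, the classical notion of a piece. The classical $C(7)$-condition says no relator is a concatenation of fewer than $7$ pieces, equivalently no non-trivial closed path in $\Gamma$ decomposes into fewer than $7$ pieces, which is the $Gr(7)$-condition. I would cite \cite[Section~1.1]{Gru} for the statement that every classical $C(7)$-presentation yields a $Gr(7)$-labelled graph in this way; this is already asserted in the excerpt's discussion of the $Gr(7)$-condition.

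Finally, by \cite[Theorem~2.18 / Lemma~4.1]{Gru} (or directly by construction), the group $G(\Gamma)$ associated to this graph is isomorphic to $G=\langle S\mid R\rangle$: the generating set is the label set $S$, and the relators are the words read on closed paths in $\Gamma$, which are generated by the relators $R$. Hence Theorem~\ref{thm:gr7} applies to $G(\Gamma)\cong G$ and yields that $G$ is virtually cyclic or acylindrically hyperbolic, which is the claim.

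I expect no serious obstacle here: the entire content is the bookkeeping that the classical $C(7)$-condition translates faithfully into the $Gr(7)$-condition for the canonical disjoint-cycles graph, and that $G(\Gamma)$ recovers the original group. The only mild subtlety is making sure the notion of ``distinct up to automorphisms of $\Gamma$'' in the definition of a piece lines up with the classical convention (where, e.g., a relator and its inverse or cyclic conjugates are identified) — but since the excerpt takes $R$ closed under cyclic conjugation and inversion and the $Gr(7)$-framework of \cite{Gru} is set up precisely to generalize the classical one, this is already handled by the cited results.
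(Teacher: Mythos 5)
Your proposal is correct and takes essentially the same route as the paper: the paper obtains Theorem~\ref{classical} precisely by observing that a classical $C(7)$-presentation corresponds to a $Gr(7)$-labelled graph whose components are finite cycle graphs labelled by the relators, so that Theorem~\ref{thm:gr7} applies directly. Your additional remarks on pieces, finiteness of components, and the identification $G(\Gamma)\cong G$ are exactly the bookkeeping the paper delegates to \cite[Section~1.1]{Gru}.
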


Since every classical $C'(\frac{1}{6})$-presentation is a classical $C(7)$-presentation, this result, in particular, holds for classical $C'(\frac{1}{6})$-groups.

\begin{cor}\label{classical_corollary}
 Let $G=\langle S\mid R\rangle$ be a classical $C(7)$-presentation, where $S$ is finite and $R$ is infinite. Then $G$ is acylindrically hyperbolic, whence:
 \begin{itemize}
  \item $G$ is SQ-universal, i.e. for every countable group $C$ there exists a quotient $Q$ of $G$ such that $C$ embeds into $Q$.
  \item $G$ has free normal subgroups.
  \item $G$ contains Morse elements, and all its asymptotic cones have cut-points.
  \item The bounded cohomology of $G$ is infinite dimensional in degrees 2 and 3.
  \item The reduced $C^*$-algebra of $G$ is simple.
  \item Every commensurating endomorphism of $G$ is an inner automorphism.
 \end{itemize}
\end{cor}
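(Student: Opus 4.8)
The plan is to derive acylindrical hyperbolicity from Theorem~\ref{classical} by ruling out the virtually cyclic alternative, and then to read off the six listed properties from the consequences of acylindrical hyperbolicity collected in the introduction. The only substantial point will be the first step, namely showing that $G$ is not finitely presented; since every virtually cyclic group is finitely presented, Theorem~\ref{classical} will then force $G$ to be acylindrically hyperbolic.

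For that step I would first make the standard reduction to showing that no finite subset of $R$ presents $G$. If $G$ were finitely presented then, $S$ being finite, it would have a finite presentation $\langle S\mid Q\rangle$ on the \emph{same} generating set (by Tietze transformations); each $q\in Q$ then lies in the normal closure of $R$ in $F(S)$, hence in the normal closure of finitely many elements of $R$, so some finite $R_0\subseteq R$ already satisfies $\langle S\mid R_0\rangle\cong G$ via the identity on $S$. Enlarging $R_0$ I may assume it is closed under inversion and cyclic permutation (and I may treat $R$ itself as symmetrized, which is harmless), and since the presentation is infinite I may pick $r\in R\setminus R_0$. Now $R_0$ is again a classical $C(7)$-presentation, because every subword occurring in two distinct elements of $R_0$ occurs in two distinct elements of $R$, so every piece of $R_0$ is a piece of $R$. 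The word $r$ is trivial in $G$, hence, if $\langle S\mid R_0\rangle\cong G$, also in $\langle S\mid R_0\rangle$; applying Greendlinger's Lemma for $C(6)$-presentations to the non-trivial cyclically reduced word $r$ produces a relator $r_0\in R_0$ with a factorisation $r_0=bc$, where $b$ is a subword of a cyclic conjugate of $r$ and $c$ is a product of at most three pieces of $R_0$. Since $r_0\in R_0$ while all cyclic conjugates of $r$ lie outside $R_0$, the word $b$ occurs in two distinct elements of $R$ and is therefore a piece, so $r_0$ is a product of at most four pieces of $R$ — contradicting the $C(7)$-condition. Hence $r\neq 1$ in $\langle S\mid R_0\rangle$, a contradiction, so $G$ is not finitely presented and Theorem~\ref{classical} gives that $G$ is acylindrically hyperbolic.

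Finally I would read off the corollary. SQ-universality, the existence of free normal subgroups, the existence of Morse elements (and the resulting cut-points in all asymptotic cones), and the infinite-dimensionality of bounded cohomology in degrees $2$ and $3$ are exactly the properties of acylindrically hyperbolic groups recalled in the introduction. For the remaining two items — simplicity of the reduced $C^*$-algebra and every commensurating endomorphism being inner — one additionally needs that $G$ has no non-trivial finite normal subgroup; this is a standard feature of classical $C(7)$-groups, since the centraliser of a non-trivial torsion element is finite (such an element is conjugate into the finite cyclic group generated by a root of a relator), so a non-trivial finite normal subgroup would have finite-index centralisers and hence force the infinite group $G$ to be finite. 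I expect the non-finite-presentability step to be the only real obstacle, and even there the delicate point is merely to quote the $C(6)$ form of Greendlinger's Lemma correctly and to carry out the piece-counting; the rest is formal.
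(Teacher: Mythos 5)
Your proposal is correct and takes the route the paper intends: the corollary is stated without proof precisely because it is meant to follow from Theorem~\ref{classical} once one knows that a classical $C(7)$-presentation with $S$ finite and $R$ infinite cannot define a finitely presented (in particular, virtually cyclic) group, and your compactness reduction to a finite symmetrized $R_0\subseteq R$ followed by the $C(6)$ Greendlinger/curvature count (some boundary face of a reduced diagram for $r\in R\setminus R_0$ over $R_0$ has a single exterior arc and at most three interior arcs, so its relator is a product of at most four pieces of $R$, contradicting $C(7)$) is the standard way to see this. The first four bullets are exactly the consequences of acylindrical hyperbolicity quoted in the paper's introduction. The only soft spot is the last two bullets: as the introduction makes explicit, $C^*$-simplicity and the commensurating-endomorphism statement require that $G$ have no non-trivial finite normal subgroup, and your parenthetical justification --- that a non-trivial torsion element is conjugate into the finite cyclic group generated by a root of a relator --- constrains the element but says nothing about its centralizer. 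The finiteness (indeed cyclicity) of centralizers of non-trivial torsion elements in classical small cancellation groups is a genuine theorem (Truffault--Comerford type results on centralizers in Greendlinger-type groups) and should be cited rather than treated as immediate; with that reference in place your argument is complete, and on this point you are in fact more careful than the paper, which does not address the finite normal subgroup hypothesis at all.
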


The first statement of Corollary~\ref{classical_corollary} was proven directly using different arguments by the fist author for the larger class of infinitely presented classical $C(6)$-groups \cite{Gru-SQ}.

We next state our result on divergence functions of groups. The group presentations we obtain are explicit if the subexponential functions are given explicitly.

\begin{customthm}{\ref{thm:divalt}}
 Let $r_N:=(a^{N}b^{N}a^{-N}b^{-N})^4$, and for $I\subseteq \N$, let $G(I)$ be defined by the presentation $\langle a,b\mid r_i:i\in I\rangle$. Then, for every infinite set $I$, we have:
\begin{equation*}
\liminf_{n\to\infty} \frac{\Div^{G(I)}(n)}{n^2}<\infty.
\end{equation*}

 Let $\{f_k\mid k\in \N\}$ be a countable set of subexponential functions. Then there exists an infinite set $J\subseteq \N$ such that for every function $g$ satisfying $g\preceq f_k$ for some $k$ we have for every subset $I\subseteq J$:
 
 \begin{equation*}
\limsup_{n\to\infty}\frac{\Div^{G(I)}(n)}{g(n)}=\infty.
\end{equation*}

\end{customthm}

Being acylindrically hyperbolic is equivalent to containing a proper infinite \emph{hyperbolically embedded} subgroup in the sense of \cite{DGO}, and the motivating examples of hyperbolically embedded subgroups are peripheral subgroups of relatively hyperbolic groups. In the last part of the paper we explore the relation between these notions using our small cancellation constructions.

In Proposition~\ref{notrh}, we provide a condition that enables constructions of groups that are not non-trivially relatively hyperbolic. In particular, the groups of Theorem~\ref{thm:divalt} are not non-trivially relatively hyperbolic, which can also be deduced from the fact that the divergence function of any non-trivially relatively hyperbolic group is at least exponential \cite{Si-metrrh}. We use Proposition~\ref{notrh} and explicit small cancellation presentations over free products to prove the following theorem, which provides new examples of non-degenerate hyperbolically embedded subgroups of non-relatively hyperbolic groups.

\begin{customthm}{\ref{thm:hyperbolically_embedded}} Let $H$ be a finitely generated infinite group. Then there exists a finitely generated group $G$ such that $H$ is a non-degenerate hyperbolically embedded subgroup of $G$ and such that $G$ is not hyperbolic relative to any collection of proper subgroups. 
\end{customthm}

\section{Preliminaries}\label{section:preliminaries}

In this section, we give definitions and useful preliminary results.

\subsection{Acylindrical hyperbolicity via WPD-condition}

To prove acylindrical hyperbolicity of the groups we consider, we will use the following equivalent definition given in \cite{Os-acyl}. Here, WPD is an abbreviation for ``weak proper discontinuity'' as defined in \cite{BeFu-wpd}.

\begin{defi}\label{defi:wpd}
\label{WPDdefn}
 Let $G$ be a group acting by isometries on a Gromov hyperbolic space $Y$. We say $g\in G$ is a \emph{WPD element} if both of the following hold:
 \begin{itemize}
  \item $g$ acts \emph{hyperbolically}, i.e.\ for every $x\in Y$, the map $\Z\to Y, z\mapsto g^zx$ is a quasi-isometric embedding, and
  \item $g$ satisfies the \emph{WPD condition}, i.e.\ for every $x\in Y$ and every $K\geqslant 0$ there exists $N_0\geqslant 0$ such that for all $N\geqslant N_0$ the following set is finite:
   $$\{h\in G\mid d_Y(x,hx)\leqslant K\text{ and }d_Y(g^Nx,hg^Nx)\leqslant K\}.$$
 \end{itemize}
 We say $G$ is \emph{acylindrically hyperbolic} if $G$ is not virtually cyclic and if there exists an action of $G$ by isometries on a Gromov hyperbolic space for which there exists a WPD element.
\end{defi}

\subsection{Graphical small cancellation conditions}\label{subsection:definitions_graphical} We give definitions of graphical small cancellation conditions following \cite{Gru,Gru-SQ}.

Let $\Gamma$ be a graph. A \emph{labelling} of $\Gamma$ by a set $S$ is a choice of orientation for each edge and a map assigning to each edge an element of $S$. Given an edge-path $p$ in $\Gamma$, the \emph{label} of $p$, denoted $\ell(p)$, is the product of the labels of the edges traversed by $p$ in $M(S)$, the free monoid on $S\sqcup S^{-1}$. Here a letter is given exponent $+1$ if the corresponding edge is traversed in its direction and $-1$ if it is traversed in the opposite direction. A labelling is \emph{reduced} if the labels of reduced paths are freely reduced words. Here, a reduced path is a path without backtracking. The \emph{group defined by} $\Gamma$ is given by the following presentation:
$$G(\Gamma):=\langle S\mid\text{labels of simple closed paths in }\Gamma\rangle.$$

If $p$ is an oriented edge or a path, then $\iota p$ denotes its initial vertex and $\tau p$ its terminal vertex. Given graphs $\Theta$ and $\Gamma$ that are labelled by the same set $S$ and a path $p$ in $\Theta$, a \emph{lift} of $p$ is a path in $\Gamma$ that has the same label as $p$. Given a labelled graph $\Gamma$ and paths (or subgraphs) $p_1$ and $p_2$ in $\Gamma$, we say $p_1$ and $p_2$ are \emph{essentially distinct} if, for every label-preserving automorphism $\phi$ of $\Gamma$, we have $p_2\neq\phi(p_1)$. Otherwise, we say they are \emph{essentially equal}.

\begin{defi}[Piece]\label{defi:piece} Let $\Theta$ and $\Gamma$ be graphs labelled over the same set $S$. A \emph{piece} in $\Theta$ with respect to $\Gamma$ is a path $p$ in $\Theta$ for which there exist essentially distinct lifts $p_1$ and $p_2$ of $p$ in $\Gamma$.
\end{defi}

In most cases we consider, $\Theta$ will either be $\Gamma$ itself or the 1-skeleton of a diagram as defined below. We are ready to give the graphical small cancellation conditions first stated in \cite{Gru}. A path is \emph{non-trivial} if it is not 0-homotopic. 
\begin{defi} Let $\Gamma$ be a labelled graph, $n\in\N$, and $\lambda>0$. We say $\Gamma$ satisfies
\begin{itemize}
 \item the \emph{graphical $Gr(n)$-condition} if the labelling is reduced and no non-trivial closed path is a concatenation of fewer than $n$ pieces,
 \item the \emph{graphical $Gr'(\lambda)$-condition} if the labelling is reduced and every piece $p$ that is a subpath of a simple closed path $\gamma$ satisfies $|p|<\lambda|\gamma|$.  
\end{itemize}
If, moreover, every label-preserving automorphism of $\Gamma$ is the identity on every component with non-trivial fundamental group, then we say $\Gamma$ satisfies the graphical $C(n)$-condition, respectively graphical $C'(\lambda)$-condition.
\end{defi}
Note that the $Gr'(\frac{1}{n})$-condition is stronger than the $Gr(n+1)$-condition. 

\subsection{Diagrams over graphical presentations}\label{subsection:diagrams}

We briefly give definitions of diagrams in the sense of \cite{LS}, which are standard tools in small cancellation theory.

A \emph{diagram} is a finite, contractible 2-complex embedded in $\R^2$ whose 1-skeleton, when considered as graph, is endowed with a labelling by a set $S$. (Such a diagram is usually called a singular disk diagram.) A \emph{disk diagram} is a diagram that is homeomorphic to a 2-disk. We call the images of 1-cells \emph{edges} and the images of 2-cells \emph{faces}.

The \emph{boundary word} of a face $\Pi$ is the word read on the boundary cycle $\partial \Pi$ of $\Pi$ (which depends on choices of base point and orientation). The \emph{boundary word} of a diagram $D$ is the word read on the path $\partial D$ traversing the topological boundary of $D$ inside $\R^2$ (and again depends on choices of base point and orientation). We call a diagram $D$ with boundary word $w$ a \emph{diagram for $w$}. Given a presentation $\langle S\mid R\rangle$ where $R$ is a set of words in $M(S)$, the free monoid on $S\sqcup S^{-1}$, a diagram \emph{over} $\langle S\mid R\rangle$ is a diagram labelled by $S$ where every face has a boundary word in $R$.

An \emph{arc} in a diagram $D$ is an embedded line graph of length at least 1 all interior vertices of which have degree 2 (in $D$). An arc is \emph{exterior} if its edges lie in $\partial D$ and \emph{interior} otherwise. 
A face in a diagram $D$ is called \emph{interior} if it does not have edges in the boundary of $D$, otherwise it is a \emph{boundary face}. A \emph{$(3,k)$-diagram} is a diagram where the boundary of every interior face is made up of at least $k$ maximal arcs (where we count arcs with their multiplicity in the boundary cycle).

Given an $S$-labelled graph, a \emph{diagram over $\Gamma$} is a diagram over $\langle S\mid\text{labels of simple}$ $\text{closed paths in }\Gamma\rangle$. Let $p$ be path in a diagram over $\Gamma$ that lies in the intersection of faces $\Pi$ and $\Pi'$. There are lifts of $p$ in $\Gamma$ induced by lifts of $\partial \Pi$ and $\partial \Pi'$ in $\Gamma$. We say that $p$ \emph{originates} from $\Gamma$ if there exist lifts of $\partial \Pi$ and $\partial \Pi'$ in $\Gamma$ that induce the same lift of $p$. Note that if an interior arc does not originate from $\Gamma$, then it is a piece. A \emph{$\Gamma$-reduced} diagram is a diagram over $\Gamma$ where no interior edge (or equivalently no interior arc) originates from $\Gamma$. The following lemma, a direct consequence of van Kampen's Lemma \cite{LS} and \cite[Lemma 2.13]{Gru}, shows that this is a notion of ``reducedness'' suitable for our purposes . The idea is that whenever an arc $a$ in the intersection of two faces originates from $\Gamma$, then the two incident faces can be merged into one face by removing $a$, and the resulting face bears the label of a closed path in $\Gamma$. This closed path can be decomposed into simple closed paths, which corresponds to foldings in the diagram.

\begin{lem}\label{lem:graphical_basic} Let $\Gamma$ be a $Gr(6)$-labelled graph, and let $w$ be a word in $M(S)$. Then $w$ represents the identity in $G(\Gamma)$ if and only if there exists a $\Gamma$-reduced diagram for $w$.
\end{lem}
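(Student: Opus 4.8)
The plan is to derive the statement from van Kampen's Lemma \cite{LS} together with \cite[Lemma~2.13]{Gru}, which packages the combinatorial core of the argument. The implication ``$\Leftarrow$'' is immediate and uses neither the $Gr(6)$-hypothesis nor $\Gamma$-reducedness: a $\Gamma$-reduced diagram for $w$ is in particular a diagram over the presentation $\langle S\mid\text{labels of simple closed paths in }\Gamma\rangle$ of $G(\Gamma)$ with boundary word $w$, so van Kampen's Lemma gives that $w$ represents the identity in $G(\Gamma)$.

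For ``$\Rightarrow$'', suppose $w$ is trivial in $G(\Gamma)$. Van Kampen's Lemma provides some diagram for $w$ over $\langle S\mid\text{labels of simple closed paths in }\Gamma\rangle$, and I would fix one, $D$, that is minimal with respect to a suitable complexity --- say, the number of faces, or lexicographically the number of faces and then the number of edges --- and show that $D$ must be $\Gamma$-reduced. Assuming for contradiction that it is not, pick an interior edge, hence a maximal interior arc $a$, that originates from $\Gamma$; let $\Pi,\Pi'$ be the faces with $a\subseteq\partial\Pi\cap\partial\Pi'$. After disposing of the degenerate configurations by local surgery --- the case $\Pi=\Pi'$, and the case where $a$ meets $\partial D$ or where an endpoint of $a$ has degree $\geqslant 3$ --- one is left with $\Pi\neq\Pi'$. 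Merging $\Pi$ and $\Pi'$ by deleting the interior of $a$ produces a single $2$-cell $\hat\Pi$; because $a$ originates from $\Gamma$, there are lifts of the boundary cycles $\partial\Pi$ and $\partial\Pi'$ to $\Gamma$ inducing a common lift $\tilde a$ of $a$, and concatenating these lifts along $\tilde a$ exhibits the boundary word of $\hat\Pi$ as the label of a closed path $\gamma$ in $\Gamma$ (possibly non-reduced, possibly $0$-homotopic). I then invoke \cite[Lemma~2.13]{Gru}: folding $\gamma$ to a reduced closed path with the same label and decomposing it into simple closed paths yields a diagram over $\langle S\mid\text{labels of simple closed paths in }\Gamma\rangle$ with boundary word $\ell(\gamma)$ but of strictly smaller complexity than the contribution of $\Pi$ and $\Pi'$ (in particular $\hat\Pi$ is replaced by a diagram without faces when $\gamma$ is $0$-homotopic, since then $\ell(\gamma)$ is trivial in the free group on $S$). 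Gluing this sub-diagram back in place of $\hat\Pi$ produces a diagram for $w$ over the defining presentation of strictly smaller complexity than $D$, contradicting minimality. Hence $D$ is $\Gamma$-reduced.

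The step I expect to be the real obstacle is the use of \cite[Lemma~2.13]{Gru}: one must verify that the merge-and-refold operation genuinely decreases the chosen complexity --- equivalently, that refolding the merged closed path of $\Gamma$ into simple closed paths terminates without reintroducing interior arcs that originate from $\Gamma$ --- and this is exactly where the graphical small cancellation hypothesis enters, as it controls how a closed path of $\Gamma$ can overlap embedded cycles. The remaining work, namely bookkeeping of base points, orientations, and the multiplicity of $a$ in the boundary cycles of $\Pi$ and $\Pi'$ when passing back and forth between faces of $D$ and closed paths of $\Gamma$, is routine but has to be carried out with care.
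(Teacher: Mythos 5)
Your argument is essentially the paper's: the text establishes this lemma precisely by citing van Kampen's Lemma together with \cite[Lemma~2.13]{Gru} and the merge-and-refold procedure you describe (delete an interior arc originating from $\Gamma$, observe that the merged face is bounded by a closed path of $\Gamma$, refold that path into simple closed paths), so your proposal is correct and takes the same route. One correction to your hedged choice of complexity: induct on the number of edges alone, not on the number of faces (even lexicographically first), because decomposing the merged closed path into simple closed paths can strictly increase the face count, whereas the edge count strictly decreases --- deleting the interior of $a$ removes edges and the subsequent foldings only identify edges. Relatedly, termination comes from this edge count rather than from the small cancellation hypothesis; the only part of the $Gr(6)$ condition genuinely used here is that the labelling is reduced.
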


Consider a $\Gamma$-reduced diagram $D$, where $\Gamma$ is $Gr(k)$-labelled for $k\geqslant 6$. Whenever an arc $a$ lies in the intersection of two faces $\Pi$ and $\Pi'$ of $D$, it is a piece. Therefore, $D$ is a $(3,k)$-diagram. If $\Gamma$ moreover satisfies the graphical $Gr'(\lambda)$-condition, then we have $|a|<\lambda|\partial\Pi|$ and $|a|<\lambda|\partial\Pi'|$.


\subsection{Graphical small cancellation over free products}\label{subsection:definitions_products}

Given groups $G_i$ with generating sets $S_i$, let $\Gamma$ be a graph labelled by the set $\sqcup_{i\in I}S_i$. In this situation, we define $G(\Gamma)_*$ to be the quotient of $\freeproduct_{i\in I}G_i$ by the normal subgroup generated by all labels of closed paths in $\Gamma$. 

Graphical small cancellation groups over free products were first studied in \cite{Ste}. We recap here definitions from \cite{Gru-SQ}, which present a convenient way to skip notions such as ``reduced forms'' and ``semi-reduced forms'' used in standard definitions of small cancellation conditions over free products \cite{LS}.

\begin{defi} Let $\Gamma$ be labelled by $\sqcup_{i\in I}S_i$, where $S_i$ are generating sets of groups $G_i$. The \emph{completion} of $\Gamma$, denoted $\overline \Gamma$, is obtained as follows: Onto every edge labelled by $s_i\in S_i$, attach a copy of $\Cay(G_i,S_i)$ along an edge of $\Cay(G_i,S_i)$ labelled by $s_i$. The graph $\overline \Gamma$ is defined as the quotient of the resulting graph by the equivalence relation $e\sim e'$ if edges $e$ and $e'$ have the same label and there exists a path from $\iota e$ to $\iota e'$ whose label is trivial in $\freeproduct_{i\in I}G_i$. 
\end{defi}

We use the same notion of piece as above. We say a path $p$ in $\overline \Gamma$ is \emph{locally geodesic} if every subpath of $p$ that is contained in one of the attached $\Cay(G_i,S_i)$ is geodesic. More generally, a path in a labelled graph is locally geodesic if it lifts to a locally geodesic path in $\overline\Gamma$.

\begin{defi}  Let $n\in\N$ and $\lambda>0$. Let $\Gamma$ be labelled over $\sqcup_{i\in I}S_i$, where $S_i$ are generating sets of groups $G_i$. We say $\Gamma$ satisfies
\begin{itemize}
 \item the \emph{graphical $Gr_*(n)$-condition} if every attached $\Cay(G_i,S_i)$ in $\overline \Gamma$ is an embedded copy of $\Cay(G_i,S_i)$ and in $\overline \Gamma$ no path whose label is non-trivial in $\freeproduct_{i\in I}G_i$ is concatenation of fewer than $n$ pieces,
 \item the \emph{graphical $Gr_*'(\lambda)$-condition} if every attached $\Cay(G_i,S_i)$ in $\overline \Gamma$ is an embedded copy of $\Cay(G_i,S_i)$ and in $\overline \Gamma$ every piece $p$ that is locally geodesic and that is a subpath of a simple closed path $\gamma$ whose label is non-trivial in $\freeproduct_{i\in I}G_i$ satisfies $|p|<\lambda|\gamma|$.  
\end{itemize}
If, additionally, every label-preserving automorphism of $\overline\Gamma$ is the identity on every component $\Gamma_0$ of $\overline\Gamma$ for which there exists a closed path in $\Gamma_0$ whose label is non-trivial in $*_{i\in I}G_i$, then we say that $\Gamma$ satisfies the graphical $C_*(n)$-condition, respectively graphical $C'_*(\lambda)$-condition.
\end{defi}

A diagram over $\overline\Gamma$ is a diagram where every face $\Pi$ either bears the label of a simple closed path in $\overline\Gamma$ that is non-trivial in $*_{i\in I} G_i$, or $\Pi$ bears the label of a simple closed path in some $\Cay(G_i,S_i)$ and has no interior edge. The following analogy of Lemma~\ref{lem:graphical_basic} is stated in \cite[Lemma~3.8]{Gru-SQ}. A detailed proof is given in \cite[Theorem~1.35]{Gru-Thesis}. A $\overline\Gamma$-reduced diagram is a diagram over $\overline\Gamma$ in which no interior edge originates from $\overline\Gamma$ and in which every interior arc is locally geodesic.

\begin{lem}\label{lem:graphical_product_basic}
 Let $\Gamma$ be a $Gr_*(6)$-labelled graph over $S=\sqcup_{i\in I} S_i$, where $S_i$ are generating sets of groups $G_i$, and let $w$ be a word in $M(S)$. Then $w$ represents the identity in $G(\Gamma)_*$ if and only if there exists a $\overline\Gamma$-reduced diagram for $w$.
\end{lem}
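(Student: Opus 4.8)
The plan is to imitate the proof of Lemma~\ref{lem:graphical_basic}, replacing the presentation of $G(\Gamma)$ used there by the presentation of $G(\Gamma)_*$ over $S=\sqcup_{i\in I}S_i$ whose relators are all words of $M(S_i)$ trivial in $G_i$ (for $i\in I$) together with the labels of simple closed paths in $\Gamma$ that are non-trivial in $\freeproduct_{i\in I}G_i$. This is indeed a presentation of $G(\Gamma)_*$: the first family presents $\freeproduct_{i\in I}G_i$, and decomposing an arbitrary closed path of $\Gamma$ into simple closed subpaths shows that the second family normally generates the kernel of $\freeproduct_{i\in I}G_i\to G(\Gamma)_*$. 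I would also record at the outset that labels of closed paths in $\overline\Gamma$ are trivial in $G(\Gamma)_*$: a closed path in $\Gamma$ is a relator of $G(\Gamma)_*$ by definition, a closed path in an attached $\Cay(G_i,S_i)$ has label trivial in $G_i$, and the edge identifications defining $\overline\Gamma$ are along paths with label trivial in $\freeproduct_{i\in I}G_i$; equivalently, there is a label-preserving morphism $\overline\Gamma\to\Cay(G(\Gamma)_*,S)$ fixing a base vertex.

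For the direction ``$\Leftarrow$'', a $\overline\Gamma$-reduced diagram $D$ for $w$ is in particular a diagram over $\overline\Gamma$, so every face of $D$ is labelled either by a simple closed path in some $\Cay(G_i,S_i)$, whose label is trivial in $G_i$ and hence in $G(\Gamma)_*$, or by a simple closed path in $\overline\Gamma$, whose label is trivial in $G(\Gamma)_*$ by the previous paragraph. The easy direction of van Kampen's Lemma --- the boundary word of a diagram is a product of conjugates in $F(S)$ of the boundary words of its faces --- then shows that $w$ is trivial in $G(\Gamma)_*$. Note that $\overline\Gamma$-reducedness is not used here.

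For ``$\Rightarrow$'', suppose $w$ is trivial in $G(\Gamma)_*$. By van Kampen's Lemma applied to the presentation above, there is a singular disk diagram $D_0$ for $w$ each of whose faces is labelled either by a simple closed path in $\Gamma$ that is non-trivial in $\freeproduct_{i\in I}G_i$, or by a word of $M(S_i)$ trivial in some $G_i$; after a routine reorganization of the latter ``$G_i$-faces'' (which play the role of the free relators in classical small cancellation over free products) one obtains a genuine diagram $D_1$ for $w$ over $\overline\Gamma$. Among all diagrams for $w$ over $\overline\Gamma$ I would choose one, $D$, minimizing the total boundary length of all its faces, and claim $D$ is $\overline\Gamma$-reduced. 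If some interior arc of $D$ were not locally geodesic, one of its maximal subpaths inside an attached $\Cay(G_i,S_i)$ could be replaced by a geodesic with the same endpoints, strictly decreasing the complexity. If some interior arc $a$ originated from $\overline\Gamma$, then the lifts of the boundaries of the two faces incident to $a$ that induce a common lift of $a$ glue along $a$ to a lift into $\overline\Gamma$ of the boundary of the single face obtained by deleting $a$; one then decomposes that closed path into simple closed subpaths, each of which (being simple) either lies in one attached $\Cay(G_i,S_i)$ or has label non-trivial in $\freeproduct_{i\in I}G_i$, hence is an admissible face --- and since $a$ has been removed, the total boundary length has strictly dropped. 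Both cases contradict minimality, so $D$ is $\overline\Gamma$-reduced.

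The hypothesis $Gr_*(6)$ enters through its requirement that each attached $\Cay(G_i,S_i)$ embed in $\overline\Gamma$, which is exactly what makes ``diagram over $\overline\Gamma$'' a meaningful notion and is used throughout. I expect the main obstacle to be the passage from $D_0$ to $D_1$ together with the verification that the face-merging and arc-shortening moves always return admissible diagrams over $\overline\Gamma$: one must check that the $G_i$-faces can be arranged to conform to the precise definition of diagram over $\overline\Gamma$, that the decomposition of a merged boundary never reintroduces the pathologies just eliminated, and --- the delicate point absent in \cite[Lemma~2.13]{Gru} --- that subwords collapsing inside a single factor and non-geodesic subpaths inside attached Cayley graphs are handled simultaneously. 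This bookkeeping is precisely what is carried out in \cite[Lemma~3.8]{Gru-SQ} and \cite[Theorem~1.35]{Gru-Thesis}.
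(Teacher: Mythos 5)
The paper does not actually prove this lemma: it is quoted from \cite[Lemma~3.8]{Gru-SQ}, with the detailed proof deferred to \cite[Theorem~1.35]{Gru-Thesis}, so there is no in-paper argument to compare against. Your outline is the standard one and is consistent with what those references carry out --- the relative presentation of $G(\Gamma)_*$, the easy direction of van Kampen's Lemma, and the choice of a diagram over $\overline\Gamma$ minimizing total face-boundary length, with the two reduction moves (merging faces along interior arcs originating from $\overline\Gamma$, and geodesifying interior arcs inside attached Cayley graphs) each strictly decreasing that complexity --- and since you, like the paper, defer the reorganization of the $G_i$-faces and the verification that the surgery moves return admissible diagrams to the same external sources, your proposal is exactly as complete as the paper's own treatment. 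The one step worth spelling out in your sketch is the normal-form argument showing that a simple closed path in $\overline\Gamma$ whose label is trivial in $\freeproduct_{i\in I}G_i$ must lie in a single attached $\Cay(G_i,S_i)$; this is what makes your decomposition of a merged boundary into admissible faces legitimate.
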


We call a diagram $D$ as in Lemma~\ref{lem:graphical_product_basic} $\overline\Gamma$-reduced. We record the following generalization of \cite[Theorem 1]{Ste}, which follows from the arguments of \cite[Theorem 1]{Ste}, \cite[Theorem 2.16]{Gru}, and \cite[Lemma~4.3]{Gru} and refer the reader to \cite[Theorem~2.9]{Gru-Thesis} for a detailed proof. We first recall the definition  of a relatively hyperbolic group following \cite{Os-rh}.

\begin{defi}\label{defi:relative_presentation} Let $G$ be a group and $\{G_i\mid i\in I\}$ a collection of subgroups. Denote by $R_i$ all elements of $M(G_i)$ that represent the identity in $G_i$. A \emph{presentation of $G$ relative to $\{G_i\mid i\in I\}$} is a pair of sets $(X,R)$ such that $R\subseteq M(\sqcup_{i\in I} G_i\sqcup X)$, and $\langle \sqcup_{i\in I}G_i\sqcup X\mid \sqcup_{i\in I}R_i\sqcup R\rangle$ is a presentation of $G$ that is compatible with the inclusion maps $G_i\to G$. The \emph{relative area} of a word $w\in M(\sqcup_{i\in I}G_i\sqcup X)$ that represents the identity in $G$, denoted $\Area(w)$, is the minimal number of faces with labels in $R$ in a diagram over $\langle \sqcup_{i\in I}G_i\sqcup X\mid \sqcup_{i\in I}{R_i}\sqcup R\rangle$ whose boundary word is $w$. The \emph{relative Dehn function} associated to $(X,R)$ is the map $\N\to\N, n\mapsto\sup\{\Area(w)\mid w\in M(\sqcup_{i\in I} G_i\sqcup X), w=1\in G, |w|\leqslant n\}$
\end{defi}

\begin{defi} A group $G$ is \emph{hyperbolic relative to a collection of subgroups $\{G_i\mid i\in I\}$} if it admits a presentation $(X,R)$ relative to $\{G_i\mid i\in I\}$ such that $X$ and $R$ are finite and the associated relative Dehn function is bounded from above by a linear map. A group $G$ is \emph{non-trivially relatively hyperbolic} if it is hyperbolic relative to a collection of proper subgroups.
\end{defi}

\begin{thm}[{\cite[Theorem~2.9]{Gru-Thesis}}]\label{thm:relatively_hyperbolic} Let $\Gamma$ be a $Gr_*(7)$-labelled graph. Let $\overline R$ be the set of all words read on simple closed paths in $\overline\Gamma$, and let $R$ be a set words such that, for each $r\in \overline R$, there exist a cyclic shift $r'$ of $r$ and $r''\in R$ such that $r'$ and $r''$ represent the same element of $*_{i\in I}G_i$. Then $(\emptyset,R)$ is a presentation of $G(\Gamma)_*$ relative to the collection of subgroups $\{G_i\mid i\in I\}$ with a linear relative Dehn function. If $\Gamma$ is finite, then $R$ may be chosen to be finite and, hence, if $\Gamma$ is finite, then $G(\Gamma)_*$ is hyperbolic relative to $\{G_i\mid i\in I\}$.
\end{thm}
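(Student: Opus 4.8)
The plan is to obtain both conclusions --- that $(\emptyset,R)$ is a presentation of $G(\Gamma)_*$ relative to $\{G_i\mid i\in I\}$, and that the associated relative Dehn function is linear (with $R$ finite when $\Gamma$ is) --- from the diagram calculus of Lemma~\ref{lem:graphical_product_basic} together with the combinatorial geometry of $(3,7)$-diagrams, essentially by transporting the argument of \cite[Theorem~1]{Ste} from the metric setting to the $Gr_*(7)$-setting: one replaces the length estimates available under a metric small cancellation hypothesis by the combinatorial Gauss--Bonnet estimates valid for $(3,7)$-complexes as in \cite[Theorem~2.16]{Gru}. The full bookkeeping is carried out in \cite[Theorem~2.9]{Gru-Thesis}.

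\emph{Step 1 (relative presentation).} By definition $G(\Gamma)_*=(\freeproduct_{i\in I}G_i)/N$, where $N$ is the normal closure of the labels of closed paths in $\Gamma$. I would first identify $N$, modulo $\sqcup_{i\in I}R_i$, with the normal closure of $R$: the canonical map $\Gamma\to\overline\Gamma$ shows that labels of closed paths in $\Gamma$ occur among $\overline R$, while the construction of $\overline\Gamma$ --- attach copies of the $\Cay(G_i,S_i)$, then identify edges joined by a path trivial in $\freeproduct_{i\in I}G_i$ --- exhibits every element of $\overline R$ as a product of conjugates of labels of closed paths in $\Gamma$ and of words over $\sqcup_{i\in I}R_i$. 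Since each $r''\in R$ differs from a cyclic shift of the corresponding $r\in\overline R$ only by an element of $\freeproduct_{i\in I}G_i$ represented over $\sqcup_{i\in I}R_i$, replacing $\overline R$ by $R$ does not change the quotient, so $\langle\sqcup_{i\in I}G_i\mid\sqcup_{i\in I}R_i\cup R\rangle$ presents $G(\Gamma)_*$. Since the $Gr_*(6)$-condition guarantees that each $G_i$ embeds into $G(\Gamma)_*$ (a standard feature of small cancellation over free products, cf.\ \cite{Ste}), this presentation is compatible with the inclusions $G_i\hookrightarrow G(\Gamma)_*$, and $(\emptyset,R)$ is a relative presentation.

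\emph{Step 2 (linear relative Dehn function).} Let $w\in M(\sqcup_{i\in I}G_i)$ with $w=1$ in $G(\Gamma)_*$ and $|w|=n$. Spelling each syllable of $w$ over the appropriate $S_i$ gives a word $\hat w$ over $\sqcup_{i\in I}S_i$, equal to $w$ in $\freeproduct_{i\in I}G_i$, with at most $n$ maximal $S_i$-blocks; by Lemma~\ref{lem:graphical_product_basic} there is a $\overline\Gamma$-reduced diagram $D$ for $\hat w$. In $D$, every face labelled by a closed path of some $\Cay(G_i,S_i)$ has no interior edge and so is a boundary face, while every interior face is a $\overline\Gamma$-face whose boundary is a concatenation of at least $7$ maximal arcs (each interior arc being a piece, by $\overline\Gamma$-reducedness and $Gr_*(7)$); thus $D$ is a $(3,7)$-diagram. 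Combinatorial Gauss--Bonnet on $D$ --- with the standard angle assignment each interior $\overline\Gamma$-face has curvature at most a fixed negative constant, while the total curvature is $2\pi$ --- then bounds the number of $\overline\Gamma$-faces of $D$ by a linear function of the number of exterior arcs of $D$. Converting $D$ to a diagram over $\langle\sqcup_{i\in I}G_i\mid\sqcup_{i\in I}R_i\cup R\rangle$ by syllabifying its $1$-skeleton --- each maximal $S_i$-subpath of each arc becomes a single edge labelled by the corresponding element of $G_i$, and the $\Cay(G_i,S_i)$-faces together with the cyclic-shift discrepancies are absorbed into faces labelled over $\sqcup_{i\in I}R_i$ --- yields a diagram for $w$ whose $R$-faces are exactly the $\overline\Gamma$-faces of $D$. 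Hence $\Area(w)$ is at most the number of $\overline\Gamma$-faces of $D$, which --- once one checks that the number of exterior arcs of $D$ entering the Gauss--Bonnet estimate is controlled by the number of maximal $S_i$-blocks of $\hat w$, and so by $n$ --- is at most $Cn$ for a constant $C=C(\Gamma)$. Thus the relative Dehn function is linear. If $\Gamma$ is finite, then $\overline\Gamma$ has only finitely many orbits of simple closed paths whose label is non-trivial in $\freeproduct_{i\in I}G_i$ under its label-preserving automorphism group --- such a path being determined, up to cyclic shift, $\freeproduct_{i\in I}G_i$-equivalence, and automorphism, by a bounded amount of combinatorial data relative to the finite graph $\Gamma$ --- so $R$ may be chosen finite; a finite relative presentation with a linear relative Dehn function witnesses, by definition, that $G(\Gamma)_*$ is hyperbolic relative to $\{G_i\mid i\in I\}$.

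The hard part is the interface between the two diagram languages in Step~2: one must verify that the number of exterior arcs of $D$ that actually contribute positive curvature in Gauss--Bonnet is dominated by the relative length $n$, rather than by the possibly much larger edge-length $|\hat w|$, and that syllabification neither creates nor destroys $R$-faces. The Gauss--Bonnet bound itself is the routine ``$(3,7)$ is negatively curved'' computation, and Step~1 is essentially formal; it is this conversion --- worked out in detail in \cite[Theorem~2.9]{Gru-Thesis} --- that needs the care.
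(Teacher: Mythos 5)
Your proposal is correct and takes essentially the same route as the paper, which gives no self-contained proof of this theorem but states that it follows from the arguments of \cite[Theorem~1]{Ste}, \cite[Theorem~2.16]{Gru} and \cite[Lemma~4.3]{Gru}, deferring the details to \cite[Theorem~2.9]{Gru-Thesis}: your Step~1 is the formal identification of the relative presentation, and your Step~2 is exactly the intended combination of Lemma~\ref{lem:graphical_product_basic} with the $(3,7)$ linear isoperimetric inequality, measured against the syllable length $n$ rather than the spelled-out length $|\hat w|$. You also correctly isolate the one genuinely delicate point (charging the positive curvature to the $n$ relative-boundary segments and absorbing the syllabification into $\sqcup_{i\in I}R_i$-faces), which is precisely the bookkeeping the paper itself outsources to the thesis.
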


\subsection{Facts about small cancellation diagrams}

In our proofs, we will use properties of the following particular type of $(3,7)$-diagrams. If $\Pi$ is a face, then $i(\Pi)$ denotes the number of interior maximal arcs in $\partial\Pi$, and $e(\Pi)$ denotes the number exterior maximal arcs in $\partial \Pi$.

\begin{defi}\label{defi:n-gons}
 A \emph{$(3,7)$-$n$-gon} is a $(3,7)$-diagram with a decomposition of $\partial D$ into $n$ reduced subpaths $\partial D=\gamma_1\gamma_2\dots\gamma_n$ with the following property: Every face $\Pi$ of $D$ with $e(\Pi)=1$ for which the maximal exterior arc in $\partial\Pi$ is contained in one of the $\gamma_i$ satisfies $i(\Pi)\geqslant 4$. 
A face $\Pi$ for which there exists an exterior arc in $\partial\Pi$ that is not contained in any $\gamma_i$ is called \emph{distinguished}.
We use the words \emph{bigon}, \emph{triangle} and \emph{quadrangle} for $2$-gon, $3$-gon and $4$-gon. 
\end{defi}

\begin{thm}[Strebel's bigons, {\cite[Theorem 35]{Str}}]\label{thm:strebel_bigons} Let $D$ be a simple disk diagram that is a $(3,7)$-bigon. Then $D$ is either a single face, or it has shape $\mathrm{I}_1$ as depicted in Figure~\ref{figure:bigons}. Having shape $\mathrm{I}_1$ means:
\begin{itemize}
\item There exist exactly 2 distinguished faces. For each distinguished face $\Pi$, there exist an interior maximal arc $\delta_1$, and an exterior maximal arc $\delta_2$ such that $\partial\Pi=\delta_1\delta_2$.
\item For every non-distinguished face $\Pi$, there exist exterior maximal arcs $\delta_1$ and $\delta_3$ that are subpaths of the two sides of $D$ and that are not both subpaths of the same side, and interior maximal arcs $\delta_2$ and $\delta_4$ such that $\delta=\delta_1\delta_2\delta_3\delta_4$.
\end{itemize}
\end{thm}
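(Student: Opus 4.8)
The plan is to prove the classification by induction on the number of faces of $D$, at each stage peeling off a boundary face of controlled shape, and to underpin this with a combinatorial Gauss--Bonnet (curvature) estimate that exploits the hyperbolic-type inequality $\tfrac13+\tfrac17<\tfrac12$.

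First I would replace $D$ by the CW-complex obtained by collapsing each maximal arc to an edge; since $D$ is a \emph{simple} disk diagram, $\partial D$ is then an embedded cycle, every interior vertex has degree $\geq 3$, and every interior face has degree $\geq 7$. Assigning each corner at an interior vertex $v$ the angle $2\pi/\deg(v)$, each corner at an ordinary boundary vertex $v$ the angle $\pi/k(v)$ (with $k(v)$ the number of corners at $v$), and larger deficits at the two extreme vertices $\tau\gamma_1,\tau\gamma_2$, the Gauss--Bonnet identity
\[
\sum_{\text{faces }\Pi}\kappa(\Pi)+\sum_{\text{vertices }v}\kappa(v)=2\pi
\]
(with $\kappa(\Pi)=(2-n(\Pi))\pi+\sum(\text{angles of }\Pi)$, $\kappa(v)=2\pi-\sum(\text{angles at }v)$ at interior $v$, and $\kappa(v)=\pi-\sum(\text{angles at }v)$ at boundary $v$) gives $\kappa(v)=0$ at interior vertices, $\kappa(v)\leq 0$ at boundary vertices, and $\kappa(\Pi)\leq-\tfrac{\pi}{3}$ at interior faces. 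Combined with the $(3,7)$-bigon hypothesis --- which forces a boundary face $\Pi$ with $e(\Pi)=1$ and exterior arc inside a $\gamma_i$ to have $i(\Pi)\geq 4$, hence degree $\geq 5$ and bounded curvature --- this localizes the positive curvature near the two extreme vertices: it forces $D$ to have no interior face, forces the two extreme vertices to have degree $2$, and hence forces exactly two \emph{distinguished} faces, one straddling each extreme vertex.

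A distinguished face $\Pi$ then, by the same degree bounds, has $\partial\Pi=\delta_1\delta_2$ with $\delta_1$ a single interior arc and $\delta_2$ a single exterior arc straddling an extreme vertex. I would remove $\Pi$ together with the vertices and edges it leaves dangling, obtaining a strictly smaller diagram $D'$, verify that $D'$ inherits the structure of a simple $(3,7)$-bigon (with the side decomposition of $\partial D$ modified along the old position of $\delta_1$), and apply the inductive hypothesis to conclude that $D'$, hence $D$, is a ``ladder'' --- that is, has shape $\mathrm I_1$. The base case is $D$ a single face, which is exactly the alternative in the statement. Along the way one reads off that the non-distinguished faces are the ``rungs'': each has exactly two exterior arcs, one on each side of $D$, and two interior arcs joining it to its two neighbours, which is the description $\delta=\delta_1\delta_2\delta_3\delta_4$.

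The main obstacle is the surgery step and the accompanying bookkeeping. When $\Pi$ is deleted, several maximal arcs of $D'$ can merge into one, a face that was interior in $D$ can become a boundary face in $D'$, and the side decomposition shifts; so one must check carefully that $D'$ is still \emph{reduced} (no interior arc originates from $\Gamma$), still a $(3,7)$-diagram, and still satisfies the bigon condition ``$e(\Pi)=1$ along a $\gamma_i$ $\Rightarrow$ $i(\Pi)\geq 4$'', using the length restrictions coming from the small cancellation condition and a local analysis of how arcs are created or lengthened by the removal. One must also ensure --- again from the curvature count --- that at every stage a distinguished face of the required simple two-arc shape is available to peel, so that the induction never stalls; enumerating the possible shapes of a curvature-heavy boundary face and eliminating all but $\partial\Pi=\delta_1\delta_2$ is the most delicate part of this analysis.
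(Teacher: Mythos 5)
This theorem is not proved in the paper at all: it is imported verbatim from Strebel (\cite[Theorem 35]{Str}), and the only piece of Strebel's machinery the paper reproduces is the curvature identity recorded as Lemma~\ref{lem:curvature_strebel}. Your sketch rests on exactly that identity --- your angle assignment and Gauss--Bonnet computation are equivalent, after suppressing degree-$2$ vertices, to $6=2\sum_v(3-d(v))+\sum_\Pi(6-2e(\Pi)-i(\Pi))$ --- so at the level of the key idea you are on the same route as the cited source: only faces with $e(\Pi)=1$ whose exterior arc straddles an endpoint of the bigon can contribute positive curvature, each contributes at most $+3$, there are at most two of them, and so both must exist and realize equality, which pins them down as $\partial\Pi=\delta_1\delta_2$ and forces every other vertex and face to contribute exactly $0$. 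Where you diverge is the endgame: Strebel extracts the ladder directly from this equality analysis, whereas you peel off a distinguished face and induct. Two cautions. First, your references to $\Gamma$-reducedness and to ``length restrictions coming from the small cancellation condition'' are out of place: the statement is purely combinatorial, about abstract $(3,7)$-bigons, and no labelled graph $\Gamma$ or relator length appears in it --- the only available hypotheses are the arc and degree conditions, and leaning on small cancellation here suggests a conflation of the theorem with its later applications. Second, the surgery step you flag really is where the content lies: the equality case of the curvature count still permits, a priori, interior vertices of degree $3$ and non-distinguished boundary faces with $e(\Pi)=1$, $i(\Pi)=4$, so the ladder shape is not yet established; and in the induction you must check that the interior arc $\delta_1$ of the peeled face sits in the \emph{interior} of the exterior arc of the new distinguished face of $D'$ (if it sat at an end, reattaching would produce a face of $D$ with $e=1$ along a single side and $i=2$, contradicting the bigon condition --- which is in fact how one proves it). Since you name these verifications without performing them, the proposal is a correct outline of essentially the standard argument rather than a complete proof.
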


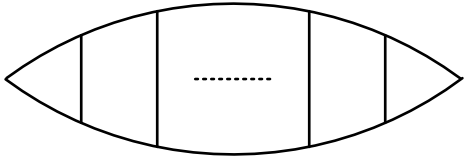
\begin{figure}\label{figure:bigons}
\begin{center}
\begin{tikzpicture}[line cap=round,line join=round,x=1.0cm,y=1.0cm,line width=1pt]
\draw [shift={(2,-3)}] plot[domain=0.93:2.21,variable=\t]({1*5*cos(\t r)+0*5*sin(\t r)},{0*5*cos(\t r)+1*5*sin(\t r)});
\draw [shift={(2,5)}] plot[domain=4.07:5.36,variable=\t]({1*5*cos(\t r)+0*5*sin(\t r)},{0*5*cos(\t r)+1*5*sin(\t r)});
\draw (0,1.58)-- (0,0.42);
\draw (4,1.58)-- (4,0.42);
\draw (1,1.9)-- (1,0.1);
\draw (3,1.9)-- (3,0.1);
\draw [dotted] (1.5,1)-- (2.5,1);
\end{tikzpicture}
\end{center}
\vspace{-12pt}
\caption{A diagram $D$ of \emph{shape $\mathrm{I}_1$}. All faces except the two distinguished ones are optional, i.e. $D$ may have as few as 2 faces. 
}
\end{figure}

We also record two useful formulas:

\begin{lem}[{\cite[Corollary~3.3]{LS}}]\label{lem:curvature_lyndon} Let $D$ be a diagram with at least 2 vertices, such that every interior vertex has degree at least 3 and in which every face has boundary length at least 6. Then
$$\sum_{v\in \partial D} 2+\frac{1}{2} -d(v)\geqslant 3,$$
where $v$ denotes vertices of $D$.
\end{lem}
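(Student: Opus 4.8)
The plan is to derive this from Euler's formula by a double–counting (discrete Gauss--Bonnet) argument. Since $D$ is contractible we have $V - E + F = 1$, where $V$, $E$, $F$ denote the numbers of vertices, edges, and faces of $D$. Write $b$ for the number of edges lying on $\partial D$; when $D$ is homeomorphic to a disk these are exactly the edges of the boundary cycle, the number of vertices on $\partial D$ equals $b$ as well, and so $D$ has $V-b$ interior vertices. (For a general singular disk diagram $\partial D$ is a closed path that may repeat vertices and edges, and one first reduces to the disk case or adjusts the counts below accordingly; this bookkeeping is the only fiddly point, and it is precisely what is carried out in \cite{LS}.)

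First I would count edge–incidences from the side of the faces: each interior edge contributes $2$ and each boundary edge contributes $1$ to $\sum_\Pi|\partial\Pi|$, so $\sum_\Pi|\partial\Pi| = 2E - b$. The hypothesis $|\partial\Pi|\geqslant 6$ for every face then gives $6F \leqslant 2E - b$. Combined with $V-b = 1 + E - F - b$ from Euler's formula, this yields the lower bound for the number of interior vertices
\[
V - b \;\geqslant\; 1 + \tfrac{2}{3}E - \tfrac{5}{6}b .
\]
Next I would count edge–incidences from the side of the vertices: $\sum_v d(v) = 2E$, and since every interior vertex has degree at least $3$, splitting off the interior vertices gives $\sum_{v\in\partial D} d(v) \leqslant 2E - 3(V-b)$.

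Putting the two counts together is now a routine computation:
\[
\sum_{v\in\partial D}\Bigl(2+\tfrac12 - d(v)\Bigr) = \tfrac52 b - \sum_{v\in\partial D} d(v) \;\geqslant\; \tfrac52 b - 2E + 3(V-b) \;\geqslant\; \tfrac52 b - 2E + 3\Bigl(1 + \tfrac23 E - \tfrac56 b\Bigr) = 3,
\]
which is the claim (and one sees the bound is sharp for ``tree-like'' unions of hexagons). I expect the main obstacle to be not this inequality chain but ensuring the two edge-counts and the count of boundary vertices versus boundary edges are set up correctly for singular disk diagrams with cut vertices or spur edges; in the case that $D$ is an honest disk the identities above are exact, and the general case is handled as in Lyndon--Schupp.
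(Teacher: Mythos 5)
The paper offers no proof of this lemma; it is quoted verbatim from \cite[Corollary~3.3]{LS} (Chapter V), and the argument there is exactly the combinatorial Gauss--Bonnet computation you give, so in the disk case your proof is correct and is essentially \emph{the} proof. Your inequality chain is exact: with $V-E+F=1$, $\sum_\Pi|\partial\Pi|=2E-b$ and $\#\{v\in\partial D\}=b$ for a disk, the three estimates combine to give precisely $3$, and the equality cases (a hexagon, or a chain of hexagons glued along single edges) confirm the constant is sharp.

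The one place where your write-up is genuinely incomplete rather than just tersely worded is the reduction of the singular case, and it is worth being explicit that this is not mere bookkeeping: for a diagram with cut vertices or spurs the two identities you rely on both fail. For a tree one has $\#\{v\in\partial D\}=b+1$, not $b$, and an edge with both sides exterior contributes $2$ to $|\partial D|$, so $\sum_\Pi|\partial\Pi|=2E-|\partial D|$, which is only $\leqslant 2E-b$. The second defect is harmless (the inequality still points the right way), but the first breaks the final computation if applied naively (for a single edge your formula with $b=1$ would give $\tfrac12$, not $3$). The correct repair is an induction on cut vertices: if $v_0$ is a cut vertex splitting $D$ into $D_1$ and $D_2$ with $d(v_0)=d_1+d_2$, then
$$\sum_{v\in\partial D}\Bigl(\tfrac52-d(v)\Bigr)=\sum_{v\in\partial D_1}\Bigl(\tfrac52-d(v)\Bigr)+\sum_{v\in\partial D_2}\Bigl(\tfrac52-d(v)\Bigr)-\tfrac52\geqslant 3+3-\tfrac52\geqslant 3,$$
with base cases a single disk component (your computation) and a single edge (which gives exactly $2\cdot\tfrac32=3$); it is precisely the fact that the weight $2+\tfrac12$ satisfies $3+3-(2+\tfrac12)\geqslant 3$ that makes the constant propagate. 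With that paragraph added your proof would be complete and self-contained, which is arguably preferable to the paper's bare citation.
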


\begin{lem}[{\cite[p.241]{Str}}]\label{lem:curvature_strebel} Let $D$ be a diagram without vertices of degree 2 such that every edge is contained in a face. Then
$$6=2\sum_{v}(3-d(v))+\sum_{e(\Pi)=k}(6-2k-i(\Pi)),$$
where $v$ denotes vertices and $\Pi$ denotes faces of $D$.
\end{lem}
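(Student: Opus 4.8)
The plan is to prove this as a discrete Gauss--Bonnet identity, obtained by multiplying Euler's formula by $6$ and regrouping the terms by vertices and by faces. The crucial preliminary observation is that, under the hypothesis that $D$ has no vertices of degree $2$, every maximal arc consists of a single edge: an arc of length at least $2$ would contain an interior vertex of degree $2$. Consequently $i(\Pi)$ and $e(\Pi)$ count, respectively, the interior and the exterior edges occurring in the boundary cycle $\partial\Pi$, each with its multiplicity. Writing $V$, $E$, $F$ for the numbers of vertices, edges and faces of $D$, this is what will let me translate the face-term into an edge count.

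I would assemble the identity from three ingredients. First, since $D$ is a finite contractible $2$-complex, Euler's formula gives $V-E+F=\chi(D)=1$. Second, the handshake lemma $\sum_v d(v)=2E$ yields
\[
2\sum_v\bigl(3-d(v)\bigr)=6V-2\sum_v d(v)=6V-4E.
\]
Third, I would count edge--face incidences. Call an edge \emph{interior} if both of its two sides abut faces of $D$, and \emph{exterior} if exactly one side does; the hypothesis that every edge lies in a face guarantees that no edge has zero face-sides, so these are the only two types. An interior edge contributes $2$ to $\sum_\Pi i(\Pi)$ and nothing to $\sum_\Pi e(\Pi)$, while an exterior edge contributes $1$ to $\sum_\Pi e(\Pi)$ and nothing to $\sum_\Pi i(\Pi)$. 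Hence $\sum_\Pi i(\Pi)=2E_{\mathrm{int}}$ and $\sum_\Pi e(\Pi)=E_{\mathrm{ext}}$, where $E=E_{\mathrm{int}}+E_{\mathrm{ext}}$, so that
\[
\sum_\Pi\bigl(6-2e(\Pi)-i(\Pi)\bigr)=6F-2\sum_\Pi e(\Pi)-\sum_\Pi i(\Pi)=6F-2E_{\mathrm{ext}}-2E_{\mathrm{int}}=6F-2E.
\]

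Adding the vertex-term and the face-term then gives $6V-4E+6F-2E=6(V-E+F)=6$, which is the desired identity. It is worth recording how the two hypotheses enter: ``no vertices of degree $2$'' is exactly what identifies maximal arcs with edges and so makes the face term equal $6F-2E$ and keeps it in step with the Euler count --- were a degree-$2$ vertex present, it would add $2$ to the vertex term $2\sum_v(3-d(v))$ with no matching change in the arc-based face term, so the total would exceed $6$; and ``every edge lies in a face'' rules out filament edges with no adjacent $2$-cell, for which the incidence count $\sum_\Pi i(\Pi)=2E_{\mathrm{int}}$ would fail.

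I expect the main obstacle to be the bookkeeping for genuinely singular disk diagrams, where the boundary path $\partial D$ may traverse an edge twice or revisit a vertex, so that the intuitive ``edge on $\partial D$'' description of exteriority no longer coincides with the clean side-based classification used above. The safe route is to phrase the whole incidence count in terms of the two sides of each edge and the regions (faces, or the unbounded complementary region) they abut, rather than in terms of $\partial D$; spike edges, whose two sides abut the same face, are then consistently counted as interior, and one checks that this matches the conventions in force for the diagrams to which the lemma is applied. Once the incidence identity $2\sum_\Pi e(\Pi)+\sum_\Pi i(\Pi)=2E$ is established at this level of generality, the remainder is the purely formal regrouping above.
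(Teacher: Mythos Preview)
Your argument is correct and is exactly the standard Gauss--Bonnet/Euler-characteristic computation that underlies this formula; the paper does not supply its own proof but simply cites Strebel, and Strebel's derivation proceeds along the same lines you describe. Your care about the singular cases (spike edges, edges traversed twice by $\partial D$) is appropriate and your side-based accounting handles them cleanly.
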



\subsection{Embedding $\Gamma$ into $\Cay(G(\Gamma))$}

Given a component $\Gamma_0$ of an $S$-labelled graph $\Gamma$, after choosing a base vertex in $\Gamma_0$ and its image $\Cay(G(\Gamma),S)$, the labelling induces a unique label-preserving graph homomorphism $\Gamma_0\to\Cay(G(\Gamma),S)$. We show that, assuming $\Gamma$ has a $Gr'(\frac{1}{6})$-labelling, the image of every component of $\Gamma$ is convex in $\Cay(G(\Gamma),S)$. We also prove that it is isometrically embedded. The isometric embedding result was proved in \cite{Oll} for finite graphs assuming a stronger condition than our graphical $C'(\frac{1}{6})$-condition. It was observed in \cite{Gru} that Ollivier's isometric embedding result extends to arbitrary $Gr'(\frac{1}{6})$-labelled graphs.

\begin{lem}\label{lem:convex_embedding}
 Let $\Gamma_0$ be a component of a $Gr'(\frac{1}{6})$-labelled graph $\Gamma$, and let $f$ be a label-preserving graph homomorphism $\Gamma_0\to\Cay(G(\Gamma),S)$. Then $f$ is an isometric embedding, and its image is convex.
\end{lem}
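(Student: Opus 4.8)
The plan is to prove both statements — isometric embedding and convexity — simultaneously by analyzing $\Gamma$-reduced diagrams whose boundary consists of (the image of) a path in $\Gamma_0$ together with a geodesic of $\Cay(G(\Gamma),S)$. Fix a component $\Gamma_0$ and the induced map $f\colon\Gamma_0\to\Cay(G(\Gamma),S)$. Let $p$ be a simple path in $\Gamma_0$ from a vertex $u$ to a vertex $v$, and let $q$ be a geodesic in $\Cay(G(\Gamma),S)$ from $f(u)$ to $f(v)$. Since $\ell(p)$ and $\ell(q)$ represent the same element of $G(\Gamma)$, the word $\ell(p)\ell(q)^{-1}$ is trivial in $G(\Gamma)$, so by Lemma~\ref{lem:graphical_basic} there is a $\Gamma$-reduced diagram $D$ with boundary word $\ell(p)\ell(q)^{-1}$. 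Writing $\partial D = \alpha\beta$ where $\alpha$ is labelled by $\ell(p)$ and $\beta$ by $\ell(q)^{-1}$, the diagram $D$ is a $(3,7)$-bigon in the sense of Definition~\ref{defi:n-gons}: I must check this. A boundary face $\Pi$ with $e(\Pi)=1$ whose exterior arc lies in $\alpha$ has, after removing that arc, its remaining boundary decomposed into interior arcs, each of length less than $\frac16|\partial\Pi|$ by the $Gr'(\frac16)$-condition, forcing $i(\Pi)\geqslant 6 > 4$; similarly for $\Pi$ whose exterior arc lies in $\beta$. So $D$ is indeed a $(3,7)$-bigon, and Strebel's theorem (Theorem~\ref{thm:strebel_bigons}) applies.

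Next I would extract the geometric consequence. By Theorem~\ref{thm:strebel_bigons}, either $D$ is a single face, or $D$ has shape $\mathrm I_1$. In either case every face $\Pi$ of $D$ has an exterior arc on $\alpha$ (the $p$-side) which, by the $Gr'(\frac16)$ bound on pieces that are subpaths of a simple closed path, has length $<\frac16|\partial\Pi|\leqslant \frac16|\alpha\text{-part}| + \ldots$; the standard small-cancellation length count along a shape-$\mathrm I_1$ diagram then gives $|\beta| \geqslant \tfrac12|\alpha|$ (roughly: each face contributes more length to the $p$-side than the sum of its two interior arcs, and at most $\frac16$ of its perimeter to the $q$-side on each of the two ends). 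Since $|\beta| = |q| \leqslant |p| = |\alpha|$ because $q$ is geodesic, and $p$ was a geodesic of $\Gamma_0$ we want to compare against, this already shows $d_{\Cay}(f(u),f(v))$ is comparable to $d_{\Gamma_0}(u,v)$; to get \emph{equality} (isometric embedding) one pushes the count to its sharp form, observing that if $D$ has any face at all then $|\beta|>|\alpha|$ is impossible, so $D$ must be empty, i.e. $\ell(p)=\ell(q)$ freely. Hence $d_{\Cay}(f(u),f(v)) = |q| = |p| = d_{\Gamma_0}(u,v)$ provided $p$ was chosen to be a geodesic of $\Gamma_0$; the reduced labelling guarantees $f$ is injective on vertices. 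For convexity, I run the same argument with $p$ now an \emph{arbitrary} path (or rather: take $q$ a geodesic between two points $f(u),f(v)$ in the image, and show $q$ lies in the image of $\Gamma_0$) — here the key point is that shape $\mathrm I_1$ forces every edge of $\beta$ to either lie on a face with an arc originating from $\Gamma_0$ or, when $D$ is a single face or empty, to be read off a path of $\Gamma_0$; lifting arcs back through $f$ and using that $\Gamma_0$ is a single component lets one reconstruct a path in $\Gamma_0$ over $q$.

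The main obstacle I anticipate is the bookkeeping in the length estimate along a shape-$\mathrm I_1$ diagram: one must carefully account for the two distinguished faces (which have a single interior and single exterior arc each), for the fact that exterior arcs on the $q$-side are pieces only if they don't originate from $\Gamma$ (when they do, $\Gamma$-reducedness still controls them but via a different clause), and for shared vertices between consecutive faces, so that the inequality $|\beta|\geqslant \tfrac12|\alpha|$ (and ultimately the sharp ``$D$ is empty'') comes out with the right constant rather than being off by a boundary term. A secondary subtlety is the convexity direction: translating ``$q$ lies in the $f$-image of $\Gamma_0$'' into an honest statement requires knowing $f$ is a local isometric embedding first (so that lifts of subpaths of $q$ glue up), which is why I would prove the isometric embedding statement before convexity and feed it in. Both subtleties are routine small-cancellation arguments once Strebel's classification is in hand; the real content is Theorem~\ref{thm:strebel_bigons}.
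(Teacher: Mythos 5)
Your overall strategy (a $\Gamma$-reduced diagram $D$ between a geodesic $p_\Gamma$ of $\Gamma_0$ and a Cayley-graph geodesic $q_X$, followed by Strebel's classification) is the paper's strategy, but two of your key steps fail. First, your verification that $D$ is a $(3,7)$-bigon does not work: knowing that the interior arcs of a boundary face $\Pi$ are each shorter than $\frac{1}{6}|\partial\Pi|$ forces nothing about $i(\Pi)$ unless you also bound the \emph{exterior} arc, and your count ``$i(\Pi)\geqslant 6$'' would require that exterior arc to have length zero. The missing content is precisely the two exterior bounds: an arc of $\partial\Pi$ on the $q$-side has length at most $\frac{1}{2}|\partial\Pi|$ because $q_X$ is geodesic; and an arc $a$ of $\partial\Pi$ on the $p$-side is either a piece (hence $<\frac{1}{6}|\partial\Pi|$) or lifts to $\Gamma$ compatibly with $p\mapsto p_\Gamma$ --- and in the latter case $a$ is \emph{not} a piece, so the $Gr'(\frac{1}{6})$-condition says nothing about it; instead one uses that $p_\Gamma$ is a geodesic of $\Gamma_0$, so the lift of $a$ is a geodesic subpath of a simple closed path of length $|\partial\Pi|$ and hence $|a|\leqslant\frac{1}{2}|\partial\Pi|$. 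Only with both bounds does $i(\Pi)\geqslant 4$ follow.

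Second, your conclusion that ``if $D$ has any face then $|\beta|>|\alpha|$ is impossible, so $D$ is empty, i.e.\ $\ell(p)=\ell(q)$ freely'' is false: a Cayley-graph geodesic between two points of $f(\Gamma_0)$ need not coincide with $f(p_\Gamma)$ --- the two halves of an embedded simple closed path of even length already give a one-face diagram with $|\alpha|=|\beta|$. What the correct count yields (interior degree at most $2$ by shape $\mathrm{I}_1$, each interior arc $<\frac{1}{6}|\partial\Pi|$, and $|\partial\Pi\cap q|\leqslant\frac{1}{2}|\partial\Pi|$) is that $|\partial\Pi\cap p|>\frac{1}{6}|\partial\Pi|$ for every face $\Pi$, so this arc is not a piece and its lift via $\partial\Pi$ agrees with the lift via $p\mapsto p_\Gamma$. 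These compatible lifts glue to a label-preserving map of the whole $1$-skeleton of $D$ into $\Gamma_0$, and both conclusions drop out at once: $q_X$ lies in $f(\Gamma_0)$ (convexity), and $q_X$ lifts to a path of $\Gamma_0$ joining the endpoints of the geodesic $p_\Gamma$, whence $|q_X|\geqslant|p_\Gamma|$ (isometric embedding). Your instinct in the final paragraph about ``reconstructing a path in $\Gamma_0$ over $q$'' is the right one; it is the main argument rather than a secondary one, and it does not require first knowing that $f$ is a local isometry.
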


\begin{proof}
Denote $X:=\Cay(G(\Gamma),S)$. Let $p_\Gamma$ be a geodesic path in $\Gamma_0$, and let $q_X$ be a geodesic path in $X$ that has the same endpoints as $f(p_\Gamma)$. 
Let $D$ be a $\Gamma$-reduced diagram for $\ell(p_\Gamma)\ell(q_X)^{-1}$ over $\Gamma$, and denote $\partial D=pq^{-1}$, i.e. $p$ is a lift of $p_\Gamma$ and $q$ is a lift of $q_X$. If $D$ has no faces, then $q_X=f(p_\Gamma)$, and the claim holds. From now on assume that $D$ contains at least one face. Note that, by the $Gr'(\frac{1}{6})$-assumption, for any face $\Pi$ of $D$, any interior arc $a$ in $\partial\Pi$ satisfies $|a|<\frac{|\partial\Pi|}{6}$.
 
Let $\Pi$ be a face of $D$. Since $q_X$ is geodesic, any arc $a$ in $\partial \Pi\cap q$ satisfies $|a|\leqslant\frac{|\partial\Pi|}{2}$. Suppose there exists an arc $a$ in $\partial \Pi\cap p$, and suppose a lift of $a$ via $\partial\Pi$ equals the lift of $a$ via $p\mapsto p_\Gamma$. Then the lift of $a$ in $\Gamma$ is a geodesic subpath of a simple closed path $\gamma$ in $\Gamma$, where $|\gamma|=|\partial\Pi|$; therefore, $|a|\leqslant\frac{|\partial\Pi|}{2}$. 
If the lifts are distinct for every choice of lift of $\partial\Pi$, then $a$ is a piece, and, hence, $|a|<\frac{|\partial\Pi|}{6}$. Therefore, $D$ is a $(3,7)$-bigon, and every disk component is either a single face, or it has shape $\mathrm{I}_1$ as in Theorem~\ref{thm:strebel_bigons}. 
 
Let $\Pi$ be a face of $D$. Then $\Pi$ has interior degree at most two, and any interior arc in $\partial\Pi$ is shorter than $\frac{|\partial \Pi|}{6}$. Since $|\partial\Pi\cap q|\leqslant\frac{|\partial\Pi|}{2}$, we obtain $|\partial\Pi\cap p|>\frac{|\partial\Pi|}{6}$. Therefore, the path $a:=\partial\Pi\cap p$ is not a piece, and a lift of $a$ to $\Gamma$ via $\partial \Pi$ and the lift of $a$ via $p\mapsto p_\Gamma$ are equal. 
Since this holds for every face $\Pi$, there exists a label-preserving graph homomorphism of the 1-skeleton of $D$ to $\Gamma_0$ that induces the lift $p\mapsto p_\Gamma$. This implies that $q_X$ lies in $f(\Gamma_0)$, whence the image of $f(\Gamma_0)$ is convex. Since $q_X$ lifts to a path in $\Gamma_0$ with the same endpoints as $p_\Gamma$, and since $p_\Gamma$ is geodesic, we have $|q_X|\geqslant|p_\Gamma|$. Thus, the map $\Gamma_0\to X$ is an isometric embedding.
\end{proof}


\begin{remark}\label{remark:convex_product} By Lemma~\ref{lem:graphical_product_basic}, the proof and statement of Lemma~\ref{lem:convex_embedding} also apply to the free product case replacing $\Gamma$ with $\overline\Gamma$, i.e.\ if $\Gamma$ is a $Gr'_*(\frac{1}{6})$-labelled graph over $*_{i\in I}G_i$ with generating sets $(S_i)_{i\in I}$, then each component of a $\overline\Gamma$ isometrically embeds into $\Cay(G(\Gamma)_*,\sqcup_{i\in I}S_i)$ and has a convex image.
In particular, if a component of $\Gamma$ embeds isometrically into $\overline\Gamma$, then it embeds isometrically into $\Cay(G(\Gamma)_*,\sqcup_{i\in I}S_i)$. Moreover, if an attached $\Cay(G_{i_0},S_{i_0})$ embeds isometrically into $\overline\Gamma$, then $\Cay(G_{i_0},S_{i_0})$ embeds isometrically into $\Cay(G(\Gamma)_*,\sqcup_{i\in I}S_i)$. Thus, if $I$ and all $S_i$ are finite, then $G_{i_0}$ embeds quasi-isometrically into $G(\Gamma)_*$ (where both groups are considered with their corresponding word-metrics). 

In order for an attached $\Cay(G_{i_0},S_{i_0})$ to be isometrically embedded (and convex) in $\overline\Gamma$, it is sufficient that the label-preserving automorphism group of $\overline\Gamma$ does not act transitively on the union of all vertex-sets of all attached $\Cay(G_{i_0},S_{i_0})$: If it does not act transitively, 
then every geodesic path in $\Cay(G_{i_0},S_{i_0})$ is a piece. The small cancellation condition ensures that any geodesic path in $\Cay(G_{i_0},S_{i_0})$ that is a piece is a geodesic path in $\overline\Gamma$, and any other geodesic path with the same endpoints is contained in the same copy of $\Cay(G_{i_0},S_{i_0})$.
\end{remark}

We also show that, assuming the weaker $Gr(6)$-condition the intersection of any two embedded components of $\Gamma$ is either empty or connected, which again carries over to the situation over free products.

\begin{lem}\label{lem:connected_embedding}
Let $\Gamma_1$ and $\Gamma_2$ be components of a $Gr(6)$-labelled graph $\Gamma$, and for each $i\in\{1,2\}$, let $f_i$ be a label-preserving graph-homomorphism $\Gamma_i\to\Cay(G(\Gamma),S)$. Then $f_1(\Gamma_1)\cap f_2(\Gamma_2)$ is either empty or connected.
\end{lem}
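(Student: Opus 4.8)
The strategy is to argue by contradiction: suppose $f_1(\Gamma_1)\cap f_2(\Gamma_2)$ is non-empty but disconnected, and build from this a $\Gamma$-reduced diagram that violates the $Gr(6)$-condition. Concretely, pick two vertices $x,y$ in $f_1(\Gamma_1)\cap f_2(\Gamma_2)$ lying in different connected components of the intersection. For $i\in\{1,2\}$, choose a path $p_i$ in $\Gamma_i$ from (a preimage of) $x$ to (a preimage of) $y$; then $f_1(p_1)$ and $f_2(p_2)$ are two paths in $\Cay(G(\Gamma),S)$ with the same endpoints, so the word $\ell(p_1)\ell(p_2)^{-1}$ is trivial in $G(\Gamma)$. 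By Lemma~\ref{lem:graphical_basic} there is a $\Gamma$-reduced diagram $D$ for this word, and we may take $D$ minimal (fewest faces). Write $\partial D = q_1 q_2^{-1}$ where $q_1$ lifts $p_1$ and $q_2$ lifts $p_2$.

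The heart of the argument is to control how faces of $D$ meet the boundary. Since $D$ is $\Gamma$-reduced, every interior arc is a piece, so $D$ is a $(3,7)$-diagram, hence in particular a $(3,6)$-diagram. The key claim is that $D$ can be arranged to be a $(3,7)$-bigon with respect to the decomposition $\partial D = q_1 q_2^{-1}$: for this I need that every boundary face $\Pi$ meeting only one side $q_j$ in a single exterior arc has interior degree at least $4$. This is where the hypothesis that $x$ and $y$ lie in \emph{different} components of the intersection gets used — a boundary arc of such a $\Pi$ along $q_1$ lifts, via $\partial\Pi$ on one hand and via $p_1\mapsto$ its image on the other, and if these lifts agreed one could push part of $q_1$ into $f_2(\Gamma_2)$ (or push $q_2$ into $f_1(\Gamma_1)$), contradicting minimality of $D$ or connecting $x$ to $y$ inside the intersection. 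So such an arc is a piece, forcing $i(\Pi)\geqslant 4$ by the $(3,7)$-condition counting argument (a face with one short exterior arc along a single side, all of whose other arcs are pieces, must have at least $6$ pieces on its boundary by $Gr(7)$, so at least $4$ interior ones after accounting for the exterior one and its two flanking interior arcs). Applying Strebel's classification (Theorem~\ref{thm:strebel_bigons}), each disk component of $D$ is either a single face or has shape $\mathrm{I}_1$.

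From the shape-$\mathrm{I}_1$ description one extracts the contradiction. In shape $\mathrm{I}_1$, the non-distinguished faces have boundary $\delta_1\delta_2\delta_3\delta_4$ with $\delta_1\subseteq q_1$ and $\delta_3\subseteq q_2$ (or vice versa) and $\delta_2,\delta_4$ interior arcs that are pieces. As in the proof of Lemma~\ref{lem:convex_embedding}, a counting estimate shows that one of $\delta_1,\delta_3$ must be long enough that it fails to be a piece, i.e.\ the corresponding lift via $\partial\Pi$ agrees with the lift via $p_1\mapsto f_1(p_1)$ (resp.\ $p_2\mapsto f_2(p_2)$). Running this across all faces of the disk component, one gets a label-preserving homomorphism of the $1$-skeleton of that component into $\Gamma_1$ (or $\Gamma_2$), which transports a subpath of $q_2$ into $f_1(\Gamma_1)$ — equivalently, it exhibits a path inside $f_1(\Gamma_1)\cap f_2(\Gamma_2)$ joining endpoints of that subpath. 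Chaining these across the disk components of $D$ along $\partial D$ produces a connected path in $f_1(\Gamma_1)\cap f_2(\Gamma_2)$ from $x$ to $y$, contradicting the choice of $x,y$ in distinct components. The case where $D$ has a single face is handled directly by the same lift-agreement observation.

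\textbf{Main obstacle.} The delicate point is establishing that $D$ is genuinely a $(3,7)$-bigon in the technical sense of Definition~\ref{defi:n-gons} — that is, ruling out boundary faces $\Pi$ with $e(\Pi)=1$, exterior arc inside one $q_j$, and $i(\Pi)\leqslant 3$. This requires simultaneously using minimality of $D$ (to exclude an exterior arc along $q_j$ that originates from $\Gamma_j$, which would let one reduce $D$) and the separation of $x$ and $y$ (to exclude the degenerate situation where such a reduction reconnects the intersection). Getting the bookkeeping of lifts right — tracking which lift of a shared arc comes from $\partial\Pi$ versus from the boundary path versus from $\Gamma_1$ or $\Gamma_2$ — is the part that demands care, but it is entirely parallel to the argument already carried out in Lemma~\ref{lem:convex_embedding}. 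The free-product version follows verbatim by substituting $\overline\Gamma$ for $\Gamma$ and invoking Lemma~\ref{lem:graphical_product_basic}, exactly as in Remark~\ref{remark:convex_product}.
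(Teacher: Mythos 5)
Your overall strategy (compare the two paths through a $\Gamma$-reduced diagram, use minimality to force boundary arcs to be pieces, and then kill the diagram by a combinatorial curvature argument) starts in the right direction, but the specific machinery you invoke is not available under the hypothesis of the lemma, and this is a genuine gap rather than bookkeeping. The lemma only assumes the \emph{graphical $Gr(6)$-condition}. A $\Gamma$-reduced diagram over a $Gr(6)$-labelled graph is a $(3,6)$-diagram, not a $(3,7)$-diagram; your sentence ``so $D$ is a $(3,7)$-diagram'' and your appeal to ``at least $6$ pieces on its boundary by $Gr(7)$'' silently upgrade the hypothesis. Consequently Strebel's classification (Theorem~\ref{thm:strebel_bigons}), which is a statement about $(3,7)$-bigons, cannot be applied, and the entire shape-$\mathrm{I}_1$ analysis collapses. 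The subsequent ``counting estimate'' showing that one of $\delta_1,\delta_3$ is ``long enough that it fails to be a piece'' is imported from Lemma~\ref{lem:convex_embedding}, but that argument depends on the metric $Gr'(\frac{1}{6})$-condition and on the sides of the bigon being \emph{geodesics}; here $p_1$ and $p_2$ are arbitrary paths in the components, so no length comparison of the form $|a|\leqslant\frac{|\partial\Pi|}{2}$ or $|a|<\frac{|\partial\Pi|}{6}$ is available.

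The paper's proof avoids all of this. It takes $x,y$ arbitrary in the intersection (no contradiction setup), minimizes the number of \emph{edges} of a $\Gamma$-reduced diagram over all choices of the connecting paths $p_X\subseteq f_1(\Gamma_1)$ and $q_X\subseteq f_2(\Gamma_2)$, and observes that this minimality forces \emph{every} arc of $\partial\Pi\cap p$ and $\partial\Pi\cap q$ to be a piece (otherwise one removes the arc as in Figure~\ref{figure:minimality} and re-routes the boundary path inside the same component, reducing the edge count). Since interior arcs are pieces as well, after collapsing degree-$2$ vertices one obtains a diagram in which every face has boundary length at least $6$ and at most two (boundary) vertices have degree less than $3$; Lemma~\ref{lem:curvature_lyndon} then forces this diagram to be a single vertex or a single edge, i.e.\ $D$ has no faces and $p_X=q_X$ is itself a path in the intersection. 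Note two further points you would need in any repair: the minimization must range over the choice of the paths $p_1,p_2$, not just over diagrams for a fixed boundary word, since the reduction move changes the path; and the conclusion you are after does not need a disconnectedness contradiction at all --- showing that any two intersection points are joined by a path in the intersection is both what the paper proves and what the curvature argument naturally delivers.
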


\begin{proof}
Let $x$ and $y$ be vertices in $f_1(\Gamma_1)\cap f_2(\Gamma_2)$. Denote $X:=\Cay(G(\Gamma),S)$, and let $p_X$, respectively $q_X$, be paths in $\Cay(G(\Gamma),S)$ from $x$ to $y$ such that $p_X=f_1(p_\Gamma)$ for a path $p_\Gamma$ in $\Gamma_1$ and $q_X=f_2(q_\Gamma)$ for a path $q_\Gamma$ in $\Gamma_2$. Assume that, given $x$ and $y$, $p_X$ and $q_X$ are chosen such that there exists a $\Gamma$-reduced diagram $D$ for $\ell(p_X)\ell(q_X)^{-1}$ over $\Gamma$ whose number of edges is minimal among all possible choices for $p_X$ and $q_X$. 
Denote $\partial D=pq^{-1}$, i.e.\ $p$ lifts to $p_X$ and $q$ lifts to $q_X$. Note that by our minimality assumptions, the only (possible) vertices of $D$ having degree 1 are the initial or terminal vertices of $p$ (or equivalently $q$).

For every face $\Pi$, any arc $a$ in $\partial\Pi\cap p$ or in $\partial\Pi\cap q$ is a piece since, otherwise, we could remove edges in $a$ as in Figure~\ref{figure:minimality}. Moreover, every interior arc is a piece since $D$ is $\Gamma$-reduced. Now iteratively remove all vertices of degree 2, except the initial and terminal vertices of $p$ (in case they have degree 2), by always replacing the two adjacent edges by a single one. 
This yields a $[3,6]$-diagram $\Delta$, where at most two vertices have degree less than 3. Thus, by Lemma~\ref{lem:curvature_lyndon}, $\Delta$ is either a single vertex or a single edge. This implies $p=q$, whence $p_X=q_X$ and, therefore, $p_X=q_X$ is a path in $f_1(\Gamma_1)\cap f_2(\Gamma_2)$ from $x$ to $y$.
\end{proof}

\begin{figure}\label{figure:minimality}
\hspace{2cm}
\begin{tikzpicture}[line width=1pt,>=stealth]
 \draw[-] (-.5,0) to (2.5,0);
 \draw[-,rounded corners=5] (0,0) to (0,-2) to (2,-2) to (2,0);
 \draw[->, dotted] (2.5,.1) to (-.5,.1);
 \draw [->,rounded corners=5,dashed,shorten <=2.5pt] (.1,-1.1) to (.1,-1.9) to (1.9,-1.9) to (1.9,-.9);
 \draw [->,rounded corners=5,dashed,shorten <=2.5pt] (1.9,-.9) to (1.9,-.1) to (.1,-.1) to (.1,-1.1); 
 \node at (1,-1) {\small $\Pi$};
\end{tikzpicture}
\hfill
\begin{tikzpicture}[line width=1pt,>=stealth]
 \draw[-] (-.5,0) to (0,0); \draw[-] (2,0) to (2.5,0);
 \draw[-,rounded corners=5] (0,0) to (0,-2) to (2,-2) to (2,0);
 \draw[->, dotted, rounded corners=5] (2.5,.1) to (1.9,.1) to (1.9,-1.9) to (.1,-1.9) to (.1,.1) to (-.5,.1);
\end{tikzpicture}
\hspace{2cm}
\caption{
Left: The dotted line represents a subpath $p$ of $\partial D$ that lifts to a path $p_\Gamma$ in $\Gamma$, the dashed line represents the boundary cycle $\partial \Pi$ of a face $\Pi$. Right:  We remove edges of a path $a$ in $p\cap\partial\Pi$ and, thus, remove $\Pi$. If the lift of $a$ in $\Gamma$ via $p$ essentially equals a lift of $a$ via $\partial\Pi$, then the resulting path, drawn as dotted line, lifts to a path in $\Gamma$ with the same endpoints as $p_\Gamma$. If the two lifts are essentially distinct, then $a$ is a piece.
}
\end{figure}
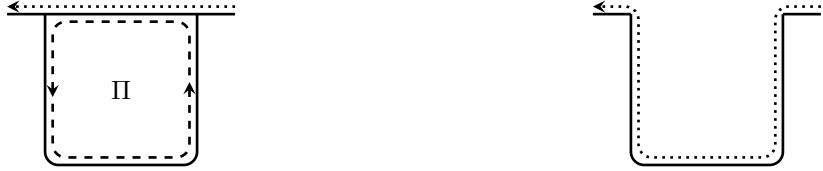

\section{The hyperbolic space}\label{section:hyperbolic_space}

In the first part of this section, we construct, given a $Gr(7)$-labelled graph $\Gamma$, a Gromov hyperbolic Cayley graph of $G(\Gamma)$. More generally, we construct for a group $G$ defined by a (possibly infinite) presentation $\langle S\mid R\rangle$ satisfying a certain subquadratic isoperimetric inequality a (possibly non-locally finite) Gromov hyperbolic Cayley graph $Y$ of $G$. In the second part, we provide a description of the geodesics in the hyperbolic space in the case that $\Gamma$ is $Gr'(\frac{1}{6})$-labelled and use it to show that the action of $G(\Gamma)$ on $Y$ is not acylindrical in general.

\subsection{Construction of the space} We will prove:

\begin{thm}\label{thm:hyperbolic_space} Let $\Gamma$ be a $Gr(7)$-labelled graph over a set $S$, and let $W$ be the set of all elements of $G(\Gamma)$ represented by labels of paths in $\Gamma$. Then $\Cay(G(\Gamma),S\cup W)$ is hyperbolic.
\end{thm}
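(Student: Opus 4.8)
The natural strategy is to deduce Theorem~\ref{thm:hyperbolic_space} from Proposition~\ref{prop:hyperbolic_space} (the statement labelled \texttt{prop:hyperbolic\_space} in the excerpt) by checking that the presentation $\langle S\mid R\rangle$ of $G(\Gamma)$, where $R$ is the set of labels of simple closed paths in $\Gamma$, satisfies the required relative subquadratic isoperimetric inequality, and by identifying $W_0$ with the set of all subwords of elements of $R$, so that $W$ as defined in the theorem is exactly the image of $W_0$ in $G(\Gamma)$. First I would replace $R$ by its closure under cyclic conjugation and inversion, which changes neither $G(\Gamma)$ nor $W$, so that the hypotheses of the proposition apply. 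The content to verify is then: there is a subquadratic $f\colon\N\to\N$ such that whenever $w\in M(S)$ is trivial in $G(\Gamma)$ and is a product of $N$ elements of $W_0$, there is a diagram for $w$ over $\langle S\mid R\rangle$ with at most $f(N)$ faces.

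\textbf{Key steps.} To bound the number of faces, I would take a $\Gamma$-reduced diagram $D$ for $w$, which exists by Lemma~\ref{lem:graphical_basic} since $Gr(7)$ implies $Gr(6)$. Because $\Gamma$ is $Gr(7)$-labelled and $D$ is $\Gamma$-reduced, $D$ is a $(3,7)$-diagram: every interior arc is a piece, so every interior face has at least $7$ maximal arcs in its boundary. The boundary $\partial D$ reads the word $w$, which by hypothesis decomposes as a concatenation of $N$ labels of paths in $\Gamma$; I would use this to cut $\partial D$ into $N$ arcs along each of which $D$ looks, from the point of view of the adjacent faces, like a subpath of a relator — so that $D$ is naturally a $(3,7)$-$N$-gon in the sense of Definition~\ref{defi:n-gons} (the distinguished faces are those meeting the $N$ cut points). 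The heart of the estimate is then a linear isoperimetric-type bound for $(3,7)$-$N$-gons: using the combinatorial curvature formula of Lemma~\ref{lem:curvature_strebel} (or Lemma~\ref{lem:curvature_lyndon}) together with the fact that interior faces have interior degree $\geqslant 7$ and the boundary has been chopped into $N$ pieces, one shows the number of faces of $D$ is $O(N)$ — in fact linear in $N$ — so we may take $f(N)=CN$ for a universal constant $C$, which is certainly subquadratic. This is the step I expect to be the main obstacle: making the curvature bookkeeping rigorous, in particular controlling the distinguished faces near the $N$ cut vertices and faces with more than one exterior arc, and handling non-simply-connected or degenerate diagrams; the cleanest route is probably to process $D$ disk-component by disk-component and apply a Strebel-style classification ($\mathrm{I}_1$-shape and its $n$-gon analogues) to peel off boundary layers.

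\textbf{Conclusion.} Once the linear bound on faces is established, Proposition~\ref{prop:hyperbolic_space} applies verbatim with $f(N)=CN$ and yields that $\Cay(G(\Gamma),S\cup W_0)=\Cay(G(\Gamma),S\cup W)$ is Gromov hyperbolic, which is the assertion of Theorem~\ref{thm:hyperbolic_space}. I would also remark that $S\cup W$ generates $G(\Gamma)$ (indeed $S$ alone does, and $W\supseteq S$ up to the obvious inclusion of single-letter subwords), so the Cayley graph is connected and the statement is meaningful even when $S$ or $W$ is infinite. A final sanity check is that the construction is equivariant: $G(\Gamma)$ acts on $\Cay(G(\Gamma),S\cup W)$ by left translations, which is the action used in the subsequent sections to exhibit a WPD element.
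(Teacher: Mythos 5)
Your overall strategy---reduce to Proposition~\ref{prop:hyperbolic_space} and prove a linear-in-$N$ isoperimetric bound---is the paper's, but the step you yourself flag as the heart of the estimate does not work as proposed. A $(3,7)$-$N$-gon does \emph{not} have $O(N)$ faces: a $(3,7)$-bigon of shape $\mathrm{I}_1$ (Theorem~\ref{thm:strebel_bigons}) is a ladder with arbitrarily many faces, and each non-distinguished face there has $e(\Pi)=2$ and $i(\Pi)=2$, hence contributes $6-2k-i(\Pi)=0$ to the curvature sum of Lemma~\ref{lem:curvature_strebel}; likewise a face with $e(\Pi)=1$ and $i(\Pi)=4$ contributes $0$. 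So no curvature bookkeeping on a $(3,7)$-$N$-gon can bound the total face count in terms of $N$ alone. (It is also not automatic that your $\Gamma$-reduced $D$, with $\partial D$ cut into the $N$ arcs, is a $(3,7)$-$N$-gon: a face whose single exterior arc lies inside one $\gamma_i$ and whose lift via $\gamma_i$ agrees with a lift via $\partial\Pi$ need not be a union of pieces, so $i(\Pi)\geqslant 4$ can fail; and without a minimality hypothesis a $\Gamma$-reduced diagram for $w_1w_2$ can literally be an arbitrarily long ladder.) What is true, and what the paper uses via Strebel, is that a $(3,7)$-diagram has at most $8$ times as many faces as it has \emph{boundary faces}. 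The real content is therefore to produce a diagram for $w$ with at most $N$ boundary faces. The paper does this by coning off: onto the arc of $\partial D$ labelled $w_i=\ell(p_i)$ it glues a single new face with boundary word $w_iw_i^{-1}$, the label of the closed path $p_ip_i^{-1}$ in $\Gamma$; this pushes all original faces into the interior, leaves at most $N$ boundary faces, and a diagram of minimal edge number among these is a $(3,7)$-diagram, hence has at most $8N$ faces.

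There is a second, smaller mismatch at the start of your reduction. If $R$ is the set of labels of \emph{simple} closed paths (closed under cyclic conjugation and inversion), then $W_0$, the set of subwords of elements of $R$, consists only of labels of subpaths of simple closed paths; in general this is strictly smaller than the set of labels of all paths in $\Gamma$ (a path running through a tree-like portion of a component, or through a cut vertex between two cycles, is a subpath of no simple closed path), so the image of your $W_0$ need not equal the $W$ of the theorem and you would be proving hyperbolicity of a different, possibly non-quasi-isometric Cayley graph. The paper's fix is to take $R$ to be the labels of \emph{all} closed paths in $\Gamma$: this presents the same group, makes every path label a subword of a relator via $p\mapsto pp^{-1}$, and is exactly what licenses the single-face cones $w_iw_i^{-1}$ above. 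With these two repairs your reduction to Proposition~\ref{prop:hyperbolic_space} with $f(N)=8N$, and your closing remarks on connectivity and equivariance, are fine.
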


Our argument rests on the following proposition, which will be deduced from Theorem~\ref{thm:subquadratic}.

\begin{prop}\label{prop:hyperbolic_space} Let $\langle S\mid R\rangle$ be a presentation of a group $G$,  
where $R\subseteq M(S)$ is closed under cyclic conjugation and inversion. Let $W_0$ be the set of all subwords of elements of $R$. Suppose there exists a subquadratic map $f:\N\to\N$ with the following property for every $w\in M(S)$: If $w$ is trivial in $G$ and if $w$ can be written as product of $N$ elements of $W_0$, then there exists a diagram for $w$ over $\langle S\mid R\rangle$ with at most $f(N)$ faces. Denote by $W$ the image of $W_0$ in $G$. Then $\Cay(G,S\cup W)$ is Gromov hyperbolic. 
\end{prop}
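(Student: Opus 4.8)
The plan is to establish hyperbolicity of $\Cay(G, S\cup W)$ by verifying a linear isoperimetric inequality (equivalently, a thin-triangles / Gromov product condition) for this Cayley graph, using the presentation $\langle S\cup W\mid R'\rangle$ where $R'$ consists of the relators of $\langle S\mid R\rangle$ together with the ``definitional'' relators $w_S w^{-1}$ expressing each generator $w\in W$ as the corresponding word $w_S\in M(S)$ (of length equal to the length of the subword of $R$ representing it, which is bounded). The key point is that the extra generators in $W$ allow us to shortcut along subwords of relators: a relator $r\in R$, viewed as a word in $S$, can be expressed in $\Cay(G,S\cup W)$ by a word of \emph{bounded} length (at most some fixed constant, or even length $1$ after a cyclic shift, since $r$ itself is a subword of an element of $R$ up to cyclic conjugation). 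First I would set up this presentation carefully and observe that $R'$ has bounded relator length.

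The heart of the argument is the following: given a word $v$ over $S\cup W$ representing the identity in $G$, I want to bound its area over $\langle S\cup W\mid R'\rangle$ linearly in $|v|$. Replace each $W$-letter of $v$ by the corresponding $W_0$-word over $S$ using the definitional relators; this produces a word $w\in M(S)$ trivial in $G$, at the cost of at most $|v|$ faces (the definitional relators), and with the property that $w$ is a product of at most $|v|$ elements of $W_0$ — here I use that each $S$-letter is itself a (one-letter) element of $W_0$ and each original $W$-letter contributes one element of $W_0$. Now apply the hypothesis: there is a diagram for $w$ over $\langle S\mid R\rangle$ with at most $f(|v|)$ faces, where $f$ is subquadratic. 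The total area of $v$ over $\langle S\cup W\mid R'\rangle$ is thus at most $|v| + f(|v|)$, which is subquadratic in $|v|$.

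The final step is to invoke the classical fact (Bowditch; see also the ``subquadratic implies linear'' dichotomy) that a finitely-presented-style group whose Dehn function is subquadratic is in fact hyperbolic — i.e., a group presentation with bounded relator length and subquadratic isoperimetric function has linear isoperimetric function, hence its Cayley graph is Gromov hyperbolic. This is precisely the content I expect to be cited as ``Theorem~\ref{thm:subquadratic}'' in the paper. One subtlety to handle: the presentation $\langle S\cup W\mid R'\rangle$ is generally infinite (infinitely many generators, infinitely many relators), so I must use the version of the subquadratic-implies-hyperbolic statement that applies to Cayley graphs of possibly infinite presentations with \emph{uniformly bounded} relator length, rather than the finite-presentation version — checking that the relator lengths in $R'$ are uniformly bounded (definitional relators have length $\le C+1$ where $C$ bounds the lengths of subwords of relators we include as generators; but since $W_0$ contains arbitrarily long subwords, I should instead only adjoin a generating \emph{set} $W$ as group elements and note that the relevant combinatorial statement, Theorem~\ref{thm:subquadratic}, is phrased directly in terms of the metric space $\Cay(G,S\cup W)$ and an isoperimetric-type function on it, so no bound on relator length is actually needed).

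The main obstacle I anticipate is precisely this bookkeeping about infiniteness: making sure that the ``subquadratic isoperimetric $\Rightarrow$ hyperbolic'' input (Theorem~\ref{thm:subquadratic}) is applied in a form valid for a non-locally-finite Cayley graph of an infinite presentation, and that the counting of faces genuinely yields a subquadratic bound \emph{in terms of the length $|v|$ of the word in the new alphabet $S\cup W$} — the potential danger being that a single $W$-letter, when expanded, contributes many $S$-letters, inflating $N$. This is avoided because the hypothesis counts the number $N$ of $W_0$-factors, not the $S$-length, and each $W$-letter expands to exactly one $W_0$-factor; so $N\le |v|$ and we are safe. Everything else — setting up the definitional relators, the face count $|v|+f(|v|)$, and the hyperbolicity conclusion — is routine given Theorem~\ref{thm:subquadratic}.
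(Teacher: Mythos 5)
Your upper-bound computation is exactly the paper's: pass to the presentation $\langle S\cup W\mid R\cup R_W\rangle$ with definitional relators $R_W=\{w\dot w^{-1}\mid w\in W_0\}$, expand the $W$-letters of a closed path $\gamma$ at the cost of at most $|\gamma|$ faces, note that the resulting $S$-word is a product of at most $|\gamma|$ elements of $W_0$, and apply the hypothesis to get an area function $A$ with $\sup\{A(\gamma)\mid |\gamma|\leqslant n\}\leqslant n+f(n)=o(n^2)$. (One small caveat you share with the paper: an $S$-letter is an element of $W_0$ only if it occurs in some relator; in the application this is arranged separately.)

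The genuine gap is in the last step. Theorem~\ref{thm:subquadratic} is not of the form ``subquadratic area implies hyperbolic'': it has a \emph{second} hypothesis, namely that for every closed path split as $\gamma=\alpha_1\alpha_2\alpha_3\alpha_4$ one has $A(\gamma)\geqslant d(\alpha_1,\alpha_3)\cdot d(\alpha_2,\alpha_4)$. This lower bound is precisely the substitute for the bounded-relator-length assumption in the classical subquadratic-implies-linear theorem, and it has to be verified for the area function you constructed; your proposal never addresses it and instead asserts that, since the theorem is ``phrased directly in terms of the metric space,'' no further condition is needed. The verification is where the specific choice of $W$ enters: because $W_0$ is closed under taking subwords of elements of $R$, any two vertices on the image in $\Cay(G,S\cup W)$ of the $1$-skeleton of a face (of either type) are joined by a single $W$-edge, so each face has diameter at most $1$ in $Y$. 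Hence in a minimal diagram for a simple closed $\gamma$ any path from $\alpha_1$ to $\alpha_3$ meets at least $d_1$ faces, and an induction on $d_1$ gives $A(\gamma)\geqslant d_1d_2$ (the non-simple case reducing to this). Without this step the argument is incomplete: a subquadratic area function over an infinite presentation with unbounded relator lengths carries no hyperbolicity information on its own; it is the combination with the quadrilateral lower bound, i.e.\ with the metric smallness of the faces in the coned-off graph, that Theorem~\ref{thm:subquadratic} requires.
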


We can think of the space $\Cay(G,S\cup W)$ as obtained from the Cayley 2-complex of $\langle S\mid R\rangle$ by replacing the every 2-cell by the complete graph on its vertices. Our proof uses the following result of Bowditch, and our argument also applies in the more general context of simply-connected 2-complexes.

\begin{thm}[{\cite{Bowditch-subquadratic}}]\label{thm:subquadratic}
 Let $Y$ be a connected graph, let $\Omega$ be the set of all closed paths in $Y$, and let $A:\Omega\to \N$ be a map satisfying:
 \begin{itemize}
  \item If $\gamma_1,\gamma_2,\gamma_3$ are closed paths with the same initial vertex and if $\gamma_3$ is homotopic to $\gamma_1\gamma_2$, then $A(\gamma_3)\leqslant A(\gamma_1)+A(\gamma_2)$.
  \item If $\gamma\in\Omega$ is split into four subpaths $\gamma=\alpha_1\alpha_2\alpha_3\alpha_4$, then $A(\gamma)\geqslant d_1d_2$, where $d_1=d(\alpha_1,\alpha_3)$ and $d_2=d(\alpha_2,\alpha_4)$.
 \end{itemize}
Here $d$ is the graph-metric. If $\sup\{A(\gamma)\mid \gamma\in\Omega,|\gamma|\leqslant n\}=o(n^2)$, then $Y$ is Gromov hyperbolic.
\end{thm}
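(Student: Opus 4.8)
The plan is to deduce hyperbolicity of $Y$ from the criterion that a geodesic metric space is Gromov hyperbolic if and only if all of its asymptotic cones are $\R$-trees (a theorem of Gromov). Since $Y$ is a connected graph it is a geodesic space, so it suffices to fix an arbitrary asymptotic cone $Y_\omega=\lim_\omega(Y,d/\lambda_k)$, with a non-principal ultrafilter $\omega$ and scaling factors $\lambda_k\to\infty$, and to prove that $Y_\omega$ is an $\R$-tree. I would do this by verifying the four-point condition with constant $0$, the standard characterization of $\R$-trees among complete geodesic spaces.

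The engine of the argument is an immediate consequence of the two displayed hypotheses. Writing $\rho(n):=\sup\{A(\gamma)\mid\gamma\in\Omega,\ |\gamma|\le n\}$, so that $\rho(n)=o(n^2)$, any closed path $\gamma=\alpha_1\alpha_2\alpha_3\alpha_4$ of length $L$ satisfies
\[ \min(d_1,d_2)\le\sqrt{d_1d_2}\le\sqrt{A(\gamma)}\le\sqrt{\rho(L)}=o(L). \]
In words: when a long loop is cut into four arcs, one pair of opposite arcs comes within \emph{sublinear} distance of the other. Note that this uses only the quadratic lower bound and the growth hypothesis; the subadditivity hypothesis is instead what one would exploit in Bowditch's original recursive derivation of a genuinely linear isoperimetric inequality.

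Now let $w,x,y,z\in Y_\omega$ and fix geodesics between them. For the cyclic ordering $(w,x,y,z)$, represent the four sides $[w,x],[x,y],[y,z],[z,w]$ as $\omega$-limits of geodesics in $Y$ joining representatives of the vertices; concatenating these four $Y$-geodesics produces a closed path $\gamma_k$ with $|\gamma_k|\le C\lambda_k$. Applying the engine to $\gamma_k$, dividing by $\lambda_k$, and taking the $\omega$-limit (so that the distance between two limit arcs equals the $\omega$-limit of their distances) shows that in $Y_\omega$ one of the two pairs of opposite sides of the quadrilateral $[w,x][x,y][y,z][z,w]$ must intersect. If a point $u$ lies on both $[w,x]$ and $[y,z]$, then expanding $d(w,x)$ and $d(y,z)$ through $u$ and applying the triangle inequality shows that $S_1:=d(w,x)+d(y,z)$ is $\ge S_2:=d(w,y)+d(x,z)$ and $\ge S_3:=d(w,z)+d(x,y)$; symmetrically $[x,y]\cap[z,w]\ne\emptyset$ forces $S_3\ge S_1,S_2$. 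Thus the ordering $(w,x,y,z)$ guarantees that $S_1$ or $S_3$ is a maximum, the ordering $(w,x,z,y)$ guarantees $S_1$ or $S_2$ is a maximum, and $(w,y,x,z)$ guarantees $S_2$ or $S_3$ is a maximum. Consequently no single $S_i$ can strictly exceed the other two, so the largest of $S_1,S_2,S_3$ is attained at least twice: this is exactly the four-point condition with constant $0$, and hence $Y_\omega$ is an $\R$-tree.

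The conceptual heart, and the step I would be most careful about, is the passage in the previous paragraph: recognizing that the product lower bound $A\ge d_1d_2$ rescales in the cone to the clean metric statement ``opposite sides of a geodesic quadrilateral meet,'' and that running this across all three cyclic orderings of $\{w,x,y,z\}$ upgrades it to $0$-hyperbolicity. The main \emph{technical} obstacle is the asymptotic-cone bookkeeping: justifying that geodesic sides in $Y_\omega$ may be taken as $\omega$-limits of $Y$-geodesics between converging representatives, that the resulting loops $\gamma_k$ genuinely close up and have length $O(\lambda_k)$, and that the distance between two such limit arcs equals the $\omega$-limit of the distances in $Y$. These are standard facts about asymptotic cones of geodesic spaces, but they require enough care that, were a self-contained account desired, one might prefer Bowditch's recursive route (which is where the subadditivity hypothesis earns its place) over the cone argument sketched here.
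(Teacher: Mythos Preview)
The paper does not prove this theorem; it is quoted from Bowditch's paper and used as a black box. So there is no ``paper's own proof'' to compare to, only Bowditch's original argument.

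Your asymptotic-cone approach is essentially correct and genuinely different from Bowditch's. Bowditch's proof is a direct recursive subdivision: one cuts a loop $\gamma$ by a short chord into two subloops, applies the subadditivity hypothesis, and iterates; the quadratic lower bound forces the recursion to terminate in linearly many steps, yielding a \emph{linear} isoperimetric inequality, from which hyperbolicity is classical. In that scheme the subadditivity hypothesis is essential---it is what lets the recursion run. Your route bypasses the recursion entirely: you extract from the second hypothesis alone the sublinear bound $\min(d_1,d_2)\le\sqrt{\rho(|\gamma|)}=o(|\gamma|)$, push it to the cone to force opposite sides of geodesic quadrilaterals to meet, and then run the neat three-orderings argument to obtain the $0$-hyperbolic four-point condition. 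The payoff is a conceptually clean proof that, as you observe, never touches subadditivity; the cost is the appeal to the (nontrivial) theorem that a geodesic space is hyperbolic iff all of its asymptotic cones are $\R$-trees, together with the standard ultralimit bookkeeping you flag (limit geodesics realising cone geodesics, $\lim_\omega d(A_k,B_k)/\lambda_k = d(A_\omega,B_\omega)$ for the compact side-segments, and the ultrafilter split deciding which opposite pair has $d_i\to 0$).

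Two small points worth tightening if you write this up. First, cite the cone characterisation carefully (e.g.\ Dru\c{t}u or Dru\c{t}u--Kapovich): the direction you need (``all cones trees $\Rightarrow$ hyperbolic'') is the nonobvious one, and $Y$ here need not be locally finite. Second, in the three-orderings step you implicitly choose possibly different limit geodesics for the same pair in different orderings; this is harmless because each ordering yields a conclusion about the distance sums $S_1,S_2,S_3$, not about particular geodesics, but it is worth saying explicitly.
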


\begin{proof}[Proof of Proposition~\ref{prop:hyperbolic_space}]
Let $Y:=\Cay(G,S\cup W)$. If $w\in W_0\subseteq M(S)$, we denote by $\dot w$ the image of $w$ in $W\subseteq G$. Consider the presentation $\langle S\cup W\mid R\cup R_W\rangle$, where $R_W:=\{w\dot w^{-1}\mid w\in W_0\}\subseteq M(S\cup W)$. This is a presentation of $G$. 

Let $\gamma$ be a closed path in $Y$. Then the label $\ell(\gamma)$ of $\gamma$ admits a diagram $D$ over $\langle S\cup W\mid R\cup R_W\rangle$ such that $D$ has at most $|\gamma|$ boundary faces and such that every interior edge of $D$ is labelled by an element of $S$, i.e. all interior faces have labels in $R$.
If $D$ has a minimal number of faces among all diagrams for $\ell(\gamma)$, then, by construction, $D$ has at most $|\gamma|+f(|\gamma|)$ faces. 
For a closed path $\gamma$, denote by $A(\gamma)$ the minimal number of faces of a diagram for $\ell(\gamma)$ over $\langle S\cup W\mid R\cup R_W\rangle$. Then $\sup\{A(\gamma)\mid \gamma\in\Omega,|\gamma|\leqslant n\}$ is a subquadratic map as required. The map $A$ moreover satisfies the first assumption of Theorem~\ref{thm:subquadratic}.

To prove the second assumption of Theorem~\ref{thm:subquadratic}, it is sufficient to consider the case that $\gamma$ is a simple closed path, as the general case can be constructed from this. Let $\gamma$ be decomposed into four subpaths $\gamma=\alpha_1\alpha_2\alpha_3\alpha_4$, and let $d_1:=d(\alpha_1,\alpha_3)$ and $d_2:=(\alpha_2,\alpha_4)$. We may assume that $d_1>0$ and $d_2>0$. Let $D$ be a simple disk diagram for the label of $\gamma$ with a minimal number of faces. 
By definition of $Y$, any two vertices in the image the 1-skeleton of a face of $D$ in $Y$ are at distance at most 1 from each other. Thus, no path in $D$ connecting $\alpha_1$ to $\alpha_3$ (respectively connecting $\alpha_2$ to $\alpha_4$) is contained in strictly fewer than $d_1$ (respectively $d_2$) faces. Induction on $d_1$ (or $d_2)$ yields that $D$ has at least $d_1d_2$ faces, i.e. $A(\gamma)\geqslant d_1d_2$. Therefore, we can apply Theorem~\ref{thm:subquadratic}.
\end{proof}

\begin{proof}[Proof of Theorem~\ref{thm:hyperbolic_space}]
 This follows from Proposition~\ref{prop:hyperbolic_space} by considering the presentation $\langle S\mid R\rangle$ of $G(\Gamma)$, where $R$ is the set of all labels of closed paths in $\Gamma$. Let $W_0$ be the set of all labels of paths in $\Gamma$, and let $w=w_1\dots w_N$ for $w_i\in W_0$ such that $w$ is trivial in $G(\Gamma)$. Then there exists a diagram for $w$ over $\langle S\mid R\rangle$ with at most $N$ boundary faces. Let $D$ be a diagram with a minimal number of edges among all such diagrams. Then the arguments of \cite[Lemma~2.10]{Gru} yield that $D$ has no interior edge originating from $\Gamma$ and that every interior face has a freely non-trivial boundary word. Therefore, $D$ is a $(3,7)$-diagram and, thus, has at most $8N$ faces by \cite[Proposition~2.7]{Str}.
\end{proof}

In Remark~\ref{remark:hyperbolic_space}, we provide alternative arguments showing that $\Cay(G(\Gamma),S\cup W)$ is Gromov hyperbolic which do not rely on Proposition~\ref{prop:hyperbolic_space} but on geometric features specific to $(3,7)$-bigons and $(3,7)$-triangles.

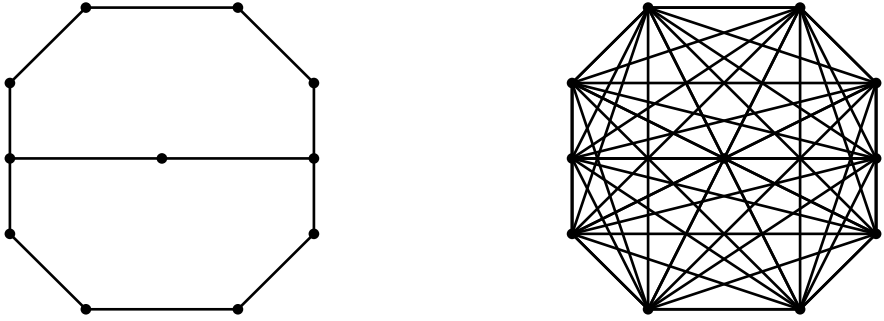
\begin{figure}\label{figure:hyperbolic_space}
\begin{center}
\begin{tikzpicture}[line width=1pt]

\node[coordinate] at (0,1) (X1) {};
\node[coordinate] at (1,2) (X2) {};
\node[coordinate] at (3,2) (X3) {};
\node[coordinate] at (4,1) (X4) {};

\node[coordinate] at (0,0) (Y1) {};
\node[coordinate] at (2,0) (Y2) {};
\node[coordinate] at (4,0) (Y3) {};

\node[coordinate] at (0,-1) (Z1) {};
\node[coordinate] at (1,-2) (Z2) {};
\node[coordinate] at (3,-2) (Z3) {};
\node[coordinate] at (4,-1) (Z4) {};

\fill (X1) circle (2pt);\fill (X2) circle (2pt);\fill (X3) circle
(2pt);\fill (X4) circle (2pt);
\fill (Y1) circle (2pt);\fill (Y2) circle (2pt);\fill (Y3) circle
(2pt);
\fill (Z1) circle (2pt);\fill (Z2) circle (2pt);\fill (Z3) circle
(2pt);\fill (Z4) circle (2pt);

\draw[-] (X1) -- (X2);
\draw[-] (X2) -- (X3);
\draw[-] (X3) -- (X4);

\draw[-] (Y1) -- (X1);\draw[-] (X4) -- (Y3);
\draw[-] (Y1) -- (Y2);\draw[-] (Y2) -- (Y3);
\draw[-] (Y1) -- (Z1);\draw[-] (Z4) -- (Y3);

\draw[-] (Z1) -- (Z2);
\draw[-] (Z2) -- (Z3);
\draw[-] (Z3) -- (Z4);
\end{tikzpicture}
\hspace{3cm}
\begin{tikzpicture}[line width=1pt]

\node[coordinate] at (0,1) (X1) {};
\node[coordinate] at (1,2) (X2) {};
\node[coordinate] at (3,2) (X3) {};
\node[coordinate] at (4,1) (X4) {};

\node[coordinate] at (0,0) (Y1) {};
\node[coordinate] at (2,0) (Y2) {};
\node[coordinate] at (4,0) (Y3) {};

\node[coordinate] at (0,-1) (Z1) {};
\node[coordinate] at (1,-2) (Z2) {};
\node[coordinate] at (3,-2) (Z3) {};
\node[coordinate] at (4,-1) (Z4) {};

\draw[-] (X1) -- (X2);
\draw[-] (X1) -- (X3);
\draw[-] (X1) -- (X4);
\draw[-] (X1) -- (Y1);
\draw[-] (X1) -- (Y2);
\draw[-] (X1) -- (Y3);
\draw[-] (X1) -- (Z1);
\draw[-] (X1) -- (Z2);
\draw[-] (X1) -- (Z3);
\draw[-] (X1) -- (Z4);

\draw[-] (X2) -- (X3);
\draw[-] (X2) -- (X4);
\draw[-] (X2) -- (Y1);
\draw[-] (X2) -- (Y2);
\draw[-] (X2) -- (Y3);
\draw[-] (X2) -- (Z1);
\draw[-] (X2) -- (Z2);
\draw[-] (X2) -- (Z3);
\draw[-] (X2) -- (Z4);

\draw[-] (X3) -- (X4);
\draw[-] (X3) -- (Y1);
\draw[-] (X3) -- (Y2);
\draw[-] (X3) -- (Y3);
\draw[-] (X3) -- (Z1);
\draw[-] (X3) -- (Z2);
\draw[-] (X3) -- (Z3);
\draw[-] (X3) -- (Z4);

\draw[-] (X4) -- (Y1);
\draw[-] (X4) -- (Y2);
\draw[-] (X4) -- (Y3);
\draw[-] (X4) -- (Z1);
\draw[-] (X4) -- (Z2);
\draw[-] (X4) -- (Z3);
\draw[-] (X4) -- (Z4);

\draw[-] (Y1) -- (Y2);
\draw[-] (Y1) -- (Y3);
\draw[-] (Y1) -- (Z1);
\draw[-] (Y1) -- (Z2);
\draw[-] (Y1) -- (Z3);
\draw[-] (Y1) -- (Z4);

\draw[-] (Y2) -- (Y3);
\draw[-] (Y2) -- (Z1);
\draw[-] (Y2) -- (Z2);
\draw[-] (Y2) -- (Z3);
\draw[-] (Y2) -- (Z4);

\draw[-] (Y3) -- (Z1);
\draw[-] (Y3) -- (Z2);
\draw[-] (Y3) -- (Z3);
\draw[-] (Y3) -- (Z4);

\draw[-] (Z1) -- (Z2);
\draw[-] (Z1) -- (Z3);
\draw[-] (Z1) -- (Z4);

\draw[-] (Z2) -- (Z3);
\draw[-] (Z2) -- (Z4);

\draw[-] (Z3) -- (Z4);

\fill (X1) circle (2pt);\fill (X2) circle (2pt);\fill (X3) circle
(2pt);\fill (X4) circle (2pt);
\fill (Y1) circle (2pt);\fill (Y2) circle (2pt);\fill (Y3) circle
(2pt);
\fill (Z1) circle (2pt);\fill (Z2) circle (2pt);\fill (Z3) circle
(2pt);\fill (Z4) circle (2pt);

\draw[-] (X1) -- (X2);
\draw[-] (X2) -- (X3);
\draw[-] (X3) -- (X4);

\draw[-] (Y1) -- (X1);\draw[-] (X4) -- (Y3);
\draw[-] (Y1) -- (Y2);\draw[-] (Y2) -- (Y3);
\draw[-] (Y1) -- (Z1);\draw[-] (Z4) -- (Y3);

\draw[-] (Z1) -- (Z2);
\draw[-] (Z2) -- (Z3);
\draw[-] (Z3) -- (Z4);

\end{tikzpicture}
\end{center}
\vspace{-12pt}
\caption{Left: The graph induced by the image of a component of the labelled graph $\Gamma$ in $\Cay(G(\Gamma),S)$. Right: The (complete) graph induced by the image of a component of $\Gamma$ in $Y=\Cay(G(\Gamma),S\cup W)$.}
\end{figure}

The arguments of Theorem~\ref{thm:hyperbolic_space}, replacing $\Gamma$ with $\overline\Gamma$, also yield the following:

\begin{thm}\label{thm:relative_hyperbolic_space} Let $\Gamma$ be a $Gr_*(7)$-labelled graph over a free product $\freeproduct_{i\in I}G_i$, and let $W$ be the set of all elements of $G(\Gamma)_*$ represented by labels of paths in $\overline\Gamma$. Then $\Cay(G(\Gamma)_*,\sqcup_{i\in I}G_i\cup W)$ is hyperbolic.
\end{thm}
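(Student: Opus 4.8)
The plan is to run the proof of Theorem~\ref{thm:hyperbolic_space} essentially verbatim with $\overline\Gamma$ in place of $\Gamma$ and Lemma~\ref{lem:graphical_product_basic} in place of Lemma~\ref{lem:graphical_basic}, deducing the statement from Proposition~\ref{prop:hyperbolic_space}. Set $S:=\sqcup_{i\in I}S_i$ and let $R$ be the set of all labels of closed paths in $\overline\Gamma$; this is closed under cyclic conjugation and inversion (change the basepoint, respectively reverse the closed path), and by the setup behind Lemma~\ref{lem:graphical_product_basic} (cf.\ \cite[Lemma~3.8]{Gru-SQ}, \cite[Theorem~1.35]{Gru-Thesis}, and the presentation in Theorem~\ref{thm:relatively_hyperbolic}) the pair $\langle S\mid R\rangle$ is a presentation of $G(\Gamma)_*$. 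Since every path $p$ in $\overline\Gamma$ is a subpath of the closed path $p\bar p$, the set $W_0$ of all subwords of elements of $R$ is exactly the set of labels of paths in $\overline\Gamma$, so its image $W$ in $G(\Gamma)_*$ coincides with the set in the statement. It therefore suffices to verify the hypothesis of Proposition~\ref{prop:hyperbolic_space} for the (linear, hence subquadratic) map $f(N)=8N$.

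So let $w=w_1\cdots w_N$ be trivial in $G(\Gamma)_*$ with $w_j\in W_0$ for every $j$. Exactly as in the proof of Theorem~\ref{thm:hyperbolic_space}, each $w_j$ is a subword of some $r_j\in R$, say $r_j=w_ju_j$; gluing $N$ faces with boundary words $r_1,\dots,r_N$ into a chain meeting at single vertices, with the $w_j$-arcs appearing along the outer boundary in order, produces a diagram whose boundary reads $w$ followed by some word $v$, and since both $w$ and the full boundary word are trivial in $G(\Gamma)_*$, so is $v$; filling a van Kampen diagram for $v$ over $\langle S\mid R\rangle$ into the $v$-portion yields a diagram for $w$ over $\langle S\mid R\rangle$ in which only the $N$ original faces meet $\partial D$. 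Now choose, among all diagrams for $w$ over $\langle S\mid R\rangle$ with at most $N$ faces meeting the boundary, one $D$ with the fewest edges. By the free-product form of the reduction argument of \cite[Lemma~2.10]{Gru} contained in the proof of Lemma~\ref{lem:graphical_product_basic}, $D$ is $\overline\Gamma$-reduced and every interior face of $D$ has boundary word non-trivial in $\freeproduct_{i\in I}G_i$; by the $Gr_*(7)$-condition every interior arc of $D$ is then a piece, and every attached-$\Cay(G_i,S_i)$-face, having no interior edges, meets the boundary (unless $D$ is a single face). Hence every interior face has at least $7$ interior maximal arcs, so $D$ is a $(3,7)$-diagram with at most $N$ boundary faces, and \cite[Proposition~2.7]{Str} gives that $D$ has at most $8N$ faces.

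Proposition~\ref{prop:hyperbolic_space} then yields that $\Cay(G(\Gamma)_*,S\cup W)$ is Gromov hyperbolic. Finally, since each attached $\Cay(G_i,S_i)$ is an embedded copy in $\overline\Gamma$ by the $Gr_*(7)$-condition, the image of $G_i$ in $G(\Gamma)_*$ lies in $W$ for every $i$; hence $\sqcup_{i\in I}G_i\cup W$ and $S\cup W$ determine the same Cayley graph of $G(\Gamma)_*$, and the statement follows. The only genuine point of care is the reduction step: one must invoke the free-product version of the basic reduction lemma to know that an edge-minimal diagram over the relative presentation (subject to the boundary-face constraint) is $\overline\Gamma$-reduced and therefore a bona fide $(3,7)$-diagram, and one must note that $\Cay(G_i,S_i)$-faces contribute to Strebel's curvature count exactly as ordinary boundary faces do (they carry $i(\Pi)=0$). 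Both facts are already part of the machinery underlying Lemma~\ref{lem:graphical_product_basic} and Theorem~\ref{thm:relatively_hyperbolic}, so no new work is needed beyond transcribing the argument of Theorem~\ref{thm:hyperbolic_space}.
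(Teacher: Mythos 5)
Your proposal is correct and follows exactly the route the paper takes: its entire proof of this theorem is the remark that the arguments of Theorem~\ref{thm:hyperbolic_space} go through with $\overline\Gamma$ in place of $\Gamma$ (using Lemma~\ref{lem:graphical_product_basic} instead of Lemma~\ref{lem:graphical_basic}), and you have simply written out those details, including the two points the paper leaves implicit — that the $\Cay(G_i,S_i)$-faces have no interior edges and hence only occur as boundary faces, and that $\sqcup_{i\in I}G_i\cup W$ and $\sqcup_{i\in I}S_i\cup W$ give the same Cayley graph because each $G_i$ lies in $W$.
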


\begin{remark}\label{remark:relative_hyperbolic_space} Theorem~\ref{thm:relative_hyperbolic_space} can be considered as an application of Proposition~\ref{prop:hyperbolic_space} to a relative presentation having a subquadratic relative Dehn function. Let $R$ be the set all words read on closed paths in $\Gamma$ (not $\overline\Gamma$). Then, by Theorem~\ref{thm:relatively_hyperbolic}, $(\emptyset, R)$ is a presentation of $G(\Gamma)_*$ relative to $\{G_i\mid i\in I\}$ with a linear relative Dehn function. Let $W'$ be the set of all elements of $G(\Gamma)_*$ represented by subwords of elements of $R$. Then $\Cay(G(\Gamma)_*,\sqcup_{i\in I}G_i\cup W')$ is quasi-isometric to $\Cay(G(\Gamma)_*,\sqcup_{i\in I}G_i\cup W)$ as in Theorem~\ref{thm:relative_hyperbolic_space} and, hence, hyperbolic. 
Therefore, $G(\Gamma)_*$ is weakly hyperbolic relative to $W'$ and $\{G_i\mid i\in I\}$ in the sense of \cite[Definition 4.1]{DGO}. 
\end{remark}

\subsection{Geodesics in the $Gr'(\frac{1}{6})$-case}

We show that geodesics in $\Cay(G(\Gamma),S\cup W)$ are close to geodesics in  $\Cay(G(\Gamma),S)$ in the case that $\Gamma$ is $Gr'(\frac{1}{6})$-labelled by providing a description of the geodesics in $\Cay(G(\Gamma),S)$. Applying our construction, we show that the action of $G(\Gamma)$ on $\Cay(G(\Gamma),S\cup W)$ is not acylindrical in general, even in the case of classical $C'(\frac{1}{6})$-groups.

\begin{prop}\label{prop:geodesics} Let $\Gamma$ be a $Gr'(\frac{1}{6})$-labelled graph, and let $W$ be the set of all elements of $G(\Gamma)$ represented by words read on $\Gamma$. Let $x\neq y$ be vertices in $X:=\Cay(G(\Gamma),S)$ and $\gamma_X$ a geodesic in $X$ from $x$ to $y$. Denote $k:=d_Y(x,y)$, where $Y:=\Cay(G(\Gamma),S\cup W)$. Then:
\begin{itemize}
 \item $k$ is the minimal number such that $\gamma_X=\gamma_{1,X}\dots\gamma_{k,X}$, where each $\gamma_{i,X}$ is a lift of a path in $\Gamma$ or (the inverse of) an edge labelled by an element of $S$ that does not occur on $\Gamma$. This means $(\iota \gamma_{1,X},\tau\gamma_{1,X})\dots(\iota\gamma_{k,X},\tau\gamma_{k,X})$ is a geodesic in $Y$ from $x$ to $y$.
 \item If $\Gamma_1,\dots,\Gamma_k$ are images of components of $\Gamma$ or of single edges labelled by elements of $S$ that do not occur on $\Gamma$ such that $\Gamma_1\cup\dots\cup\Gamma_k\subseteq X$ contains a path from $x$ to $y$, then $\gamma_X$ is contained in $\Gamma_1\cup\dots\cup\Gamma_k$ and intersects each $\Gamma_i$ in at least one edge. 
\end{itemize}
\end{prop}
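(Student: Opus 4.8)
The plan is to analyze an arbitrary factorization of a $Y$-geodesic into lifts of paths on $\Gamma$ and $S$-edges, and to use the $Gr'(\tfrac16)$ diagram machinery---in particular Lemma~\ref{lem:convex_embedding}, Lemma~\ref{lem:connected_embedding} and Strebel's classification of $(3,7)$-bigons (Theorem~\ref{thm:strebel_bigons})---to transfer information between $X=\Cay(G(\Gamma),S)$ and $Y=\Cay(G(\Gamma),S\cup W)$. For the first item, note that by definition of the generating set $S\cup W$, a $Y$-geodesic of length $k$ from $x$ to $y$ is a word $w_1\cdots w_k$ where each $w_i$ is either an element of $W$ (hence represented by a word read on $\Gamma$) or an $S$-generator not occurring on $\Gamma$. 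Lifting this product to $X$ starting at $x$ yields a decomposition of \emph{some} $X$-path $\gamma'$ from $x$ to $y$ as a concatenation $\gamma'_1\cdots\gamma'_k$ of the required type; conversely, any such decomposition of any $X$-path from $x$ to $y$ into $k$ pieces gives a $Y$-path of length $k$. So $d_Y(x,y)$ equals the minimal number of pieces over all such decompositions of all $X$-paths from $x$ to $y$, and what remains is to show this minimum is already realized by the \emph{geodesic} $\gamma_X$.

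First I would reduce the first item to the second. Suppose $\Gamma_1,\dots,\Gamma_k$ are images in $X$ of components of $\Gamma$ (or single $S$-edges not on $\Gamma$) such that $\Gamma_1\cup\dots\cup\Gamma_k$ contains a path $\gamma'$ from $x$ to $y$, realizing $k=d_Y(x,y)$. The second item asserts $\gamma_X\subseteq\Gamma_1\cup\dots\cup\Gamma_k$ and that $\gamma_X$ meets each $\Gamma_i$ in an edge; granting this, $\gamma_X$ itself decomposes into at most $k$ lifts of paths on $\Gamma$ and $S$-edges (by following $\gamma_X$ through the $\Gamma_i$ it successively enters), so $d_Y(x,y)\le k$, and minimality forces equality and forces exactly $k$ pieces. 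To prove the second item, I would argue by induction on $k$ using Lemma~\ref{lem:connected_embedding}: the intersections of the (convex, isometrically embedded by Lemma~\ref{lem:convex_embedding}) subsets $\Gamma_i$ are connected, so $\Gamma_1\cup\dots\cup\Gamma_k$ is a ``tree-like'' union of convex sets; a standard fact about unions of convex subsets with connected pairwise intersections in a geodesic space is that any geodesic between two points of the union stays in the union and traverses the constituents in order. Concretely, take the first $\Gamma_i$, say $\Gamma_1$, containing $x$; let $z$ be the last point of $\gamma_X$ lying in $\Gamma_1$. Convexity of $\Gamma_1$ gives that the subpath of $\gamma_X$ from $x$ to $z$ lies in $\Gamma_1$; then $z$ must lie in $\Gamma_2\cup\dots\cup\Gamma_k$ because $\gamma'$ does and the intersection pattern is connected, and one recurses on the remaining union with endpoints $z,y$. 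The claim that $\gamma_X$ meets each $\Gamma_i$ in at least an edge (not just a vertex) follows from the minimality of $k$: if $\gamma_X$ only touched some $\Gamma_i$ in a single vertex it could be dropped, contradicting $k=d_Y(x,y)$.

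The main obstacle I anticipate is the geometric lemma that a geodesic between two points of a union of convex sets with connected pairwise intersections stays in the union and visits the pieces consecutively. In a general geodesic space this needs the nerve of the cover to be a tree (or at least to have no ``cycles'' that a geodesic could shortcut through), and here that has to be extracted from the small cancellation hypothesis rather than assumed. The clean way is to invoke the diagram argument directly, as in the proof of Lemma~\ref{lem:convex_embedding}: given $\gamma_X$ and the path $\gamma'\subseteq\Gamma_1\cup\dots\cup\Gamma_k$, form a $\Gamma$-reduced diagram $D$ for $\ell(\gamma_X)\ell(\gamma')^{-1}$ with $\partial D=p q^{-1}$; since $\gamma_X$ is an $X$-geodesic, every face $\Pi$ satisfies $|\partial\Pi\cap p|\le\tfrac12|\partial\Pi|$, while interior arcs are pieces of length $<\tfrac16|\partial\Pi|$, so $D$ is a $(3,7)$-bigon of shape $\mathrm I_1$ (or a single face). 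Exactly as in Lemma~\ref{lem:convex_embedding}, each face then has $|\partial\Pi\cap p|>\tfrac16|\partial\Pi|$, hence $\partial\Pi\cap p$ is not a piece and lifts back to $\Gamma$ compatibly with $\partial\Pi$; piecing these together yields a label-preserving map from the $1$-skeleton of $D$ to $\Gamma$ sending $p$ to $\gamma_X$, which shows $\gamma_X$ lies in $\Gamma_1\cup\dots\cup\Gamma_k$ (each face of $D$ is glued onto a segment of $\gamma'$, hence into some $\Gamma_j$, and its opposite side---a segment of $\gamma_X$---lies in the same component). The consecutive-visiting and ``at least one edge'' statements are then read off from the shape-$\mathrm I_1$ structure together with the minimality of $k$ and the connectedness from Lemma~\ref{lem:connected_embedding}, completing both items.
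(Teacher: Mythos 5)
Your overall strategy --- compare $\gamma_X$ with a path $\gamma'$ realizing $k=d_Y(x,y)$ inside $\Gamma_1\cup\dots\cup\Gamma_k$ via a $\Gamma$-reduced diagram, show the diagram is a $(3,7)$-bigon of shape $\mathrm{I}_1$, and lift its faces back into the $\Gamma_j$ --- is exactly the paper's, and your first paragraph and the reduction of the first bullet to the second are fine. But the diagram argument as you state it has two genuine gaps. First, you only control the faces whose exterior arcs lie on the geodesic side $p$; to conclude that $D$ is a $(3,7)$-bigon you must also bound the faces along the $\gamma'$-side, and this fails for an arbitrary realization $\gamma'$ of $d_Y(x,y)$: a face could meet a single $\sigma_j$ in a long arc that lifts compatibly to $\Gamma_j$ and have interior degree $1$. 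The paper fixes this by choosing $\sigma_X=\sigma_{1,X}\cdots\sigma_{k,X}$ with $|\sigma_X|$ minimal (so each $\sigma_{j,\Gamma}$ is geodesic in $\Gamma_j$, and a long compatible exterior arc would let you shorten $\sigma_{j,X}$) and, in addition, with the partial sums $\sum_{r\leqslant j}|\sigma_{r,X}|$ maximal; both choices are used repeatedly in the paper's two claims and neither appears in your proposal.

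Second, the lifting step is run on the wrong side of the diagram. You argue that $|\partial\Pi\cap p|>\frac{1}{6}|\partial\Pi|$ forces $\partial\Pi\cap p$ to be a non-piece that ``lifts back compatibly'' --- but $p$ is the geodesic $\gamma_X$, which carries no prescribed lift to $\Gamma$, so there is nothing for the face's lift to be compatible with. The prescribed lifts live on the $\sigma$-side, and to place a face $\Pi$ into the correct $\Gamma_j$ one must show that $\partial\Pi\cap\sigma_j$ for a single $j$ is not a piece. This requires knowing that $\partial\Pi\cap\sigma$ meets at most two consecutive $\sigma_i$ (which comes from the maximality of the partial sums together with the minimality of $k$), and, for the faces after the first one in the shape-$\mathrm{I}_1$ chain, that the interior arc concatenated with the leftover portion of $\sigma_i$ is itself a piece --- this is the inductive content of the paper's Claim 2. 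Your parenthetical ``each face of $D$ is glued onto a segment of $\gamma'$, hence into some $\Gamma_j$'' is precisely the assertion that needs proof, not a consequence of the bigon shape. Finally, as you yourself anticipated, the ``standard fact'' about unions of convex sets with connected pairwise intersections is not available in this generality and cannot substitute for the diagram argument.
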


Here, if $v\neq w$ are vertices, then $(v,w)$ denotes an edge $e$ with $\iota e=v$ and $\tau e=w$.

\begin{remark}
 Let $x\neq y$ be vertices in $\Cay(G(\Gamma),S)$. The sequence $\Gamma_1,\Gamma_2,\dots,\Gamma_k$ considered in Proposition~\ref{prop:geodesics} has the following properties:
 \begin{itemize}
  \item $\bigcup_{i=1}^k\Gamma_i$ contains every geodesic in $\Cay(G(\Gamma),S)$ from $x$ to $y$.
  \item $\bigcup_{i=1}^k\Gamma_i$ is connected and no three $\Gamma_i$ pairwise intersect. There exist (not necessarily distinct) $\Gamma_{i_0}$ and $\Gamma_{i_1}$ that each intersect at most one other $\Gamma_i$. \end{itemize}
The second part follows from the minimality of $k$. $\Gamma_{i_0}$ and $\Gamma_{i_1}$ are the components containing $x$ and $y$, respectively.

This aspect of our result generalizes a part of \cite[Theorem 4.15]{ADr}, which was the first description of geodesics for infinitely presented small cancellation groups: in \cite[Theorem 4.15]{ADr}, given any classical small $C'(\frac{1}{8})$-group $G$, for any two vertices $x\neq y\in \Cay(G,S)$, a sequence $\Gamma_1,\Gamma_2,\dots,\Gamma_k$ with the two above properties is constructed. In this case, each $\Gamma_i$ is either an embedded cycle graph labelled by a relator or a single edge. \cite[Theorem 4.15]{ADr} is not concerned with a minimal number $k$ of components and provides further details on metric properties of the sequence such as pairwise distance of non-intersecting $\Gamma_i$.
\end{remark}

\begin{proof}[Proof of Proposition~\ref{prop:geodesics}]
 For the proof, assume that every letter occurs on an edge of $\Gamma$. If this is a priori not the case, it can be achieved by adding for each $s\in S$ that does not occur on $\Gamma$ a new component to $\Gamma$ that is simply an edge labelled by $s$.
 
 Let $x\neq y$ be vertices in $X$, and let $k:=d_Y(x,y)$. Let $\gamma_X$ be a geodesic in $X$ from $x$ to $y$, and let $l$ be minimal such that $\gamma_X=\gamma_{1,X}\gamma_{2,X}\dots\gamma_{l,X}$, where each $\gamma_{i,X}$ lifts to a path in $\Gamma$. We will show that $l=k$.
 
 Since $d_Y(x,y)=k$, there exists a path $\sigma_X$ in $X$ from $x$ to $y$ such that there exist images $\Gamma_i$ in $\Cay(G(\Gamma),S)$ of components of $\Gamma$ such that $\sigma_X=\sigma_{1,X}\sigma_{2,X}\dots\sigma_{k,X}$ where each $\sigma_{i,X}$ is a path in $\Gamma_i$. This gives rise to a lift $\sigma_{i,\Gamma}$ in $\Gamma$ of each $\sigma_{i,X}$. 
 We choose $\sigma_X$ and $\sigma_{i,X}$ as above such that $|\sigma_X|$ is minimal and such that, for every $j<k$, $\sum_{r=1}^j|\sigma_{r,X}|$ is maximal. Note that, since $|\sigma_X|$ is minimal, $\sigma_X$ is labelled by a reduced word, and each $\sigma_{i,X}$ lifts to a geodesic in $\Gamma$.
 
 Let $D$ be a $\Gamma$-reduced diagram for $\ell(\sigma_X)\ell(\gamma_X)^{-1}$ over $\Gamma$, i.e.\ we can write $\partial D=\sigma\gamma^{-1}$ where $\sigma$ lifts to $\sigma_X$ and $\gamma$ lifts to $\gamma_X$. Denote by $\sigma_i$ the lifts in $D$ of the $\sigma_{i,X}$. 
 
 \vspace{12pt}
 
 \noindent {\bf Claim 1.} $D$ is a $(3,7)$-bigon. 
 
 Let $\Pi$ be a face of $D$ with $e(\Pi)=1$. Then there exists a unique maximal exterior arc $p$ in $\partial\Pi$. If $p$ is contained in $\gamma$, then $|p|\leqslant\frac{|\partial\Pi|}{2}$ since $\gamma_X$ is a geodesic, whence $i(\Pi)\geqslant 4$. 
 
 Now suppose $p$ is contained in $\sigma$, and suppose that $i(\Pi)\leqslant 3$. Then $|p|>\frac{|\partial\Pi|}{2}$, whence $p$ is not a concatenation of at most 3 pieces. Since $d(x,y)=k$, the concatenation of two consecutive $\sigma_i$ cannot lift to a path in $\Gamma$. Therefore, $p$ must be a subpath of $\sigma_{i_0}\sigma_{i_0+1}\sigma_{i_0+2}$ for some $i_0$.  
 Since $p$ is not a concatenation of at most 3 pieces, there exists $j\in\{i_0,i_0+1,i_0+2\}$ for which $p\cap\sigma_j$ is not a piece. Therefore, a lift of $p\cap\sigma_j$ via $\partial \Pi$ equals the lift via $\sigma_{j}\mapsto\sigma_{j,\Gamma}$. This implies that in the decomposition $\sigma_{1,X}\sigma_{2,X}\dots \sigma_{k,X}$, 
 we can replace $\sigma_{j,X}$ by a lift $\tilde\sigma_{j,X}$ of $p$ such that $\tilde\sigma_{j,X}$ is a path in $\Gamma_j$, and we correspondingly shorten the paths $\sigma_{j-1,X}$ and $\sigma_{j+1,X}$ that are (possibly) intersected by $\tilde\sigma_{j,X}$. 
 (By minimality of $k$, no other paths are intersected.) The resulting decomposition $\sigma_X=\tilde\sigma_{1,X}\tilde\sigma_{2,X}\dots\tilde\sigma_{k,X}$ still satisfies that every $\tilde\sigma_{i,X}$ is path in $\Gamma_{i}$. 
 Since $\tilde\sigma_{j,X}$ is contained in a lift in $\Gamma_j$ of $\partial\Pi$ with $|\tilde\sigma_{j,X}|=|p|>\frac{|\partial\Pi|}{2}$, we have that $\tilde\sigma_{j,X}$ is not a geodesic path in $\Gamma_j$. Thus, we can replace $\tilde\sigma_{j,X}$ by a shorter path in $\Gamma_j$, contradicting the minimality of $|\sigma_X|$. Therefore, $i(\Pi)\geqslant 4$.
 
 \vspace{12pt}
 
\noindent {\bf Claim 2.} $D^{(1)}$ maps to $\Gamma_1\cup \Gamma_2\cup\dots\cup\Gamma_k$. Since, each $\Gamma_i$ is convex by Lemma~\ref{lem:convex_embedding}, this proves that $l=k$, and our proposition follows. 
 
Suppose $D$ contains a disk component $\Delta$. Then we can number the faces of $\Delta$ by $\Pi_1,\Pi_2,...$ starting from the one closest to $\iota \sigma$. (This makes sense since $\Delta$ is a single face or has shape $\mathrm{I_1}$ by Theorem~\ref{thm:strebel_bigons}.) Consider $\Pi_1$. Denote by $\sigma_0$ a path of length $0$. As argued above, the path $\partial\Pi_1\cap\sigma$ is contained in $\sigma_{i-1}\sigma_{i}\sigma_{i+1}$ for some $i\geqslant 1$. 
By maximality of $\sum_{r=1}^{i}|\sigma_{r,X}|$, it is a subpath of $\sigma_{i-1}\sigma_{i}$ for some $i$ such that $\partial\Pi_1\cap \sigma_i$ contains an edge. We have $|\partial\Pi_1\cap \gamma|\leqslant\frac{|\partial\Pi_1|}{2}$ since $\gamma_X$ is a geodesic. 
Moreover, $\partial\Pi_1$ has at most one subpath $p$ that is a maximal interior arc, and $|p|<\frac{|\partial\Pi_1|}{6}$. Therefore, $|\partial\Pi_1\cap \sigma|>\frac{|\partial\Pi_1|}{3}$, whence $\partial\Pi_1\cap \sigma$ cannot be a concatenation of two pieces. 
Thus, by maximality of $\sum_{r=1}^{i-1}|\sigma_{r,X}|$, the lift of $\partial\Pi_1\cap \sigma_i$ via $\sigma_i\mapsto \sigma_{i,\Gamma}$ must equal a lift via $\partial\Pi_1$. 
 
Now suppose $\Pi_2$ exists. Then $\partial\Pi_2\cap \sigma$ is a subpath of $\sigma_i\sigma_{i+1}$ ($i$ from the above paragraph), and $\partial\Pi_2\cap \sigma$ has an initial subpath $\sigma_i'$ that is a (possibly empty) terminal subpath of $\sigma_i$. By the above observation, the lifts of $\partial\Pi_1$ give rise to lifts of the concatenation $q:=(\partial \Pi_1\cap\partial\Pi_2) \sigma_i'$ in $\Gamma$. 
Note that $q$ is contained in $\partial\Pi_2$ and, thus, has lifts via $\partial\Pi_2$ in $\Gamma$. Since no interior edge of $D$ originates from $\Gamma$, these lifts are never equal, whence $q$ is a piece. Therefore, the same argument as above shows that the lift of $\partial\Pi_2\cap \sigma_{i+1}$ via $\sigma_{i+1}\mapsto\sigma_{i+1,\Gamma}$ equals a lift via $\partial\Pi_2$. Claim 2 follows inductively.
\end{proof}

\begin{remark}  In the case of a $Gr_*'(\frac{1}{6})$-labelled graph over a free product, the above proof and, hence, result apply if $\Gamma$ is replaced by $\overline\Gamma$. The only additional observation required is that any geodesic in $X$ that lifts to $\overline\Gamma$ is locally geodesic.\end{remark}

Proposition~\ref{prop:geodesics} lets us study the action of $G(\Gamma)$ on $Y$. We use it to show that the action need not be acylindrical in general. 

\begin{defi}[{\cite[Introduction]{Os-acyl}}]\label{defi:acylindrical} A group $G$ acts \emph{acylindrically} on a metric space $Y$ if for every $\epsilon >0$ there exist $K\in\N$ and $N\in\N$ such that for every $x,y\in Y$ with $d(x,y)\geqslant K$, there exist at most $N$ elements $g\in G$ satisfying:
$$d(x,gx)\leqslant \epsilon \text{ and } d(y,gy)\leqslant \epsilon.$$
\end{defi}

\begin{example}\label{example:not_acylindrical}
We construct a classical $C'(\frac{1}{6})$-presentation $\langle S\mid R\rangle$ of a group $G$ such that the action of $G$ on $Y:=\Cay(G,S\cup W)$ is not acylindrical. Here $W$ is the set of all elements of $G$ represented by subwords of elements of $R$. 
This corresponds to our above definition of $W$ by taking $\Gamma$ to be the disjoint union of cycle graphs labelled by the elements of $R$.

Let $G$ be defined by a classical $C'(\frac{1}{6})$-presentation $\langle S\mid R\rangle$ with the following property for every $N\in \N$: There exists a cyclically reduced word $w_N\in M(S)$ satisfying the following conditions. (Denote by $\omega_N$ a 1-infinite ray in $\Cay(G,S)$ starting at $1\in G$ with label $w_Nw_N\dots$.)
\begin{itemize}
\item[a)] $w_N^N$ is a subword of a relator in $R$.
\item[b)] If $\gamma$ is a path in $\Cay(G,S)$ with label in $R$ and if $p$ is a path in $\gamma\cap\omega_N$, then $|p|\leqslant\frac{|\gamma|}{6}$.
\item[c)] There exists an integer $C_N$ such that if $\gamma$ is a path in $\Cay(G,S)$ with label in $R$ and if $p$ is a path in $\gamma\cap\omega_N$, then $|p|\leqslant C_N|w_N|$.
\end{itemize}

Let $N\in\N$. By Theorem~\ref{thm:strebel_bigons}, b) implies that every subpath of $\omega_N$ is a geodesic in $\Cay(G,S)$. Therefore, Proposition~\ref{prop:geodesics} and c) yield for every $K\in\N$ and $L:=C_NK$ that $d_Y(1,w_N^{L})\geqslant K.$ By a), for every $0\leqslant m\leqslant N$ we have
$$d_Y(1,w_N^m)=d_Y(w_N^{L},w_N^mw_N^{L})\leqslant 1.$$
The elements of $G$ represented by $w_N,w_N^2,\dots,w_N^N$ are pairwise distinct. We conclude that, for every $K\in\N$ and every $N\in\N$, there exist points $x$ and $y$ in $Y$ and at least $N$ elements $g$ of $G$ satisfying:
$$d_Y(x,y)\geqslant K\text{ and }d_Y(x,gx)=d_Y(y,gy)\leqslant 1.$$
Therefore, the action of $G$ on $Y$ is not acylindrical.

The (symmetrized closure) of the presentation $\langle a,b,s_1,s_2,\dots s_{12}\mid r_1,r_2,\dots\rangle$ with $$r_N:=(ab^N)^Ns_1^{N^2+N}s_2^{N^2+N}\dots s_{12}^{N^2+N}$$ is a classical $C'(\frac{1}{6})$-presentation that satisfies the above conditions with $w_N=ab^N$ and $C_N=N$.
\end{example}

\section{The WPD element}\label{section:wpd_element}

In this section, we complete the proofs of Theorems~\ref{thm:gr7} and \ref{thm:c7} by showing the existence of a WPD element for the action of $G(\Gamma)$ on the hyperbolic space constructed in Theorem~\ref{thm:hyperbolic_space}. We then provide a slight refinement for the case of $Gr'(\frac{1}{6})$-groups, and we show that all results hold for the corresponding free product small cancellation cases as well.

\subsection{The graphical $Gr(7)$ and $C(7)$-cases}
From now until the end of this section, we fix a $Gr(7)$-labelled graph for the proof of Theorem~\ref{thm:gr7}, respectively a $C(7)$-labelled graph $\Gamma$ for the proof of Theorem~\ref{thm:c7}, with a set of labels $S$ such that the following hold:
\begin{itemize}\label{graph_assumptions}
 \item Every $s\in S$ occurs on an edge of $\Gamma$.
 \item No $s\in S$ occurs on exactly one edge of $\Gamma$.
 \item $\Gamma$ has at least one component, and every component of $\Gamma$ has a non-trivial fundamental group.
 \item No two components $\Gamma_1$ and $\Gamma_2$ of $\Gamma$ admit a label-preserving isomorphism $\Gamma_1\to\Gamma_2$.
\item In the case of Theorem~\ref{thm:gr7}, $\Gamma$ has at least two finite components $\Gamma_1,\Gamma_2$.
\item In the case of Theorem~\ref{thm:c7}, $\Gamma$ contains at least one embedded cycle graph $c$. 
\end{itemize}

We explain why these properties can be assumed for the proofs: If the first property does not hold for $s\in S$, then $s$ generates a free factor in $G(\Gamma)$, and either $G(\Gamma)$ is isomorphic to $\Z$ or to $G'*\Z$ for some non-trivial group $G'$. In both cases, the statements of the theorems hold. 

For the second property, let $e$ be an edge whose label $s$ occurs on no other edge of $\Gamma$. 
The operation of removing $e$ from $\Gamma$ and simultaneously removing $s$ from the alphabet corresponds to a Tietze-transformation if $e$ is contained in an embedded cycle graph. If $e$ is not contained in an embedded cycle graph, then the operation corresponds to projecting to the identity the free factor of $G(\Gamma)$ that is the infinite cyclic group generated by $s$. Thus, if we simultaneously remove all such edges and the corresponding labels from the alphabet, the resulting graph defines either $G(\Gamma)$, or it defines a group $G'$ such that $G(\Gamma)\cong G'*F$ for some non-trivial free group 
$F$. In the latter case, the statements of the theorems hold. 

The third and fourth properties can be arranged by simply discarding superfluous components. If no component remains, $G(\Gamma)$ is a free group. 

For the last two properties, if, in either case, the property is not satisfied, then $\Gamma$ is finite or a forest, and $G(\Gamma)$ is Gromov hyperbolic (if it is finitely generated) by \cite[Theorem~2.16]{Gru} or a non-trivial free product. Therefore, the statements of the theorems hold.

\begin{lem}\label{lem:c7_wpd1}
Let $\Gamma_0$ be a finite component of $\Gamma$. Then one of the following holds:
 \begin{itemize}
  \item There exist distinct vertices $x$ and $y$ in $\Gamma_0$ such that no path from $x$ to $y$ is a concatenation of pieces.
  \item There exists a simple closed path in $\Gamma_0$ that is a concatenation of pieces.
 \end{itemize}
\end{lem}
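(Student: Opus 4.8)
The plan is to argue by contraposition: assume that neither alternative holds, and derive that $\Gamma_0$ contains no non-trivial closed path, contradicting our standing assumption that every component has non-trivial fundamental group. So suppose that for every pair of distinct vertices $x,y$ in $\Gamma_0$ there \emph{is} a path from $x$ to $y$ which is a concatenation of pieces, and suppose that no simple closed path in $\Gamma_0$ is a concatenation of pieces. The first step is to record the easy consequence that \emph{every} edge of $\Gamma_0$ is itself a piece: by the second standing assumption on $\Gamma$, each label occurs on at least two edges, so either the edge and some other edge with the same label are essentially distinct (and the edge is a piece), or all edges with that label are identified by automorphisms — but then one uses the hypothesis on $x,y$ endpoints together with the automorphism structure to still produce a piece decomposition. (One has to be a little careful here about the $Gr(7)$ versus $C(7)$ distinction; in the $C(7)$ case automorphisms are trivial on components with non-trivial $\pi_1$, so this is immediate.)

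The heart of the argument is the following. Pick any simple closed path $\gamma$ in $\Gamma_0$ — it exists since $\pi_1(\Gamma_0)\neq 1$. By assumption $\gamma$ is \emph{not} a concatenation of pieces. Write $\gamma = e_1 e_2 \cdots e_n$ as a concatenation of edges. Let $v = \iota e_1 = \tau e_n$ be the basepoint. The idea is to build, from the "piece-connectivity" hypothesis, a closed path at $v$ that \emph{is} a concatenation of pieces and is homotopic to (a power of, or otherwise related to) $\gamma$, and then massage it down to a simple closed path that is still a concatenation of pieces — contradicting the second assumed negation. Concretely: for each $i$, the vertices $\iota e_i$ and $\tau e_i$ are joined by a path $p_i$ that is a concatenation of pieces; the closed path $p_1 p_2 \cdots p_n$ (suitably based) is then a concatenation of pieces. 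This closed path need not be simple, but any closed path in a graph is a concatenation of simple closed paths (reading along and canceling backtracking, or decomposing via the standard lemma that the edge-loop at a vertex decomposes into simple loops). Each simple closed sub-loop inherits a decomposition into pieces from being a sub-path of $p_1\cdots p_n$, \emph{provided} the decomposition into pieces respects the cutting — which it does, since sub-paths of pieces are pieces. If at least one of these simple closed sub-loops is non-trivial, we have produced a simple closed path that is a concatenation of pieces, the desired contradiction. If all of them are trivial (0-homotopic), then $p_1\cdots p_n$ is 0-homotopic, but it has the same endpoints as, and is freely homotopic structure-wise comparable to, the non-trivial $\gamma$; chasing this through (in a graph, "$p_1\cdots p_n$ is a loop at $v$ that is 0-homotopic" just means it reduces to the trivial path) one checks it forces a relation that contradicts $\gamma$ being non-trivial and simple, or more directly forces one of the $p_i$ together with $e_i$ to close up a non-trivial simple loop made of pieces.

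The main obstacle, as I see it, is the bookkeeping in the last step: ensuring that when one decomposes the piece-path $p_1 \cdots p_n$ into simple closed paths, at least one of them is genuinely non-trivial, rather than having everything collapse. The clean way to force this is to choose the paths $p_i$ and the edges $e_i$ so that the concatenation $(e_1 p_1^{-1})(e_2 p_2^{-1})\cdots$ — or some such alternating assembly — has the homotopy type of $\gamma$ itself; since $\gamma \not\sim 1$, the assembled loop is non-trivial, and being a concatenation of pieces (the $e_i$ are pieces by Step 1, the $p_i$ are concatenations of pieces by hypothesis), it decomposes into simple closed paths at least one of which is non-trivial and is therefore a simple closed path that is a concatenation of pieces. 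That non-trivial simple closed path is exactly what the second bullet of the lemma asserts, so the contraposition is complete. The only remaining care is the treatment of automorphisms in the $Gr(7)$ case when verifying that edges are pieces, and that "sub-path of a piece is a piece," both of which are routine from Definition~\ref{defi:piece}.
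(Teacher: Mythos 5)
Your Step 1 is where the proof actually lives, and it is not correct as stated: it is \emph{not} true in general that every edge of $\Gamma_0$ is a piece. In the $Gr(7)$ setting an edge $e$ fails to be a piece precisely when all occurrences of its label are related by label-preserving automorphisms of $\Gamma$ (and, since no two components are isomorphic, these automorphisms restrict to automorphisms of $\Gamma_0$). Your parenthetical ``one uses the hypothesis on $x,y$ endpoints together with the automorphism structure to still produce a piece decomposition'' is exactly the missing argument, not a routine remark. The paper's proof of this case is genuinely non-trivial: it takes an automorphism $\phi$ of $\Gamma_0$ with $\phi(e)\neq e$ (hence, by reducedness of the labelling, fixing no vertex), connects $\iota e$ to $\phi(\iota e)$ by a reduced concatenation of pieces $p$ (using the negation of the first bullet), observes that $\ell(p)$ is freely non-trivial, and then uses \emph{finiteness of $\Gamma_0$} to get $\phi^k=\operatorname{id}$, so that $p\,\phi(p)\cdots\phi^{k-1}(p)$ is a closed path with freely non-trivial label $\ell(p)^k$; its reduction contains a non-trivial simple closed path that is a concatenation of pieces. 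Nothing in your write-up substitutes for this, and note that this is also the only place the hypothesis that $\Gamma_0$ is \emph{finite} enters.

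Your Step 2 does not repair the gap and is moreover internally inconsistent: if every edge were a piece (your Step 1), then the simple closed path $\gamma=e_1\cdots e_n$ (which exists since $\pi_1(\Gamma_0)\neq 1$) is already a concatenation of pieces, giving the second bullet immediately --- yet you simultaneously assume $\gamma$ is not a concatenation of pieces and launch the $p_i$-construction. And in the problematic case where some $e_i$ is not a piece, the construction does not close: if $p_1\cdots p_n$ happens to be $0$-homotopic, the non-triviality of $\gamma$ only forces some loop $e_ip_i^{-1}$ to be non-trivial, and that loop is \emph{not} a concatenation of pieces precisely because $e_i$ is not one. So the ``chasing this through'' step fails exactly where it is needed. (Your observation that the $C(7)$ case is immediate is correct, and subpaths of pieces are indeed pieces for a reduced labelling, but the $Gr(7)$ case is the substance of the lemma and remains unproved in your proposal.)
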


\begin{proof} 
 Suppose the first claim does not hold. Then any two vertices of $\Gamma_0$ can be connected by a path that is a concatenation of pieces. If every edge in $\Gamma_0$ is a piece, the second claim holds since $\Gamma_0$ has a non-trivial fundamental group. Now assume an edge $e$ is not a piece. 
 
 Since the label of $e$ occurs more than once on $\Gamma$ and since no two components of $\Gamma$ are isomorphic, there exists a label-preserving automorphism $\phi:\Gamma_0\to\Gamma_0$ with $\phi(e)\neq e$. Let $p$ be a reduced path from $\iota e$ to $\phi(\iota e)$ that is a concatenation of pieces. Since $p$ is not closed, its label is freely non-trivial. 
 Since $\Gamma_0$ is finite, there exists $k>0$ with $\phi^k=\operatorname{id}$, whence the path $p\phi(p)\dots\phi^{k-1}(p)$ is closed. It is non-trivial since its label is freely non-trivial. Therefore, its reduction contains a simple closed path that is a concatenation of pieces.
\end{proof}

\begin{lem}\label{lem:gr7_wpd}
Suppose $\Gamma$ has distinct finite components $\Gamma_1$ and $\Gamma_2$. 
Then there exist vertices  $x_1,y_1$ and $x_2,y_2$ in $\Gamma$ with the following properties:
 \begin{itemize}
  \item $x_1$ and $y_1$ lie in the same component of $\Gamma$, and $x_2$ and $y_2$ lie in the same component of $\Gamma$. Moreover, $x_1\neq y_1$, $x_2\neq y_2$, $x_2$ and $y_1$ are essentially distinct, and $y_2$ and $x_1$ are essentially distinct.
 \item If $\overline\alpha_1=p^{-1}\alpha_1q$ is a path from $x_1$ to $y_1$, where $p$ is a piece and $q$ is a lift of a path terminating at $x_2$, then $\alpha_1$ is not a piece.
 \item If $\overline\alpha_2=q^{-1}\alpha_2p$ is a path from $x_2$ to $y_2$, where $q$ is a lift of a path terminating at $y_1$ and $p$ is a piece, then $\alpha_2$ is not a piece.
  \item There exists at most one reduced path $\overline\alpha_1=p^{-1}\alpha_1q$ from $x_1$ to $y_1$ such that $p$ is a lift of a path terminating at $y_2$, $q$ is a lift of a path terminating at $x_2$, and $\alpha_1$ is a concatenation of at most two pieces.
 \item There exists at most one reduced path $\overline\alpha_2=q^{-1}\alpha_2p$ from $x_2$ to $y_2$ such that $q$ is a lift of a path terminating at $y_1$, $p$ is a lift of a path terminating at $x_1$, and $\alpha_2$ is a concatenation of at most two pieces.
 \end{itemize}
\end{lem}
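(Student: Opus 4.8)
The plan is to place $x_1,y_1$ inside the finite component $\Gamma_1$ and $x_2,y_2$ inside the finite component $\Gamma_2$. Since no two components of $\Gamma$ admit a label-preserving isomorphism, every label-preserving automorphism of $\Gamma$ maps each component onto itself; hence vertices or paths lying in two different components are automatically essentially distinct. This immediately gives ``$x_2$ and $y_1$ essentially distinct'' and ``$y_2$ and $x_1$ essentially distinct'', while $x_1\neq y_1$ and $x_2\neq y_2$ are arranged by hand. The same remark shows that any \emph{non-trivial} subpath $q$ of a path inside $\Gamma_1$ that is also a lift of a path terminating at $x_2\in\Gamma_2$ is a piece (its two lifts lie in distinct components), and likewise for the initial segments $p$. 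Thus all four bullet points reduce to statements purely about pieces in $\Gamma_1$ and $\Gamma_2$: the second and third follow once $x_1,y_1$ (resp.\ $x_2,y_2$) cannot be joined inside their component by a concatenation of too few pieces, and the last two follow from a rigidity statement that comes out of the $Gr(7)$-condition, namely that a non-trivial closed path is not a concatenation of fewer than $7$ pieces.

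The vertices are produced by Lemma~\ref{lem:c7_wpd1}, applied to $\Gamma_1$ and, separately, to $\Gamma_2$ (both finite and with non-trivial fundamental group). In its first alternative we obtain distinct $x_1,y_1\in\Gamma_1$ with \emph{no} path between them a concatenation of pieces; then the second bullet is immediate — a decomposition $p^{-1}\alpha_1q$ with $p$ a piece, $q$ a piece-or-trivial and $\alpha_1$ a piece would exhibit exactly such a concatenation — and the fourth bullet holds vacuously, as no candidate path $\overline\alpha_1$ exists at all. In its second alternative we obtain a simple closed path $c_1\subseteq\Gamma_1$ that is a concatenation of pieces; being reduced, closed and non-constant it is non-trivial, so by the $Gr(7)$-condition it is a concatenation of at least $7$ pieces, and no proper sub-arc of a minimal piece-decomposition of $c_1$ re-merges into a single piece. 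One then picks $x_1,y_1$ on $c_1$ so that the shorter $c_1$-arc between them consists of at least three pieces, and shows that any path in $\Gamma_1$ from $x_1$ to $y_1$ is a concatenation of at least three pieces; when such a path leaves $c_1$ this is established by feeding it together with the relevant sub-arc of $c_1$ into a $\Gamma$-reduced diagram and running the $(3,7)$-bigon analysis (Theorem~\ref{thm:strebel_bigons} and the curvature formulas, Lemmas~\ref{lem:curvature_lyndon} and~\ref{lem:curvature_strebel}), exactly as in the proof of Lemma~\ref{lem:convex_embedding}. The symmetric discussion produces $x_2,y_2$ in $\Gamma_2$.

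It then remains to choose $x_2,y_2$ — and the rotational position of the pair $(x_1,y_1)$ on $c_1$, and symmetrically — so that the ``attaching'' segments are controlled: the key point is to make every non-trivial $q$ as in the second bullet impossible, i.e.\ to pick $x_2$ so that no label carried by an edge of $\Gamma_2$ incident to $x_2$ is also carried by an edge of $\Gamma_1$ incident to $y_1$, which forces $q$ to be trivial, after which $\overline\alpha_1=p^{-1}\alpha_1$ would be a two-piece path between $x_1$ and $y_1$ — impossible by the previous paragraph. With the analogous choice for $p$ (via $y_2$, and by the symmetric choices for the $\Gamma_2$-side conditions), the fourth and fifth bullets also become vacuous, since a candidate $\overline\alpha_1=p^{-1}\alpha_1q$ then has $p,q$ trivial and $\alpha_1$ a concatenation of at most two pieces, again impossible. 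The step I expect to be the real obstacle is precisely this forcing of the attaching segments $p$ and $q$: one must understand exactly which labelled paths can simultaneously sit inside $\Gamma_1$ near $x_1,y_1$ and lift to paths of $\Gamma_2$ terminating at $x_2,y_2$, and arrange the finitely many choices uniformly over both alternatives of Lemma~\ref{lem:c7_wpd1}, including the tight case in which $c_1$ is a short cycle of unit-length pieces, where the piece-separation of any pair of vertices is only $3$ and one genuinely needs these attaching segments to be trivial.
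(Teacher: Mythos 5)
Your reduction of the essential-distinctness claims and your observation that the attaching segments $p,q$ are pieces (being lifts shared between the non-isomorphic components $\Gamma_1,\Gamma_2$) match the paper, and your treatment of the first alternative of Lemma~\ref{lem:c7_wpd1} is fine: there every candidate $\overline\alpha_1$ is a concatenation of pieces and so cannot exist. The gap is exactly where you suspect it, in the second alternative, and it is fatal to your route. You cannot in general ``pick $x_2$ so that no label carried by an edge of $\Gamma_2$ incident to $x_2$ is also carried by an edge of $\Gamma_1$ incident to $y_1$'': for a two-letter alphabet (as in all the classical $C'(\frac16)$-examples in the paper) every vertex of every component is typically incident to edges of both labels, so no such $x_2$ exists and $q$ cannot be forced to be trivial. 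Without that forcing, your plan needs $x_1$ and $y_1$ at piece-distance at least $5$ in $\Gamma_1$ (a candidate in the fourth bullet is $p^{-1}\alpha_1 q$ with $p,q$ pieces and $\alpha_1$ two pieces, i.e.\ four pieces), but when $c_1$ is a simple closed path that is a concatenation of exactly $7$ pieces, any two of its vertices are separated along $c_1$ by an arc of at most $3$ or $4$ pieces, so the required separation is unattainable. In particular the fourth and fifth bullets cannot be made vacuous; they genuinely assert \emph{at most one} such path because one may exist.

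The paper resolves this with a construction you do not have: it takes $y_1=v_1$ and $x_2=v_2$ to be the \emph{basepoints} of the two embeddings $f_i:\Gamma_i\to\Cay(G(\Gamma),S)$ with $f_1(v_1)=f_2(v_2)=1$, so that any $q$ terminating at $y_1$ which lifts to a path terminating at $x_2$ has image inside $C:=f_1(\Gamma_1)\cap f_2(\Gamma_2)$. By Lemma~\ref{lem:connected_embedding} $C$ is connected, and every path in it is a single piece; hence the whole overlap at the $y_1$-end is absorbed into $C$, and it suffices to choose $x_1=w_1$ on $\gamma_1$ at piece-distance greater than $2$ from $f_1^{-1}(C)$ (possible because $\gamma_1\smallsetminus f_1^{-1}(C)$ cannot be covered by $5$ pieces). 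This asymmetric choice ($y_1$ in $f_1^{-1}(C)$, $x_1$ far from it) is what makes the second and third bullets work without any hypothesis on edge labels. Finally, the last two bullets are proved not by vacuousness but by uniqueness: two distinct candidates $\overline\alpha_1,\overline\alpha_1'$ would yield a closed path $pp'^{-1}\alpha_1'q'q^{-1}\alpha_1^{-1}$ that is a concatenation of at most $6$ pieces and is non-trivial because $\overline\alpha_1'\overline\alpha_1^{-1}$ has freely non-trivial label, contradicting $Gr(7)$. You would need to import both of these ideas to repair your argument.
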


\begin{proof} 
Denote $X:=\Cay(G(\Gamma),S)$. First assume both $\Gamma_1$ and $\Gamma_2$ satisfy the second claim of Lemma~\ref{lem:c7_wpd1}.
 For $i=1,2$, let $\gamma_i$ be simple closed paths in $\Gamma_i$ that are concatenations of pieces, and denote their initial vertices by $v_i$. Consider the maps of labelled graphs $f_i:\Gamma_i\to X$ obtained by mapping $v_i$ to $1\in G(\Gamma)$. Let $C:=f_1(\Gamma_1)\cap f_2(\Gamma_2)$. The maps $f_i$ are injective by Lemma~\cite[Lemma~4.1]{Gru}, and $C$ is connected by Lemma~\ref{lem:connected_embedding}. Since $v_1$ and $v_2$ are essentially distinct, we have that any path in $C$ is a piece.

For each $i\in\{1,2\}$, let $p_i$ be the maximal subpath of $\gamma_i$ contained in $f_i^{-1}(X\setminus C)$. (See Figure~\ref{figure:free_gr7} for an illustration.) Then $p_i$ is not a concatenation of at most 5 pieces. Let $w_i$ be the initial vertex of the maximal terminal subpath of $p_i$ that is a concatenation of at most 3 pieces. 
Then $f_i(w_i)$ cannot be connected to any vertex of $C$ by any path in $f_i(\Gamma_i)$ that is a concatenation of at most two pieces, for else there would exist a non-trivial closed path in $\Gamma$ that is a concatenation of at most 6 pieces. Denote $x_1=w_1,y_1=v_1,x_2=v_2,y_2=w_2$. Then first claim holds since $\Gamma_1$ and $\Gamma_2$ are non-isomorphic, and our above observation proves the second and third claims.
 
If one or both $\Gamma_i$ satisfy the first claim of Lemma~\ref{lem:c7_wpd1}, then we make the above construction letting $v_i$ and $w_i$ be any distinct vertices in $\Gamma_i$ that cannot be connected by a path that is a concatenation of pieces.

 For the fourth claim, suppose there are two distinct reduced paths $\overline \alpha_1$ and $\overline\alpha_1'$ as in the claim. We write $\overline\alpha_1=p^{-1}\alpha_1q$ and $\overline\alpha_1'=p'^{-1}\alpha_1'q'$ as above. Note that each one of $pp'^{-1}$ and $q'q^{-1}$ is a piece by construction. 
 Therefore, $pp'^{-1}\alpha_1'q'q^{-1}\alpha_1^{-1}$ is a closed path that is a concatenation of at most 6 pieces, and it is non-trivial since the label of its cyclic conjugate $\overline\alpha_1'\overline\alpha_1^{-1}$ is freely non-trivial; this is a contradiction. For the fifth claim, the same argument applies.
\end{proof}

 \begin{figure}\label{figure:free_gr7}
\begin{center}
\begin{tikzpicture}[>=stealth,
line width=1pt,x=2cm,y=2cm]

\draw[-,shift={(1,0)}] (0:1) -- (51.4:1) -- (102.9:1) -- (154.3:1) -- (205.7:1) -- (257.1:1) -- (308.6:1) -- (0:1);

\draw[-,shift={(-2.8,0)},rotate around={180:(1,0)}] (0:1) -- (51.4:1) -- (102.9:1) -- (154.3:1) -- (205.7:1) -- (257.1:1) -- (308.6:1) -- (0:1);
\node at (0,0) {$C$};



\draw[->,dashed,shift={(1,0)},domain=25.7:334.3] plot ({-.8*cos(\x)},{.8*sin(\x)});
\draw[->,dashed,shift={(-.8,0)},domain=25.7:334.3] plot ({.8*cos(\x)},{.8*sin(\x)});

\node at (2.4,0) {\small $f_1(w_1)$};
\fill (2,0) circle (2pt);

\node at (-2.2,0) {\small $f_2(w_2)$};
\fill (-1.8,0) circle (2pt);

\fill (.1,-.43) circle (2pt);
\node at (.1,-.63) {\small $1$};
\end{tikzpicture}
\end{center}
\vspace{-12pt}
\caption{An illustration of the union of $f_1(\Gamma_1)$ (right) and $f_2(\Gamma_2)$ (left) in $X$. The intersection $f_1(\Gamma_1)\cap f_2(\Gamma_2)$ is denoted by $C$.
The dashed lines represent the paths $f_1(p_1)$ (right) and $f_2(p_2)$ (left). Note that $1=f_1(v_1)=f_2(v_2)$.
}
\end{figure}
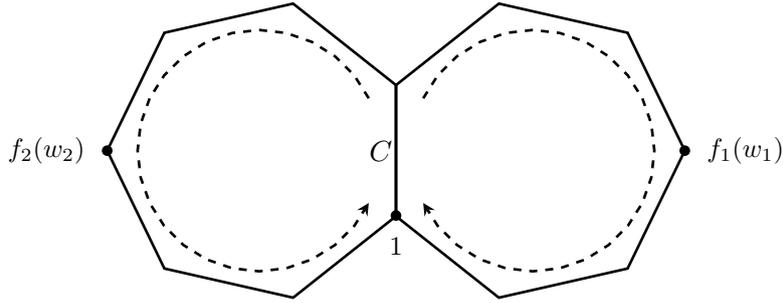

\begin{lem}\label{lem:c7_wpd} Suppose $\Gamma$ admits no non-trivial label-preserving automorphism. Let $c$ be an embedded cycle graph in $\Gamma$ such that every edge of $c$ is a piece. Then there exist vertices $x_1,y_1$ and $x_2,y_2$ in $c$ for which the statement of Lemma~\ref{lem:gr7_wpd} holds.
\end{lem}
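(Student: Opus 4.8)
The plan is to adapt the proof of Lemma~\ref{lem:gr7_wpd}, in which two distinct finite components of $\Gamma$ supplied the two pairs of vertices; here only the single embedded cycle $c$ is available, so it must play \emph{both} roles, which I would arrange by embedding $c$ into $X:=\Cay(G(\Gamma),S)$ in two different ways. Since $\Gamma$ has no non-trivial label-preserving automorphism, ``essentially distinct'' means simply ``distinct'' throughout, which streamlines the verification of the five bullet points.

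First I would note that $c$ is a non-trivial closed path with at least two vertices and, by hypothesis, is a concatenation of its edges, each of which is a piece. Choose distinct vertices $v_1,v_2$ of $c$, and let $f_i\colon c\to X$ be the label-preserving homomorphism with $f_i(v_i)=1$; each $f_i$ is injective by \cite[Lemma~4.1]{Gru}. The key preliminary point is that the copies $f_1(c)$ and $f_2(c)$ are distinct: one has $f_2(c)=g^{-1}f_1(c)$ for $g:=f_1(v_2)\neq 1$, and if $f_1(c)=f_2(c)$ then $g$ would act on $f_1(c)$, a cycle of length $|c|$, as a non-trivial label- and orientation-preserving automorphism, i.e.\ a non-trivial rotation, forcing the labelling of $c$ to be periodic; writing $c=u^m$ with $m\geqslant 2$, both $u$ and $u^{m-1}$ then occur at distinct, hence essentially distinct, places of $c$ and so are pieces, exhibiting $c$ as a concatenation of two pieces and contradicting the $C(7)$-condition. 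Hence $C:=f_1(c)\cap f_2(c)$ is nonempty (it contains $1$), connected by the argument of Lemma~\ref{lem:connected_embedding}, and a proper sub-arc of each copy; and every path in $C$ lifts to two distinct sub-arcs of $c$ (distinct since $g\neq 1$) and is therefore a piece.

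Now I would run the argument of Lemma~\ref{lem:gr7_wpd} with $\gamma_1=\gamma_2=c$ --- precisely the case there in which both simple closed paths are concatenations of pieces. Let $p_i$ be the sub-arc of $c$ complementary to $f_i^{-1}(C)$; since $c$ is the concatenation of the single piece $f_i^{-1}(C)$ with $p_i$, the $C(7)$-condition forces $p_i$ not to be a concatenation of at most $5$ pieces. Let $w_i$ be the initial vertex of the maximal terminal subpath of $p_i$ that is a concatenation of at most $3$ pieces. As in Lemma~\ref{lem:gr7_wpd}, $f_i(w_i)$ cannot be joined to any vertex of $C$ by a path in $f_i(c)$ that is a concatenation of at most $2$ pieces, as such a path together with the relevant subpaths of $c$ would close up to a non-trivial closed path in $\Gamma$ of at most $6$ pieces. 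Setting $y_1:=v_1$, $x_2:=v_2$, $x_1:=w_1$, $y_2:=w_2$, the second and third conditions and the uniqueness statements in the fourth and fifth then follow verbatim from the corresponding arguments of Lemma~\ref{lem:gr7_wpd} (the fourth and fifth using $x_1\neq y_2$, so that two such paths again produce a non-trivial closed path of at most $6$ pieces).

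The main obstacle I anticipate is the first condition, and within it precisely the inequality $x_1\neq y_2$, i.e.\ $w_1\neq w_2$ --- this was automatic in Lemma~\ref{lem:gr7_wpd} because the $w_i$ lay in different components. The remaining (in)equalities are immediate: $w_i\neq v_i$ since $w_i$ lies on $p_i$ while $v_i$ lies on $f_i^{-1}(C)$, and $v_1\neq v_2$ by choice. To force $w_1\neq w_2$ I would exploit the freedom in choosing $v_1,v_2$: the vertex $w_i$ lies within a bounded number of pieces of an endpoint of the arc $f_i^{-1}(C)$, hence is confined to a neighbourhood of $v_i$ whose size is controlled by $|C|$, while $p_1$ and $p_2$ are the arcs opposite $f_1^{-1}(C)$ and $f_2^{-1}(C)$; choosing $v_1$ and $v_2$ far apart on $c$ then separates the admissible positions of $w_1$ and $w_2$. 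Carrying this out carefully --- tracking where $p_1\cap p_2$ can lie and ruling out $w_1=w_2$ across the different regimes for the length of $C$ --- is the one genuinely new estimate needed beyond the proof of Lemma~\ref{lem:gr7_wpd}.
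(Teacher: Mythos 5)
Your overall strategy --- embed the graph twice into $\Cay(G(\Gamma),S)$, based at two distinct vertices $v_1,v_2$ of $c$, intersect the images, and run the construction of Lemma~\ref{lem:gr7_wpd} --- is exactly the paper's, and your preliminary observations (injectivity, $f_1(c)\neq f_2(c)$, connectedness of $C$, every path in $C$ being a piece, the choice of $w_i$, and the verification of every condition except $x_1\neq y_2$) are essentially the ones the paper uses. One small caveat: you should map the whole component $\Gamma_0\supseteq c$ rather than just $c$, since Lemma~\ref{lem:connected_embedding} and its proof concern components (the minimality argument there may reroute paths outside $c$); the subpaths $p_i$ are then still taken inside $c$.

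The genuine gap is the one you flag yourself: you never prove $w_1\neq w_2$. Your plan --- take $v_1,v_2$ ``far apart'' on $c$ and confine each $w_i$ to a neighbourhood of $v_i$ ``controlled by $|C|$'' --- does not close. The relevant neighbourhoods are measured in \emph{pieces}, not edges; a single piece (in particular the arc $f_i^{-1}(C)$) can be arbitrarily long; $C$ itself depends on the choice of $v_1,v_2$, so the confinement is circular; and $c$ need only consist of $7$ pieces, so two ``few-piece'' neighbourhoods of $v_1$ and $v_2$ cannot in general be separated no matter how the metric distance is chosen. The paper resolves this with a combinatorial rather than metric choice: fix $\gamma_1$ based at an arbitrary $v_1$ and let $v_2$ be the \emph{terminal vertex of the longest initial subpath of $\gamma_1$ that is a concatenation of at most $3$ pieces} (this is a proper, non-empty subpath since every edge of $c$ is a piece while $c$ is not a concatenation of at most $6$ pieces, so $v_2\neq v_1$). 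With this choice there is, by construction, a path from $v_1$ to $w_2$ that is a concatenation of at most two pieces; since $f_1(v_1)=1\in C$ and $w_1$ was chosen so that $f_1(w_1)$ cannot be joined to any vertex of $C$ by a path of at most two pieces in $f_1(\Gamma_0)$, this forces $w_1\neq w_2$. Some such explicit device is needed; as written, your argument is incomplete precisely at the step this lemma exists to handle.
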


\begin{proof}
Let $\gamma_1$ be a simple closed path based at a vertex $v_1$ whose image is $c$. Let $v_2$ be the terminal vertex of the longest initial subpath of $\gamma_1$ made up of at most 3 pieces, and let $\gamma_2$ be the cyclic shift of $\gamma_1$ with initial vertex $v_2$. (See Figure~\ref{figure:free_c7}.) Note that $v_1\neq v_2$ and, hence, $v_1$ and $v_2$ are essentially distinct because $\Gamma$ has no non-trivial label-preserving automorphism. 
We make the same construction as in Lemma~\ref{lem:gr7_wpd} for the component $\Gamma_0$ of $\Gamma$ containing $c$, i.e.\ we map $\Gamma_0$ to $\Cay(G(\Gamma),S)$ by $f_1(v_1)=1$ and by $f_2(v_2)=1$ and choose the vertices $w_1$ and $w_2$ as above. By construction, there is a path from $v_1$ to $w_2$ that is a concatenation of at most two pieces, whence $w_1\neq w_2$, and $w_1$ and $w_2$ are essentially distinct. All other claims of Lemma~\ref{lem:gr7_wpd} follow with the same proofs.
\end{proof}
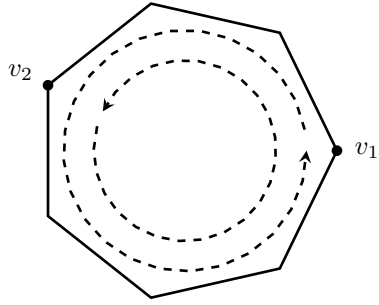
\begin{figure}\label{figure:free_c7}
\begin{center}
\begin{tikzpicture}[>=stealth,
line width=1pt,x=2cm,y=2cm]

\draw[-] (0:1) -- (51.4:1) -- (102.9:1) -- (154.3:1) -- (205.7:1) -- (257.1:1) -- (308.6:1) -- (0:1);

\fill (0:1) circle (2pt);
\node at (0:1.2) {\small $v_1$};
\fill (154.3:1) circle (2pt);
\node at (154.3:1.2) {\small $v_2$};

\draw[->,dashed,domain=10:360] plot ({.8*cos(\x)},{.8*sin(\x)});
\draw[->,dashed,domain=164.3:514.3] plot ({.6*cos(\x)},{.6*sin(\x)});

\end{tikzpicture}
\end{center}
\vspace{-12pt}
\caption{An illustration of $c\subseteq \Gamma$. The outer dashed line represents $\gamma_1$, and the inner dashed line represents $\gamma_2$. 
}
\end{figure}

Given vertices $v$ and $w$ in a labelled graph, we denote by $p:v\to w$ a path with $\iota p=v$ and $\tau p=w$, and we denote by $\ell(v\to w)$ the label of such a path. 

\medskip

\noindent {\bf Definition of the WPD element $g$.} In the notation of Lemmas~\ref{lem:gr7_wpd}, respectively \ref{lem:c7_wpd}, let $g$ be the element of $G(\Gamma)$ represented by $\ell(x_1\to y_1)\ell(x_2\to y_2)$.

\begin{remark}\label{remark:non-elementary}
It follows from \cite[Section 3]{Gru} that if, in the case of Theorem~\ref{thm:gr7}, $\Gamma$ has at least 4 pairwise non-isomorphic components or, in the case of Theorem~\ref{thm:c7}, $\Gamma$ contains two disjoint embedded cycle graphs, then $G(\Gamma)$ contains a free subgroup that is freely generated by two distinct elements $g$ as above. In particular, in these cases, $G(\Gamma)$ is not virtually cyclic.
\end{remark}

Denote $X:=\Cay(G(\Gamma),S)$ and $Y:=\Cay(G(\Gamma),S\cup W)$, where $W$ is the set of all elements of $G(\Gamma)$ represented by words read on $\Gamma$. If $\alpha_Y=(e_1,e_2,\dots,e_k)$ is a path in $Y$, then a path in $X$ 
\emph{representing} $\alpha_Y$ is a path $\alpha_X$ in $X$ together with a decomposition $\alpha_X=\alpha_{1,X}\alpha_{2,X}\dots\alpha_{k,X}$ with, for each $i$, $\iota e_i=\iota\alpha_{i,X}$ and $\tau e_i=\tau\alpha_{i,X}$ such that, for each $i$, a lift $\alpha_{i,\Gamma}$ in $\Gamma$ of $\alpha_{i,X}$ is chosen. 
We call the paths $\alpha_{i,X}$ \emph{segments}. Observe that if $\alpha_Y$ is a geodesic in $Y$ of length $k$, and if $\alpha_X$ is a path representing $\alpha_Y$, then any two vertices in $\alpha_X$ are at distance (in $Y$) at most $k$ from each other.

\begin{lem}\label{lem:c7_geodesic} Let $N\in\N$. Then there exists a path $\alpha_Y$ in $Y$ from $1$ to $g^N$ of length $2N$ with the following properties: 

\begin{itemize}
 \item There exists a reduced path $\alpha_X$ in $X$ representing $\alpha_Y$ and a decomposition $\alpha_X$ into segments $\alpha_{1,X},\alpha_{2,X},\dots,\alpha_{2N,X}$ with the following properties, where we denote by $\alpha_{i,\Gamma}$ the lift in $\Gamma$ of each $\alpha_{i,X}$.
\begin{itemize}
\item For every $i$, there exist paths $p_i$ in $X$ and $\overline\alpha_{i,\Gamma}:x_{\overline i}\to y_{\overline i}$ in $\Gamma$, where $\overline i\equiv i \mod 2$, such that $p_0$ and $p_{2N}$ have length 0 and such that for every $i$, the path $p_{i-1}^{-1}\alpha_{i,X}p_i$ lifts to $\overline\alpha_{i,\Gamma}$, and this lift induces the lift $\alpha_{i,X}\mapsto \alpha_{i,\Gamma}$.
\item Given $\alpha_Y$, for every choice of $\alpha_{1,X},\alpha_{2,X},\dots,\alpha_{2N,X}$ with the above properties, every $\alpha_{i,X}$ has length $>0$ and is not a piece.
\end{itemize} 
\item $\alpha_Y$ is a geodesic in $Y$.
\end{itemize}
\end{lem}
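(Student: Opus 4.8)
The plan is to take for $\alpha_Y$ the edge path of length $2N$ in $Y=\Cay(G(\Gamma),S\cup W)$ that spells $g^N$ over $W$: the concatenation of edges labelled alternately by $\ell(x_1\to y_1)$ and $\ell(x_2\to y_2)$, both of which lie in $W$ since $x_j,y_j$ share a component of $\Gamma$; it runs from $1$ to $g^N$. To exhibit a representing path, fix reduced paths $\overline\alpha_{1,\Gamma}\colon x_1\to y_1$ and $\overline\alpha_{2,\Gamma}\colon x_2\to y_2$ in $\Gamma$, set $\overline\alpha_{i,\Gamma}:=\overline\alpha_{1,\Gamma}$ for $i$ odd and $:=\overline\alpha_{2,\Gamma}$ for $i$ even, concatenate the successive translated images of $\overline\alpha_{1,\Gamma},\dots,\overline\alpha_{2N,\Gamma}$ in $X=\Cay(G(\Gamma),S)$, and let $\alpha_X$ be the result after free reduction. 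Since each $\overline\alpha_{i,\Gamma}$ is reduced, cancellation occurs only at the $2N-1$ junctions; at junction $i$ the cancelled common subpath $p_i$ occurs once as a terminal subpath of $\overline\alpha_{i,\Gamma}$, ending at $y_1$ or $y_2$, and once (reversed) as an initial subpath of $\overline\alpha_{i+1,\Gamma}$, the reversal ending at $x_2$ or $x_1$. These two lifts of $p_i$ have essentially distinct endpoints by the first property in Lemma~\ref{lem:gr7_wpd}, hence are essentially distinct, so every $p_i$ of positive length is a piece. Declaring $p_0=p_{2N}$ trivial and $\overline\alpha_{i,\Gamma}=p_{i-1}^{-1}\alpha_{i,\Gamma}p_i$, with $\alpha_{i,X}$ the image of $\alpha_{i,\Gamma}$ in $X$, realises the first bulleted property.

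I would then verify the ``for every choice'' clause: given any representing path and any segment decomposition with that overlap structure, the $p_i$ are pieces by the same essential-distinctness argument, and the segments must be long and non-piece. Positive length is immediate, as $\alpha_{i,X}$ joins $\iota e_i$ to $\tau e_i$ and these are distinct because $\ell(x_1\to y_1)$ and $\ell(x_2\to y_2)$ are non-trivial in $G(\Gamma)$ (components embed into $X$ by \cite[Lemma~4.1]{Gru} and $x_j\neq y_j$). If some $\alpha_{i,\Gamma}$ were a piece, then for $1<i<2N$ the identity $\overline\alpha_{i,\Gamma}=p_{i-1}^{-1}\alpha_{i,\Gamma}p_i$ presents a path $x_1\to y_1$ (resp.\ $x_2\to y_2$) with $p_{i-1}$ a piece and $p_i$ a lift of a path terminating at $x_2$ (resp.\ $y_1$), contradicting the second (resp.\ third) property of Lemma~\ref{lem:gr7_wpd}; and for $i\in\{1,2N\}$, one of the outer paths being trivial makes $\overline\alpha_{i,\Gamma}$ a concatenation of at most two pieces joining $x_1$ to $y_1$ or $x_2$ to $y_2$, which the construction of these vertices in Lemma~\ref{lem:gr7_wpd} (resp.\ Lemma~\ref{lem:c7_wpd}) rules out.

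The substantial point is that $\alpha_Y$ is a geodesic in $Y$, i.e.\ $g^N$ is not a product of fewer than $2N$ elements of $W$. Assume $g^N=u_1\cdots u_k$ with $u_j\in W$ and $k<2N$; pick reduced paths in $\Gamma$ representing the $u_j$, concatenate their images in $X$ into a path $q\colon 1\to g^N$ with $k$ segments, and take a $\Gamma$-reduced diagram $D$ with $\partial D=\alpha_X q^{-1}$ having a minimal number of edges among all such expressions and diagrams. Then $D$ is a $(3,7)$-diagram, and I would push it through Strebel's structure theory (Theorem~\ref{thm:strebel_bigons}) together with the curvature identities of Lemmas~\ref{lem:curvature_lyndon} and~\ref{lem:curvature_strebel}. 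The crucial inputs are that none of the $2N$ subpaths of $\alpha_X$ coming from the $\overline\alpha_{i,\Gamma}$ is a concatenation of at most two pieces, and that consecutive ones are anchored at the essentially distinct vertices $y_1,x_2$ (resp.\ $y_2,x_1$): the former keeps any such subpath from being absorbed into the faces of $D$, forcing it to run along $q$, and the latter prevents a single segment of $q$ from serving two consecutive subpaths — together yielding $k\ge 2N$, a contradiction.

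I expect this last step to be the main obstacle. In contrast to Proposition~\ref{prop:geodesics}, there is no $Gr'(\frac{1}{6})$ hypothesis, so pieces need not be short relative to the faces they bound and $D$ need not be a bigon in the strong sense of Theorem~\ref{thm:strebel_bigons}; converting the qualitative statement ``each of the $2N$ subpaths forces a fresh segment of $q$'' into a clean inequality, via the curvature bookkeeping for a general $(3,7)$-diagram, is the delicate part. The positive-length and non-piece assertions are, by comparison, routine bookkeeping once Lemma~\ref{lem:gr7_wpd} is on hand.
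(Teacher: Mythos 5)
Your construction of $\alpha_Y$ and $\alpha_X$, and the verification that every segment has positive length and is not a piece, follow the same route as the paper and are essentially correct. The genuine gap is the geodesicity of $\alpha_Y$, which is the substance of the lemma and which you explicitly leave open as ``the delicate part'': you set up the minimal $\Gamma$-reduced diagram $D$ between $\alpha_X$ and a representative $\beta_X$ of a competing $Y$-geodesic of length $k\leqslant 2N$, but you do not carry out the combinatorial step, and the input you propose to feed into it is not available. You want to use that no $\alpha_{i,X}$ is a concatenation of at most two pieces; this is neither established by Lemma~\ref{lem:gr7_wpd} (whose fourth and fifth assertions explicitly allow such paths, just at most one of each kind) nor needed. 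What is needed, and what the paper uses, is only that no segment is a single piece, combined with the minimality of $D$: by minimality, every arc in the intersection of a face boundary with the lift of a single segment is a piece, so $\partial\Pi\cap\alpha$ cannot contain a whole segment of $\alpha$ and is therefore a concatenation of at most $2$ pieces; on the other side, if $\partial\Pi\cap\beta$ contained two consecutive whole segments of $\beta_X$, their concatenation would be readable on $\Gamma$ via a lift of $\partial\Pi$, so the two segments could be merged into one, contradicting that $\beta_Y$ is geodesic in $Y$ --- hence $\partial\Pi\cap\beta$ meets at most three consecutive segments and is a concatenation of at most $3$ pieces.

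These two bounds immediately make $D$ a $(3,7)$-bigon, so the ``curvature bookkeeping for a general $(3,7)$-diagram'' you anticipate never arises: Theorem~\ref{thm:strebel_bigons} forces each disk component to be a single face or of shape $\mathrm{I}_1$, and in either case some face has boundary consisting of an arc in $\alpha$ (at most $2$ pieces), an arc in $\beta$ (at most $3$ pieces), and at most one interior arc (one piece), i.e.\ at most $6$ pieces, contradicting the $Gr(7)$-condition. Hence $D$ has no faces and $\alpha_X=\beta_X$. Note that this alone does not give $k=2N$; the paper concludes with a separate pigeonhole/induction argument: since $k\leqslant 2N$, some $\alpha_{i,X}$ is contained in some $\beta_{j,X}$; because $\alpha_{i,X}$ is not a piece, the two induced lifts are essentially equal, so $\beta_{j,X}$ can be substituted into the $\alpha$-decomposition while preserving the structure required in the first bullet, and iterating shows that the $\beta$-decomposition is itself of that type, whence $k=2N$. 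Your sketch conflates these two steps, and as written does not yield the inequality $k\geqslant 2N$.
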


\begin{proof}
By definition of $g$, there exist paths $\overline\alpha_{i,X}$ in $X$ such that each $\overline\alpha_{2i-1,X}$ lifts to a path $\overline\alpha_{2i-1,\Gamma}:x_1\to y_1$, such that each $\overline\alpha_{2i,X}$ lifts to a path $\overline\alpha_{2i,\Gamma}:y_2\to x_2$, and such that $\overline\alpha_{1,X}\overline\alpha_{2,X}\dots\overline\alpha_{2N,X}$ is a path from $1$ to $g^N$ in $X$. 
The path $\alpha_X$ obtained as the reduction of this path satisfies the first part of the first statement and, conversely, any path satisfying the first part of the first statement can be constructed in this manner. The second part of the first statement now follows by definition of $g$, i.e.\ applying the assertions of Lemmas~\ref{lem:gr7_wpd}, respectively \ref{lem:c7_wpd}. 

We proceed to the proof of the second statement. Let $\beta_Y$ be a geodesic in $Y$ from $1$ to $g^N$ of length $k$. Choose paths $\alpha_X$ representing $\alpha_Y$ as above and $\beta_X$ representing $\beta_Y$ such that there exists a $\Gamma$-reduced diagram $D$ for $\ell(\alpha_X)\ell(\beta_X)^{-1}$ over $\Gamma$ whose number of edges is minimal among all possible choices. 
We denote $\partial D=\alpha\beta^{-1}$, i.e. $\alpha$ lifts to $\alpha_X$ and $\beta$ lifts to $\beta_X$. Note that if $e$ is an edge in $\alpha$, then the lift $\alpha\mapsto\alpha_X$ and the lifts of segments $\alpha_{i,X}\mapsto \alpha_{i,\Gamma}$ induce a lift of $e$ in $\Gamma$; the same observation holds for $\beta$.

\vspace{12pt}

\noindent {\bf Claim 1.} $D$ has no faces, whence $\alpha_X=\beta_X$.

Let $\Pi$ be a face, and let $e$ be an edge in $\partial\Pi\cap\alpha$. If a lift of $e$ via $\partial\Pi$ equals the lift via $\alpha_{i,X}\mapsto\alpha_{i,\Gamma}$ for some $i$, then we can remove $e$ from $D$ as in Figure~\ref{figure:minimality}, and we can remove any resulting spurs and fold together resulting consecutive edges with inverse labels to obtain a diagram with fewer edges than $D$ that satisfies our assumptions; a contradiction. 
The same observation holds for any edge in $\partial \Pi\cap\beta$. Therefore, any arc in the intersection of a face with the lift of a segment is a piece. 
 
No segment of $\alpha_X$ is a piece. Therefore, for any face $\Pi$, any path in $\partial\Pi\cap\alpha$ is a concatenation of at most 2 pieces. Suppose a path $p$ in $\partial\Pi\cap \beta$ lifts to a subpath of $\beta_X$ that is a concatenation of two segments. Then these two segments can be replaced by a single segment which corresponds to a path in the lift of $\partial \Pi$ in $\Gamma$, and $\beta_X$ can be decomposed into $k-1$ segments. This contradicts the fact that $\beta_Y$ is a geodesic. 
Therefore, any path in $\partial\Pi\cap\beta$ is a concatenation of at most 3 pieces. Thus, any face $\Pi$ with $e(\Pi)=1$ whose exterior edges are contained in $\alpha$ or in $\beta$ has interior degree at least 4. This implies that $D$ is a $(3,7)$-bigon, whence any of its disk components has shape $\mathrm{I_1}$ as in Theorem~\ref{thm:strebel_bigons}, or it has at most one face. 
 
If $D$ has at least one face, then there exist a face $\Pi$ such that $\partial\Pi$ is the concatenation of at most 3 arcs as follows: An arc $\gamma_1$ in $\alpha$, an arc $\gamma_2$ in $\beta$, and possibly an interior arc $\gamma_3$. By our above observation, this implies that $\gamma$ is a concatenation of no more than 6 pieces, a contradiction. Therefore, $D$ has no faces, whence $\alpha=\beta$ and $\alpha_X=\beta_X$.
 
\vspace{12pt}

\noindent {\bf Claim 2.} $k=2N$, whence $\alpha_Y$ is a geodesic.
 
We denote the decomposition into segments of $\beta_X$ as $\beta_X=\beta_{1,X}\beta_{2,X}\dots\beta_{k,X}$ and the lift in $\Gamma$ of $\beta_{i,X}$ by $\beta_{i,\Gamma}$. Since $k\leqslant 2N$, there exist $i$ and $j$ such that $\alpha_{i,X}$ is contained in $\beta_{j,X}$. Consider the lift $\alpha_{i,X}\mapsto\alpha_{i,\Gamma}$ and the lift of $\alpha_{i,X}$ via $\beta_{j,X}\mapsto \beta_{j,\Gamma}$. 
Since $\alpha_{i,X}$ is not a piece, these lifts are essentially equal. Therefore, the decomposition $\alpha=\alpha_{1,X}\alpha_{2,X}\dots\alpha_{i-1,X}'\beta_{j,X}\alpha_{i+1,X}'\dots \alpha_{2N,X}$ , where $\alpha_{i-1}'$ is an initial subpath of $\alpha_i$ and $\alpha_{i+1}'$ is a terminal subpath of $\alpha_{i+1}$, with the associated lifts (where the lift $\beta_{j,X}\mapsto\beta_{j,\Gamma}$ may have to be composed with an automorphism of $\Gamma$) is a decomposition as in the first statement; in particular no segment has length $0$ or is a piece.

We can now apply the above procedure to the initial subpath of $\alpha$ terminating at $\iota \beta_{j,X}$ and to the terminal subpath of $\alpha$ starting at $\tau\beta_{j,X}$. Induction yields that the decomposition $\alpha_X=\beta_{1,X}\beta_{2,X}\dots\beta_{k,X}$ is as in the first statement, whence $k=2N$. 
\end{proof}

\begin{cor} $g$ acts hyperbolically. 
\end{cor}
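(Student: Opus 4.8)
The plan is to obtain this as an immediate consequence of Lemma~\ref{lem:c7_geodesic}, which already contains all the substantive work. That lemma provides, for every $N\in\N$, a geodesic in $Y$ from $1$ to $g^N$ of length exactly $2N$, so that $d_Y(1,g^N)=2N$. First I would record that the same equality holds for negative exponents: since $G(\Gamma)$ acts on $Y$ by isometries via left multiplication and $1\in G(\Gamma)$ is a vertex of $Y$, we have $d_Y(1,g^{-N})=d_Y(g^N\cdot 1,\,g^N\cdot g^{-N})=d_Y(g^N,1)=2N$. Combining the two cases, for all $m,n\in\Z$ one gets
\[
 d_Y(g^m,g^n)=d_Y\bigl(g^m\cdot 1,\,g^m\cdot g^{n-m}\bigr)=d_Y(1,g^{n-m})=2|n-m|,
\]
so the orbit map $\Z\to Y,\ z\mapsto g^z\cdot 1$, is a $(2,0)$-quasi-isometric embedding.

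Second, I would upgrade this from the basepoint $1$ to an arbitrary $x\in Y$, which is the routine observation that orbit maps of an isometric action differ by a bounded amount: for every $x\in Y$ and all $m,n\in\Z$,
\[
 \bigl|\,d_Y(g^mx,g^nx)-2|n-m|\,\bigr|\ \leqslant\ d_Y(g^mx,g^m\cdot 1)+d_Y(g^n\cdot 1,g^nx)\ =\ 2\,d_Y(x,1),
\]
using that $g$ acts by isometries. Hence $z\mapsto g^zx$ is a quasi-isometric embedding of $\Z$ into $Y$, with multiplicative constant $2$ and additive constant $2\,d_Y(x,1)$, which is exactly the definition of $g$ acting hyperbolically in Definition~\ref{WPDdefn}.

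I do not expect any genuine obstacle in this corollary: the only input with real content is the lower bound $d_Y(1,g^N)\geqslant 2N$, equivalently the positivity of the stable translation length $\tau(g)=\lim_{N\to\infty}\tfrac1N d_Y(1,g^N)$, and that is precisely what Lemma~\ref{lem:c7_geodesic} delivers; everything else is the standard fact that an isometry of a metric space with positive stable translation length acts hyperbolically on it. If one prefers, one may instead invoke hyperbolicity of $Y$ from Theorem~\ref{thm:hyperbolic_space} together with the general theory of isometries of hyperbolic spaces, but the direct computation above is self-contained and does not even require $Y$ to be hyperbolic for this particular step.
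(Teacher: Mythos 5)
Your proposal is correct and matches the paper's (implicit) argument: the paper states this corollary without proof precisely because Lemma~\ref{lem:c7_geodesic} gives $d_Y(1,g^N)=2N$, and the remaining steps — extending to negative exponents and to arbitrary basepoints via the isometric action — are exactly the routine deductions you carry out.
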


\begin{remark}\label{remark:hyperbolic_space} The arguments of claim 1 in the proof of Lemma~\ref{lem:c7_geodesic} show the following: 
Given two geodesics $\alpha_Y$ and $\beta_Y$ in $Y$ with the same endpoints, there exist paths $\alpha_X$ and $\beta_X$ in $X$ representing the $\alpha_Y$, respectively $\beta_Y$, such that there exist paths $\alpha_X'$ and $\beta_X'$ in $X$ with the same endpoints as $\alpha_X$ and $\beta_X$ such that (denoting by $d_H$ the Hausdorff-distance in $Y$, where $Y$ is considered as a geodesic metric space) $d_H(\alpha_X,\alpha_X')\leqslant 2$ and $d_H(\beta_X,\beta_X')\leqslant 2$,
and there exists a diagram $D$ with a boundary path $\alpha'\beta'^{-1}$, where $\alpha'$ is a lift of $\alpha_X'$ and $\beta'$ is a lift of $\beta_X'$, such that $D$ is a $(3,7)$-bigon. Hence, every disk component of $D$ has shape $\mathrm{I}_1$, whence $d_H(\alpha_X',\beta_X')\leqslant 2$.
This implies $d_H(\alpha_Y,\beta_Y)\leqslant 10$ and, thus, geodesic bigons in $Y$ are uniformly thin. Therefore, $Y$ is Gromov hyperbolic by \cite{Papasoglu-bigons} independently of Theorem~\ref{thm:subquadratic}.

Another way to prove Gromov hyperbolicity of $Y$ is observing, as above, that geodesic triangles in $Y$ are close to triangles in $X$ that give rise to $(3,7)$-triangles over $\Gamma$. Such triangles are $3$-slim by Strebel's classification of $(3,7)$-triangles \cite[Theorem 43]{Str}.
\end{remark}

\begin{prop}\label{prop:wpd7}
 $g$ satisfies the WPD condition.
\end{prop}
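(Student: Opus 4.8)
The plan is to establish the WPD condition for $g$ by reducing it, via Lemma~\ref{lem:c7_geodesic}, to a statement about $\Gamma$-reduced diagrams whose boundary paths decompose into controlled segments, and then to bound the number of group elements that can "almost fix" both endpoints of a long axial segment $1 \to g^N$. Fix a base vertex $x = 1 \in Y$ and $K \geqslant 0$; we must find $N_0$ so that for all $N \geqslant N_0$ the set $\{h \in G(\Gamma) : d_Y(1, h) \leqslant K,\ d_Y(g^N, h g^N) \leqslant K\}$ is finite. First I would replace $1$ and $g^N$ by the path $\alpha_Y$ of Lemma~\ref{lem:c7_geodesic}, which is a geodesic in $Y$ of length $2N$ represented by a reduced path $\alpha_X$ in $X$ that decomposes into $2N$ segments, each of positive length and none a piece, with consecutive segments joined through the "bridge" paths $p_i$. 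An element $h$ in the set above gives a second such geodesic $h\alpha_Y$ (after translating); concatenating appropriate geodesics between $1, h, g^N, h g^N$ produces a closed path in $Y$ of length $\leqslant 4N + 2K$, hence a diagram in $X$ that is a $(3,7)$-diagram over $\Gamma$ with boundary decomposing into two long "axis" sides (each a translate of $\alpha_X$, split into non-piece segments) and two short "error" sides of $Y$-length $\leqslant K$, i.e. $X$-length bounded but a priori unbounded. The key geometric input is Strebel's classification of $(3,7)$-quadrangles (or the $(3,7)$-$n$-gon machinery with Lemma~\ref{lem:curvature_strebel}): since the two long sides are made of non-piece segments, no face can consume too much of them, which forces the diagram to be "thin" in the direction transverse to the axis, so that after discarding a bounded-length initial and terminal portion (absorbed by the error sides), the two axis sides must be fellow-travelers at bounded distance in $X$, segment by segment.

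The heart of the argument is then a \emph{rigidity} statement: because segments are not pieces, once two axis copies $\alpha_X$ and $h\alpha_X$ run parallel along a diagram of shape $\mathrm{I}_1$-type strips, the faces between them must "originate from $\Gamma$" in the sense of the excerpt — but the diagram is $\Gamma$-reduced, so in fact there are \emph{no} faces between the parallel portions, i.e. the two axis subpaths literally coincide in $X$ after removing the bounded end-portions. Concretely: strip off from each side of $\alpha_X$ a prefix and suffix whose $X$-length is bounded in terms of $K$ and the (finitely many, since $\Gamma_1, \Gamma_2$ are finite) segment lengths and bridge lengths appearing; this eats at most a bounded number $C$ of segments. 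For $N > 2C + 1$ there remains a middle segment $\alpha_{i,X}$ that survives on both sides, and the diagram forces the lift $\alpha_{i,X}\mapsto\alpha_{i,\Gamma}$ coming from $\alpha_X$ to be essentially equal to the one coming from $h\alpha_X$ — exactly as in Claim~1 of Lemma~\ref{lem:c7_geodesic}. This pins $h$ down: $h$ must map a specific edge of $\alpha_X$ to a specific edge of $\alpha_X$ (up to the label-preserving automorphism of $\Gamma$, of which there are finitely many relevant ones since $\Gamma_1,\Gamma_2$ are finite), and in $\Cay(G(\Gamma),S)$ an element is determined by the image of a single vertex together with which way it acts on an incident edge. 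Since the edge and the bounded data live in a finite set, only finitely many $h$ are possible.

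The step I expect to be the main obstacle is controlling the \emph{error sides}: a path of $Y$-length $\leqslant K$ can have arbitrarily long $X$-length, so the quadrangle's "short" sides are not short in $X$, and I must argue that in the $(3,7)$-quadrangle over $\Gamma$ they can nonetheless only influence a bounded-length portion of the long axis sides. The way to handle this is to not pass to $X$ naively but to build the comparison diagram at the level of \emph{segments}: represent the error geodesics in $X$ too, decomposed into their own segments, so that every boundary path of the diagram is a concatenation of non-piece (or piece-controlled) segments, and then run the curvature count of Lemma~\ref{lem:curvature_strebel} counting \emph{maximal arcs along the segmentation} rather than raw edge-arcs — this is precisely the philosophy of the $(3,7)$-$n$-gon in Definition~\ref{defi:n-gons}, where faces meeting a designated side $\gamma_i$ through a single exterior arc are required to have interior degree $\geqslant 4$. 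With the quadrangle realized as a $(3,7)$-quadrangle in that sense, Strebel-type classification of $(3,7)$-quadrangles (three slim sides plus bounded "corner" contributions) yields that the two axis sides fellow-travel outside a portion whose length is bounded purely in terms of $K$ and the finite combinatorial data of $\Gamma_1,\Gamma_2$, and the rigidity step above then finishes the proof. Secondary bookkeeping — making sure the translates $h\alpha_Y$ can indeed be chosen with representing paths of the Lemma~\ref{lem:c7_geodesic} form, and that "finitely many relevant automorphisms" is justified — is routine given that $\Gamma_1$ and $\Gamma_2$ are finite and $G(\Gamma)$ acts freely on edges of its Cayley graph.
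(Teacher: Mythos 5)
Your overall strategy --- form a quadrangle diagram whose two long sides represent geodesics $1\to g^N$ in $Y$ decomposed as in Lemma~\ref{lem:c7_geodesic} and whose two short sides represent geodesics of $Y$-length at most $K$, segment all four sides so that every face-boundary arc along a side is a concatenation of at most two or three pieces, and then run the curvature count --- is the paper's route. Your instinct that the error sides must be handled by segmentation rather than by their $X$-length is also correct, and the paper does exactly that. The paper in fact gets the cleaner conclusion that the diagram has \emph{no} faces at all: Lemma~\ref{lem:curvature_strebel} rules out three or four distinguished faces, so each disk component is a shape-$\mathrm{I}_1$ bigon with at most two faces which must meet all four sides, which is impossible once $d_Y(1,g^N)>2K+5$.

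The genuine gap is in your endgame. You assert that a surviving middle segment $\alpha_{i,X}$ forces its lift to $\Gamma$ to be essentially equal to the one induced by $\beta=h\alpha$, ``exactly as in Claim~1 of Lemma~\ref{lem:c7_geodesic}'' (you mean Claim~2). But in that lemma the two segmentations subdivide the \emph{same} path into $2N$ and $k\leqslant 2N$ segments, so pigeonhole places some $\alpha_{i,X}$ inside a single $\beta_{j,X}$ and non-pieceness then forces essential equality. Here $\alpha$ and $\beta$ each carry exactly $2N$ segments and the two segmentations of their common overlap are offset: each $\alpha_i$ may be split across $\beta_{j}\beta_{j+1}$, so every intersection $\alpha_i\cap\beta_j$ can be a piece with essentially \emph{distinct} lifts, and no segment of one need lie inside a segment of the other. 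The paper therefore splits into two cases, and in the ``all overlaps are pieces'' case it deduces that $\beta_{j_0}$ and $\alpha_{i_0+1}$ are concatenations of at most two pieces and invokes the fourth and fifth clauses of Lemma~\ref{lem:gr7_wpd} --- the uniqueness of such reduced paths $x_i\to y_i$, which is precisely why $x_i,y_i$ were chosen the way they were --- to pin $h$ down. Your argument never uses those clauses. Relatedly, your finiteness count leans on $\Gamma_1,\Gamma_2$ being finite throughout; that does suffice for Theorem~\ref{thm:gr7} (and the paper notes as much), but not for Theorem~\ref{thm:c7}, where the component containing $x_1,y_1,x_2,y_2$ may be infinite and only the triviality of its label-preserving automorphism group is available --- which is exactly the situation in which the essentially-equal/essentially-distinct dichotomy cannot be avoided.
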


\begin{proof} Let $K>0$, and let $N_0$ such that $d_Y(1,g^N)>2K+5$ for all $N\geqslant N_0$. Let $N\geqslant N_0$, and let $h\in G(\Gamma)$ with $d_Y(1,h)\leqslant K$ and $d_Y(1,g^{-N}hg^{N})\leqslant K$. We will show that, given $K$ and $N_0$, there exist only finitely many possibilities for choosing $h$. Let $D$ be a $\Gamma$-reduced diagram with the following properties, where $\partial D=\alpha\delta_1\beta^{-1}\delta_2^{-1}$.

\begin{itemize}
 \item $\alpha$ lifts to a reduced path $\alpha_X$ in $X$ representing a geodesic $1\to g^N$ in $Y$ with a decomposition as in the statement of Lemma~\ref{lem:c7_geodesic}.
 \item $\beta$ lifts to a reduced path $\beta_X$ in $X$ representing a geodesic $1\to g^N$ in $Y$ with a decomposition as in the statement of Lemma~\ref{lem:c7_geodesic}.
 \item $\delta_1$ lifts to a path $\delta_{1,X}$ in $X$ representing a geodesic $1\to g^{-N}hg^N$ in $Y$.
 \item $\delta_2$ lifts to a path $\delta_{2,X}$ in $X$ representing a geodesic $1\to h$ in $Y$. 
 \item Among all such choices, the number of edges of $D$ is minimal.
\end{itemize}

Given $D$, we make additional minimality assumptions on the decompositions of $\alpha_X$ and $\beta_X$:
Denote the decompositions $\alpha_X=\alpha_{1,X}\alpha_{2,X}\dots\alpha_{2N,X}$ and $\beta_X=\beta_{1,X}\beta_{2,X}\dots\beta_{2N,X}$, and denote by $\alpha_i$, respectively $\beta_j$, the lifts of $\alpha_{i,X}$, respectively $\beta_{j,X}$ in $D$. Denote the lifts in $\Gamma$ of $\alpha_i$, respectively $\beta_j$, by $\alpha_{i,\Gamma}$, respectively $\beta_{j,\Gamma}$ and the corresponding paths $x_1\to y_1$ or $x_2\to y_2$ by $\overline\alpha_{i,\Gamma}$, 
respectively $\overline\beta_{j,\Gamma}$. We assume that, given $\alpha_X$ and $\beta_X$, the decompositions and their lifts are chosen such that both $\sum_{i=1}^{2N}|\overline\alpha_{i,\Gamma}|$ and $\sum_{j=1}^{2N}|\overline\beta_{j,\Gamma}|$ are minimal. 
Since $\alpha_X$ and $\beta_X$ are reduced, this readily implies that every $\overline\alpha_{i,\Gamma}$ and every $\overline\beta_{j,\Gamma}$ is a reduced path. Also, observe that our assumptions on $D$ imply that both $\delta_1$ and $\delta_2$ are reduced paths.

\vspace{12pt}

\noindent {\bf Claim 1.} $D$ has no faces.

By minimality, for any face $\Pi$ and any $i,j$, any path in $\partial\Pi\cap \alpha_i$ or $\partial\Pi\cap \beta_j$ is a piece since, otherwise, we could remove edges as in Figure~\ref{figure:minimality} and subsequently remove any resulting spurs and fold away any resulting consecutive inverse edges. 
The same observation holds for any path in $\partial \Pi\cap\delta$, where $\delta$ is a subpath of $\delta_1$ or $\delta_2$ that is a lift of a segment of $\delta_{1,X}$ or $\delta_{2,X}$.

No $\alpha_i$ or $\beta_j$ is a piece, whence for any face $\Pi$ we have that any path in $\partial\Pi\cap\alpha$  or in $\partial\Pi\cap\beta$ lies in the concatenation of no more than two $\alpha_i$, respectively $\beta_j$, and, thus, it is a concatenation of no more than two pieces. Suppose for a face $\Pi$, there exists a path $\delta$ in $\delta_1$ (or in $\delta_2$) that is a lift of a segment such that $\delta$ lies in $\partial\Pi$. 
Then we can remove the edges of $\delta$ from $D$, thus replacing $\delta$ by a path $\delta'$ such that $\partial\Pi=\delta\delta'^{-1}$. 
The resulting path $\delta_{1,X}'$ (or $\delta_{2,X}'$) can be decomposed with the same number of segments, contradicting the minimality assumptions on $D$. Therefore, any path in $\partial\Pi\cap\delta_1$ or in $\partial\Pi\cap\delta_2$ is a subpath of the concatenation of at most two lifts of segments and, therefore, a concatenation of at most two pieces.  This shows that $D$ is 
a $(3,7)$-quadrangle. 

Let $\Delta$ be a disk component of $D$. If there exist 4 distinguished faces, then every distinguished face of $\Delta$ with exterior degree 1 intersects at most two sides of $\Delta$ in arcs and thus has interior degree at least 3. 
This contradicts Lemma~\ref{lem:curvature_strebel} (after removing vertices of degree 2), since any such distinguished face contributes at most 1 positive curvature, and the only positive contributions come from distinguished faces with exterior degree 1. Similarly, the existence of 3 distinguished faces yields a contradiction. 

Thus, there exist at most two distinguished faces, whence $\Delta$ is a $(3,7)$-bigon and, by Theorem~\ref{thm:strebel_bigons}, it is of shape $\mathrm{I}_1$. Note that $\Delta$ must intersect all 4 sides of $D$: If $\Pi$ is a distinguished face of $\Delta$, then its boundary path cannot be made up of fewer than 7 pieces. 
Hence, since its interior degree is 1, $\Pi$ must intersect at least 3 sides because the intersection of $\Pi$ with any side is made up of at most 2 pieces. Considering shape $\mathrm{I}_1$, we also see that there cannot exist a non-distinguished face, since such a face would have a boundary path made up of at most 6 pieces. Thus $\Delta$ has at most two faces. 
The lifts $\delta_{1,X}$ and $\delta_{2,X}$ of $\delta_1$ and $\delta_2$ represent geodesics in $Y$ of length at most $K$, whence, for each $i$, any two vertices in $\delta_{i,X}$ are at $Y$-distance at most $K$ from each other. Any two vertices in the image in $Y$ of the 1-skeleton a face of $D$ at are at distance at most 1 from each other by definition of $Y$. 
Therefore, the assumption that $d_Y(1,g^N)>2K+5>2K+2$ implies that $\Delta$ cannot contain vertices of both $\delta_1$ and $\delta_2$, whence $\Delta$ does not exist. Thus, $D$ has no faces. 

\vspace{12pt}
\noindent {\bf Claim 2.} Given $K$ and $N_0$, there exist only finitely many possibilities for $h$.

Recall that $\alpha$ and $\beta$ lift to paths in $X$ representing geodesics in $Y$, and $\delta_2$ lifts to a path in $X$ representing a geodesic of length at most $K$ in $Y$. Therefore, $\delta_2$ is contained in $(\alpha_1\alpha_2\dots\alpha_{K+1})\cup(\beta_1\beta_2\dots\beta_{K+1})$. Each $\alpha_i$ and each $\beta_j$ lifts to a path in either the component of $\Gamma$ containing $x_1$ or in the component of $\Gamma$ containing $x_2$. 
Therefore, if the components of $\Gamma$ containing $x_1$ and $x_2$ are both finite, there exist only finitely many possibilities for $h$. This completes the proof in the case of Theorem~\ref{thm:gr7}.

We proceed to show that it is actually sufficient for the components to have finite automorphism groups, which also completes the proof in the case of Theorem~\ref{thm:c7}, as in that case, the automorphism groups are trivial. Denote by $p$ a maximal path in $\alpha\cap\beta$. Applying our above observation on $\delta_2$ to $\delta_1$ and using the fact that $d_Y(1,g^N)>2K+5$ yields that there exist $i_0\leqslant K+4$ and $j_0\leqslant K+4$ such that:
\begin{itemize}
 \item $\alpha_{i_0}\alpha_{i_0+1}$ is a subpath of $p$,
 \item $\beta_{j_0}\beta_{j_0+1}$ is a subpath of $p$, and 
 \item $\iota \beta_{j_0}$ lies in $\alpha_{i_0}\setminus\{\tau \alpha_{i_0}\}$.
\end{itemize}
The last property can be attained by an index shift of up to 2, since the concatenation of two consecutive $\alpha_i$ cannot be a subpath of one $\beta_j$ because the paths $\alpha_X$ and $\beta_X$ represent geodesics in $Y$, and the symmetric statement holds for $\beta_j$ and $\alpha_i$. (Hence, our upper bound for the indices is $K+4$ instead of $K+2$.)

Consider $i\in\{i_0,i_0+1\}$ and $j\in\{j_0,j_0+1\}$ for which there exists a path  $q$ of length $>0$ in $\alpha_{i}\cap\beta_{j}$. There exist lifts of $q$ in $\Gamma$ via $\alpha_i\mapsto\alpha_{i,\Gamma}$ and via $\beta_j\mapsto\beta_{j,\Gamma}$. 
Suppose these lifts are essentially equal. Then there exists a label-preserving automorphism $\phi$ of $\Gamma$ such that the lift of $q$ to a subpath of $\overline\alpha_{i,\Gamma}$ is equal to the lift of $q$ to a subpath of $\phi(\overline\beta_{j,\Gamma})$. If $\overline i\equiv i\mod 2$ and $\overline j\equiv j\mod 2$, then $x_{\overline i}$ is the initial vertex of $\overline\alpha_{i,\Gamma}$ and by $x_{\overline j}$ is the initial vertex of $\overline\beta_{j,\Gamma}$. 
Thus, there exists a path in $D$ from $\iota \alpha$ to $\iota \beta$ whose label is freely equal to a word of the form
$$\ell(x_1\to y_1)\ell(x_2\to y_2)\ell(x_1\to y_1)\dots \ell(x_{\overline i}\to\phi(x_{\overline j})) \dots \ell(y_2\to x_2)\ell(y_1\to x_1),$$
where no more than $2K+9$ factors occur. (See also Figure~\ref{figure:wpd_gr7}.) If the label-preserving automorphism groups of the components of $\Gamma$ containing $x_1$ and $x_2$ are finite, then there exist only finitely many elements of $G(\Gamma)$ represented by words of this form. Thus, we conclude that in this case, there are only finitely many possibilities for $h$. 

It remains to prove the case that, for every $i\in\{i_0,i_0+1\}$ and every $j\in\{j_0,j_0+1\}$, whenever $q$ is a path in $\alpha_i\cap\beta_j$, then the induced lifts of $q$ are essentially distinct. Note that in this case, $q$ is a piece.

By the choice of $i_0$ and $j_0$, $\alpha_{i_0}\cap\beta_{j_0}$ contains a maximal path $q$ of length $>0$ such that $q$ is an initial subpath of $\beta_{j_0}$. Since $\beta_{j_0}$ is not a piece, $\beta_{j_0}$ is not a subpath of $\alpha_{i_0}$. 
By the same argument, $\alpha_{i_0+1}$ is not a subpath of $\beta_{j_0}$, whence $\beta_{j_0}$ is a subpath of $\alpha_{i_0}\alpha_{i_0+1}$. Similarly, it follows that $\alpha_{i_0+1}$ is a subpath of $\beta_{i_0}\beta_{i_0+1}$. Hence, both $\alpha_{i_0+1}$ and $\beta_{j_0}$ are concatenations of no more than two pieces. 

We now invoke the last two conclusions of Lemma~\ref{lem:gr7_wpd}, which imply that there exist at most two possibilities for the reduced path $\overline\alpha_{i_0+1,\Gamma}$, and at most two possibilities for the reduced path $\overline\beta_{j_0,\Gamma}$. There exist initial subpaths $q_1$ of $\overline\alpha_{i_0+1,\Gamma}$ and $q_2$ of $\overline\beta_{j_0,\Gamma}$ such that we may represent $h$ by a word
$$\ell(x_1\to y_1)\ell(x_2\to y_2)\ell(x_1\to y_1)\dots \ell(q_1)\ell(q_2^{-1})\dots\ell(y_2\to x_2)\ell(y_1\to x_1),$$
with at most $2K+9$ factors, whence also in this case, there exist only finitely many possibilities for $h$.
\end{proof}

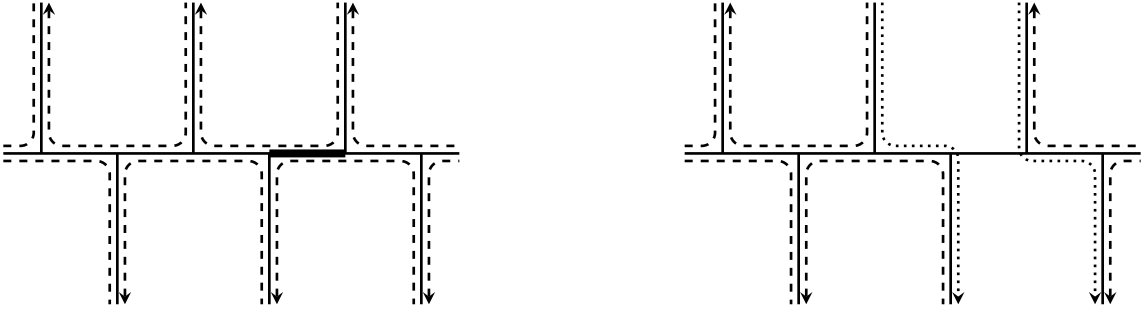
\begin{figure}\label{figure:wpd_gr7}
\begin{tikzpicture}[line width=1pt,>=stealth]
 \draw[-,line width=1pt] (-.5,0) to (5.5,0);
 \draw[-,line width=3pt] (3,0) to (4,0);
 \draw[-] (0,2) to (0,0);
 \draw[-] (2,2) to (2,0);
 \draw[-] (4,2) to (4,0);
 
 \draw[-] (1,-2) to (1,0);
 \draw[-] (3,-2) to (3,0);
 \draw[-] (5,-2) to (5,0);

  \draw [-,rounded corners=5,dashed] (-.5,.1) to (-.1,.1) to (-.1,2);

 \draw [<-,rounded corners=5,dashed] (.1,2) to (.1,.1) to (1.9,.1) to (1.9,2);
 \draw [<-,rounded corners=5,dashed] (2.1,2) to (2.1,.1) to (3.9,.1) to (3.9,2);
 \draw [<-,rounded corners=5,dashed] (4.1,2) to (4.1,.1) to (5.5,.1);
 
 \draw [-,rounded corners=5,dashed] (-.5,-.1) to (.9,-.1) to (.9,-2);
 \draw [<-,rounded corners=5,dashed] (1.1,-2) to (1.1,-.1) to (2.9,-.1) to (2.9,-2);
 \draw [<-,rounded corners=5,dashed] (3.1,-2) to (3.1,-.1) to (4.9,-.1) to (4.9,-2);
 \draw [<-,rounded corners=5,dashed] (5.1,-2) to (5.1,-.1) to (5.5,-.1);
 
\end{tikzpicture}
\hfill
\begin{tikzpicture}[line width=1pt,>=stealth]
 \draw[-,line width=1pt] (-.5,0) to (5.5,0);
 \draw[-] (0,2) to (0,0);
 \draw[-] (2,2) to (2,0);
 \draw[-] (4,2) to (4,0);
 
 \draw[-] (1,-2) to (1,0);
 \draw[-] (3,-2) to (3,0);
 \draw[-] (5,-2) to (5,0);

  \draw [-,rounded corners=5,dashed] (-.5,.1) to (-.1,.1) to (-.1,2);

 \draw [<-,rounded corners=5,dashed] (0.1,2) to (0.1,.1) to (1.9,.1) to (1.9,2);
 \draw [<-,rounded corners=5,dashed] (4.1,2) to (4.1,.1) to (5.5,.1);
 
 \draw [-,rounded corners=5,dashed] (-.5,-.1) to (.9,-.1) to (.9,-2);
 \draw [<-,rounded corners=5,dashed] (1.1,-2) to (1.1,-.1) to (2.9,-.1) to (2.9,-2);
 \draw [<-,rounded corners=5,dashed] (5.1,-2) to (5.1,-.1) to (5.5,-.1);

  \draw [->,rounded corners=5,dotted] (3.9,2) to (3.9,-.1) to (4.9,-.1) to (4.9,-2);
  \draw [->,rounded corners=5,dotted] (2.1,2) to (2.1,.1) to (3.1,.1) to (3.1,-2);
  

\end{tikzpicture}
\caption{
The horizontal line represents the intersection $\alpha\cap\beta$ in $D$. The vertical lines are for illustration only, providing support for the dashed paths, which lift to paths $\overline\alpha_{i,\Gamma}:x_{\overline i}\to y_{\overline i}$. 
If the $q$ traverses the thick part in the left-hand picture and if the induced lifts of $q$ are essentially equal, then the dotted paths in the right-hand picture lift to paths $x_{\overline i}\to\phi(x_{\overline j})$, respectively $y_{\overline i}\to\phi(y_{\overline j})$ in $\Gamma$ for some label-preserving automorphism $\phi$ of $\Gamma$. 
}
\end{figure}

\subsection{The graphical $Gr'(\frac{1}{6})$-case}\label{subsection:1over6}

In the presence of the $Gr'(\frac{1}{6})$-condition, we can drop all finiteness assumptions:

\begin{thm}\label{thm:gr16} Let $\Gamma$ be a $Gr'(\frac{1}{6})$-labelled graph that has at least two non-isomorphic components that each contain a simple closed path of length at least 2. Then $G(\Gamma)$ is either virtually infinite cyclic or acylindrically hyperbolic.
\end{thm}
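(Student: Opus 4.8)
The plan is to follow the blueprint of Theorem~\ref{thm:gr7} verbatim at the level of structure: since the $Gr'(\frac{1}{6})$-condition implies the $Gr(7)$-condition, Theorem~\ref{thm:hyperbolic_space} still provides the hyperbolic space $Y=\Cay(G(\Gamma),S\cup W)$, and it suffices to exhibit a WPD element for the action of $G(\Gamma)$ on $Y$; by Definition~\ref{defi:wpd} this forces $G(\Gamma)$ to be virtually cyclic or acylindrically hyperbolic, and since the element acts loxodromically the group is infinite, so ``virtually cyclic'' means ``virtually infinite cyclic''. First I would run the preliminary reductions of Section~\ref{section:wpd_element}: discarding superfluous components and splitting off free factors, we may assume $\Gamma=\Gamma_1\sqcup\Gamma_2$ with $\Gamma_1\not\cong\Gamma_2$, each $\Gamma_i$ carrying a simple closed path of length at least $2$, every letter occurring on at least two edges, and so on. These reductions apply because $Gr'(\frac{1}{6})\Rightarrow Gr(7)$ and the degenerate cases (forests, isomorphic or missing components) produce free products or hyperbolic groups; note also that a union of components of a $Gr'(\frac{1}{6})$-graph is again $Gr'(\frac{1}{6})$, so the hypothesis is preserved.

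The new content lies in replacing the finiteness used in Lemmas~\ref{lem:c7_wpd1} and~\ref{lem:gr7_wpd} and in Claim~2 of Proposition~\ref{prop:wpd7}. For the WPD element, I would fix in each $\Gamma_i$ a simple closed path $\gamma_i$ of length $n_i\geq 2$ and choose vertices $x_i,y_i$ on $\gamma_i$ cutting it into two subpaths, selecting the cut so that the subpath $\bar p_i:x_i\to y_i$ of $\gamma_i$ has length a fixed fraction of $n_i$ (strictly below a half). Using Strebel's classification of $(3,7)$-bigons (Theorem~\ref{thm:strebel_bigons}) together with the $Gr'(\frac{1}{6})$-condition --- exactly the mechanism of Example~\ref{example:not_acylindrical}(b), where a subpath of $\gamma_i$ of length at least $\frac{|\gamma_i|}{6}$ cannot be a piece --- one shows that $\bar p_i$ is a geodesic of $\Gamma_i$ and, more importantly, that there are only finitely many geodesics of $\Gamma_i$ between $x_i$ and $y_i$; by the convexity Lemma~\ref{lem:convex_embedding} the same then holds for the interval between $f(x_i)$ and $f(y_i)$ in $X=\Cay(G(\Gamma),S)$. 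Translating the bigon estimates into the ``no short concatenation of pieces'' statements, and arranging the cuts so that $x_1,y_1$ and $x_2,y_2$ are mutually essentially distinct, yields exactly the conclusions of Lemma~\ref{lem:gr7_wpd}. I then set $g$ to be the element of $G(\Gamma)$ represented by $\ell(x_1\to y_1)\ell(x_2\to y_2)$, as before.

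With $g$ in hand, I would re-run the geodesic analysis of Lemma~\ref{lem:c7_geodesic} and the WPD argument of Proposition~\ref{prop:wpd7}. These are purely diagrammatic --- $(3,7)$-bigons, $(3,7)$-quadrangles, and the curvature formulas of Lemmas~\ref{lem:curvature_lyndon} and~\ref{lem:curvature_strebel} --- and use no finiteness, with one exception: the final counting step, where a geodesic $\delta_2$ from $1$ to a candidate conjugator $h$ is shown to be a subpath of an $X$-representative $\alpha_X$ of a $Y$-geodesic $1\to g^N$, lying within its first $O(K)$ segments. The new input is that each such segment is a geodesic of $X$ between two vertices lying boundedly close to translates, by a power of $g$ and a fixed intermediate vertex, of $x_1,y_1,x_2,y_2$; hence it ranges over one of finitely many (translated) intervals, which are finite by the previous paragraph. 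Therefore $h$ lies in a finite set, $g$ satisfies the WPD condition, and the theorem follows; when $\Gamma$ has additional components or longer simple closed paths the argument of Remark~\ref{remark:non-elementary} supplies a free subgroup, excluding virtual cyclicity.

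The step I expect to be hardest is the second paragraph, specifically the interaction with label-preserving automorphisms: the $Gr'(\frac{1}{6})$-condition, unlike $C'(\frac{1}{6})$, permits nontrivial --- possibly infinite --- automorphism groups of the $\Gamma_i$, and in Proposition~\ref{prop:wpd7} finiteness of these automorphism groups was precisely what made the counting work. I would therefore need a careful bigon argument, internal to the bounded subgraph $\gamma_i$, showing that if two lifts of a sufficiently long subpath of $\gamma_i$ are essentially equal then some nontrivial closed path built from at most six pieces appears, contradicting $Gr'(\frac{1}{6})$; this would confine the ``essential'' behaviour of the automorphisms to a controlled bounded region, so that the interval-finiteness statement and the conjugator count survive even when $\Aut(\Gamma_i)$ is large.
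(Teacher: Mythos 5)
Your high-level plan coincides with the paper's: reuse the $Gr(7)$ machinery, build $g=\ell(x_1\to y_1)\ell(x_2\to y_2)$ from two non-isomorphic components, and re-run Lemma~\ref{lem:c7_geodesic} and Proposition~\ref{prop:wpd7}. You also correctly locate the two places where new input is needed. However, at the step you yourself flag as hardest there is a genuine gap. The source of infiniteness in Claim~2 of Proposition~\ref{prop:wpd7} is not the number of geodesics between $x_i$ and $y_i$ (the label $\ell(x_i\to y_i)$ is a well-defined element of $G(\Gamma)$ regardless of how many paths realize it); it is the factor $\ell(x_{\overline i}\to\phi(x_{\overline j}))$ in the word representing $h$, where $\phi$ ranges over the label-preserving automorphism group of a component, which for a $Gr'(\frac16)$-graph may genuinely be infinite. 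Your proposed fix --- ``if two lifts of a sufficiently long subpath of $\gamma_i$ are essentially equal then some nontrivial closed path built from at most six pieces appears, contradicting $Gr'(\frac16)$'' --- is not coherent as stated: essentially equal lifts are exactly the non-contradictory situation (every path has a lift essentially equal to itself), and no bigon argument internal to a single bounded subgraph can bound $\Aut(\Gamma_i)$, which need not be finite. What the paper actually proves (Lemma~\ref{lem:gr16}) is a statement that crucially uses \emph{both} components at once: the matched configuration produces a path $p_1:x_1\to\phi_1(x_1)$ in $\Gamma_1$ whose label also reads a path in $\Gamma_2$, so $p_1$ is a piece; iterating $\phi_1$ then yields either arbitrarily long pieces labelled by proper powers (contradicting $Gr'(\frac16)$ via a bigon with $\gamma$ and $\phi_1^k(\gamma)$ when $\phi_1$ has infinite order) or a nontrivial closed path that is a piece (when $\phi_1$ has finite order). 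Hence $\phi=\mathrm{id}$ and $h=g^{(i-j)/2}$ exactly; no ``interval-finiteness'' is needed or sufficient. Without an argument of this kind your conjugator count does not close.

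A secondary issue: your choice of $x_i,y_i$ by cutting $\gamma_i$ at a fixed fraction of its length does not reference the intersection $C=f_1(\Gamma_1)\cap f_2(\Gamma_2)$, whereas the conclusions of Lemma~\ref{lem:gr7_wpd} that you need (e.g.\ that $\alpha_1$ is not a piece after stripping off a piece $p$ and a $\Gamma_2$-path $q$, and the ``at most one short connector'' statements) are precisely about how far $x_i,y_i$ sit from $C$ in the piece metric. The paper's Lemma~\ref{lem:gr16_wpd} takes $\gamma_i$ of \emph{minimal} length $\geqslant 2$ and chooses $w_i$ on $\gamma_i$ maximizing $d(w_i,f_i^{-1}(C))$, and the verification uses both the minimality of $|\gamma_i|$ and the connectedness and piece-property of $C$. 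Your cut would need a comparable analysis before the properties of Lemma~\ref{lem:gr7_wpd} can be invoked.
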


We will rely on the following adaption of Lemma~\ref{lem:gr7_wpd} to define our WPD element as before.

\begin{lem}\label{lem:gr16_wpd} Let $\Gamma$ be a $Gr'(\frac{1}{6})$-labelled graph that has at least two non-isomorphic (not necessarily finite) components $\Gamma_1$ and $\Gamma_2$ that each contain a simple closed path of length at least 2. Then there exist vertices $x_1,y_1$ in $\Gamma_1$ and $x_2,y_2$ in $\Gamma_2$ for which the conclusion of Lemma~\ref{lem:gr7_wpd} holds.
\end{lem}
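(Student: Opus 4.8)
The plan is to run the proof of Lemma~\ref{lem:gr7_wpd} essentially verbatim, the only change being at the single point where finiteness of $\Gamma_1$ and $\Gamma_2$ entered, namely the appeal to Lemma~\ref{lem:c7_wpd1}. There finiteness was used to produce in each $\Gamma_i$ either two vertices not joined by a concatenation of pieces, or a simple closed path that is a concatenation of pieces. Here I would instead work directly with the simple closed paths $\gamma_i$ of length at least $2$ provided by hypothesis, using the following metric consequence of the $Gr'(\frac{1}{6})$-condition: since any piece that is a subpath of $\gamma_i$ has length $<\frac16|\gamma_i|$, any subpath of $\gamma_i$ that is a concatenation of at most $k$ pieces has length $<\frac{k}{6}|\gamma_i|$; in particular $\gamma_i$ is not a concatenation of fewer than $7$ pieces. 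Since the $Gr'(\frac{1}{6})$-condition implies the $Gr(7)$-condition, this is precisely the input that the ``second alternative'' of Lemma~\ref{lem:c7_wpd1} supplied.

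Concretely, I would pick a base vertex $v_i$ on $\gamma_i$ and map $\Gamma_i$ into $X:=\Cay(G(\Gamma),S)$ by the label-preserving homomorphism $f_i$ with $f_i(v_i)=1$; these maps are injective by \cite[Lemma~4.1]{Gru}, the intersection $C:=f_1(\Gamma_1)\cap f_2(\Gamma_2)$ is connected by Lemma~\ref{lem:connected_embedding} and convex by Lemma~\ref{lem:convex_embedding} (neither result requires finiteness), and $1\in C$. Since $\Gamma_1\not\cong\Gamma_2$, the vertices $v_1,v_2$ are essentially distinct, so every path in $C$ is a piece. Next I take $p_i$ to be a maximal subpath of $\gamma_i$ meeting $C$ only at its endpoints, chosen so that $p_i$ is not a concatenation of at most $5$ pieces; such a $p_i$ exists by the metric estimate above, using that each maximal $C$-arc of $\gamma_i$ is a single piece. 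Let $w_i$ be the initial vertex of the maximal terminal subpath of $p_i$ that is a concatenation of at most $3$ pieces. Then, exactly as in Lemma~\ref{lem:gr7_wpd}, $f_i(w_i)$ cannot be joined inside $f_i(\Gamma_i)$ to any vertex of $C$ by a concatenation of at most two pieces, since otherwise one obtains a non-trivial closed path in $\Gamma$ that is a concatenation of at most $6$ pieces, contradicting the $Gr(7)$-condition. Setting $y_1:=v_1$, $x_1:=w_1$, $x_2:=v_2$, $y_2:=w_2$, the first claim of Lemma~\ref{lem:gr7_wpd} holds because $\Gamma_1\not\cong\Gamma_2$ forces the relevant vertices to be distinct and essentially distinct, the second and third claims follow from the property of $w_i$ just established, and the fourth and fifth claims are proved word for word as in Lemma~\ref{lem:gr7_wpd}: from two competing reduced paths one forms a non-trivial closed path that is a concatenation of at most six pieces and invokes $Gr(7)$.

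The main obstacle is making the metric bound a faithful substitute for the combinatorial hypothesis of Lemma~\ref{lem:c7_wpd1}: there $\gamma_i$ was itself a concatenation of pieces, whereas here it need not be, and one has to check that ``$\gamma_i$ is long compared with its piece-subpaths'' is enough. The delicate point is the choice of $p_i$ when $f_i(\gamma_i)$ enters and leaves $C$ more than once; one must verify, by a short count of pieces (each maximal $C$-arc of $\gamma_i$ contributes one piece, while $\gamma_i$ needs at least seven pieces in total), that some complementary arc is not a concatenation of at most five pieces, and convexity of $C$ coming from Lemma~\ref{lem:convex_embedding} is what one leans on to constrain how $\gamma_i$ can re-enter $C$. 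Once $x_i,y_i$ are in hand, every remaining step coincides with the corresponding step in the proof of Lemma~\ref{lem:gr7_wpd} and uses only the implication $Gr'(\frac{1}{6})\Rightarrow Gr(7)$, so no further finiteness assumption is needed.
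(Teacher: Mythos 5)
Your reduction to the combinatorial argument of Lemma~\ref{lem:gr7_wpd} breaks down exactly at the point you flag as ``delicate'': the existence of a complementary arc $p_i$ of $\gamma_i\setminus f_i^{-1}(C)$ that is not a concatenation of at most $5$ pieces. Your proposed count does not close this. If $\gamma_i$ meets $f_i^{-1}(C)$ in $m$ maximal arcs, each such arc is one piece and each of the $m$ complementary arcs could a priori be a concatenation of at most $5$ pieces, so $\gamma_i$ would be a concatenation of at most $6m$ pieces; for $m\geqslant 2$ this is $\geqslant 12$ and contradicts neither the $Gr(7)$-condition nor the metric estimate $|{\rm piece}|<\tfrac16|\gamma_i|$ (which only gives $|\gamma_i|<m|\gamma_i|$). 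Convexity of $C$ does not obviously rule out multiple re-entries of $\gamma_i$ into $C$, and since you take $\gamma_i$ to be an \emph{arbitrary} simple closed path of length $\geqslant 2$, you have no control over how its subarcs compare with geodesics. In Lemma~\ref{lem:gr7_wpd} this problem is invisible because there $\gamma_i$ is itself a concatenation of pieces and the argument is purely combinatorial; here that hypothesis is gone and nothing in your proposal replaces it. (A secondary issue: the closed path of $\leqslant 6$ pieces you produce to rule out short connections from $w_i$ to $C$ must be shown non-trivial, which is also not addressed.)

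The paper's actual proof takes a genuinely different, metric route, and it is worth seeing why. One chooses $\gamma_i$ of \emph{minimal} length greater than $1$ among simple closed paths in $\Gamma_i$, and defines $w_i$ as a vertex of $\gamma_i$ \emph{maximizing the distance} to $f_i^{-1}(C)$ (no decomposition of $\gamma_i$ into $C$-arcs is needed). If $w_i$ could be joined to $f_i^{-1}(C)$ by a concatenation $p$ of at most $2$ pieces, one compares $p$ with a shortest path $q$ along $\gamma_i$ from $w_i$ to $f_i^{-1}(C)$, which satisfies $|q|\leqslant\lfloor|\gamma_i|/2\rfloor$; closing up through the connected piece $C$ yields a simple closed path $\gamma'$ with $|\gamma'|\geqslant|\gamma_i|$ (by minimality) that is a concatenation of at most $3$ pieces, each of length $<\tfrac16|\gamma'|$, and a subpath of $q$ of length $\leqslant\tfrac12|\gamma'|$ --- a length contradiction. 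So the step you need really uses the quantitative $Gr'(\tfrac16)$-condition together with the minimality of $|\gamma_i|$, not merely the implication $Gr'(\tfrac16)\Rightarrow Gr(7)$ as you assert; without importing both of these choices your argument does not go through.
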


\begin{proof}
 Denote $X:=\Cay(G(\Gamma),S)$. For each $i\in\{1,2\}$, let $\gamma_i$ be a simple closed path in $\Gamma_i$ of minimal length greater than 1, and denote by $v_i$ the initial vertex of $\gamma_i$. Consider the maps $f_i:\Gamma_i\to X$ that send $v_i$ to $1$, and denote $C:=f_1(\Gamma_1)\cap f_2(\Gamma_2)$. 
 For each $i$, let $w_i$ be a vertex in $\gamma_i\subseteq \Gamma_i$ for which $d(w_i,f_i^{-1}(C))$ is maximal. Since $|\gamma_i|\geqslant 2$ and since any path in $C$ is a piece, we have $w_i\notin f_i^{-1}(C)$ by the small cancellation condition.
 
 Let $i\in\{1,2\}$, and suppose there exists a path $p$ in $\Gamma_i$ with $\iota p=w_i$ and $\tau p\in f_i^{-1}(C)$ that is a concatenation of at most 2 pieces. Choose such a $p$ with minimal length. Then $p$ is a simple path. Let $q$ be a shortest path in $\gamma_i$ with $\iota q=w_i$ and $\tau q\in f_i^{-1}(C)$. 
 Since $C$ is connected by Lemma~\ref{lem:connected_embedding}, there exists a shortest path $c$ in $f_i^{-1}(C)$ with $\iota c=\tau p$ and $\tau c=\tau q$ which, as observed above, is a piece. 
 
 If $pcq^{-1}$ is a non-trivial closed path, then there exists a subpath $\gamma'$ of its reduction that is a simple closed path of length at least 2. The path $\gamma'$ can be written as a concatenation of at most 3 pieces and a subpath of $q^{-1}$. Since $|\gamma'|\geqslant|\gamma_i|$ and since $|q|\leqslant \lfloor\frac{|\gamma_i|}{2}\rfloor$, this is a contradiction to the small cancellation assumption.
 
 If $pcq^{-1}$ is a trivial closed path, then $|c|=0$ and $p=q$. Now there exists a simple path $q'$ in $\gamma_i$ such that $\iota q'=w_i$, $\tau q'\in f_i^{-1}(C)$ and such that $q$ and $q'$ are edge-disjoint. Lt $c'$ be a simple path in $f_i^{-1}(C)$ with $\iota c'=\tau q$ and $\tau c'=\tau q'$. Then $\gamma'':=qc'q'^{-1}$ is a simple closed path.
 Note that $|q'|\leqslant |q|+1$. Thus, if $|c'|>0$, then $|qc'|\geqslant\frac{|\gamma_i''|}{2}$, which, together with the fact that $qc'$ is a concatenation of at most 3 pieces, contradicts the small cancellation assumption. If, on the other hand, $|c'|=0$, then the fact that $q$ is a concatenation of at most 2 pieces yields that $|q|<\frac{2|q|+1}{3}$, which cannot hold since $|q|\geqslant 1$.
 
 We conclude for $x_1=w_1,y_1=v_1,x_2=v_2,y_2=w_2$ as in Lemma~\ref{lem:gr7_wpd}.
\end{proof}

To remove the requirement that the components containing $x_1$ and $x_2$ have finite automorphism groups, which we use in the proof of Proposition~\ref{prop:wpd7}, we prove the following:

\begin{lem}\label{lem:gr16} Let $\Gamma_1$ and $\Gamma_2$ be non-isomorphic components of a $Gr'(\frac{1}{6})$-labelled graph. Suppose there exist vertices $x_1,y_1\in \Gamma_1$ and $x_2,y_2\in \Gamma_2$, such that that $x_1\neq y_1$ and such that no path from $x_1$ to $y_1$ is a concatenation of at most two pieces. Let $\phi:\Gamma_2\to\Gamma_2$, $\phi_1:\Gamma_1\to\Gamma_1$, and $\phi_2:\Gamma_2\to\Gamma_2$ be label-preserving automorphisms such that:
\begin{itemize}
 \item There exist paths  $q_2:y_2\to \phi(y_2)$ and $p_1:x_1\to\phi_1(x_1)$ such that $q_2$ and $p_1$ have the same label.
 \item There exist paths $q_1:y_1\to \phi_1(y_1)$ and $p_2:x_2\to\phi_2(x_2)$ such that $q_1$ and $p_2$ have the same label. 
\end{itemize}
Then $\phi$, $\phi_1$, and $\phi_2$ are the identity maps.\end{lem}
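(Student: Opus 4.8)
The plan is to pass to the Cayley graph $X:=\Cay(G(\Gamma),S)$ and to reduce everything to the triviality of a single group element. By Lemma~\ref{lem:convex_embedding} every component of $\Gamma$ embeds isometrically into $X$ with convex image, and a label-preserving graph homomorphism from a connected graph into $X$ is determined by the image of one vertex. First I would fix three embeddings: $f_1\colon\Gamma_1\hookrightarrow X$ with $f_1(x_1)=1$, $f_2\colon\Gamma_2\hookrightarrow X$ with $f_2(y_2)=1$, and $f_2'\colon\Gamma_2\hookrightarrow X$ with $f_2'(x_2)=f_1(y_1)$. Since a path in $X$ is determined by its starting vertex and its label, $\ell(q_2)=\ell(p_1)$ gives $f_1(p_1)=f_2(q_2)$, and $\ell(p_2)=\ell(q_1)$ gives $f_1(q_1)=f_2'(p_2)$. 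Writing $K_1:=f_1(\Gamma_1)$, $K_2:=f_2(\Gamma_2)$, $K_2':=f_2'(\Gamma_2)$, we thus have $f_1(p_1)\subseteq K_1\cap K_2$ and $f_1(q_1)\subseteq K_1\cap K_2'$.

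Next I would introduce $c:=f_1(\phi_1(x_1))\in G(\Gamma)$. The two maps $x\mapsto c\cdot f_1(x)$ and $f_1\circ\phi_1$ are label-preserving homomorphisms of the connected graph $\Gamma_1$ into $X$ agreeing at $x_1$, hence equal; so $c\cdot f_1=f_1\circ\phi_1$. A short computation of the same kind (using $\ell(q_2)=\ell(p_1)$, respectively $\ell(p_2)=\ell(q_1)$, together with the fact that $p_1$, $q_1$ and $\phi_1$-images of a path $x_1\to y_1$ close up in $\Gamma_1$, so $\ell(q_1)$ represents the $f_1(y_1)$-conjugate of $\ell(p_1)$) shows $c\cdot f_2=f_2\circ\phi$ and $c\cdot f_2'=f_2'\circ\phi_2$. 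Consequently $c$ stabilizes each of $K_1,K_2,K_2'$, and since each $f_i$ is injective we get: $\phi_1=\mathrm{id}$, $\phi=\mathrm{id}$ and $\phi_2=\mathrm{id}$ hold simultaneously if and only if $c=1$. So it suffices to prove $c=1$.

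Assume $c\ne 1$; then $c$ acts without fixed vertices on $X$. Put $P:=K_1\cap K_2$ and $Q:=K_1\cap K_2'$. Both are convex (intersections of convex sets), connected and nonempty (Lemma~\ref{lem:connected_embedding}, together with $f_1(p_1)\subseteq P$ and $f_1(q_1)\subseteq Q$), and both are $c$-invariant. Crucially, every positive-length path in $P$ or in $Q$ is a piece: via $f_1$ and via $f_2$ (respectively $f_2'$) it lifts to two paths lying in the \emph{non-isomorphic} components $\Gamma_1$ and $\Gamma_2$, which are therefore essentially distinct. Since $f_1(x_1)=1\in P$ and $f_1(y_1)\in Q$, the hypothesis that no path $x_1\to y_1$ is a concatenation of at most two pieces forces $P\cap Q=\emptyset$: a common vertex $z$ would give a path $x_1\to z$ inside $f_1^{-1}(P)$ and a path $z\to y_1$ inside $f_1^{-1}(Q)$, i.e.\ a concatenation of two pieces from $x_1$ to $y_1$.

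Finally I would derive a contradiction from the action of $c$ on $P$. If $P$ were not a tree it would contain an embedded cycle; this is a non-null-homotopic closed path which is a single piece, contradicting the $Gr(7)$-condition (no non-trivial closed path is a concatenation of fewer than $7$ pieces). So $P$, and likewise $Q$, is a tree. A non-trivial element acting freely on a tree acts as an inversion or a translation; an inversion is impossible here, since an inverted edge of $f_1^{-1}(P)$ would lie on a simple closed path of length $2$ in $\Gamma_1$, whence by the $Gr'(\tfrac16)$-condition that edge could not be a piece — contradicting that it lies in $f_1^{-1}(P)$. Hence $c$ translates an axis $L_P\subseteq P$ and an axis $L_Q\subseteq Q$, two disjoint $c$-invariant geodesic lines inside the convex set $K_1$. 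The remaining, and most delicate, step is to convert this configuration into a concatenation of at most two pieces from $x_1$ to $y_1$: one moves inside $P$ from $x_1$ towards $L_P$, crosses to $Q$ using that the minimal bridge between $L_P$ and $L_Q$ is $c$-equivariant (so the bridge together with segments of the two axes can be assembled into a single piece), and then moves inside $Q$ to $y_1$. Making this last part rigorous — controlling the bridge between the axes via the $c$-action and the convexity of $K_1$, presumably through a van Kampen diagram over $\Gamma$ and the curvature formula of Lemma~\ref{lem:curvature_strebel} — is the main obstacle; once it is in place, $c=1$ follows, and with it the lemma.
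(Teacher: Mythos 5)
Your reduction to showing $c=1$ is correct, and the structural observations are sound: the identities $c\cdot f_1=f_1\circ\phi_1$, $c\cdot f_2=f_2\circ\phi$, $c\cdot f_2'=f_2'\circ\phi_2$ all check out; $P$ and $Q$ are connected, consist of pieces, are disjoint by the two-piece hypothesis, and are trees by the $Gr(7)$-condition, so a non-trivial $c$ would have to translate along axes $L_P\subseteq P$ and $L_Q\subseteq Q$. (In particular your tree argument disposes of the finite-order case for free, which the paper's proof has to treat separately.) But the argument stops exactly where the real work is, and the route you sketch for the last step is not viable: a segment of $L_P$ is a piece because it lies in $K_1\cap K_2$, a segment of $L_Q$ is a piece because it lies in $K_1\cap K_2'$, but a bridge between them lies only in $K_1$, and there is no reason for it --- let alone for a concatenation of the bridge with axis segments --- to admit a second essentially distinct lift, so it cannot be ``assembled into a single piece.'' More fundamentally, the contradiction should not be another two-piece path from $x_1$ to $y_1$; it has to come from the $Gr'(\frac{1}{6})$-condition applied to a closed path.

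Here is how to finish from your own configuration; it is essentially what the paper does, working directly in $\Gamma_1$ rather than in $X$. The paths $p^{(k)}:=p_1\phi_1(p_1)\cdots\phi_1^{k-1}(p_1)$ from $x_1$ to $\phi_1^k(x_1)$ and $q^{(k)}:=q_1\phi_1(q_1)\cdots\phi_1^{k-1}(q_1)$ from $y_1$ to $\phi_1^k(y_1)$ are single pieces for every $k$, since their labels are also read on $\Gamma_2$ (as $q_2\phi(q_2)\cdots$ and $p_2\phi_2(p_2)\cdots$) and $\Gamma_1\not\cong\Gamma_2$; these are your two axes, and they are disjoint by the two-piece hypothesis. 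Let $\gamma$ be a geodesic $x_1\to y_1$ in $\Gamma_1$ and consider the closed path $p^{(k)}\,\phi_1^k(\gamma)\,(q^{(k)})^{-1}\gamma^{-1}$. Its two long sides are single pieces whose lengths grow linearly in $k$ while $|\gamma|=|\phi_1^k(\gamma)|$ is fixed, so for $k$ large its reduction contains a simple closed path more than a sixth of which is a single piece, contradicting $Gr'(\frac{1}{6})$. (When $\phi_1$ has finite order $K$, the paper instead notes that $p_1\phi_1(p_1)\cdots\phi_1^{K-1}(p_1)$ is a non-trivial closed path that is a single piece; in your setup this case is already excluded because a finite-order $c\neq 1$ cannot act freely on the tree $P$.) Once $\phi_1=\mathrm{id}$, i.e.\ $c=1$, the triviality of $\phi$ and $\phi_2$ follows as you say.
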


\begin{proof}
 Assume $\phi_1$ is non-trivial. By assumption, for every $k$ there exist paths $p^{(k)}:x_1\to \phi_1^k (x_1)$ and $q^{(k)}:y_1\to \phi_1^k (y_1)$ that are pieces and whose labels are $k$-th powers of a freely non-trivial word each. Let $\gamma$ be a geodesic $x_1\to y_1$. 
 Suppose $\phi_1$ has infinite order. By assumption, $p^{(k)}$ and $q^{(k)}$ do not intersect whence, for $k$ large enough, the reduction of $p^{(k)}\phi^k(\gamma)(q^{(k)})^{-1}\gamma^{-1}$ contains a simple closed path that contradicts the $Gr'(\frac{1}{6})$-condition. Therefore, $\phi_1$ has finite order $K$. But in this case, the path $p_1\phi_1(p_1)\phi_1^2(p_1)\dots\phi_1^{K-1}(p_1)$ is a non-trivial closed path whose label is a  piece, a contradiction. Therefore, $\phi_1$ is trivial.

This implies that $y_2$ is connected to $\phi(y_2)$ by a path of length 0 and $x_2$ is connected to $\phi_2(x_2)$ by a path of length 0, whence these two automorphisms are trivial as well.
\end{proof}

\begin{proof}[Proof of Theorem~\ref{thm:gr16}]

We define $g$ as before to be the element of $G(\Gamma)$ represented by $\ell(x_1\to y_1)\ell(x_2\to y_2)$, where the $x_i$ and $y_i$ are those produced by Lemma~\ref{lem:gr16_wpd}. Then the statement and proof of Lemma~\ref{lem:c7_geodesic} clearly apply to $g$. This shows hyperbolicity of $g$.

To prove the WPD condition, consider the proof of Proposition~\ref{prop:wpd7}, and choose the constant $N_0$ such that $d_Y(1,g^N)>2K+7$ for all $N\geqslant N_0$. The only ingredient in the proof of Proposition~\ref{prop:wpd7} that is not present in the case of Theorem~\ref{thm:gr16} is the finiteness of the automorphism groups of the components of $\Gamma$. This ingredient is used exclusively in the following case of claim 2: 
There exists $i\leqslant K+5,j\leqslant K+5$ and a path $q_i$ in $\alpha_i\cap\beta_j$ such that the lifts of $q$ via $\alpha_i\mapsto\alpha_{i,\Gamma}$ and via $\beta_j\mapsto\beta_{j,\Gamma}$ are essentially equal. In this case, we may represent $h$ by a word
$$\ell(x_1\to y_1)\ell(x_2\to y_2)\ell(x_1\to y_1)\dots \ell(x_{\overline i}\to\phi(x_{\overline j}))\dots \ell(y_2\to x_2)\ell(y_1\to x_1),$$
with at most $2K+9$ factors. Since $x_1$ and $x_2$ are contained in non-isomorphic components of $\Gamma$ we have $\overline i=\overline j$.

By our choice of $N_0$, the paths $\alpha_i\alpha_{i+1}\alpha_{i+2}$ and $\beta_j\beta_{j+1}\beta_{j+2}$ are subpaths of $p$. Using arguments of claim 2 in the proof of Lemma~\ref{lem:c7_geodesic} it now follows that there exists a path $q'$ in $\alpha_{i+1}\cap\beta_{j+1}$ for which the two resulting lifts are essentially equal and  that there exists $q''$ in $\alpha_{i+2}\cap\beta_{j+2}$ for which the lifts are essentially equal.  
Therefore, we are in the situation of Lemma~\ref{lem:gr16} where, if $\overline i\not\equiv 2\mod 2$, the indices $1$ and $2$ in the statement of the lemma have to be switched. (See also Figure~\ref{figure:wpd_gr16} for an illustration.) Therefore, $\phi$ is the identity and, in this case, $h$ is equal to $g^{\frac{i-j}{2}}$, whence finiteness is proved.
\end{proof}

\begin{figure}\label{figure:wpd_gr16}
\begin{tikzpicture}[line width=1pt,>=stealth]
 \draw[-,line width=1pt] (-.5,0) to (5.5,0);
 \draw[-,line width=3pt] (3,0) to (4,0);
 \draw[-] (0,2) to (0,0);
 \draw[-] (2,2) to (2,0);
 \draw[-] (4,2) to (4,0);
 
 \draw[-] (1,-2) to (1,0);
 \draw[-] (3,-2) to (3,0);
 \draw[-] (5,-2) to (5,0);

  \draw [-,rounded corners=5,dashed] (-.5,.1) to (-.1,.1) to (-.1,2);

 \draw [<-,rounded corners=5,dashed] (.1,2) to (.1,.1) to (1.9,.1) to (1.9,2);
 \draw [<-,rounded corners=5,dashed] (2.1,2) to (2.1,.1) to (3.9,.1) to (3.9,2);
 \draw [<-,rounded corners=5,dashed] (4.1,2) to (4.1,.1) to (5.5,.1);
 
 \draw [-,rounded corners=5,dashed] (-.5,-.1) to (.9,-.1) to (.9,-2);
 \draw [<-,rounded corners=5,dashed] (1.1,-2) to (1.1,-.1) to (2.9,-.1) to (2.9,-2);
 \draw [<-,rounded corners=5,dashed] (3.1,-2) to (3.1,-.1) to (4.9,-.1) to (4.9,-2);
 \draw [<-,rounded corners=5,dashed] (5.1,-2) to (5.1,-.1) to (5.5,-.1);
 
\end{tikzpicture}
\hfill
\begin{tikzpicture}[line width=1pt,>=stealth]
 \draw[-,line width=1pt] (-.5,0) to (5.5,0);
 \draw[-,line width=3pt] (1,0) to (2,0);
 \draw[-] (0,2) to (0,0);
 \draw[-] (2,2) to (2,0);
 \draw[-] (4,2) to (4,0);
 
 \draw[-] (1,-2) to (1,0);
 \draw[-] (3,-2) to (3,0);
 \draw[-] (5,-2) to (5,0);

  \draw [-,rounded corners=5,dashed] (-.5,.1) to (-.1,.1) to (-.1,2);

 \draw [<-,rounded corners=5,dashed] (0.1,2) to (0.1,.1) to (1.9,.1) to (1.9,2);
 \draw [<-,rounded corners=5,dashed] (4.1,2) to (4.1,.1) to (5.5,.1);
 
 \draw [-,rounded corners=5,dashed] (-.5,-.1) to (.9,-.1) to (.9,-2);
 \draw [<-,rounded corners=5,dashed] (1.1,-2) to (1.1,-.1) to (2.9,-.1) to (2.9,-2);
 \draw [<-,rounded corners=5,dashed] (5.1,-2) to (5.1,-.1) to (5.5,-.1);

  \draw [->,rounded corners=5,dotted] (3.9,2) to (3.9,-.1) to (4.9,-.1) to (4.9,-2);
  \draw [->,rounded corners=5,dotted] (2.1,2) to (2.1,.1) to (3.1,.1) to (3.1,-2);
  

\end{tikzpicture}
\caption{
The horizontal line represents the intersection $\alpha\cap\beta$ in $D$. The vertical lines are for illustration only, providing support for the dashed paths, which lift to paths $\overline\alpha_{i,\Gamma}:x_{\overline i}\to y_{\overline i}$. 
If the $q$ traverses the thick part in the left-hand picture and if the lifts of $q$ are essentially equal, then the dotted paths in the right-hand picture lift to paths $x_{\overline i}\to\phi(x_{\overline i})$, respectively $y_{\overline i}\to\phi(y_{\overline i})$ in $\Gamma$ for some label-preserving automorphism $\phi$ of $\Gamma$. 
Hence, the properties of the $\overline\alpha_{i,\Gamma}$ imply the path $q'$ traversing the thick part in the right-hand picture cannot be a piece, whence the lifts of $q'$ are also essentially equal. Since $\alpha\cap\beta$ is long enough, we have at least 3 consecutive situations as in the figure, and we obtain the situation of Lemma~\ref{lem:gr16}.
}
\end{figure}
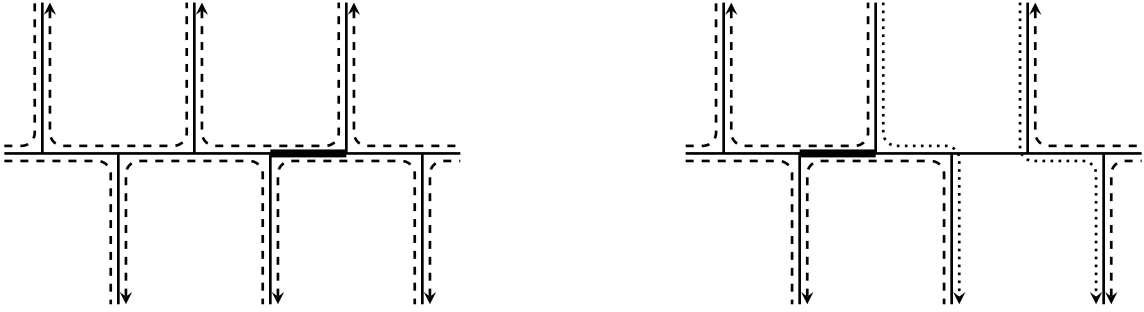

\subsection{The free product case}\label{subsection:products}

The corresponding results for groups defined by graphical free product small cancellation presentations also hold with the same proofs if we assume that at least two of the $G_i$ are non-trivial. Here ``finiteness'' means that there exists a finite graph $\Gamma'$ whose completion is $\overline \Gamma$. Equivalently, it means that there are finitely many vertices in $\overline \Gamma$ that are incident at two edges whose labels lie in distinct factors $G_i$, and that for every vertex $v$, the set of labels of edges incident at $v$ is contained in finitely many $G_i$.

\begin{thm}\label{thm:gr7*} Let $\Gamma$ be a $Gr_*(7)$-labelled graph over a free product with at least two non-trivial factors such that the components of $\Gamma$ are finite. Then $G(\Gamma)_*$ is either virtually cyclic or acylindrically hyperbolic. 
\end{thm}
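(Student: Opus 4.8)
The plan is to reproduce the proof of Theorem~\ref{thm:gr7} line by line, with $\Gamma$ replaced by its completion $\overline\Gamma$ and $G(\Gamma)$ by $G(\Gamma)_*$. Every combinatorial tool used in Sections~\ref{section:hyperbolic_space} and~\ref{section:wpd_element} has a free-product counterpart recorded above: Lemma~\ref{lem:graphical_product_basic} provides van Kampen's lemma for $\overline\Gamma$-reduced diagrams; the space $Y:=\Cay(G(\Gamma)_*,(\sqcup_{i\in I}G_i)\cup W)$ is Gromov hyperbolic by Theorem~\ref{thm:relative_hyperbolic_space}; isometric embedding and convexity of the components of $\overline\Gamma$ in $\Cay(G(\Gamma)_*,\sqcup_i S_i)$ hold by Remark~\ref{remark:convex_product}; and the free-product version of Lemma~\ref{lem:connected_embedding} (stated immediately after its proof) gives connectedness of intersections of embedded components. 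Since a $\overline\Gamma$-reduced diagram over a $Gr_*(7)$-labelling is again a $(3,7)$-diagram, Strebel's classification of $(3,7)$-bigons (Theorem~\ref{thm:strebel_bigons}) and the curvature formulas (Lemmas~\ref{lem:curvature_lyndon},~\ref{lem:curvature_strebel}) apply unchanged.

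First I would carry out the reduction to convenient assumptions on $\overline\Gamma$, the exact analogue of the list of hypotheses preceding Lemma~\ref{lem:c7_wpd1}: discard labels occurring on at most one edge of $\overline\Gamma$ (this splits off a free $\Z$-factor or is a Tietze move), discard superfluous components, assume every remaining component carries a closed path whose label is non-trivial in $\freeproduct_{i\in I}G_i$, assume no two components are label-preservingly isomorphic, and --- using that at least two factors are non-trivial and that the components of $\Gamma$ are finite --- arrange that $\overline\Gamma$ has two non-isomorphic components $\overline\Gamma_1,\overline\Gamma_2$ each carrying such a closed path and each containing a finite component $\Gamma_1$, respectively $\Gamma_2$, of $\Gamma$. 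Whenever one of these reductions fails, $G(\Gamma)_*$ is a free product one of whose factors is free or is some $G_i$, and the conclusion holds by elementary facts about free products (a non-trivial free product is virtually cyclic or acylindrically hyperbolic, see e.g.\ \cite{MO}).

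Next I would construct the WPD element. Lemmas~\ref{lem:c7_wpd1} and~\ref{lem:gr7_wpd} carry over with $\Gamma_i$ replaced by the finite components $\Gamma_1,\Gamma_2$ of $\Gamma$ sitting inside $\overline\Gamma_1,\overline\Gamma_2$: the finite-order-automorphism argument of Lemma~\ref{lem:c7_wpd1} applies to the finite graph $\Gamma_i$, while the diagram estimates in Lemma~\ref{lem:gr7_wpd} use only the embedding and connected-intersection statements now available over $\overline\Gamma$. One then defines $g\in G(\Gamma)_*$ to be represented by $\ell(x_1\to y_1)\ell(x_2\to y_2)$ with $x_1,y_1\in\Gamma_1$ and $x_2,y_2\in\Gamma_2$, exactly as before. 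The proofs that $g$ acts hyperbolically (Lemma~\ref{lem:c7_geodesic}) and satisfies the WPD condition (Proposition~\ref{prop:wpd7}) are then transcribed over $\overline\Gamma$: every diagram that occurs is $\overline\Gamma$-reduced, hence a $(3,7)$-diagram, and the shape-$\mathrm{I}_1$ and curvature arguments go through verbatim. Together with the observation (as in Remark~\ref{remark:non-elementary}) that we may assume $G(\Gamma)_*$ is not virtually cyclic, Definition~\ref{defi:wpd} then yields acylindrical hyperbolicity. The analogue for $Gr_*'(\tfrac16)$-labelled graphs (Theorem~\ref{thm:gr16*}) is obtained in the same way from the proof of Theorem~\ref{thm:gr16}, using Lemmas~\ref{lem:gr16_wpd} and~\ref{lem:gr16}.

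The hard part will be re-establishing the finiteness conclusion at the end of Proposition~\ref{prop:wpd7} (Claim~2), where $h$ is forced to be represented by a word with at most $2K+9$ factors, each equal to $\ell(x_1\to y_1)$, $\ell(x_2\to y_2)$, or a middle factor $\ell(x_{\overline{\imath}}\to\phi(x_{\overline{\jmath}}))$ for a label-preserving automorphism $\phi$ of $\overline\Gamma$. Over $\Gamma$ this set is finite because the components are finite; over $\overline\Gamma$ one must contend with the fact that the component of $\overline\Gamma$ containing $x_1$ (resp.\ $x_2$) is in general infinite --- it contains the attached copies of the possibly infinite $\Cay(G_j,S_j)$ --- and a priori has infinite label-preserving automorphism group. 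The resolution I would use is that the reductions above force the label-preserving automorphism group of $\overline\Gamma$ not to act transitively on the vertex set of the union of the attached Cayley graphs, so that by Remark~\ref{remark:convex_product} every geodesic path inside an attached $\Cay(G_j,S_j)$ is a piece; combined with the minimality choices in Lemma~\ref{lem:c7_geodesic} and the ``not a piece'' conclusions of Lemma~\ref{lem:gr7_wpd}, this pins each $\overline\alpha_{i,\Gamma}$ (and each relevant middle factor) down to a reduced path lying inside the finite graph $\Gamma_1$ or $\Gamma_2$ and restricts $\phi$ to moving $x_{\overline{\jmath}}$ within that finite graph, giving a finite set of possibilities for $h$ after all. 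In other words, the only property of ``$\Gamma$ has finite components'' actually used in Proposition~\ref{prop:wpd7} is that the relevant components of $\overline\Gamma$ have finite label-preserving automorphism group, and verifying this carefully is the main task.
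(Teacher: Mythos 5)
Your overall strategy --- transcribe the proof of Theorem~\ref{thm:gr7} over $\overline\Gamma$ and isolate the finiteness step in Claim~2 of Proposition~\ref{prop:wpd7} as the real difficulty --- matches the paper's, and you correctly identify where the argument genuinely needs new input. But your resolution of that difficulty does not work as stated. You claim that the minimality choices ``pin each $\overline\alpha_{i,\Gamma}$ (and each relevant middle factor) down to a reduced path lying inside the finite graph $\Gamma_1$ or $\Gamma_2$.'' This is not justified and is false in general: every edge of $\overline\Gamma$ lies in some attached $\Cay(G_j,S_j)$, so the paths $\overline\alpha_{i,\overline\Gamma}:x_{\overline\imath}\to y_{\overline\imath}$ necessarily traverse the attached (possibly infinite) Cayley graphs and there is no reason for them, or for their initial subpaths $q_1,q_2$ appearing in the factor $\ell(q_1)\ell(q_2^{-1})$, to stay in the image of the finite graph. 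Consequently your closing claim --- that the only property of finiteness actually used is finiteness of the label-preserving automorphism groups of the relevant components of $\overline\Gamma$ --- is also wrong: in the last case of Claim~2 the original proof uses that the path that is a concatenation of at most two pieces is \emph{unique}, hence has finitely many initial subpaths; over $\overline\Gamma$ there may be infinitely many such paths and infinitely many initial subpaths, and no automorphism-group statement repairs this. The paper's fix is different and essential: (a) one arranges that consecutive segments meet at a vertex incident to edges with labels from two distinct free factors --- the set of such vertices in a component of $\overline\Gamma$ is finite and automorphism-invariant precisely because the components of $\Gamma$ are finite, which handles the ``essentially equal lifts'' case; and (b) in the ``all pieces'' case one replaces uniqueness of the path by uniqueness of the \emph{element of} $\freeproduct_{i\in I}G_i$ represented by such paths (a finite set $Z$), locates the splitting vertex $v$ at a syllable break, and concludes that the middle factors $w_1w_2^{-1}$ are initial subwords of the (finite) free-product normal forms of elements of $Z$. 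Without this normal-form argument the finiteness of the set of possible $h$ does not follow.

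A secondary gap: the paper treats the case where $\Gamma$ itself is finite separately, via Theorem~\ref{thm:relatively_hyperbolic} ($G(\Gamma)_*$ is hyperbolic relative to $\{G_i\}$, non-trivially so except in a degenerate situation that forces $G(\Gamma)_*\cong G_i$ finite) together with the fact that a non-virtually-cyclic, non-trivially relatively hyperbolic group is acylindrically hyperbolic. Your fallback for degenerate cases --- ``$G(\Gamma)_*$ is a free product one of whose factors is free or is some $G_i$'' --- is false when $\Gamma$ has a single component on which generators of all factors occur: then $G(\Gamma)_*$ is a proper quotient of $\freeproduct_{i\in I}G_i$ and need not split as a free product at all, so this case needs the relative-hyperbolicity argument rather than an appeal to elementary facts about free products.
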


\begin{thm}\label{thm:c7*} Let $\Gamma$ be a $C_*(7)$-labelled graph over a free product with at least two non-trivial factors. Then $G(\Gamma)_*$ is either virtually cyclic or acylindrically hyperbolic. 
\end{thm}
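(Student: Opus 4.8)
The plan is to run the proof of Theorem~\ref{thm:c7} with $\Gamma$ everywhere replaced by its completion $\overline\Gamma$, and Lemma~\ref{lem:graphical_basic} replaced by Lemma~\ref{lem:graphical_product_basic}, so that $\overline\Gamma$-reduced diagrams over $\overline\Gamma$ take the role of $\Gamma$-reduced diagrams. First I would normalize $\overline\Gamma$ exactly as in the list of assumptions preceding Lemma~\ref{lem:c7_wpd1}, now reading ``non-trivial fundamental group'' as ``carries a closed path whose label is non-trivial in $*_{i\in I}G_i$'': generators, or whole factors $G_i$, not genuinely constrained by $\overline\Gamma$ split off as free factors and may be removed; components all of whose closed paths are trivial in $*_{i\in I}G_i$ contribute only free factors and may be discarded; and among label-isomorphic components only one need be kept. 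After these reductions, either $G(\Gamma)_*$ is a free product of a ``good'' smaller $G(\overline\Gamma)_*$ with a free group (in which case the theorem for the smaller configuration yields it for $G(\Gamma)_*$), or $\overline\Gamma$ carries no closed path non-trivial in $*_{i\in I}G_i$, so $G(\Gamma)_*=*_{i\in I}G_i$, which is infinite dihedral, hence virtually cyclic, or a free product of at least two non-trivial groups not equal to $\mathbb{Z}/2*\mathbb{Z}/2$, hence acylindrically hyperbolic by the classical theory of groups acting on trees. So we may assume $\overline\Gamma$ is good and contains an embedded cycle graph with label non-trivial in $*_{i\in I}G_i$.

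Next, put $Y:=\Cay(G(\Gamma)_*,\sqcup_{i\in I}S_i\cup W)$, where $W$ is the set of elements of $G(\Gamma)_*$ represented by labels of paths in $\overline\Gamma$; by Theorem~\ref{thm:relative_hyperbolic_space}, $Y$ is Gromov hyperbolic and $G(\Gamma)_*$ acts on it by isometries. By Remark~\ref{remark:convex_product} each component of $\overline\Gamma$ embeds isometrically and convexly into $\Cay(G(\Gamma)_*,\sqcup_{i\in I}S_i)$, and the free-product version of Lemma~\ref{lem:connected_embedding}, valid by Lemma~\ref{lem:graphical_product_basic}, shows the images of two components meet in a connected set. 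Since $\overline\Gamma$ is $C_*(7)$-labelled, every label-preserving automorphism of $\overline\Gamma$ is trivial on each component carrying a closed path non-trivial in $*_{i\in I}G_i$, so the relevant component automorphism groups are trivial, as in the classical $C(7)$ case. I would then pick an embedded cycle graph $c$ in $\overline\Gamma$ with label non-trivial in $*_{i\in I}G_i$, all of whose edges are pieces --- its existence following from the normalization exactly as in Theorem~\ref{thm:c7}, using that no letter occurs on a single essentially distinct edge and that the automorphism group of the component of $c$ is trivial --- and run the $\overline\Gamma$-analogue of Lemma~\ref{lem:c7_wpd} to obtain vertices $x_1,y_1,x_2,y_2$ satisfying the conclusion of Lemma~\ref{lem:gr7_wpd}. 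Setting $g$ to be the element of $G(\Gamma)_*$ represented by $\ell(x_1\to y_1)\ell(x_2\to y_2)$, the $\overline\Gamma$-version of Lemma~\ref{lem:c7_geodesic} shows $g$ acts hyperbolically, and the $\overline\Gamma$-version of Proposition~\ref{prop:wpd7} shows $g$ satisfies the WPD condition: the only place finiteness of components (or of their automorphism groups) entered the proof of Proposition~\ref{prop:wpd7} is the dichotomy in its Claim~2, and in the $C_*(7)$ setting the component automorphism groups are trivial, so the ``essentially equal'' case forces $h$ directly into a fixed finite set, while the ``pieces'' case is handled, as there, by the last two clauses of Lemma~\ref{lem:gr7_wpd}. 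Finally, the $\overline\Gamma$-analogue of Remark~\ref{remark:non-elementary} (following \cite[Section~3]{Gru} and \cite{Gru-SQ}) shows that if $\overline\Gamma$ has two disjoint embedded cycles with labels non-trivial in $*_{i\in I}G_i$ then $G(\Gamma)_*$ contains a rank-$2$ free subgroup, hence is not virtually cyclic; the finitely many remaining configurations of $\overline\Gamma$ are inspected by hand and give virtually cyclic $G(\Gamma)_*$. An action on a hyperbolic space with a WPD element then yields, by Definition~\ref{WPDdefn}, that $G(\Gamma)_*$ is virtually cyclic or acylindrically hyperbolic.

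The main obstacle will be checking that $\overline\Gamma$-reduced diagrams over $\overline\Gamma$ remain $(3,7)$-diagrams, so that Strebel's bigon classification (Theorem~\ref{thm:strebel_bigons}) and the curvature identities (Lemmas~\ref{lem:curvature_lyndon} and~\ref{lem:curvature_strebel}) apply verbatim in the arguments of Claim~1 of the $\overline\Gamma$-versions of Lemma~\ref{lem:c7_geodesic} and Proposition~\ref{prop:wpd7}. The new feature is that a face may carry the label of a simple closed path in an attached $\Cay(G_i,S_i)$ and then, by definition of $\overline\Gamma$-reduced, has no interior edge, so it occurs only as a boundary face and contributes to the curvature counts exactly like a distinguished face; one has to verify this does not disturb the bigon/quadrangle analysis. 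The remaining delicate points are the normalization of the typically infinite graph $\overline\Gamma$, with its typically infinite components, and the precise enumeration of the degenerate cases producing virtually cyclic groups.
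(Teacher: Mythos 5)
Your overall strategy is the one the paper follows for infinite $\Gamma$ (replace $\Gamma$ by $\overline\Gamma$, rerun the normalization, Lemmas~\ref{lem:c7_wpd1}--\ref{lem:c7_wpd}, Lemma~\ref{lem:c7_geodesic} and Proposition~\ref{prop:wpd7}), and your concerns about faces carrying labels from an attached $\Cay(G_i,S_i)$ and about the degenerate/virtually cyclic configurations are the right secondary issues. However, there is a genuine gap at the point you dismiss most quickly: the claim that the ``pieces'' case of Claim~2 of Proposition~\ref{prop:wpd7} ``is handled, as there, by the last two clauses of Lemma~\ref{lem:gr7_wpd}.'' Those two clauses do not survive the passage to $\overline\Gamma$. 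Their proof derives a contradiction from a closed path $\overline\alpha_1'\overline\alpha_1^{-1}$ that is a concatenation of at most $6$ pieces and has freely non-trivial label; but the $Gr_*(7)$-condition only forbids such concatenations when the label is non-trivial in $\freeproduct_{i\in I}G_i$. Two distinct reduced paths whose labels represent the same element of the free product (e.g.\ two geodesics in an attached Cayley graph of an infinite $G_i$) give no contradiction, so in general there are infinitely many admissible paths $\overline\alpha_{i_0+1,\overline\Gamma}$; only the \emph{element of $\freeproduct_{i\in I}G_i$} they represent is unique. This is exactly how the paper restates the lemma (introducing the finite set $Z$ of such elements).

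Even after that restatement, the final finiteness count does not follow as in the classical case: there you bound $h$ by the finitely many \emph{initial subpaths} $q_1,q_2$ of two fixed finite paths, whereas an initial subpath of a path in $\overline\Gamma$ representing a fixed $z\in Z$ can terminate at any of infinitely many vertices inside an attached $\Cay(G_i,S_i)$ when $G_i$ is infinite, so a priori $\ell(q_1)$ ranges over an infinite set of group elements. The paper closes this by an extra argument you omit: one may assume consecutive segments $\alpha_i,\alpha_{i+1}$ meet where two edges with labels from distinct factors meet, one shows the relevant breakpoint $v$ lies at such a ``syllable boundary'' of the paths representing $z$, writes $z=g_1g_2\dots g_{n_1}$ in free product normal form, and concludes that $\ell(q_1)$ is one of the finitely many prefixes $g_1\cdots g_l$. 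Without this (or an equivalent) step your proof of the WPD condition is incomplete precisely when some $G_i$ is infinite, which is the case the free product theorem is designed to cover. (A minor further remark: the paper also treats finite $\Gamma$ separately via Theorem~\ref{thm:relatively_hyperbolic} and relative hyperbolicity rather than through the WPD machinery; your uniform treatment is plausible but then the above repair is needed in that case too.)
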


\begin{thm}\label{thm:gr16*} Let $\Gamma$ be a $Gr_*'(\frac{1}{6})$-labelled graph such that $\overline\Gamma$ contains at least two non-isomorphic components that contain closed paths whose labels are non-trivial in $\freeproduct_{i\in I}G_i$. Then $G(\Gamma)_*$ is either virtually infinite cyclic or acylindrically hyperbolic. 
\end{thm}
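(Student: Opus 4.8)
The plan is to run the proofs of Theorems~\ref{thm:gr7}, \ref{thm:c7} and, above all, Theorem~\ref{thm:gr16} essentially verbatim, with $\Gamma$ replaced throughout by its completion $\overline\Gamma$: ``piece'' is read as ``piece with respect to $\overline\Gamma$'', all paths and segments are taken to be \emph{locally geodesic}, $\Gamma$-reduced diagrams are replaced by $\overline\Gamma$-reduced diagrams (so Lemma~\ref{lem:graphical_product_basic} replaces Lemma~\ref{lem:graphical_basic}), Lemma~\ref{lem:convex_embedding} is replaced by Remark~\ref{remark:convex_product}, Lemma~\ref{lem:connected_embedding} by its free-product version, and Theorem~\ref{thm:hyperbolic_space} by Theorem~\ref{thm:relative_hyperbolic_space}. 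The hyperbolic space is $Y:=\Cay(G(\Gamma)_*,(\freeproduct_{i\in I}G_i)\cup W)$, with $W$ the set of elements of $G(\Gamma)_*$ represented by labels of paths in $\overline\Gamma$; it is hyperbolic by Theorem~\ref{thm:relative_hyperbolic_space}. First I would carry out the routine reductions (discard superfluous components; absorb any generator occurring on too few edges of $\Gamma$ into a free factor, which alters $G(\Gamma)_*$ only up to a free factor that is free or infinite cyclic), after which we may assume $\overline\Gamma$ has two non-isomorphic components $\Gamma_1,\Gamma_2$ each carrying a simple closed path whose label is non-trivial in $\freeproduct_{i\in I}G_i$; such a path automatically has length at least $2$, since a simple closed path contained in a single attached $\Cay(G_i,S_i)$ has label trivial in $G_i$. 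If no such pair of components survives, then, exactly as in the reductions preceding Lemma~\ref{lem:c7_wpd1} (adapted to the free product via Theorem~\ref{thm:relatively_hyperbolic}), $G(\Gamma)_*$ is virtually infinite cyclic or acylindrically hyperbolic, and we are done.

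The next step is the free-product analogue of Lemma~\ref{lem:gr16_wpd}: there are vertices $x_1,y_1\in\Gamma_1$, $x_2,y_2\in\Gamma_2$ satisfying the conclusion of Lemma~\ref{lem:gr7_wpd}. The proof of Lemma~\ref{lem:gr16_wpd} transfers directly once one takes each $\gamma_i$ to be a simple closed path of \emph{minimal length among those with label non-trivial in $\freeproduct_{i}G_i$}, maps $\Gamma_i$ into $X:=\Cay(G(\Gamma)_*,\sqcup_iS_i)$ with base vertex at $1$, uses connectedness of the intersection $C$ of the two images (the $Gr_*(6)$ version of Lemma~\ref{lem:connected_embedding}) together with the fact that any locally geodesic path in $C$ is a piece, picks $w_i\in\gamma_i$ maximally far from $f_i^{-1}(C)$, and invokes the $Gr_*'(\frac{1}{6})$-condition exactly as before --- here one must observe that every simple closed path produced in that argument has label non-trivial in $\freeproduct_iG_i$ (so that the small cancellation hypothesis does apply to it), which holds because it is not contained in a single attached $\Cay(G_i,S_i)$. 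Define $g\in G(\Gamma)_*$ to be the element represented by $\ell(x_1\to y_1)\ell(x_2\to y_2)$, with locally geodesic lifts chosen.

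That $g$ acts hyperbolically on $Y$ follows from the proof of Lemma~\ref{lem:c7_geodesic} applied over $\overline\Gamma$: a minimal $\overline\Gamma$-reduced diagram between two $Y$-geodesic representatives $\alpha_X,\beta_X$ of paths from $1$ to $g^N$ is a $(3,7)$-diagram; the faces carrying simple closed paths of some $\Cay(G_i,S_i)$ have no interior edges, so they occur only as boundary faces of interior degree $0$ and contribute no surplus positive curvature, the diagram is a $(3,7)$-bigon, and Strebel's classification (Theorem~\ref{thm:strebel_bigons}) together with the fact that no segment is a piece forces the diagram to have no faces; convexity of the components of $\overline\Gamma$ in $X$ (Remark~\ref{remark:convex_product}) then yields $d_Y(1,g^N)=2N$. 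For the WPD condition I would copy the proof of Proposition~\ref{prop:wpd7} and its refinement in the proof of Theorem~\ref{thm:gr16}: form the $(3,7)$-quadrangle $D$ with $\partial D=\alpha\delta_1\beta^{-1}\delta_2^{-1}$, $\overline\Gamma$-reduced and with a minimal number of edges; the curvature identity (Lemma~\ref{lem:curvature_strebel}), again using that the $G_i$-faces are boundary faces of interior degree $0$ and hence never distinguished faces of exterior degree $1$, shows $D$ has at most two distinguished faces and then, once $d_Y(1,g^N)>2K+7$, no faces at all; hence $\delta_1,\delta_2$ lie in a bounded number of segments of $\alpha$ and $\beta$, each lifting to a bounded-length locally geodesic path in $\Gamma_1$ or $\Gamma_2$, and the only remaining freedom is a label-preserving automorphism $\phi$ of $\overline\Gamma$ relating these lifts. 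By the free-product analogue of Lemma~\ref{lem:gr16} (whose proof uses only the $Gr_*'(\frac{1}{6})$-condition, applied to simple closed paths with label non-trivial in $\freeproduct_iG_i$, as the choice of $x_1,y_1$ guarantees), $\phi$ is the identity once $\alpha\cap\beta$ is long enough to furnish three consecutive coincidences. Therefore $h$ equals a bounded power of $g$, so only finitely many $h$ occur, and $g$ is a WPD element. By Definition~\ref{defi:wpd}, $G(\Gamma)_*$ is then virtually cyclic or acylindrically hyperbolic; it is infinite (it contains an isometrically embedded infinite component of $\overline\Gamma$ by Remark~\ref{remark:convex_product}), so ``virtually cyclic'' means ``virtually infinite cyclic''.

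The main obstacle is bookkeeping rather than a new idea: at each step one has to check that the two kinds of faces occurring in $\overline\Gamma$-reduced diagrams --- those carrying simple closed paths of $\overline\Gamma$ with label non-trivial in $\freeproduct_iG_i$, and those carrying a relation of a single $G_i$ and having no interior edge --- are compatible with the $(3,7)$-diagram geometry used throughout (Strebel's bigon and triangle classifications, the curvature formulas of Lemmas~\ref{lem:curvature_lyndon} and~\ref{lem:curvature_strebel}), and that the omnipresent ``locally geodesic'' requirement is carried correctly so that the small cancellation inequalities on pieces apply, so that a segment lying inside a single $\Cay(G_i,S_i)$ cannot be shortened, and so that every simple closed path produced when invoking the $Gr_*'(\frac{1}{6})$-condition really has label non-trivial in $\freeproduct_iG_i$.
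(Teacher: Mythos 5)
Your overall strategy is the paper's: replace $\Gamma$ by $\overline\Gamma$ everywhere, reuse Lemmas~\ref{lem:gr16_wpd} and~\ref{lem:gr16} to build $g$ and to kill the automorphism $\phi$ in the case of essentially equal lifts, and run Lemma~\ref{lem:c7_geodesic} and Proposition~\ref{prop:wpd7} over $\overline\Gamma$-reduced diagrams. Most of the bookkeeping you flag is indeed what the paper does.

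However, there is a genuine gap in your finiteness argument for the WPD condition. You write that the segments of $\alpha$ and $\beta$ meeting $\delta_2$ each lift to a \emph{bounded-length} locally geodesic path and that ``the only remaining freedom is a label-preserving automorphism $\phi$.'' Neither claim survives in the free product setting: Theorem~\ref{thm:gr16*} has no finiteness hypothesis, and the case you have silently dropped is the one where every overlap $q$ in $\alpha_i\cap\beta_j$ has essentially \emph{distinct} lifts (so $q$ is a piece and no $\phi$ exists). In the original Proposition~\ref{prop:wpd7} that case is closed by the last two conclusions of Lemma~\ref{lem:gr7_wpd}: there are at most two possibilities for the \emph{path} $\overline\beta_{j_0,\Gamma}$, hence finitely many possibilities for its initial subpaths $q_1,q_2$. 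Over a free product this breaks down: the correct analogue only bounds the number of \emph{elements} of $\freeproduct_{i\in I}G_i$ represented by such paths (the set the paper calls $Z$), and a single element of $Z$ can be represented by infinitely many locally geodesic paths in $\overline\Gamma$ whose initial subpaths represent infinitely many group elements (e.g.\ when they wander inside an infinite attached $\Cay(G_i,S_i)$). The paper repairs this by (a) normalizing the decompositions so that consecutive segments break at syllable boundaries, (b) observing that each $\alpha_i$ under consideration is a concatenation of two pieces but not a piece, hence contains an interior vertex $v$ incident to edges with labels from distinct factors, and (c) showing that the lifts send $v$ to syllable-break vertices of $\overline\alpha_{i_0+1,\overline\Gamma}$ and $\overline\beta_{j_0,\overline\Gamma}$, so that $\ell(q_1)$ and $\ell(q_2)$ represent prefixes of the normal forms $g_1\cdots g_{n_1}$, $g_1'\cdots g_{n_2}'$ of the at most two elements of $Z$ --- of which there are only finitely many. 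Without some version of this argument your proof does not establish that only finitely many $h$ occur, so this step needs to be supplied.
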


We explain how these results are deduced from the proofs we have already obtained in this section.

\begin{proof}[Proof of Theorems~\ref{thm:gr7*}, \ref{thm:c7*}, and \ref{thm:gr16*}] 
If $\Gamma$ is finite, then $G(\Gamma)_*$ is hyperbolic relative to the $\{G_i\mid i\in I\}$ by Theorem~\ref{thm:relatively_hyperbolic}. The proof of \cite[Lemma~2.1]{Gru} shows that $\overline\Gamma$ injects into $\Cay(G(\Gamma),\sqcup_{i\in I} S_i)$. Thus, $G(\Gamma)_*$ is non-trivially relatively hyperbolic unless the vertex set of every non-trivial component of $\overline\Gamma$ is equal to the vertex set of each one of the attached non-trivial $\Cay(G_i,S_i)$ and for every non-trivial $G_i$, $\Cay(G_i,S_i)$ is attached at every component of $\overline \Gamma$. In the case of Theorem~\ref{thm:gr7*} this can only hold if every $G_i$ is finite, in which case $G(\Gamma)_*\cong G_i$ is finite and the statement holds. In the cases of Theorems~\ref{thm:c7*} and \ref{thm:gr16*} this cannot hold at all. If $G(\Gamma)_*$ is non-trivially 
relatively hyperbolic and not virtually cyclic, then it is acylindrically hyperbolic as it acts acylindrically with unbounded orbits on the hyperbolic space $\Cay(G(\Gamma),\sqcup_{i\in I}G_i)$ by \cite[Proposition 5.2]{Os-acyl} and \cite[Corollary 4.6]{Os-hypel}.

Now assume that $\Gamma$ is infinite. We explain how to adapt the proofs from Section~\ref{section:wpd_element}. Instead of considering $\Gamma$, we must now consider $\overline\Gamma$. For simplicity, assume that each $G_i$ is non-trivial. Then, automatically, there does not exist an edge whose label occurs exactly once on the graph, and we can apply the proofs of Lemmas~\ref{lem:c7_wpd1}, \ref{lem:gr7_wpd}, and \ref{lem:c7_wpd}. Here, when considering non-trivial  closed paths or simple closed paths, 
we always require that their labels are not trivial in the free product of the $G_i$. In Lemma~\ref{lem:gr7_wpd}, we replace the claim that there exists at most one reduced path $\overline \alpha_{i,\overline\Gamma}:x_i\to y_i$ such that $\alpha_{i,\overline\Gamma}$ is a concatenation of at most two pieces by the claim that there exist at most one element of $\freeproduct_{i\in I}G_i$ represented by the labels of paths $\overline\alpha_{i,\overline\Gamma}$ 
for which $\alpha_{i,\overline\Gamma}$ is a concatenation of at most two pieces. For convenience, we denote the set of these (at most two) elements of $*_{i\in I}G_i$ by $Z$. The statements and proofs of Lemmas~\ref{lem:gr16_wpd} and \ref{lem:gr16} also apply.

Thus, we are able to define the WPD element $g$ as before, and the proof of hyperbolicity of $g$, Lemma~\ref{lem:c7_geodesic}, applies. In the proof of Proposition~\ref{prop:wpd7}, we need to make an additional observation: It is no restriction to assume that for every $i$, the terminal edge of $\alpha_i$ has a label from a different generating factor than that of the initial edge of $\alpha_{i+1}$, and to make the same assumption for every $\beta_j$ and $\beta_{j+1}$. This assumption is required since any finiteness statement only applies to vertices in the intersections of two attached Cayley graphs. We also choose $N_0$ such that $d_Y(1,g^N)>2K+6$ for all $N\geqslant N_0$. The corresponding adaption of the arguments of Proposition~\ref{prop:wpd7} occurs in the last case of the proof of claim 2. 

By our choice of $N_0$, $\alpha_{i_0}\alpha_{i_0+1}\alpha_{i_0+2}$ and $\beta_{j_0}\beta_{j_0+1}\beta_{j_0+2}$ lie in $p$, i.e.\ in the last case of the proof of claim 2, we may consider all $i\in \{i_0,i_0+1,i_0+2\}$ and $j\in \{j_0,j_0+1,j_0+2\}$. 
Every $\alpha_i$ (or $\beta_j$) under consideration is a concatenation of two pieces, but not a piece itself. Observe that, in any attached Cayley graph in $\overline\Gamma$, either every path is a piece or no path is a piece. Therefore, the label of $\alpha_i$ (or $\beta_j$) cannot lie in one of the generating free factors and, hence, $\alpha_i$ (or $\beta_j$) contains in its interior a vertex where two edges with labels from distinct free factors meet. Note that $\beta_{j_0}$ is a subpath of $\alpha_{i_0}\alpha_{i_0+1}$, $\alpha_{i_0+1}$ is a subpath of $\beta_{j_0}\beta_{j_0+1}$, and $\beta_{j_0+1}$ is a subpath of $\alpha_{i_0+1}\alpha_{i_0+2}$. 
Hence, each of these 3 paths is a concatenation of at most 2 pieces and, hence, the labels of the paths $\overline\alpha_{i_0+1,\overline\Gamma},\overline\beta_{j_0,\overline\Gamma},\overline\beta_{j_0+1,\overline\Gamma}$ all represent elements of $Z$. Consider a vertex $v$ in the interior of $\alpha_{i_0+1}$ incident at edges 
with labels from two distinct factors. Then at least one of the lifts $\beta_{j_0}\mapsto \beta_{j_0,\overline\Gamma}$ or $\beta_{j_0+1}\mapsto \beta_{j_0+1,\overline\Gamma}$ is defined on $v$ and takes $v$ to a vertex in the intersection of two edges of $\overline\beta_{j_0,\overline\Gamma}$, respectively two edges of $\overline\beta_{j_0+1,\overline\Gamma}$, with labels from distinct factors. We first assume this holds for $j_0$, and note that the proof for $j_0+1$ is completely analogous; only the final constant must be raised by 1. 

Our minimality assumptions on $\sum_{i=1}^{2N}|\overline\alpha_{i,\overline\Gamma}|$ and $\sum_{j=1}^{2N}|\overline\beta_{j,\overline\Gamma}|$ imply that the labels of $\overline\alpha_{i_0+1,\Gamma}$ and $\overline\beta_{j_0,\overline\Gamma}$ are reduced words in the free product sense. We may write the elements of $Z$ uniquely as $z=g_1g_2\dots g_{n_1}$ and $z'=g_1'g_2'\dots g_{n_2}'$, where each $g_l$ is non-trivial in some $G_{k_l}$ and for each $l$ we have $k_l\neq k_{l+1}$, and, similarly, each $g_l'$ is non-trivial in some $G_{k_l'}$ and for each $l$ we have $k_l'\neq k_{l+1}'$. Since $\alpha_{i_0+1}$ and $\beta_{j_0}$ intersect in $v$ and since, in each case, the image of $v$ in $\overline\Gamma$ lies in the intersection of edges in the paths $\overline\alpha_{i_0+1,\Gamma}$ and $\overline\beta_{j_0,\overline\Gamma}$ with labels from distinct factors, 
we may write $h$ as
$$\ell(x_1\to y_1)\ell(x_2\to y_2)\ell(x_1\to y_1)\dots w_1w_2^{-1}\dots \ell(y_2\to x_2)\ell(y_1\to x_1),$$
where each $w_i$ is an initial subword of $z$ or $z'$ as written above (of which, in particular, there are only finitely many), and where at most $2K+9$ factors occur. This completes the proof.
\end{proof}

\section{New examples of divergence functions}\label{divergence}

In this section, we construct the first examples of groups with divergence functions in the gap between polynomial and exponential functions.

We recall the definition of divergence of a geodesic metric space following \cite{DMS-div}. Let $X$ be a geodesic metric space. A \emph{curve} in $X$ is a continuous map $I\to X$, where $I$ is a compact real interval. Fix constants $0<\delta<1$, and let $\gamma\geqslant 0$. 
For a triple of points $a, b, c \in X$ with $d(c, \{a, b\}) = r > 0$, let $\mathrm{div}_{\gamma} (a, b, c; \delta)$ be the infimum of the lengths of curves from $a$ to $b$ whose images do not intersect $B_{\delta r - \gamma}(c)$, where $B_\lambda(Y)$ denotes the open ball of radius $\lambda$ around a subset $Y$ of $X$. If no such curve exists, set $\mathrm{div}_\gamma (a, b, c; \delta) = \infty$.
\begin{defi}
The divergence function $\Div^X_\gamma (n, \delta)$ of the space $X$ is defined as
the supremum of all numbers $\mathrm{div}_\gamma (a, b, c; \delta)$ with $d(a, b) \leqslant n$.
\end{defi}

If $X$ is a connected graph, then we may consider $X$ as a geodesic metric space by isometrically identifying each edge of $X$ with either the unit interval or the 1-sphere. With this identification, every path gives rise to a curve.

For functions $f,g:\R^+\to\R^+$ we write $f\preceq g$ if there exists $C>0$ such that for every $n\in \R^+$, $f(n)\leqslant Cg(Cn+C)+Cn+C$, and define $\succeq,\asymp$ similarly. By \cite[Corollary 3.12]{DMS-div}, if $X$ is a Cayley graph then we have $\Div^X_\gamma (n, \delta)\asymp \Div^X_{2} (n, 1/2)$ whenever $0<\delta\leqslant 1/2$ and $\gamma\geqslant 2$. 
Also, the $\asymp$-equivalence class of $\Div^X_\gamma (n, \delta)$ is a quasi-isometry invariant (of Cayley graphs). Given a group $G$ with a specified finite generating set, we write $\Div^G(n)$ for $\Div^X_{2} (n, 1/5)$, where $X$ is the Cayley graph realized as geodesic metric space. 

\begin{thm}\label{thm:divalt}
 Let $r_N:=(a^{N}b^{N}a^{-N}b^{-N})^4$, and for $I\subseteq \N$, let $G(I)$ be defined by the presentation $\langle a,b\mid r_i,i\in I\rangle$. Then, for every infinite set $I\subseteq\N$ we have:
\begin{equation}\label{eqn:quadratic_a}
\liminf_{n\to\infty} \frac{\Div^{G(I)}(n)}{n^2}<\infty.
\end{equation}

 Let $\{f_k\mid k\in \N\}$ be a countable set of subexponential functions. Then there exists an infinite set $J\subseteq \N$ such that for every function $g$ satisfying $g\preceq f_k$ for some $k$ we have for every subset $I\subseteq J$:
 
 \begin{equation}\label{eqn:supersubexponential_a}
\limsup_{n\to\infty}\frac{\Div^{G(I)}(n)}{g(n)}=\infty.
\end{equation}
\end{thm}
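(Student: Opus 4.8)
The plan is to treat the two halves independently. For the upper-bound half \eqref{eqn:quadratic_a}, the key observation is that each relator $r_N=(a^Nb^Na^{-N}b^{-N})^4$ contains the commutator word $[a^N,b^N]$, and the presence of such a relation forces a quadratic ``detour'' in the Cayley graph along the flat piece coming from this relator. More precisely, I would fix an $i\in I$ and look at the ball of radius roughly $i$ around the identity: the subword $a^ib^i$ of $r_i$ can be joined to $b^ia^i$ avoiding a ball of radius $\asymp i$ only by going around the relator disk, which has perimeter $\asymp i$, giving $\mathrm{div}\preceq i\asymp n$ at scale $n\asymp i$. Wait — to get the $\liminf$ statement we actually want an \emph{upper} bound $\Div^{G(I)}(n)\preceq n^2$ along a subsequence $n=n_i$. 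Since $G(I)$ is a classical $C'(\frac16)$-group (each $r_i$ is $C'(\frac16)$ and the set is small-cancellation), and since by Theorem \ref{thm:hyperbolic_space} the coned-off graph $\Cay(G(I),S\cup W)$ is hyperbolic, one shows that geodesics avoiding a ball $B_{\delta r}(c)$ can be routed along at most a bounded number of relator boundaries, each of length $\preceq r$, together with a geodesic segment in the hyperbolic coned-off graph; this yields $\mathrm{div}(a,b,c;\delta)\preceq r^2$ whenever $d(a,b)\preceq r$. Taking the supremum over all such triples with $d(a,b)\leqslant n$ and dividing by $n^2$ gives a bounded quantity along every $n$ (in fact this gives $\limsup$, hence a fortiori $\liminf$, bounded — but the statement only claims $\liminf$, which is the safe claim because $\limsup$ may genuinely be larger by the second half). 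So the real content of \eqref{eqn:quadratic_a} is to exhibit, for infinitely many $n$, a lower bound: there \emph{exist} triples realizing $\asymp n^2$, coming precisely from the flats $\langle a^i,b^i\rangle$-cosets, and these certify $\Div^{G(I)}(n_i)\succeq n_i^2$; combined with routing the detour around a single relator (perimeter $\asymp n_i$, area $\asymp n_i^2$) giving $\Div^{G(I)}(n_i)\preceq n_i^2$, we get $\Div^{G(I)}(n_i)\asymp n_i^2$ along $n_i\asymp i$, $i\in I$, hence the $\liminf$ bound.

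For the lower-bound half \eqref{eqn:supersubexponential_a}, the idea is a diagonalization/lacunarity argument on the index set. Since each $f_k$ is subexponential, $\log f_k(n)/n\to 0$; choose $J=\{j_1<j_2<\dots\}$ growing fast enough (super-exponentially, relative to the $f_k$: roughly $\log f_k(j_{m+1})\ll j_m$ for all $k\leqslant m$) so that between consecutive relator scales $j_m$ and $j_{m+1}$ the group ``looks free'' on a huge range. The point is that for $I\subseteq J$, at a scale $n$ comparable to $j_{m+1}$ but still small compared to the next relator, the only relations that matter are $r_{j_1},\dots,r_{j_m}$, and the largest of these, $r_{j_m}$, forces divergence at least $\asymp$ (its perimeter)$^2\asymp j_m^2$ — but more is true: one can find two points at distance $\asymp n$ whose connecting curves must either pass through a fixed ball or wind around the relator $r_{j_m}$, and because $n$ can be taken as large as an exponential in $j_m$ (using that nothing new happens up to scale $\exp(c\,j_m)$ by the $C'(\frac16)$ geometry and lacunary hyperbolicity), we get divergence $\succeq$ (something super-polynomial in $n$). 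The cleanest route: show $\Div^{G(I)}(n)\succeq e^{c\,d_{G(I')}(\cdot)}$-type behavior on the ``free-like'' scales, where $I'$ is a finite truncation, exploiting that a free group (or hyperbolic group) has exponential divergence; then since $J$ is chosen so that $f_k(j_{m+1}^{\text{something}})\preceq$ (this exponential quantity) for all $k\leqslant m$, we conclude $\Div^{G(I)}(n)/g(n)\to\infty$ along the subsequence $n=n_m$.

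The main obstacle is the lower-bound half: producing an actual super-polynomial \emph{lower} bound on divergence at the intermediate scales. Upper bounds on divergence are routine once one has the hyperbolic coned-off space of Theorem \ref{thm:hyperbolic_space} and Strebel's classification of $(3,7)$-diagrams; but lower-bounding divergence requires showing that \emph{every} curve avoiding a given ball is long, which means ruling out clever shortcuts. The tool for this is small-cancellation diagram surgery: given a short curve $\rho$ from $a$ to $b$ avoiding $B_{\delta r}(c)$, build a disk diagram bounded by $\rho$ and a geodesic $[a,b]$, and use the $(3,7)$-condition (bigons/quadrangles of shape $\mathrm{I}_1$, via Theorems \ref{thm:strebel_bigons} and the curvature formulas Lemmas \ref{lem:curvature_lyndon}, \ref{lem:curvature_strebel}) to show the diagram must contain a ``large'' relator (one of the $r_{j_m}$), and then that $\rho$ must essentially trace out most of that relator's boundary; since the perimeter of $r_{j_m}$ is $\asymp j_m$ and one can nest this winding $\asymp \exp(c\,j_m)/j_m$ times when $d(a,b)\asymp\exp(c\,j_m)$, the length of $\rho$ is forced to be super-polynomial in $n=d(a,b)$. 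Making the nesting quantitative — i.e., showing the obstruction ball is hit once per relator-perimeter-worth of progress — is the delicate combinatorial heart of the argument, and is presumably where the bulk of Section \ref{divergence} goes; the choice of $J$ by diagonalization against $\{f_k\}$ is then a routine recursion once the quantitative lower bound is in hand.
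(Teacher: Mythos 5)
Your overall architecture matches the paper's, but both halves contain genuine problems. For Equation~\eqref{eqn:quadratic_a}: this is \emph{purely} an upper-bound statement along a subsequence, and your treatment of it is confused in two ways. First, you assert the quadratic upper bound holds ``along every $n$''; that is false and would contradict the second half of the theorem. The detour construction (Proposition~\ref{prop:divergence}, Corollary~\ref{cor:divergence}) needs a relator $r_N$ with $N\geqslant 2n$ \emph{and} $N\asymp n$ to be present in the presentation, since the detour is a fence of $\asymp n$ consecutively overlapping, isometrically embedded cycle graphs labelled by $r_N$ (isometric embedding and connected intersection coming from Lemmas~\ref{lem:convex_embedding} and~\ref{lem:connected_embedding}), each of perimeter $16N$, for a total length $\leqslant 20nN+32N$. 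Note that your stated mechanism --- ``a bounded number of relator boundaries, each of length $\preceq r$'' --- would give a linear bound, not a quadratic one; the quadratic cost comes from using $\asymp n$ cycles of perimeter $\asymp n$. Since $I$ is merely infinite, this works only for the subsequence $n=\lceil i/2\rceil$, $i\in I$, whence $\liminf$ rather than $\limsup$. Second, your claim that ``the real content'' is a matching lower bound $\succeq n^2$ coming from ``the flats $\langle a^i,b^i\rangle$-cosets'' is wrong on both counts: no lower bound is claimed or needed for a finite $\liminf$, and there are no flats --- the relator is $[a^N,b^N]^4$, not $[a^N,b^N]$, so $a^N$ and $b^N$ do not commute and $G(I)$ (being an infinitely presented $C'(\frac{1}{6})$-group, hence acylindrically hyperbolic) contains no $\Z^2$.

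For Equation~\eqref{eqn:supersubexponential_a}, your parenthetical ``cleanest route'' is in fact the paper's entire argument, and the ``delicate combinatorial heart'' you defer to --- a diagram-surgery nesting argument forcing curves to wind around relators --- does not appear and is not needed. The paper's Lemma~\ref{lem:divfacts} supplies three standard facts: (i) a $\delta$-hyperbolic Cayley graph has divergence at least $2^{(n/5-3)/\delta}$ (exponential divergence of geodesics in hyperbolic spaces); (ii) a finite $C'(\frac{1}{6})$-presentation is hyperbolic with $\delta\leqslant 2\max|r|$ (Strebel); (iii) if all relators outside a finite subset have length $\geqslant 4N$, the $N$-ball of the truncated group embeds isometrically in $\Cay(G)$ (Greendlinger/Ollivier). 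The recursion then chooses $j_{n+1}$ so that $|r_{j_{n+1}}|\geqslant 4\cdot 2^{(N/5-3)/(2\rho_n)}$ for a scale $N$ at which $2^{(N/5-3)/(2\rho_n)}$ already beats $N\,g_k(N)$ for all $k\leqslant n$; by (iii) the exponential lower bound for the finite truncation $G_n$ at scale $N$ transfers verbatim to $G(I)$ for every $I\subseteq J$. Your proposal leaves precisely this quantitative transfer unproven while mislocating the difficulty; as written, the lower-bound half is a plan rather than a proof, even though the plan points at the correct ingredients.
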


The set of relators $\{r_1,r_2,\dots\}$ satisfies the classical $C'(\frac{1}{6})$-condition. Thus, the groups constructed in this theorem are acylindrically hyperbolic by Theorem~\ref{thm:gr7}. 

The idea of proof for Theorem~\ref{thm:divalt} is to use the fact that cycle graphs labelled by the relators of a classical $C'(\frac{1}{6})$-presentation are isometrically embedded in the Cayley graph. This enables us to construct detours in the Cayley graph which provide the upper (quadratic) bound, see Figure~\ref{figure:fence}. The facts that every finitely presented classical $C'(\frac{1}{6})$-group is hyperbolic and that hyperbolic groups have exponential divergence will be used to obtain the lower (subexponential) bound.

\begin{remark}\label{remark:uncountable}
 Let $J$ be an infinite subset of $\N$ as in the second statement of the Theorem, and let $I$ be a subset of $J$ whose elements are a sequence of superexponential growth. Then, for any $I_1,I_2\subseteq I$, $G(I_1)$ and $G(I_2)$ can only be quasi-isometric if the symmetric difference of $I_1$ and $I_2$ is finite by \cite[Proposition 1]{Bow-manyqiclasses}. 
 Hence, given the countable set of subexponential functions $\{f_k\mid k\in\N\}$, we construct an uncountable family of pairwise non-quasi-isometric groups whose divergence functions satisfy the conclusion of Theorem~\ref{thm:divalt}.
\end{remark}

We first prove the second claim of Theorem~\ref{thm:divalt}. We collect useful facts:

\begin{lem}\label{lem:divfacts}$\,$
\begin{enumerate}[(i)]
 \item The divergence function of a $\delta$-hyperbolic Cayley graph is bounded below by $n\mapsto 2^{(n/5-3)/\delta}$.
 \item If $G=\langle S\mid R\rangle$ is a finite $C'(\frac{1}{6})$-presentation, then the hyperbolicity constant $\delta$ of the Cayley graph of $G$ is bounded above by $2\max_{r\in R}|r|$.
 \item Suppose that $G$ has the $C'(\frac{1}{6})$-presentation $\langle S\mid R\cup R'\rangle$. Also, suppose that each $r\in R'$ satisfies $|r|\geqslant 4N$ for some integer $N$. Then the Cayley graph of $G$ (with respect to $S$) contains an isometric copy of an $N$-ball in the Cayley graph of $\langle S\mid R\rangle$.
 
\end{enumerate}
\end{lem}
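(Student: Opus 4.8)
The three parts call for different tools, and I would establish them in turn. For part (i), which is the familiar fact that a hyperbolic space has at least exponential divergence made quantitative, one may assume the Cayley graph $X$ is unbounded (as it is in every case where the statement is applied). For a given $n$, take vertices $a,b$ with $d(a,b)=n$ and let $c$ be a midpoint of a geodesic $[a,b]$, so $r:=d(c,\{a,b\})=\lfloor n/2\rfloor$. The plan is to prove the standard estimate that any path $p$ from $a$ to $b$ avoiding the ball $B_\rho(c)$ has length at least $2^{\rho/\delta}$, up to an additive constant in the exponent: nearest-point projection of $X$ onto $[a,b]$ moves by at most $1+4\delta$ along any length-$1$ subarc of $p$, so some point of $p$ projects within bounded distance of $c$ and hence lies at distance at least $\rho-O(\delta)$ from $[a,b]$; a dyadic induction on $\log_2|p|$ — halving $p$ and applying $\delta$-thinness to the two resulting triangles — then forces exponential growth. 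Applying this with $\rho=\tfrac15 r-2$ and chasing the constants yields the stated bound $n\mapsto 2^{(n/5-3)/\delta}$.

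\emph{Part (ii).} Put $M:=\max_{r\in R}|r|$. Since a $C'(\tfrac16)$-presentation is $C(7)$, every reduced van Kampen diagram over $\langle S\mid R\rangle$ is a $(3,7)$-diagram (cf.\ the discussion after Lemma~\ref{lem:graphical_basic}, or \cite[Ch.~V]{LS}). Given three points of $\Cay(G,S)$ with geodesics joining them, a reduced diagram $D$ for the resulting triangle is a $(3,7)$-triangle in the sense of Definition~\ref{defi:n-gons}: along any of the three geodesic sides a face meets $\partial D$ in an arc of length at most $\tfrac12$ of its boundary, which together with the fact that its interior arcs are pieces forces interior degree at least $4$, exactly as in Claim~1 of the proof of Lemma~\ref{lem:c7_geodesic}. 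By Strebel's classification of $(3,7)$-triangles (\cite[Theorem~43]{Str}; cf.\ the $3$-slimness noted in Remark~\ref{remark:hyperbolic_space}), each side of $D$ lies within at most three faces of the union of the other two sides; as every face of $D$ has boundary length at most $M$ and hence diameter at most $M/2$, geodesic triangles in $\Cay(G,S)$ are $\tfrac{3M}{2}$-slim, giving hyperbolicity constant at most $2M=2\max_{r\in R}|r|$.

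\emph{Part (iii).} Write $G_0:=\langle S\mid R\rangle$ and let $\varphi\colon\Cay(G_0,S)\to\Cay(G,S)$ be the distance-nonincreasing map induced by $G_0\twoheadrightarrow G$; the claim is that $\varphi$ restricts to an isometry of the $N$-ball $B$ about $1$ onto its image (which is then the $N$-ball about $\varphi(1)$, since every element of the latter has a $G_0$-geodesic representative of equal length, hence a preimage in $B$). It suffices to show $d_G(\varphi u,\varphi v)=d_{G_0}(u,v)$ for all $u,v\in B$. If not, then $d_G(\varphi u,\varphi v)\leqslant d_{G_0}(u,v)-1$ for some such $u,v$, and, picking a geodesic $\gamma_0$ from $u$ to $v$ in $\Cay(G_0,S)$ and a geodesic $\gamma$ from $\varphi u$ to $\varphi v$ in $\Cay(G,S)$, the word $w:=\ell(\gamma_0)\ell(\gamma)^{-1}$ represents $1$ in $G$ and, since $u,v\in B$, has $|w|=d_{G_0}(u,v)+d_G(\varphi u,\varphi v)\leqslant 4N-1<4N$. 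The crux is that a reduced van Kampen diagram $D$ over $\langle S\mid R\cup R'\rangle$ with $|\partial D|<4N$ has no face whose boundary word lies in $R'$: granting this, $D$ is a diagram over $\langle S\mid R\rangle$, so $w$ represents $1$ in $G_0$, whence $\gamma_0$ and $\gamma$ represent the same element of $G_0$ and $d_{G_0}(u,v)\leqslant|\gamma|=d_G(\varphi u,\varphi v)$, a contradiction.

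To prove the crux I would induct on the number of faces of $D$. If $D$ has a face, then by Greendlinger's Lemma for $C'(\tfrac16)$-presentations (\cite[Ch.~V]{LS}) either $D$ has a single face $\Pi$ — in which case $|\partial\Pi|\leqslant|\partial D|<4N$, so $\Pi$ is not an $R'$-face — or $D$ has two faces $\Pi_1\neq\Pi_2$ with disjoint outer arcs $e_1,e_2\subseteq\partial D$ satisfying $|e_i|\geqslant\tfrac12|\partial\Pi_i|$; then $|e_1|+|e_2|\leqslant|\partial D|<4N$ forces $|\partial\Pi_i|\leqslant 2|e_i|<4N$ for some $i$, say $i=1$, so that $\Pi_1$ is an $R$-face. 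Removing $\Pi_1$ creates no new face-adjacencies and so leaves a reduced diagram over $\langle S\mid R\cup R'\rangle$ with one fewer face, boundary still below $4N$ (as $|\partial\Pi_1\setminus e_1|<|e_1|$), and containing every $R'$-face of $D$, so the inductive hypothesis finishes the argument. I expect this last step to be the main obstacle: verifying that peeling the boundary face $\Pi_1$ preserves reducedness and strictly shortens the boundary, and that it is the two-disjoint-arcs form of Greendlinger's Lemma, rather than its weaker single-arc form, that produces the factor matching the hypothesis $|r|\geqslant 4N$ for $r\in R'$. Pinning down the exact numerical constants in part (i) is the only other delicate point, and is routine.
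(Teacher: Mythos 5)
Your proposal is correct and, for parts (i) and (ii), follows essentially the same route as the paper, which disposes of these parts by citing \cite[Proposition III.H.1.6]{BrHa-metspaces} (precisely the inequality $d(c,\alpha)\leqslant\delta\log_2|\alpha|+1$, proved by the dyadic subdivision you sketch) and \cite[Theorem 43]{Str} (Strebel's classification of $(3,7)$-triangles, which you correctly convert into a slimness constant linear in $\max_{r\in R}|r|$). One small remark on (i): with $c$ the midpoint you have $r=\lfloor n/2\rfloor$, so your $\rho=\frac{1}{5}r-2$ really gives $2^{(n/10-3)/\delta}$ rather than the stated $2^{(n/5-3)/\delta}$; the paper's own computation has the same looseness, and it is immaterial since only the exponential character of the lower bound is ever used. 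The one genuine difference is in part (iii): after the identical reduction to the claim that a reduced diagram over $\langle S\mid R\cup R'\rangle$ with boundary length $<4N$ contains no $R'$-face, the paper settles this in one line by citing Ollivier's \cite[Lemma 13-(3)]{Oll} (the boundary of a reduced $C'(\frac{1}{6})$-diagram is at least as long as any relator occurring in it), whereas you reprove that fact by induction, peeling off the short boundary face supplied by the two-region form of Greendlinger's Lemma. Your induction is sound: since $|e_1|\geqslant\frac{1}{2}|\partial\Pi_1|$ the boundary does not lengthen when $\Pi_1$ is removed, reducedness is preserved because deleting a face cannot create a new cancellable pair, and at least one of the two Greendlinger faces must be an $R$-face because $|e_1|+|e_2|<4N$. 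The remaining issues you flag (cut vertices, non-cyclically-reduced boundary words) are standard. What your variant buys is a self-contained argument from Greendlinger's Lemma alone, at the cost of redoing a lemma the paper simply quotes.
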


\begin{proof}
 $(i)$ This is an easy consequence of \cite[Proposition III.H.1.6]{BrHa-metspaces}. In fact, if $c$ is the midpoint of a geodesic of length $n$ from $a$ to $b$ and $\alpha$ is any path from $a$ to $b$ not intersecting $B_{n/5-2}(c)$ then
$$n/5-2\leqslant d(c,\alpha)\leqslant \delta \log_2 |\alpha|+1,$$
whence $|\alpha|\geqslant 2^{(n/5-3)/\delta}$, as required.

$(ii)$ This easily follows from \cite[Theorem 43]{Str}.

$(iii)$ If not, there is a non-geodesic path $\gamma$ of length at most $2N$ in the Cayley graph of $G$ whose label represents a geodesic in the Cayley graph of $\langle S\mid R\rangle$. A loop whose boundary is the union of $\gamma$ and a geodesic in $G$ encloses a van Kampen diagram of boundary length at most $4N-1$ that must involve one of the relators from $R'$. However, such van Kampen diagram does not exist by \cite[Lemma 13-(3)]{Oll}, which says that the length of the boundary of a reduced van Kampen diagram for a $C'(\frac{1}{6})$-presentation is at least as large as the length of the relators it contains.
\end{proof}

\begin{proof}[Proof of Equation~$\ref{eqn:quadratic_a}$]

We recursively define the set $J=\{j_1,j_2,\dots\}$. Let $\{g_k\mid k\in\N\}$ be an enumeration of all functions of the form $t\mapsto C f_m(Ct+C)+Ct+C$, where $m,C\in \N$. First, choose $j_1$ arbitrary. Then,  for $n\geqslant 1$, suppose we have chosen $J_n:=\{j_1,j_2,\dots,j_n\}$, and let $G_n:=G(J_n)$. Let $\delta_n$ be the hyperbolicity constant of $G_n$.

Since every $g_k$ is subexponential, for each sufficiently large $N$ we have for every $k\leqslant n$:
\begin{equation}\label{eqn:divergence-hyperbolicity}
g_k(N)<\frac{1}{N}2^{(N/5-3)/(2\rho_n)}\leqslant\frac{1}{N}2^{(N/5-3)/\delta_n}\leqslant\frac{1}{N} \Div^{G_n}(N),
\end{equation}
where $\rho_n$ is the length of the longest relator in the presentation of $G_n$. This uses Lemma~\ref{lem:divfacts} (i) and (ii). Furthermore, employing Lemma~\ref{lem:divfacts}(iii) we obtain that if $|r_{j_{i}}|\geqslant 4\left(2^{(N/5-3)/(2\rho_n)}\right)$ for every $i>n$, then
\begin{equation*}
g_k(N)<\frac{1}{N}2^{(N/5-3)/(2\rho_n)}\leqslant\frac{1}{N} \Div^{G(J)}(N).
\end{equation*}
Therefore, we choose $r_{j_{n+1}}$ of length at least $4\left(2^{(N/5-3)/(2\rho_n)}\right)$. We proceed inductively to define $J$, letting the numbers $N$ in the construction go to infinity. Then we have for every $g_k$:
$$\limsup_{n\to\infty} \frac{\Div^{G(J)}(n)}{g_k(n)}=\infty.$$
If $I$ is a subset if $J$, then Inequality~\ref{eqn:divergence-hyperbolicity} still holds for $\rho_n$ unchanged (i.e. the length of the longest relator in $J_n$), and $\delta_n$ and $G_n$ defined by the set of relators $I\cap J_n$. Again, the estimate for the divergence function at $N$ carries over to $G(I)$.
\end{proof}

The following will enable us to prove Equation~$\ref{eqn:supersubexponential_a}$:

\begin{prop}\label{prop:divergence} Let $G$ be defined by a classical $C'(\frac{1}{6})$-presentation $\langle a,b\mid R\rangle$, and let $X=\Cay(G,\{a,b\})$. Let $n,N\in \N$ such that $N\geqslant 2n$. Suppose $r_N\in R$. Let $x,y,m$ be vertices in $X$ with $0<d(x,y)\leqslant n$, with $r:=d(x,m)\leqslant d(y,m)$, and with $r>0$. Then there exists a path from $x$ to $y$ of length at most $20nN+32N$ that does not intersect $B_{r/5}(m)$.
\end{prop}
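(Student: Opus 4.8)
The plan is to exhibit the path explicitly as a ``fence'' built over a geodesic $\gamma$ from $x$ to $y$ in $X:=\Cay(G,\{a,b\})$, using that the relator cycle of $r_N$ is isometrically embedded. Write $c:=a^Nb^Na^{-N}b^{-N}$, so $r_N=c^4$ and $c^4=1$ in $G$. First I would dispose of an easy case: if $r\ge\tfrac54 n$, then every point $p$ of $\gamma$ has $d(p,m)\ge d(x,m)-d(x,p)\ge r-n\ge\tfrac15 r$, so $\gamma$ itself (length $\le n\le 20nN+32N$) already misses $B_{r/5}(m)$. So from now on $r<\tfrac54 n$, and since $N\ge 2n$ this forces $r<\tfrac58 N<N$; in particular $B_{r/5}(m)$ has radius $<\tfrac15 N$, tiny compared with $N$.

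Next I would record the one structural input. The classical $C'(\tfrac16)$-presentation corresponds to the $Gr'(\tfrac16)$-labelled graph $\Gamma$ whose components are the cycles labelled by $R$; the cycle for $r_N$ is cyclically reduced of length $16N$, so each translate $C_v$ based at a vertex $v$ is an embedded $16N$-cycle, and by Lemma~\ref{lem:convex_embedding} it is isometrically embedded in $X$. Hence on $C_v$ the metric $d_X$ agrees with the cyclic metric, and the ``antipode'' $v^+:=vc^2$ satisfies $d_X(v,v^+)=8N$, with $C_v$ split into two arcs of length $8N$ joining $v$ to $v^+$. The key lemma I would prove is: \emph{if $d(v,m)>\tfrac35 r$ and $d(v^+,m)>\tfrac35 r$, then at most one of these two arcs meets $B_{r/5}(m)$.} The proof is a short metric argument: two ``bad'' vertices $z,z'$ on the two different arcs would satisfy $d_X(z,z')<\tfrac25 r<8N$, so the $C_v$-geodesic between them does not wrap around and must pass through $v$ or through $v^+$; passing through $v$ gives $d_X(z,v)<\tfrac25 r$ and hence $d(z,m)\ge d(v,m)-\tfrac25 r>\tfrac15 r$, contradicting $z\in B_{r/5}(m)$, and likewise for $v^+$. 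Thus there is always a \emph{good arc} from $v$ to $v^+$ missing $B_{r/5}(m)$, of length $8N$.

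With this, the fence is built as follows. Let $\gamma$ have vertices $x=v_0,v_1,\dots,v_k=y$, $k\le n$ (if $\gamma$ already misses $B_{r/5}(m)$ we are done). Since $d(v_i,m)\le d(v_i,x)+d(x,m)\le n+r<\tfrac98 N$, the anchors $v_i^+:=v_ic^2$ satisfy $d(v_i^+,m)\ge 8N-\tfrac98 N>6N>\tfrac35 r$, so the good-arc lemma applies at $v_0=x$ and at $v_k=y$. Take good arcs $\beta_0\colon x\to x^+:=v_0^+$ and $\beta_k\colon y\to y^+:=v_k^+$ of length $8N$ each, climb from $x$ to $x^+$ along $\beta_0$ and descend from $y^+$ to $y$ along $\beta_k^{-1}$; since $x,y\notin B_{r/5}(m)$ and $\beta_0,\beta_k$ avoid the ball, the only remaining task is a path $\tau$ from $x^+$ to $y^+$ missing $B_{r/5}(m)$ of length $\le 20nN$, which forms the top of the fence. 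Concatenating, $\beta_0\,\tau\,\beta_k^{-1}$ then has length $\le 8N+20nN+8N\le 20nN+32N$ and misses $B_{r/5}(m)$. I would build $\tau$ itself as a ``high fence'' running parallel to $\gamma$ at height $8N$: for $i=0,\dots,k-1$ join $v_i^+$ to $v_{i+1}^+$ by a geodesic, whose length is $d(v_i^+,v_{i+1}^+)=|c^{-2}s_ic^{2}|_G\le 2\cdot 8N+1\le 20N$ (where $v_{i+1}=v_is_i$), so that $\tau$ has length $\le 20kN\le 20nN$ — matching the budget exactly.

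The step I expect to be the main obstacle is verifying that the top steps of $\tau$ genuinely miss $B_{r/5}(m)$. Each $v_i^+$ lies at distance $>6N$ from $m$, but $d(v_i^+,v_{i+1}^+)$ can be as large as $\approx 16N$, so a naive geodesic between them could a priori dip within $\tfrac15 r<\tfrac15 N$ of $m$; triangle-inequality estimates show this can happen only on a sub-segment of length $<\tfrac25 r$ lying inside $B_{r/5}(m)$, i.e.\ on a genuinely short bad stretch. The fix is to reroute any such bad stretch by the same device as in the good-arc lemma — climb along a relator-cycle arc to an antipode at distance $>7N$ from $m$, cross, and descend — and to check that this substitution keeps the length of each top step within the slack already present in the $20N$-per-step budget and that the rerouted pieces do not re-enter $B_{r/5}(m)$. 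Carrying out this bookkeeping carefully (and, if needed, choosing among the antipode $v_ic^2$ and the quarter-points $v_ic,\,v_ic^{3}$ of $C_{v_i}$ so that consecutive tops are close enough for the geodesic steps to clear the ball outright) is the technical heart of the argument; once it is in place, concatenating $\beta_0$, $\tau$ and $\beta_k^{-1}$ and feeding $N\ge 2n$ into the length estimate finishes the proof.
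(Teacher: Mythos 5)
There is a genuine gap, and you have located it yourself: the path $\tau$ along the top of your fence is never actually constructed. The preliminary steps are fine --- the reduction to $r<\tfrac{5}{4}n$, the isometric embedding of the $r_N$-cycles via Lemma~\ref{lem:convex_embedding}, and your ``good arc'' lemma for a single embedded cycle are all correct, as is the arithmetic $16N+1\leqslant 20N$. But the architecture then forces you to join consecutive antipodes $v_i^+$ and $v_{i+1}^+$ by geodesics of length up to $16N+1$ whose endpoints are only known to be at distance $>6N$ from $m$. Triangle inequalities control such a geodesic only on its first and last $6N-r/5$ edges, leaving an uncontrolled middle stretch of length about $4N$; and nothing rules out that the down--across--up path through $v_i$ (of length exactly $16N+1$) is itself a geodesic between the antipodes, in which case a geodesic step passes through $v_i$, which can lie inside $B_{r/5}(m)$ (e.g.\ when $m$ lies on $\gamma$). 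So bad steps genuinely occur. Your proposed repair is both circular and over budget: the two endpoints of a bad stretch sit near $\partial B_{r/5}(m)$ and need not lie on a common $r_N$-cycle, so their antipodes can again be $\sim 16N$ apart and you are back at the original problem; and even one extra climb-and-descend per step costs an additional $\geqslant 16N$, so up to $n$ such reroutes push the total toward $32nN$, exceeding $20nN+32N$.

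The paper's construction avoids connecting geodesics between far-apart points altogether, and this is the fix you would need. Instead of a geodesic from $x$ to $y$ it takes the reduction $\sigma$ of (geodesic $x\to m$)(geodesic $m\to y$) and splits it into maximal blocks $\sigma_1\cdots\sigma_k$, each labelled by a power of a single generator; each block has length $<n+\tfrac{2}{5}r<N$, so it embeds into a block $a^{\pm N}$ or $b^{\pm N}$ of an embedded $r_N$-cycle $\gamma_i$, and the cycles can be chosen so that consecutive ones overlap in arcs of length $\geqslant n-\tfrac{2}{5}r$. The detour is the union of the far arcs $\delta_i$ of the $\gamma_i$: these arcs meet one another directly (no connecting geodesics are ever needed), and each $\delta_i$ is separated from $\sigma_i$ inside the isometrically embedded cycle $\gamma_i$ by two arcs of length $\geqslant n-\tfrac{2}{5}r$, which combined with $d(v,m)<n-\tfrac{3}{5}r$ for $v\in\sigma$ yields $d(\delta_i,m)\geqslant r/5$. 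The total length is $(|\sigma|+2)\cdot 16N\leqslant 20nN+32N$. The essential structural change relative to your fence is that consecutive detour arcs share endpoints because consecutive relator cycles are forced to overlap along long common blocks, rather than being anchored independently at the vertices of a geodesic and then joined by uncontrolled geodesic segments.
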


\begin{proof}
 Let $g$ be a geodesic path from $x$ to $y$. If $g$ does not intersect $B:=B_{r/5}(m)$ in a vertex, then the statement holds. Hence we assume $B\cap g$ contains a vertex. Since $g\subseteq \overline B_{n/2}(\{x,y\})$, we obtain $r/5+n/2\geqslant r$, whence $r\leqslant 5n/8$.
 
 Let $g'$ be the shortest initial subpath of $g$ that terminates at a vertex of $B$. Then $|g'|>(4/5)r$. 
 Thus, if $g=g'g''$, then $|g''|<n-(4/5)r$, whence $d(y,m)<n-(4/5)r+r/5=n-(3/5)r$. 
 Let $g_1$ be a geodesic path from $x$ to $m$ and $g_2$ a geodesic path from $m$ to $y$, and $\sigma$ the simple path from $x$ to $y$ obtained as the reduction of $g_1g_2$ (i.e. by removing any backtracking). Then $|\sigma|<n+(2/5)r$, and every vertex in $\sigma$ has distance less than $n-(3/5)r$ from $m$.
 We decompose $\sigma$ into subpaths $\sigma=\sigma_1\sigma_2\dots\sigma_k$, where $k\leqslant |\sigma|$, such that each $\sigma_i$ is a maximal subpath whose label is a power of a generator. Note that each $\sigma_i$ satisfies $|\sigma_i|< n+(2/5)r$.

 Denote by $\Omega$ the set of all simple closed paths in $X$ that are labelled by $r_N$. By Lemmas~\ref{lem:convex_embedding} and \ref{lem:connected_embedding}, for every $\gamma,\gamma'\in \Omega$, the image of $\gamma$ is an isometrically embedded cycle graph, and $\gamma\cap\gamma'$ is either empty or connected. A \emph{block} in $\gamma\in\Omega$ is a maximal subpath that is labelled by a power of a generator.
 
 Note that $N\geqslant 2n>n+(2/5)r>|\sigma_i|$ for each $i$. Therefore, we can choose $\gamma_1\in \Omega$ such that $\sigma_1$ is an initial subpath of a block of $\gamma_1$. Then we can choose $\gamma_2\in\Omega$ such that $\sigma_2$ is an initial subpath of a block of $\gamma_2$, and such that $|\gamma_1\cap\gamma_2|\geqslant N-n-(2/5)r\geqslant 2n-n-(2/5)r\geqslant n-(2/5)r$. Iteratively, we can find a sequence $\gamma_1,\gamma_2,\dots,\gamma_k$ of elements of $\Omega$ with these properties.
 
 Denote by $\delta_1$ the maximal initial subpath of $\gamma_1^{-1}$ that does not contain edges of $\gamma_2$. Denote by $\delta_k$ the maximal initial subpath of $\gamma_k^{-1}$ that does not contain edges of $\gamma_{k-1}$. For $1<i<k$, denote by $\delta_i$ the maximal subpath of $\gamma_i^{-1}$ that does not contain edges of $\sigma_i\cup\gamma_{i-1}\cup\gamma_{i+1}$. Then $\delta:=\delta_1\delta_2\dots\delta_k$ is a path in $X$ from $x$ to $y$. See Figure~\ref{figure:fence} for an illustration.
 
 Let $1<i<k$. Then $\gamma_i=\sigma_ip\delta_i^{-1}p'$ for paths $p,p'$ with $|p|\geqslant n-(2/5)r$ and $|p'|\geqslant n-(2/5)r$. Since $\gamma_i$ is an isometrically embedded cycle graph, this implies that every vertex in $\delta_i$ has distance at least $n-(2/5)r$ from $\sigma_i$. Since $\sigma_i$ contains a vertex at distance at most $n-(3/5)r$ from $m$, every vertex in $\delta_i$ has distance at least $n-(2/5)r-n+(3/5)r\geqslant r/5$ by the reverse triangle inequality.

 Suppose $\delta_1$ contains a vertex $v$ at distance less than $r/5$ from $m$. Since $d(x,m)\geqslant r$, we must have $d(x,v)>(4/5)r$. There exists $\gamma_0\in \Omega$ such that $\gamma_0\cap\sigma_1$ is a vertex and such that $|\gamma_0\cap\gamma_1|\geqslant n-(2/5)r$. Let $\tilde\delta_0$ be the maximal initial subpath of $\gamma_0^{-1}$ that does not contain edges of $\gamma_1$, and let $\tilde\delta_1$ be the maximal terminal subpath of $\delta_1$ that does not contain edges of $\gamma_0$. 
 
 Suppose $\tilde\delta_0$ contains a vertex $v'$ with $d(v',m)<r/5$. Then, by the triangle inequality, $d(v,v')<(2/5)r$. By the above arguments, $v$ must lie in the image of an initial subpath of $\delta_0$ of length less than $n-(2/5)r$, and $v'$ must lie in the image of an initial subpath of $\tilde\delta_0$ of length less than $n-(2/5)r$. Therefore, there exists a subpath $p$ of a cyclic shift of $\gamma_0$ from $v$ to $v'$ of length less than $2n-(4/5)r$. Since $|r_N|=4N\geqslant8n$, this is a geodesic path. Note that $p$ contains $x$. Since $d(v,x)>(4/5)r$ and $d(v',x)>(4/5)r$, this implies $d(v,v')>(8/5)r$, a contradiction.

 In the case that $\delta_k$ contains a vertex $v$ at distance less than $r/5$ from $m$, we can analogously choose $\gamma_{k+1}\in\Omega$ and replace $\delta_k$ by a concatenation of paths $\tilde \delta_k\tilde\delta_{k+1}$. The resulting path $\tilde\delta$ is a path that does not intersect the ball of radius $r/5$ around $m$. Its length is at most $(|\sigma|+2)|r_{N}|\leqslant (n+(2/5)r+2)(16N)\leqslant ((5/4)n+2)16N\leqslant 20nN+32N$.
\end{proof}

\begin{figure}[h]\label{figure:fence}
\begin{center}
\includegraphics{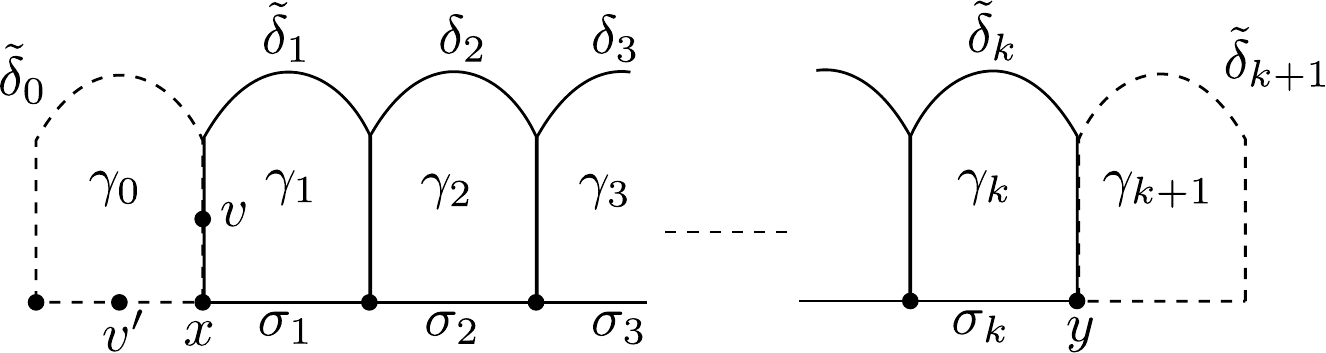}
\end{center}
\vspace{-12pt}
\caption{The construction of the path $\tilde\delta=\tilde\delta_0\tilde\delta_1\delta_2\dots\delta_{k-1}\tilde\delta_k\tilde\delta_{k+1}$ from $x$ to $y$ in the proof of Proposition~\ref{prop:divergence}.}
\end{figure}

We also consider the case that $x$, $y$, and $m$ are not necessarily vertices but possibly interior points of edges.

\begin{cor}\label{cor:divergence} Let $n\in\N$, and let $G$ be given by the a classical $C'(\frac{1}{6})$-presentation $\langle a,b\mid R\rangle$ with $r_{2n}\in R$. Then $\Div^{G}(n)\leqslant 40n^2+64n+2$.
\end{cor}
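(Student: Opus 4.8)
The plan is to deduce this from Proposition~\ref{prop:divergence} by approximating arbitrary points of the geodesic metric space $X=\Cay(G,\{a,b\})$ by nearby vertices, taking $N=2n$ in that proposition so that its length bound $20nN+32N$ becomes exactly $40n^2+64n$; the extra additive $2$ in the statement is precisely the budget for the vertex approximation. Recall that $\Div^G(n)=\Div^X_2(n,1/5)$ is the supremum, over triples $a,b,c\in X$ with $0<r_0:=d(c,\{a,b\})$ and $d(a,b)\leqslant n$, of the infimal length of a curve from $a$ to $b$ avoiding the open ball $B_{r_0/5-2}(c)$.

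First I would dispose of the easy cases. If $r_0\leqslant 10$, then $B_{r_0/5-2}(c)$ is empty and a geodesic from $a$ to $b$ already works, of length $\leqslant n$. If $d(a,b)\leqslant 2$, then a geodesic from $a$ to $b$ has length $\leqslant 2$ and avoids $B_{r_0/5-2}(c)$, since each of its points lies within distance $1$ of $a$ or of $b$, hence at distance $\geqslant r_0-1>r_0/5-2$ from $c$. So I may assume $r_0>10$ and $d(a,b)>2$. In that case a geodesic $[a,b]$ leaves the edge containing $a$; let $x$ be the vertex where it does so (or $x=a$ if $a$ is a vertex), and symmetrically $y$ for $b$. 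Then $x,y$ lie on $[a,b]$, $d(a,x),d(b,y)\leqslant 1$, and $0<d(x,y)=d(a,b)-d(a,x)-d(b,y)\leqslant n$, which is an integer. Let $m$ be a vertex with $d(c,m)\leqslant 1$; by the triangle inequality $r:=d(x,m)\geqslant d(a,c)-d(a,x)-d(c,m)\geqslant r_0-2>0$, and after possibly swapping $x$ with $y$ (and $a$ with $b$) I may assume $d(x,m)\leqslant d(y,m)$. Since $r_{2n}\in R$, Proposition~\ref{prop:divergence} applied with $N=2n$ yields a path $\beta$ from $x$ to $y$ of length $\leqslant 20n\cdot 2n+32\cdot 2n=40n^2+64n$ that misses $B_{r/5}(m)$.

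Finally I would concatenate $\beta$ with geodesics from $a$ to $x$ and from $y$ to $b$, obtaining a curve $\alpha$ from $a$ to $b$ of length $\leqslant 1+(40n^2+64n)+1$; it avoids $B_{r_0/5-2}(c)$ because every point of the two short geodesics is within $1$ of $a$ or $b$, hence at distance $\geqslant r_0-1>r_0/5-2$ from $c$, and every point $p$ of $\beta$ satisfies $d(p,c)\geqslant d(p,m)-d(c,m)\geqslant r/5-1\geqslant(r_0-2)/5-1>r_0/5-2$. Taking the supremum over all admissible triples gives $\Div^G(n)\leqslant 40n^2+64n+2$. I do not expect a genuine obstacle beyond Proposition~\ref{prop:divergence} itself: the only care needed is the bookkeeping of the $\leqslant 1$ approximation errors — which is exactly why the statement carries the $+2$ — together with the estimate $r\geqslant r_0-2$, which is what makes the forbidden ball $B_{r/5}(m)$ supplied by the proposition large enough to engulf the shifted image of $B_{r_0/5-2}(c)$.
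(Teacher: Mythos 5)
Your proof is correct and follows essentially the same route as the paper's: approximate the three points by nearby vertices, apply Proposition~\ref{prop:divergence} with $N=2n$ to get a path of length $40n^2+64n$ avoiding $B_{r/5}(m)$, and absorb the approximation errors into the shrunken radius $r_0/5-2$ and the additive $+2$ in the length bound. Your treatment of the degenerate cases ($r_0\leqslant 10$, $d(a,b)\leqslant 2$) and of the requirement $0<d(x,y)\leqslant n$ for the vertex endpoints is if anything slightly more careful than the paper's, but the argument is the same.
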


\begin{proof}
 Consider a triple of points $x,y,m$ in $X$ with $d(x,y)\leqslant n$ and $r=d(\{x,y\},m)$, where $r>0$.  Let $x'$ and $y'$ be vertices with $d(x,x')\leqslant 1,d(y,y')\leqslant 1,d(m,m')\leqslant1/2$ such that $d(x',y')\leqslant d(x,y)\leqslant n$. Then $r-2<d(\{x',y'\},m')$. By Proposition~\ref{prop:divergence} there exists a path $p$ from $x'$ to $y'$ of length at most $40n^2+64n$ (we take $N=2n$) such that $p$ does not intersect $B_{(r-2)/5}(m')$.  Note that $B_{r/5-1}(m)\subseteq B_{r/5-1/2}(m')\subseteq B_{(r-2)/5}(m')$. Therefore $p$ does not intersect $B_{r/5-1}(m)$. 
 Then $d(\{x,y\},B_{r/5-1}(m))>r-(r/5-1)>1$, whence $p$ can be extended to a path from $x$ to $y$ whose image does not intersect $B_{r/5-1}(m)$.
 \end{proof}

\begin{proof}[Proof of Equation $\ref{eqn:supersubexponential_a}$]
If $I$ is infinite, then by Corollary~\ref{cor:divergence}, $\Div^{G(I)}(n)$ is bounded from above by $40n^2+64n+2$ at infinitely many values of $n$.
\end{proof}
\section{New non-relatively hyperbolic groups}

We give a tool for constructing finitely generated groups that are not hyperbolic relative to any collection of proper subgroups. We use it to show that the groups constructed in Theorem~\ref{thm:divalt} are not non-trivially relatively hyperbolic and to construct for every finitely generated infinite group $G$ a finitely generated group $H$ that is not non-trivially relatively hyperbolic and contains $G$ as a non-degenerate hyperbolically embedded subgroup.

\begin{prop}\label{notrh}
 Let $G$ be a group with a finite generating set $S$, and denote $X:=\Cay(G,S)$. Assume that for each $K>0$ there exists a set $\Omega_K$ of isometrically embedded cycle graphs in $X$ with the following properties:
 \begin{itemize}
  \item $\cup_{\gamma\in \Omega_K}\gamma=X$, and
  \item for all $\gamma,\gamma'\in \Omega_K$ there exists a finite sequence $$\gamma=\gamma_0,\gamma_1,\dots,\gamma_n=\gamma'$$ with $\diam(\gamma_i\cap \gamma_{i+1})\geqslant K$.
 \end{itemize}
Then $G$ is not hyperbolic relative to any collection of proper subgroups.
\end{prop}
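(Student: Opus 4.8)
The plan is to use the characterization of relative hyperbolicity via the geometry of the Cayley graph, together with the fact that a relatively hyperbolic group has a "tree-graded" asymptotic cone, or equivalently that it cannot contain long "chains" of quasi-geodesics forming a thick net. More concretely, the cleanest route is to invoke Druțu–Sapir's notion of an \emph{unconstricted} or \emph{wide} group, or better, to show directly that $G$ satisfies the property of being \emph{not relatively hyperbolic} by contradiction via the following standard fact: if $G$ is hyperbolic relative to proper subgroups $\{H_i\}$, then there is a constant so that in $X = \Cay(G,S)$, the complement of any large ball centered at a point $p$ has the property that its "far" components are $\nu$-quasiconvex and well-separated — equivalently, $X$ has \emph{cut-points in its asymptotic cones}, and more usefully, for every $\sigma$ there is a bound $M$ so that no bi-infinite geodesic (or long geodesic) stays $\sigma$-close to a translate of a peripheral coset for longer than $M$ unless it is essentially inside it. The hypothesis, by contrast, produces arbitrarily long chains of isometrically embedded cycles overlapping in sets of arbitrarily large diameter, which will contradict the separation built into a relatively hyperbolic structure.

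The key steps, in order, are as follows. First, suppose for contradiction that $G$ is hyperbolic relative to a collection $\{H_i \mid i \in I\}$ of proper subgroups; by Osin's results one may assume $I$ is finite and $H_i$ are finitely generated, and there is a generating set for which geodesics in $X$ are tracked by the relative structure. Second, recall (from Druțu–Sapir, or from Osin) the asymptotic-cone characterization: $G$ is relatively hyperbolic iff $X$ is \emph{asymptotically tree-graded} with respect to the collection of peripheral cosets — every asymptotic cone $X_\omega$ is tree-graded with respect to the ultralimits of peripheral cosets, in particular $X_\omega$ has global cut-points and any embedded circle lies inside a single piece. Third, use the hypothesis: for each scaling sequence, the cycles in $\Omega_K$ with $K \to \infty$ along the sequence produce, in the asymptotic cone, a family of embedded circles (limits of the cycle graphs, which are isometrically embedded hence give genuine bi-Lipschitz circles in the cone) such that any two of them are joined by a finite chain of circles with consecutive intersections of \emph{infinite} diameter in the cone. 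Fourth, in a tree-graded space two pieces intersect in at most one point; so an embedded circle, lying in one piece, that meets another embedded circle (lying in another piece) in a set of positive diameter forces the two pieces to coincide. Propagating this along the chain, and using that $\cup_{\gamma \in \Omega_K} \gamma = X$, we conclude the entire cone $X_\omega$ lies in a single piece, i.e. $X_\omega$ is itself a peripheral ultralimit; but then $X$ is at bounded Hausdorff distance from a single peripheral coset $g H_i$, forcing $[G : H_i] < \infty$, contradicting properness (a finite-index subgroup cannot be peripheral in a non-trivial relatively hyperbolic structure).

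The main obstacle I expect is the rigorous passage to the asymptotic cone and the bookkeeping of "intersections of large diameter survive in the cone." One must choose the scaling constants $d_n \to \infty$ and, simultaneously, the parameters $K = K_n \to \infty$ in the hypothesis so that $K_n / d_n \to \infty$, ensuring the overlaps $\gamma_i \cap \gamma_{i+1}$ have infinite diameter in the limit; and one must argue that an isometrically embedded cycle graph of length $L_n$ limits either to a point (if $L_n / d_n \to 0$), to an embedded circle (if $L_n/d_n \to$ finite positive), or to an $\mathbb{R}$-tree-like arc (if $L_n/d_n \to \infty$) — in the last case it is a bi-infinite geodesic line, which in a tree-graded space is transverse, i.e. meets each piece in a point or a segment, and the "large overlap" argument still forces adjacent transversal geodesics/circles into the same piece. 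Handling the trichotomy uniformly, and making sure the chain length $n = n(\gamma,\gamma')$ — which may itself grow — does not interfere (it does not, since at each step we merely identify two pieces, and there are finitely many pieces "visible" between two fixed limit points, or one simply argues locally), is the technical heart. Alternatively, one could bypass asymptotic cones entirely and argue with Druțu–Sapir's metric criterion (the existence, in a relatively hyperbolic group, of a constant controlling "fat" geodesic polygons away from peripheral cosets), deriving a direct contradiction from a long chain of overlapping isometric cycles; this is more hands-on but avoids the cone machinery. Either way, the crux is quantifying "large overlap" against the relatively hyperbolic separation constants.
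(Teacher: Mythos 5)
Your overall strategy---reduce to the Drutu--Sapir tree-graded characterization of relative hyperbolicity and propagate ``same piece'' along the chains of overlapping cycles---is the right circle of ideas, and the ``hands-on metric'' alternative you mention in your final sentence is in fact what the paper does. But the asymptotic-cone route you actually develop has concrete gaps. First, the chain length $n(\gamma,\gamma')$ in the hypothesis is completely uncontrolled, and a chain whose length is unbounded as $K\to\infty$ does not survive the ultralimit as a finite chain of limit circles; so the step ``propagating this along the chain \dots\ the entire cone lies in a single piece'' does not go through in the cone. Your parenthetical fixes (``finitely many pieces visible between two fixed limit points'', ``argue locally'') are not arguments: in a tree-graded space infinitely many distinct pieces can separate two points. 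Second, there is a tension between the two rescalings you need: to make the overlaps $\gamma_i\cap\gamma_{i+1}$ non-degenerate in the cone you need $K_n\gtrsim d_n$, but every cycle in $\Omega_{K_n}$ has length at least $2K_n$ and nothing bounds the lengths from above, so the rescaled cycles need not converge to embedded circles at all---they may limit to bi-infinite geodesics. For those, the tree-graded property ``a simple loop lies in a single piece'' is unavailable, and your claim that a large overlap ``still forces adjacent transversal geodesics into the same piece'' is false: a transversal geodesic need not lie in any piece, and two geodesics may share a long transversal subsegment with no piece involved. (A third, smaller issue: even if every cone were a single piece, passing from that to ``$X$ lies in a bounded neighborhood of one peripheral coset'' requires a uniformity argument over all scaling sequences and ultrafilters; you assert the conclusion without it.)

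The paper's proof is exactly the finitary shadow of your idea and avoids all of this. Inscribing a geodesic triangle in each isometrically embedded cycle $\gamma$ and applying property (P) of \cite[Definition 4.31]{Drutu-qi-relhyp} together with quasiconvexity of peripheral neighborhoods \cite[Lemma 4.15]{DrutuSapir-treegraded} yields a uniform $C_1$ with $\gamma\subseteq N_{C_1}(P_{i(\gamma)})$; by \cite[Theorem 4.1]{DrutuSapir-treegraded} there is $C_2$ with $\diam(N_{C_1}(P_i)\cap N_{C_1}(P_j))<C_2$ for $i\neq j$, so an overlap of diameter at least $C_2$ forces $i(\gamma)=i(\gamma')$. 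Taking $K=C_2$, ordinary induction along the chain---whose length is now irrelevant, since the induction happens in $X$ rather than in a cone---puts every cycle of $\Omega_{C_2}$ into $N_{C_1}(P_{i_0})$ for a single $i_0$, and the covering hypothesis gives $X=N_{C_1}(P_{i_0})$, whence a peripheral subgroup has finite index, contradicting almost malnormality of peripherals in an infinite group. To repair your write-up, replace the passage to asymptotic cones by this quantitative version.
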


\begin{proof}
Suppose that $X$ is hyperbolic relative to a collection of subsets $\{P_i\mid i\in I\}$, and assume that $X=\bigcup_{i\in I}N_1(P_i)$. We show that there exists $i_0\in I$ and $C_1>0$ such that $X=N_{C_1}(P_{i_0})$, where $N_r$ denotes the $r$-neighborhood. This implies the proposition: If $G$ is hyperbolic relative to a collection of proper subgroups, then the Cayley graph $X$ is hyperbolic relative to the collection of the cosets of these peripheral subgroups. 
Our proof implies that a peripheral subgroup has finite index in $G$. Since $G$ is infinite by our assumptions, this contradicts the fact that peripheral subgroups of a relatively hyperbolic group are almost malnormal, see e.g. \cite{Os-rh}.

Let $\gamma$ be an isometrically embedded cycle graph in $X$. We first show that there exist a constant $C_1$, independent of $\gamma$, and $i\in I$ (which may depend on $\gamma$) such that $\gamma\subseteq N_{C_1}(P_{i})$.

There exists a simple closed path $q_1q_2q_3$ whose image is $\gamma$ such that each $q_k$ is a geodesic path of length at least $|V\gamma|/3-1$. Since $X$ is hyperbolic relative to the collection $\{P_i\mid i\in I\}$, it has the following property stated in \cite[Definition 4.31 (P)]{Drutu-qi-relhyp}: There exists constants $\sigma$ and $\delta$, independent of $\gamma$, such that there exists $i\in I$ for which $N_\sigma(P_i)$ intersects each $q_k$. 
Moreover, for each $k$ there exist vertices $x_k,y_k\in q_k\cap N_\sigma(P_i)$ (the entrance points) such that $d(x_k,y_{k+1})<\delta$ for each $k\in\{1,2,3\}$ (indices mod 3), see Figure~\ref{figure:entrance_points}. Note that we do need to consider the case \cite[Definition 4.31 (C)]{Drutu-qi-relhyp}, since the $N_1(P_i)$ cover $X$. A proof of the above property is found in \cite[Section 8]{DrutuSapir-treegraded}.

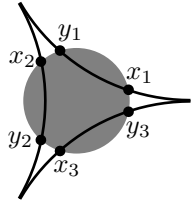
\begin{figure}\label{figure:entrance_points}
\begin{center}
\begin{tikzpicture}[>=stealth, line width=1pt]
\fill[gray] (0,0) circle (.7);
\foreach \y in {1,2,3} {
\begin{scope}[rotate=120*\y-120]
\draw [domain=210:270,shift={(60:3)}] plot ({2.59*cos(\x)}, {2.59*sin(\x)});
\fill (12:.7) circle (2pt);\node at (22:.9) {\small $x_\y$};
\fill (108:.7) circle (2pt);\node at (95:.9) {\small $y_\y$};
\end{scope}
}
\end{tikzpicture}
\end{center}
\vspace{-12pt}
\caption{The cycle graph $\gamma$. The gray area represents $N_\sigma(P_i)$, and the entrance points $x_k,y_k$ are marked.}
\end{figure}

By \cite[Lemma 4.15]{DrutuSapir-treegraded}, there exists $\sigma'$ such that for every $i$, any geodesic with endpoints in $N_\sigma(P_i)$ is contained in $N_{\sigma'}(P_i)$. Let $C_1=\sigma'+2\delta$. Then, if $\diam(\gamma)\leqslant 2\delta$, we have $\gamma\subset N_{C_1}(P_i)$. If $\diam(\gamma)\geqslant 2\delta$, then we may write a cyclic conjugate of $q_1q_2q_3$ as $d_1p_1d_2p_2d_3p_3$, where $\iota d_k=x_k$, $\tau d_k=y_{k+1}$ (indices mod 3), $|d_k|<\delta$, and $p_k$ is a subpath of $q_k$ for each $k$. 
By the assumption on the diameter, each $d_k$ is a geodesic, and each $q_k$ is a geodesic since it as subpath of a geodesic. Therefore, each $d_k$ and $q_k$ is a geodesic with endpoints in $N_\sigma(P_i)$, whence it is contained in $N_{C_1}(P_i)$. Thus, $\gamma\subseteq N_{C_1}(P_i)$.

We now show that, in fact, $i$ can be chosen independently of $\gamma$: By \cite[Theorem 4.1]{DrutuSapir-treegraded}, there exists a constant $C_2$ such that for any distinct $P_i$ and $P_j$ we have $\diam(N_{C_1}(P_i)\cap N_{C_1}(P_j))< C_2$. Let $\gamma\in\Omega_{C_2}$, and let $i_0\in I$ such that $\gamma\subseteq N_{C_1}(P_{i_0})$. 
Then, for every $\gamma'\in\Omega_{C_2}$, our second assumption implies that $\gamma'\subseteq N_{C_1}(P_{i_0})$. Thus, our first assumption yields that $X=N_{C_1}(P_{i_0})$.
\end{proof}

\begin{prop}\label{prop:divgpsnotrh} If $I\subseteq \N$ is infinite, then the group $G(I)$ in Theorem~\ref{thm:divalt} is not hyperbolic relative to any collection of proper subgroups. 
\end{prop}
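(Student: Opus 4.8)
The plan is to deduce this directly from Proposition~\ref{notrh}, by exhibiting for each $K>0$ a suitable family $\Omega_K$ of isometrically embedded cycle graphs in $X:=\Cay(G(I),\{a,b\})$. Since $I$ is infinite we may fix some $N=N(K)\in I$ with $N\geqslant K+1$. As the set of relators of $G(I)$ (hence every sub-presentation) satisfies the classical $C'(\frac{1}{6})$-condition, the disjoint union $\Gamma:=\bigsqcup_{i\in I}C_{16i}$ of cycle graphs labelled by the $r_i$ is $Gr'(\frac{1}{6})$-labelled, so Lemma~\ref{lem:convex_embedding} applies to it. Letting $\gamma_0\subseteq X$ be the image of the closed path based at $1$ reading $r_N$, Lemma~\ref{lem:convex_embedding} shows that $\gamma_0$ is an isometrically embedded cycle graph isomorphic to $C_{16N}$, and likewise that for every $g\in G(I)$ the left translate $g\gamma_0$ (the image of the closed path based at $g$ reading $r_N$) is such a cycle graph. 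I would set $\Omega_K:=\{g\gamma_0\mid g\in G(I)\}$.

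The covering property $\bigcup_{\gamma\in\Omega_K}\gamma=X$ is immediate: $\gamma_0$ contains an $a$-labelled edge (its first edge, from $1$ to $a$) and a $b$-labelled edge (from $a^N$ to $a^Nb$), so every $a$-edge of $X$ lies in $v\gamma_0$ for a suitable $v$, and every $b$-edge from $v$ to $vb$ lies in $va^{-N}\gamma_0$. For the chaining property, first note that by Lemma~\ref{lem:connected_embedding} the intersection of any two members of $\Omega_K$ is connected, so it suffices to connect given $\gamma,\gamma'\in\Omega_K$ by a finite sequence in $\Omega_K$ in which consecutive members share an \emph{embedded geodesic path} of length $\geqslant K$ (such a path then has diameter $\geqslant K$, being a sub-arc of length $\leqslant N$ of some $C_{16N}$). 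Since left translation by any element of $G(I)$ is an isometry of $X$ preserving $\Omega_K$, it suffices to connect $\gamma_0$ to $g\gamma_0$ for every $g$; writing $g$ as a word in $a^{\pm1},b^{\pm1}$ and translating, this reduces to connecting $\gamma_0$ to $s\gamma_0$ for each generator $s\in\{a,a^{-1},b,b^{-1}\}$.

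For $s=a^{\pm1}$ this is direct: $\gamma_0$ contains the geodesic $a$-segment $\{1,a,\dots,a^N\}$, and $a^{\pm1}\gamma_0$ contains its translate, which overlaps it in a path of length $N-1\geqslant K$. For $s=b^{\pm1}$ I would use the ``corner'' of $\gamma_0$ at the vertex $a^N$, where the $a$-segment $\{1,\dots,a^N\}$ meets the $b$-segment $\{a^N,a^Nb,\dots,a^Nb^N\}$: translating by $a^{-N}$, the cycle graph $a^{-N}\gamma_0$ contains both the $a$-segment $\{a^{-N},\dots,1\}$ and the $b$-segment $\{1,b,\dots,b^N\}$. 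Now connect $\gamma_0$ to $a^{-N}\gamma_0$ by the chain $\gamma_0,a^{-1}\gamma_0,\dots,a^{-N}\gamma_0$, in which consecutive terms share $N-1$ consecutive edges of an $a$-segment; pass from $a^{-N}\gamma_0$ to $b^{\pm1}a^{-N}\gamma_0$, whose $b$-segments $\{1,\dots,b^N\}$ and $b^{\pm1}\{1,\dots,b^N\}$ overlap in a path of length $N-1$; and finally connect $b^{\pm1}a^{-N}\gamma_0$ to $b^{\pm1}\gamma_0$ using the $b^{\pm1}$-translate of the first chain. Every overlap has length $N-1\geqslant K$, so this is the required chain, and Proposition~\ref{notrh} yields that $G(I)$ is not hyperbolic relative to any collection of proper subgroups.

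I expect the only genuine subtlety — the ``main obstacle'' — to be this last point: $\gamma_0$ and $b\gamma_0$ generally do not overlap at all, because left translation by $b$ does not slide $\gamma_0$ along its own $b$-segment (a right-coset segment), so one really needs the detour through $a^{-N}\gamma_0$ together with the corner structure of $r_N=(a^Nb^Na^{-N}b^{-N})^4$, all of whose syllables have length exactly $N$. Everything else is bookkeeping, relying on the fact that the cycle graphs in $\Omega_K$ are isometrically embedded (so the exhibited overlap segments are geodesics realizing the diameter bound) and that pairwise intersections are connected by Lemma~\ref{lem:connected_embedding}.
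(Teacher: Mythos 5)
Your proof is correct and follows essentially the same strategy as the paper's: both verify the hypotheses of Proposition~\ref{notrh} with $\Omega_K$ the set of embedded cycle graphs labelled by a single long relator $r_N$, chained together via block overlaps of length $N-1$ and using the corner structure of $r_N=(a^Nb^Na^{-N}b^{-N})^4$ to pass between $a$- and $b$-blocks. Your equivariant reduction to connecting $\gamma_0$ with $s\gamma_0$ for the four generators is a slightly cleaner organization than the paper's edge-by-edge propagation of ``1-st vertices of blocks'' along an arbitrary path, and it incidentally avoids the paper's final case in which the guaranteed overlap is only $\lceil N/2\rceil$.
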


\begin{proof}
Since $I$ is infinite, for every $K>0$ there exists $N$ with $\min\{N-1,\lceil\frac{N}{2}\rceil\}\geqslant K$ such that $r_{N}$ is in the presentation for $G$. Let $X:=\Cay(G(I),\{a,b\})$, and let $\Omega$ be the set of embedded cycle graphs in $X$ whose label is $r_N$. A \emph{block} of such a cycle graph is a maximal subgraph that is a line graph in which every edge is labelled by the same generator. By a 1-st vertex in a block we mean a vertex in the block that is at distance 1 from one of the endpoints of the block.

Let $\gamma\in \Omega$, and let $v$ be a 1-st vertex in a block $\beta$ of $\gamma$. Let $v'$ be a vertex in $X$ at distance 1 from $v$. We show: There exists $\gamma'\in \Omega$ with $\diam(\gamma\cap\gamma')\geqslant N-1$ such that $v'$ is a 1-st vertex of a block in $\gamma'$.

Let $e$ be an edge in $\beta$ such that $\iota e=v$ and $\tau e$ is an endpoint of $\beta$, and let $e'$ be the edge in $X$ with $\iota e'=v$ and $\tau e'=v'$. Denote $s=\ell(e)$ and $s'=\ell(e')$. If $s=s'$ or $s^{-1}=s'$, then we can choose $\gamma'$ to be the translate of $\gamma$ by $s'$ under the action of $F(S)$ on $X$. 
In this case, we have $\diam(\gamma\cap\gamma')\geqslant N-1$. If $s\neq s'$ and $s^{-1}\neq s'$, then, by construction of $r_N$ there exists $\gamma'\in \Omega$ containing a path with label $s^Ns'^{N}$ such that $\gamma'\cap\gamma$ contains a path with label $s^{N-1}$ (whence $\diam(\gamma\cap\gamma')\geqslant N-1$) and such that $v'$ is a 1-st vertex in a block of $\gamma'$.



Now fix $\gamma,\gamma'\in \Omega$, and let $v$ and $v'$ be a 1-st vertices in blocks of $\gamma$, respectively $\gamma'$. Choose a path $p$ from $v$ to $v'$ with $|p|=k$. As above, we choose a sequence of cycles $\gamma=\gamma_0,\gamma_1,...\gamma_k$ such that $\diam(\gamma_i\cap\gamma_{i+1})\geqslant N-1$, and $v'$ is the 1-st vertex of a block in $\gamma_k$. If $\gamma_k=\gamma$, we are done. Now suppose $\gamma_k\neq \gamma$. 
If the two respective blocks containing $v'$ as 1-st vertex are labelled by distinct elements of $S$, then there exists $\gamma_{k+1}\in\Omega$ that intersects both $\gamma_k$ and $\gamma$ in line graphs of length $N-1$ each. If they are labelled by the same element of $S$, then there exists $\gamma_{k+1}\in\Omega$ that intersects each $\gamma_k$ and $\gamma$ in a line graph of length at least $\lceil\frac{N}{2}\rceil$, see Figure~\ref{figure:overlap_of_blocks}.
\end{proof}

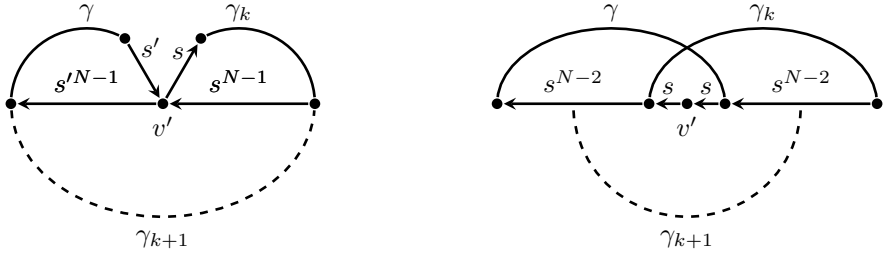
\begin{figure}\label{figure:overlap_of_blocks}
\begin{center}
\begin{tikzpicture}[>=stealth, line width=1pt,shorten <=2.5pt, shorten >=2.5pt]

\draw[->] (2,0) to (0,0);
\draw[->] (0,0) to (-2,0);

\fill (2,0) circle (2pt);
\fill (0,0) circle (2pt);
\fill (-2,0) circle (2pt);
\node at (1,.3) {\small $s^{N-1}$};
\node at (-1,.3) {\small $s'^{N-1}$};

\draw [domain=0:120,shift={(1,0)}] plot ({cos(\x)}, {sin(\x)});
\draw [->] (0,0) to (60:1);
\fill (60:1) circle (2pt);

\draw [domain=60:180,shift={(-1,0)}] plot ({cos(\x)}, {sin(\x)});
\draw [<-] (0,0) to (120:1);
\fill (120:1) circle (2pt);

\draw [domain=180:360,dashed] plot ({2*cos(\x)}, {1.5*sin(\x)});

\node at (1,.3) {\small $s^{N-1}$};
\node at (.2,.67) {\small $s$};
\node at (-.16,.75) {\small $s'$};
\node at (-1,.3) {\small $s'^{N-1}$};

\node at (1,1.2) {\small $\gamma_k$};
\node at (-1,1.2) {\small $\gamma$};
\node at (0,-1.8) {\small $\gamma_{k+1}$};
\node at (0,-.3) {\small $v'$};
\end{tikzpicture}
\hspace{2cm}
\begin{tikzpicture}[>=stealth, line width=1pt,shorten <=2.5pt, shorten >=2.5pt]

\draw[->] (2.5,0) to (.5,0);
\draw[->] (.5,0) to (0,0);
\draw[->] (0,0) to (-.5,0);

\draw[->] (-.5,0) to (-2.5,0);

\fill (2.5,0) circle (2pt);
\fill (.5,0) circle (2pt);
\fill (0,0) circle (2pt);
\fill (-.5,0) circle (2pt);
\fill (-2.5,0) circle (2pt);

\draw [domain=0:180,shift={(1,0)}] plot ({1.5*cos(\x)}, {1*sin(\x)});
\draw [domain=0:180,shift={(-1,0)}] plot ({1.5*cos(\x)}, {1*sin(\x)});

\draw [domain=180:360,dashed] plot ({1.5*cos(\x)}, {1.5*sin(\x)});

\node at (1.5,.3) {\small $s^{N-2}$};
\node at (.25,.2) {\small $s$};
\node at (-.25,.2) {\small $s$};
\node at (-1.5,.3) {\small $s^{N-2}$};
\node at (1,1.2) {\small $\gamma_k$};
\node at (-1,1.2) {\small $\gamma$};
\node at (0,-1.8) {\small $\gamma_{k+1}$};
\node at (0,-.3) {\small $v'$};
\end{tikzpicture}
\end{center}
\vspace{-12pt}
\caption{The case that $\gamma\neq \gamma_k$. Left: The blocks of $\gamma$ and $\gamma_k$ containing $v'$ are labelled by distinct elements $s\neq s'$ of $S$. Right: The blocks are labelled by the same $s\in S$. }
\end{figure}

\begin{remark}
 A similar argument was used in \cite[Subsection 7.1]{BDM-thick} to construct non-relatively hyperbolic classical $C'(\frac{1}{6})$-groups. The statement of Proposition~\ref{prop:divgpsnotrh} can also be deduced from the fact that the divergence function of a non-trivially relatively hyperbolic group is at least exponential \cite{Si-metrrh}.
\end{remark}

We conclude by constructing new examples of non-relatively hyperbolic groups with non-degenerate hyperbolically embedded subgroups as defined in \cite{DGO}. A group $H$ is acylindrically hyperbolic if and only if $H$ contains a non-degenerate hyperbolically embedded subgroup \cite{Os-acyl}, i.e. this is another characterization of acylindrical hyperbolicity.

\begin{defi}[{\cite[Definition 4.25]{DGO}}]\label{definition:hyperbolically_embedded}
Let $G$ be a group and $H$ a subgroup. Then $H$ is \emph{hyperbolically embedded in $G$} if there exists a presentation $(X,R)$ of $G$ relative to $H$ with a linear relative Dehn function such that the elements of $R$ have uniformly bounded length and such that the set of letters from $H$ appearing in elements of $R$ is finite. $H$ is a \emph{non-degenerate} hyperbolically embedded subgroup if it is an infinite, proper hyperbolically embedded subgroup.
\end{defi}

\begin{thm}\label{thm:hyperbolically_embedded} Let $H$ be a finitely generated infinite group. Then there exists a finitely generated group $G$ such that $H$ is a non-degenerate hyperbolically embedded subgroup of $G$ and such that $G$ is not hyperbolic relative to any collection of proper subgroups. 
\end{thm}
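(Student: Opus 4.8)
The plan is to realize $H$ as a subgroup of a finitely generated group $G$ obtained by a graphical small cancellation presentation over a free product, and then apply Proposition~\ref{notrh} to rule out relative hyperbolicity. First I would form the free product $H \freeproduct \Z$, say with $\Z = \langle t \rangle$, and fix a finite generating set $S_H$ of $H$, so that $S := S_H \sqcup \{t\}$ is a finite generating set. I then construct a labelled graph $\Gamma$ over this free product whose components are finite cycle graphs labelled by carefully chosen words, with the two goals that (i) $\Gamma$ satisfies the $Gr_*'(\frac16)$-condition (hence in particular $Gr_*(7)$), so that Theorem~\ref{thm:gr7*} applies and $G := G(\Gamma)_*$ is acylindrically hyperbolic, with $H$ a non-degenerate hyperbolically embedded subgroup by Theorem~\ref{thm:relatively_hyperbolic} (taking $X = \emptyset$ and $R$ the finite set of relators — note the components are finite, so $R$ is finite, the relators have uniformly bounded length, and only finitely many letters from $H$ appear); and (ii) the relators are rich enough that $\Cay(G,S)$ admits, for every $K$, a family $\Omega_K$ of isometrically embedded cycle graphs satisfying the two bullet points of Proposition~\ref{notrh}.

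For goal (ii) the model to imitate is the proof of Proposition~\ref{prop:divgpsnotrh}: there one uses relators of the form $(a^Nb^Na^{-N}b^{-N})^4$ with arbitrarily large $N$, so that embedded cycle graphs labelled by these relators cover the Cayley graph and any two can be connected through a chain of such cycles with large diameter of overlap. Here I would throw in, alongside whatever relators are needed to make $H$ sit inside $G$ correctly, an infinite family of auxiliary relators built from the free factor $\Z = \langle t\rangle$ — for instance words of the shape $(t^N w t^{-N} w')^4$ for suitable words $w, w'$ in $S_H \cup \{t\}$, with $N \to \infty$ — chosen so that each generator edge of $\Cay(G,S)$ lies in some embedded cycle of the family and the ``block overlap'' propagation argument of Proposition~\ref{prop:divgpsnotrh} goes through verbatim. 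The embeddedness and connectedness of intersections of these cycles in $\Cay(G,S)$ come from Lemmas~\ref{lem:convex_embedding} and~\ref{lem:connected_embedding} (via Remark~\ref{remark:convex_product}), exactly as in the divergence section. Since $H$ is infinite, $G$ is infinite, so Proposition~\ref{notrh} then forces $G$ not to be hyperbolic relative to any collection of proper subgroups.

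The main obstacle, and the part requiring genuine care, is the simultaneous bookkeeping in the construction of $\Gamma$: the auxiliary relators needed for goal (ii) must not destroy the $Gr_*'(\frac16)$-condition when combined with the relators (if any) needed to embed $H$, and one must verify that pieces between auxiliary relators, between auxiliary and structural relators, and within the attached $\Cay(H,S_H)$'s are all short relative to the relevant simple closed paths — this is a standard but delicate small cancellation verification, and the freedom to take $N$ as large as we like is what makes it possible (long blocks $t^N$ force any overlap to be a small fraction of each relator). A secondary point is checking the two covering/connectivity hypotheses of Proposition~\ref{notrh} carefully: every edge of $\Cay(G,S)$, including edges labelled by elements of $H$, must be covered by some cycle in $\Omega_K$, which may require adding, for each $s \in S_H$, further auxiliary relators of the form $(s^N u s^{-N} u')^4$ so that $s$-edges also lie in large embedded cycles; then the chain-connecting argument is run on the ``1-st vertex of a block'' data just as in Proposition~\ref{prop:divgpsnotrh}, case-splitting on whether the two blocks through a given vertex carry the same generator or not. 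Once these verifications are in place, the theorem follows by combining Theorem~\ref{thm:gr7*}, the hyperbolic-embedding criterion of Theorem~\ref{thm:relatively_hyperbolic}, and Proposition~\ref{notrh}.
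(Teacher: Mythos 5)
Your overall strategy --- pass to $H*\Z$, impose an infinite family of small cancellation relators over the free product mixing long $t$-powers with generators of $H$, get the hyperbolic embedding from Theorem~\ref{thm:relatively_hyperbolic} and non-relative-hyperbolicity from Proposition~\ref{notrh} --- is exactly the paper's, which takes the quotient of $H*\langle t\rangle$ by the relators $[s_i,t^n]^{6}$ for all $n\in\N$. However, there are two genuine gaps. First, your claim that ``the components are finite, so $R$ is finite'' is self-defeating: the components are finite but there must be infinitely many of them. If $\Gamma$ (hence $R$) were finite, Theorem~\ref{thm:relatively_hyperbolic} would make $G$ hyperbolic relative to $\{H,\Z\}$, a collection of proper subgroups, contradicting the very conclusion you want; and Proposition~\ref{notrh} needs a family $\Omega_K$ for \emph{every} $K$, which forces relators with $t$-blocks of unbounded length. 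Once $R$ is infinite, ``uniformly bounded length'' in Definition~\ref{definition:hyperbolically_embedded} fails for the relators as words in $S_H\cup\{t\}$; it holds only after rewriting them over the alphabet $H\sqcup\Z$, where each $t^{\pm n}$ is a single letter $t_{\pm n}$ (so $[s_i,t_n]^{6}$ has length $24$ for every $n$). Also, taking $X=\emptyset$ in a presentation relative to $H$ alone makes no sense, since then $R\subseteq M(H)$ could not involve $t$ at all.

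Second, Theorem~\ref{thm:relatively_hyperbolic} produces a presentation of $G$ relative to the \emph{collection} $\{H,\Z\}$ with linear relative Dehn function, not relative to $H$ alone, which is what Definition~\ref{definition:hyperbolically_embedded} requires. One must absorb the $\Z$-factor: the paper adjoins the triangle relators $t_mt_nt_{m+n}^{-1}$ and checks, by triangulating the maximal $\Z$-subdiagrams (whose total boundary length is linearly controlled because each $R_1$-face carries exactly six $\Z$-edges), that the relative Dehn function over $H$ alone stays linear. This step is missing from your plan; some argument of this kind (or a citation that a member of a hyperbolically embedded collection is itself hyperbolically embedded) is needed. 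A smaller point: your proposed auxiliary relators $(s^Nus^{-N}u')^4$ are both unnecessary and risky (powers $s^N$ may behave badly if $s$ has torsion in $H$); with the relators $[s_i,t^n]^{6}$ every $s_i$-edge and every $t$-edge of $\Cay(G,S_H\cup\{t\})$ already lies on an embedded cycle of the family, and the block-chaining argument of Proposition~\ref{prop:divgpsnotrh} runs on the $t$-blocks alone.
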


\begin{proof} Let $S=\{s_1,s_2,\dots,s_k\}$ be a finite generating set of $H$, where each $s_i$ is non-trivial in $H$, and let $\Z$ be generated by the element $t$. Consider the quotient $G$ of $H*\Z$ by the normal closure of
$$R_0:=\{[s_i,t^n]^{6}\mid 1\leqslant i\leqslant k, n\in \N\}.$$

We denote $\Z=\{t_n\mid n\in \Z\}$ and $R:=R_1\sqcup R_2$, where $R_1=\{[s_i,t_n]^{6}\mid 1\leqslant i\leqslant k, n\in \N\}$ and $R_2=\{t_mt_nt_{m+n}^{-1}\mid m,n\in \N\}$. Then $(\Z,R)$ is a presentation of $G$ relative to $H$ as in Definition~\ref{definition:hyperbolically_embedded}, and $(\emptyset, R_1)$ is a presentation of $G$ relative to $\{H,\Z\}$.

By Theorem~\ref{thm:relatively_hyperbolic}, $(\emptyset,R_1)$ is a presentation of $G$ relative to $\{H,\Z\}$ with a linear relative Dehn function. Denote by $M_H$, respectively $M_\Z$, all elements of $M(H)$, respectively $M(\Z)$, that represent the identity in $H$, respectively $\Z$. Consider a diagram $D$ over $\langle H,\Z \mid M_H,M_\Z,R_1\rangle$. 
If a subdiagram $\Delta$ has a boundary word in $M(\Z)$, then there exists a diagram $\Delta'$ over $\langle \Z\mid R_2\rangle$ with the same boundary word as $\Delta$ such that $\Delta'$ has at most $|\partial\Delta|$ faces; $\Delta'$ is obtained by triangulating $\Delta$ as in Figure~\ref{figure:triangulating}. 
Moreover, any face with boundary word in $R_1$ contains exactly 6 edges with labels in $\Z$. Therefore, if $D$ has $n$ $R_1$-faces, then the sum of the boundary lengths of all maximal subdiagrams whose boundary words lie in $M(\Z)$ is at most $6n+|\partial D|$. Thus, if $D$ is a diagram over 
$\langle H,\Z\mid 
M_H,M_\Z,R_1\rangle$ with at most $n$ $R_1$-faces, then there exists a diagram over $\langle H,\Z\mid M_H,R_2,R_1\rangle$ with at most $7n+|\partial D|$ $R$-faces. Therefore, $(\Z,R)$ is a presentation of $G$ relative to $H$ that has a linear relative Dehn function, whence $H$ is hyperbolically embedded. It is non-degenerate since it is infinite and $G/\langle H\rangle^G\cong \Z$, whence $H\neq G$.

Remark~\ref{remark:convex_product} shows that each component of $\Gamma$ is isometrically embedded in $\Cay(G,S\cup\{t\})$. Using the same observations as in the proof of Proposition~\ref{prop:divgpsnotrh}, we can apply Proposition~\ref{notrh} to conclude that $G$ is not non-trivially relatively hyperbolic. 
\end{proof}
\begin{figure}\label{figure:triangulating}
\begin{center}
\begin{tikzpicture}[line width=1pt, >=stealth]
\begin{scope}[rotate=18]
\foreach \x in {1,2,3,4,5}{
\draw[dashed] (72*\x-72:2) to (72*\x:2);
\fill (72*\x:2) circle (2pt);
\draw[rounded corners=5] (72*\x-72:2) to (72*\x-72:3) to (72*\x:3) to (72*\x:2);
}
\node at (0,0) {\small $\Delta$};
\end{scope}
\begin{scope}[shift={(7,0)},rotate=18]
\foreach \x in {1,2,3,4,5}{
\draw[dashed] (72*\x-72:2) to (72*\x:2);
\draw[dashed] (0,0) to (72*\x:2);
\fill (72*\x:2) circle (2pt);
\draw[rounded corners=5] (72*\x-72:2) to (72*\x-72:3) to (72*\x:3) to (72*\x:2);
}
\fill (0,0) circle (2pt);
\end{scope}
\end{tikzpicture}
\end{center}
\vspace{-12pt}
\caption{Left: A subdiagram $\Delta$ in a diagram $D$ over $\langle H,\Z \mid M_H,M_\Z,R_1\rangle$ such that $\Delta$ has a boundary word in $M(\Z)$. The dashed lines represent edges labelled by elements of $\Z$. Right: We replace $\Delta$ by a diagram $\Delta'$ with at most $|\partial\Delta|$ faces all of which have labels in $R_2$.}
\end{figure}
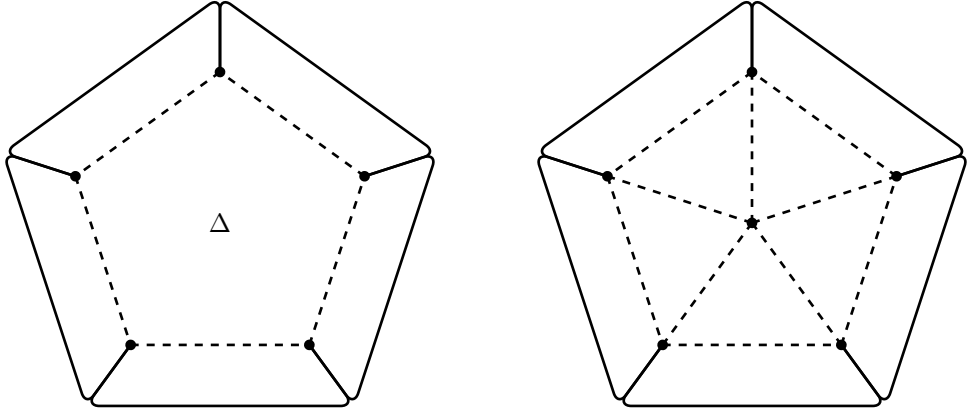

\begin{remark}
 Theorem~\ref{thm:hyperbolically_embedded} extends to any finite collection of finitely generated groups $\{G_1,G_2,\dots,G_l\}$. In the definition of $H$, one simply takes $G_1*G_2*\dots *G_l$ instead of $G$ and adapts the proof accordingly.
\end{remark}

\section*{Acknowledgements}
Prior to working on this paper, the first author learned that Denis Osin thought that infinitely presented classical $C'(\frac{1}{8})$-groups are acylindrically hyperbolic. 
This was inspired by results of Goulnara Arzhantseva and Cornelia Drutu on geodesics in the Cayley graphs of these groups \cite{ADr}. 
The present paper arose out of attempts to construct infinitely presented graphical $Gr(7)$-groups that are not acylindrically hyperbolic. 
We thank Goulnara Arzhantseva for her helpful comments, Denis Osin for his interest in our work, Markus Steenbock for an inspiring discussion on free product small cancellation, and an anonymous referee for a number of constructive remarks.
\bibliographystyle{alpha}
\bibliography{biblio}

\def\cprime{$'$}
\begin{thebibliography}{{Gru}15b}

\bibitem[AD08]{ADe}
G.~{Arzhantseva} and T.~{Delzant}.
\newblock Examples of random groups.
\newblock {\em Preprint}, 2008.

\bibitem[AD12]{ADr}
G.~{Arzhantseva} and C.~{Drutu}.
\newblock {Geometry of infinitely presented small cancellation groups, Rapid
  Decay and quasi-homomorphisms}.
\newblock {\em \href{http://arxiv.org/abs/1212.5280}{\tt{arXiv:1212.5280}}},
  2012.

\bibitem[AMS13]{MOS-pwise-inner}
Y.~{Antolin}, A.~{Minasyan}, and A.~{Sisto}.
\newblock {Commensurating endomorphisms of acylindrically hyperbolic groups and
  applications}.
\newblock {\em to appear in Groups Geom. Dyn.
  \href{http://arxiv.org/abs/1310.8605}{\tt{arXiv:1310.8605}}}, 2013.

\bibitem[AO14]{AO2}
G.~{Arzhantseva} and D.~{Osajda}.
\newblock {Graphical small cancellation groups with the Haagerup property}.
\newblock {\em \href{http://arxiv.org/abs/1404.6807}{\tt{arXiv:1404.6807}}},
  2014.

\bibitem[AS14]{Arzhantseva-Steenbock}
G.~Arzhantseva and M.~Steenbock.
\newblock Rips construction without unique product.
\newblock {\em
  \href{http://arxiv.org/abs/arXiv:1407.2441}{\tt{arXiv:1407.2441}}}, 2014.

\bibitem[BC12]{BC-raagcones}
J.~Behrstock and R.~Charney.
\newblock Divergence and quasimorphisms of right-angled {A}rtin groups.
\newblock {\em Math. Ann.}, 352(2):339--356, 2012.

\bibitem[BD11]{BC-shortconj}
J.~{Behrstock} and C.~{Drutu}.
\newblock {Divergence, thick groups, and short conjugators}.
\newblock {\em \href{http://arxiv.org/abs/1110.5005}{\tt{arXiv:1110.5005}}},
  2011.

\bibitem[BDM09]{BDM-thick}
J.~Behrstock, C.~Dru{\c{t}}u, and L.~Mosher.
\newblock Thick metric spaces, relative hyperbolicity, and quasi-isometric
  rigidity.
\newblock {\em Math. Ann.}, 344(3):543--595, 2009.

\bibitem[Beh06]{Be-asgeommcg}
J.~Behrstock.
\newblock Asymptotic geometry of the mapping class group and {T}eichm\"uller
  space.
\newblock {\em Geom. Topol.}, 10:1523--1578, 2006.

\bibitem[BF02]{BeFu-wpd}
M.~Bestvina and K.~Fujiwara.
\newblock {Bounded cohomology of subgroups of mapping class groups}.
\newblock {\em Geom. Topol.}, 6:69–89 (electronic), 2002.

\bibitem[BH99]{BrHa-metspaces}
M.~R. Bridson and A.~Haefliger.
\newblock {\em Metric spaces of non-positive curvature}, volume 319 of {\em
  Grundlehren der Mathematischen Wissenschaften [Fundamental Principles of
  Mathematical Sciences]}.
\newblock Springer-Verlag, Berlin, 1999.

\bibitem[Bow95]{Bowditch-subquadratic}
B.~H. Bowditch.
\newblock A short proof that a subquadratic isoperimetric inequality implies a
  linear one.
\newblock {\em Michigan Math. J.}, 42(1):103--107, 1995.

\bibitem[Bow98]{Bow-manyqiclasses}
B.~H. Bowditch.
\newblock Continuously many quasi-isometry classes of {$2$}-generator groups.
\newblock {\em Comment. Math. Helv.}, 73(2):232--236, 1998.

\bibitem[DGO11]{DGO}
F.~Dahmani, V.~Guirardel, and D.~Osin.
\newblock Hyperbolically embedded subgroups and rotating families in groups
  acting on hyperbolic spaces.
\newblock {\em to appear in Mem. Amer. Math. Soc.,
  \href{http://arxiv.org/abs/1111.7048}{\tt{arXiv:1111.7048}}}, 2011.

\bibitem[DMS10]{DMS-div}
C.~Dru{\c{t}}u, S.~Mozes, and M.~Sapir.
\newblock Divergence in lattices in semisimple {L}ie groups and graphs of
  groups.
\newblock {\em Trans. Amer. Math. Soc.}, 362(5):2451--2505, 2010.

\bibitem[DR09]{DR-divteich}
M.~Duchin and K.~Rafi.
\newblock Divergence of geodesics in {T}eichm\"uller space and the mapping
  class group.
\newblock {\em Geom. Funct. Anal.}, 19(3):722--742, 2009.

\bibitem[Dru09]{Drutu-qi-relhyp}
C.~Dru{\c{t}}u.
\newblock Relatively hyperbolic groups: geometry and quasi-isometric
  invariance.
\newblock {\em Comment. Math. Helv.}, 84(3):503--546, 2009.

\bibitem[DS05]{DrutuSapir-treegraded}
C.~Dru{\c{t}}u and M.~Sapir.
\newblock Tree-graded spaces and asymptotic cones of groups.
\newblock {\em Topology}, 44(5):959--1058, 2005.
\newblock With an appendix by D. Osin and Sapir.

\bibitem[FPS15]{FPS-H3b-acylhyp}
R.~{Frigerio}, M.~B. {Pozzetti}, and A.~{Sisto}.
\newblock Extending higher-dimensional quasi-cocycles.
\newblock {\em J. Topol.}, 8(4):1123--1155, 2015.

\bibitem[Ger94]{Ger-div}
S.~M. Gersten.
\newblock Quadratic divergence of geodesics in {${\rm CAT}(0)$} spaces.
\newblock {\em Geom. Funct. Anal.}, 4(1):37--51, 1994.

\bibitem[Gro93]{Gr-asinv}
M.~Gromov.
\newblock {\em Asymptotic invariants of infinite groups}, volume 182 of {\em
  London Math. Soc. Lecture Note Ser.}
\newblock Cambridge Univ. Press, Cambridge, 1993.

\bibitem[Gro03]{Gro}
M.~Gromov.
\newblock Random walk in random groups.
\newblock {\em Geom. Funct. Anal.}, 13(1):73--146, 2003.

\bibitem[Gru15a]{Gru}
D.~Gruber.
\newblock Groups with graphical {$C(6)$} and {$C(7)$} small cancellation
  presentations.
\newblock {\em Trans. Amer. Math. Soc.}, 367(3):2051--2078, 2015.

\bibitem[{Gru}15b]{Gru-SQ}
D.~{Gruber}.
\newblock Infinitely presented {$C(6)$}-groups are {SQ}-universal.
\newblock {\em J. London Math. Soc.}, 92(1):178--201, 2015.

\bibitem[Gru15c]{Gru-Thesis}
D.~Gruber.
\newblock Infinitely presented graphical small cancellation groups.
\newblock {\em PhD thesis, University of Vienna}, 2015.

\bibitem[Ham08]{Ha-isomhyp}
U.~Hamenst\"adt.
\newblock {Bounded cohomology and isometry groups of hyperbolic spaces}.
\newblock {\em J. Eur. Math. Soc. (JEMS)}, 10(2):315–349, 2008.

\bibitem[HLS02]{HLS}
N.~Higson, V.~Lafforgue, and G.~Skandalis.
\newblock Counterexamples to the {B}aum-{C}onnes conjecture.
\newblock {\em Geom. Funct. Anal.}, 12(2):330--354, 2002.

\bibitem[HO13]{HullOsin}
M.~Hull and D.~Osin.
\newblock Induced quasicocycles on groups with hyperbolically embedded
  subgroups.
\newblock {\em Algebr. Geom. Topol.}, 13(5):2635--2665, 2013.

\bibitem[LS77]{LS}
R.~C. Lyndon and P.~E. Schupp.
\newblock {\em Combinatorial group theory}.
\newblock Springer-Verlag, Berlin-New York, 1977.
\newblock Ergebnisse der Mathematik und ihrer Grenzgebiete, Band 89.

\bibitem[Mac13]{Ma}
N.~Macura.
\newblock C{AT}(0) spaces with polynomial divergence of geodesics.
\newblock {\em Geom. Dedicata}, 163:361--378, 2013.

\bibitem[MO15]{MO}
A.~{Minasyan} and D.~{Osin}.
\newblock Acylindrical hyperbolicity of groups acting on trees.
\newblock {\em Math. Ann.}, 362(3-4):1055--1105, 2015.

\bibitem[Oll06]{Oll}
Y.~Ollivier.
\newblock On a small cancellation theorem of {G}romov.
\newblock {\em Bull. Belg. Math. Soc. Simon Stevin}, 13(1):75--89, 2006.

\bibitem[OOS09]{OOS-lacun}
A.~Yu. Ol'shanskii, D.~V. Osin, and M.~V. Sapir.
\newblock Lacunary hyperbolic groups.
\newblock {\em Geom. Topol.}, 13(4):2051--2140, 2009.
\newblock With an appendix by Michael Kapovich and Bruce Kleiner.

\bibitem[{Osa}14]{Osa-label}
D.~{Osajda}.
\newblock Small cancellation labellings of some infinite graphs and
  applications.
\newblock {\em \href{http://arxiv.org/abs/1406.5015}{\tt{arXiv:1406.5015}}},
  2014.

\bibitem[Osi06a]{Os-hypel}
D.~Osin.
\newblock Elementary subgroups of relatively hyperbolic groups and bounded
  generation.
\newblock {\em Internat. J. Algebra Comput.}, 16(1):99--118, 2006.

\bibitem[Osi06b]{Os-rh}
D.~Osin.
\newblock Relatively hyperbolic groups: intrinsic geometry, algebraic
  properties, and algorithmic problems.
\newblock {\em Mem. Amer. Math. Soc.}, 179(843):vi+100, 2006.

\bibitem[Osi16]{Os-acyl}
D.~Osin.
\newblock Acylindrically hyperbolic groups.
\newblock {\em Trans. Amer. Math. Soc.}, 368(2):851--888, 2016.

\bibitem[Pap95]{Papasoglu-bigons}
P.~Papasoglu.
\newblock Strongly geodesically automatic groups are hyperbolic.
\newblock {\em Invent. Math.}, 121(2):323--334, 1995.

\bibitem[Pri89]{Pri}
S.~J. Pride.
\newblock Some problems in combinatorial group theory.
\newblock In {\em Groups---{K}orea 1988 ({P}usan, 1988)}, volume 1398 of {\em
  Lecture Notes in Math.}, pages 146--155. Springer, Berlin, 1989.

\bibitem[{Sis}11]{Si-contr}
A.~{Sisto}.
\newblock {Contracting elements and random walks}.
\newblock {\em to appear in J. Reine Angew. Math.,
  \href{http://arxiv.org/abs/1112.2666}{\tt{arXiv:1112.2666}}}, 2011.

\bibitem[{Sis}12]{Si-metrrh}
A.~{Sisto}.
\newblock {On metric relative hyperbolicity}.
\newblock {\em \href{http://arxiv.org/abs/1210.8081}{\tt{arXiv:1210.8081}}},
  2012.

\bibitem[Sis16]{Si-hypembmorse}
A.~Sisto.
\newblock Quasi-convexity of hyperbolically embedded subgroups.
\newblock {\em Math. Z.}, electronically published on January 20, 2016 (to
  appear in print), 2016.

\bibitem[Ste15]{Ste}
M.~Steenbock.
\newblock {R}ips-{S}egev torsion-free groups without unique product.
\newblock {\em J. Algebra}, 438:337--378, 2015.

\bibitem[Str90]{Str}
R.~Strebel.
\newblock Appendix. {S}mall cancellation groups.
\newblock In {\em Sur les groupes hyperboliques d'apr\`es {M}ikhael {G}romov
  ({B}ern, 1988)}, volume~83 of {\em Progr. Math.}, pages 227--273.
  Birkh\"auser Boston, Boston, MA, 1990.

\bibitem[TV00]{TV}
S.~Thomas and B.~Velickovic.
\newblock Asymptotic cones of finitely generated groups.
\newblock {\em Bull. London Math. Soc.}, 32(2):203--208, 2000.

\end{thebibliography}

\end{document}